\DeclareMathOperator{\End}{End}
\DeclareMathOperator{\Hom}{Hom}
\DeclareMathOperator{\lie}{Lie}
\DeclareMathOperator{\cHom}{\mathcal{H}om}
\DeclareMathOperator{\Spec}{Spec}
\DeclareMathOperator{\Sym}{Sym}
\DeclareMathOperator{\Der}{Der}
\DeclareMathOperator{\tr}{tr}
\DeclareMathOperator{\id}{Id}
\DeclareMathOperator{\SL}{SL}
\DeclareMathOperator{\Aut}{Aut}
\DeclareMathOperator{\gr}{gr}
\def\cA{\mathscr{A}}
\def\cE{\mathscr{E}}
\def\cF{\mathcal{F}}
\def\cI{\mathscr{I}}
\def\cJ{\mathscr{J}}
\def\cK{\mathscr{K}}
\def\cM{\mathscr{M}}
\def\cO{\mathscr{O}}
\def\cT{\mathscr{T}}
\def\fg{\mathfrak{g}}
\def\gl{\mathfrak{g}\mathfrak{l}}
\def\bo{\mathbf{o}}
\newtheoremstyle{exps}{\topsep}{\topsep}{}{0pt}{\bfseries}{.}{0pt}{}
\newtheorem*{thm*}{Theorem}
\newtheorem*{prop*}{Proposition}
\newtheorem*{lem*}{Lemma}
\newtheorem*{cor*}{Corollary}
\newtheorem*{rem*}{Remark}
\newtheorem{thm}{Theorem}[section]
\newtheorem{prop}[thm]{Proposition}
\newtheorem{lem}[thm]{Lemma}
\newtheorem{ex}[thm]{Example}
\theoremstyle{definition}
\newtheorem*{defn*}{Definition}
\newtheorem*{exer*}{Exercise}
\newtheorem{defn}[thm]{Definition}
\newtheorem*{problem*}{Problem}
\newtheorem{rem}[thm]{Remark}
\newtheorem{nolabel}[thm]{ }
\theoremstyle{exps}
\newtheorem{examples}[thm]{Examples}
\numberwithin{equation}{section}
\DeclareMathOperator{\Ber}{Ber}
\DeclareMathOperator{\Pic}{Pic}
\DeclareMathOperator{\vir}{Vir}
\DeclareMathOperator{\cur}{Cur}
\DeclareMathOperator{\res}{res}
\DeclareMathOperator{\str}{str}
\DeclareMathOperator{\cExt}{\mathscr{E}xt}
\def\cD{\mathscr{D}}
\def\cV{\mathscr{V}}
\def\cY{\mathscr{Y}}
\def\fm{\mathfrak{m}}
\def\fh{\mathfrak{h}}
\def\bt{\mathbf{t}}
\def\bx{\mathbf{x}}
\def\bq{\mathbf{q}}
\def\bz{\mathbf{z}}
\def\vac{|0\rangle}
\newcommand{\Z}{\mathbb{Z}}
\newcommand{\git}{\mathbin{
  \mathchoice{/\mkern-6mu/}
    {/\mkern-6mu/}
    {/\mkern-5mu/}
    {/\mkern-5mu/}}}
\title[Characters of topological $N=2$ vertex algebras]{Characters of topological $N=2$ vertex algebras are Jacobi forms on the moduli space of elliptic supercurves}
\author[Heluani]{Heluani, R.$^1$}
\address{$^1$ IMPA, Rio de Janeiro.}
\author[Van Ekeren]{Van Ekeren, J.$^2$}
\address{$^2$ Technische Universit\"{a}t Darmstadt, Darmstadt.} 
\date{}
\begin{document}
\maketitle

\section{Introduction}
\begin{nolabel} Let $X$ be a smooth curve of genus $g$ over $\mathbb{C}$ and let $\{x_i\}$, $i=1, \dots, n$, be a collection of closed points in $X$. Let $V$ be a vertex operator algebra and $\{M_i\}$ a collection of modules, one for each point $x_i$. We may associate to this data the space of coinvariants $C(X, \{x_i\}, V, \{M_i\})$ \cite{beilinsondrinfeld,frenkelzvi,zhu,beauville-conformal-blocks}, this is known as the space of conformal blocks associated to the given data. Varying the curve $X$ and the points $\{x_i\}$ in the moduli space $\cM_{g,n}$ gives rise, roughly speaking, to a vector bundle together with a projectively flat connection (in general one obtains, under some finiteness conditions, a twisted $\cD$-module on $\cM_{g,n}$).

The vector space $C(X,x,V,M)$ is constructed roughly as follows (for simplicity we consider $n=1$). The vertex algebra $V$ gives rise to a chiral algebra $\cA$ on $X$. This is in particular a right $\cD_X$-module, and its de Rham cohomology $h(\cA)$ is a sheaf of Lie algebras on $X$. The Lie algebra of sections $h(\cA)(X\setminus x)$ naturally acts on the module $M$, and by definition $C(X,x,V,M) = \Hom_{h(\cA)(X\setminus x)}(M,\mathbb{C})$. The extension to multiple points is straightforward. The spaces of conformal blocks possess a key property regarding insertion of the adjoint $V$-module, namely \cite{frenkelzvi} $C(X,\{x_i\}_{i=1}^n \cup \{x_0\}, \{M_i\}_{i=1}^n \cup \{V\}) \simeq C(X, \{x_i\}_{i=1}^n, \{M_i\}_{i=1}^n)$. We will mostly be interested in the case when each module $M_i$ equals $V$, in this case spaces of conformal blocks are independent of the number of insertion points and we can denote them by $C(X,V)$.

If $X$ has genus $1$, that is, if $X$ is an elliptic curve, then we can write $X = \mathbb{C}^*/\mathbb{Z}$ where the action of $\mathbb{Z}$ on $\mathbb{C}^*$ is given by $n \cdot z = q^n z$ for some $q \in \mathbb{C}^*$. Zhu described explicitly the spaces $C(X,V)$ in \cite{zhu}. For a positive energy $V$-module $M$ he considered the trace function
\begin{equation}
\tr_M Y^M(z^{\Delta_1} a_1, z_1) \dots Y^M(z^{\Delta_n} a_n, z_n) q^{L_0},
\label{eq:traces}
\end{equation}
where $a_1, \ldots, a_n \in V$ with actions $Y^M(a_i,z_i) \in \End(V)[ [z_i, z_i^{-1}] ]$ on $M$, the operator $L_0$ corresponds to a conformal vector in $V$, with respect to which $a_i$ has conformal weight $\Delta_i$. He showed that the trace function converges on the domain $|q| < |z_n| < \dots < 1$, and that this collection of functions describes a vector in $C(X,V)$. In other words, each $V$-module $M$ gives rise to a vector in $C(X,V)$. In particular, characters of $V$-modules give rise to conformal blocks. Zhu also described explicitly the connection on the bundle $C(X,V)$ as $X$ is varied on $\cM_{1,n}$ by writing $q=e^{2 \pi i \tau}$ with $\tau \in \mathbb{H}$ the modular parameter describing $X= X_q \in \cM_{1,n}$ and showing that \eqref{eq:traces} satisfy a specific differential equation (see \eqref{eq:8.6.1} below). He used this to show that the characters of $V$-modules, appropriately normalised (multiplied by $q^{-c/24}$) form an invariant space under the modular group $\SL(2,\mathbb{Z})$.
\end{nolabel}
\begin{nolabel} In this article we extend the results above to the situation when $V$ is an \emph{supersymmetric} vertex algebra \cite{heluani3} and $X$ is an elliptic supercurve (see Section \ref{sec:elliptic-super}). In particular we describe spaces of conformal blocks associated to elliptic supercurves and we show that supertraces of vertex operators acting on $V$-modules produce conformal blocks. The situation in the super case is rather subtle, as there are different flavours of supercuves giving rise to different moduli spaces, and there are distinct classes of supersymmetric vertex algebras. In this article we restrict attention to $N=1$ supercurves (see Section \ref{sec:supermanifolds}), which roughly speaking correspond to pairs consisting of a curve $X$ and a line bundle over it. The vertex algebras that give rise to vector bundles over these supercurves are called $N_W=1$ SUSY vertex algebras in \cite{heluani3} and correspond to \emph{topologically twisted $N=2$ vertex operator algebras} in the physics literature. Alternatively, we could use $N=2$ SUSY curves (see \ref{no:susy}), these are roughly speaking pairs $(X, \cE)$ of a curve $X$ and a rank-$2$ bundle $\cE$ over it such that $\det \cE \simeq \omega_X$. The vertex algebras corresponding to $N=2$ SUSY curves are called $N_K=2$ vertex algebras in \cite{heluani3} and correspond to (untwisted) $N=2$ superconformal vertex operator algebras in the physics literature. (It was observed by Deligne that $N=1$ supercurves and $N=2$ SUSY curves are essentially equivalent, see \cite[pp. 46]{manin3}).

The one point functions that we consider here are closely related to those recently studied by Krauel and Mason in \cite{krauel}, and this article can be viewed as an algebro-geometric counterpart to \cite{krauel}. In the appendix we show that the augmented trace functions
\begin{equation}
\tr_M Y(z^{\Delta_1} a_1, z_1) \dots Y(z^{\Delta_n} a_n, z_n) y^{J_0} q^{L_0}
\label{eq:MasonKrauel-traces}
\end{equation}
converge to a holomorphic function in a domain of $\mathbb{C} \times \mathbb{H}$. Note that here a choice of $U(1)$ current $J_0$ appears in addition to the energy operator $L_0$. Krauel and Mason in \cite{krauel} showed that these characters, after appropriate normalization, transform as weak Jacobi forms with character under the Jacobi group $SL(2, \Z) \ltimes \Z^2$. In this article we show that superfield analogues of these traces give rise to superconformal blocks on a moduli space of elliptic supercurves.

There exists a family $E^0$ of elliptic supercurves obtained from deformations of the reduced underlying elliptic curve and deformations of the line bundle describing the odd coordinate in $\Pic(X_{\mathrm{rd}})$. The base of $E^0$ is a two dimensional purely even supermanifold and the parameters $y$ and $q$ in \eqref{eq:super-traces} can be thought of as coordinates in this space, just as we regard $q = e^{2 \pi i \tau}$ as parametrizing the elliptic curve $X_q$ above. There exists another family (section \ref{no:x.4.1}) of elliptic supercurves with $1|2$ dimensional base, its interpretation is less clear. 

In this paper we construct sections of the bundle of conformal blocks for both families at once by embedding them into a larger family. Then we describe a connection over the supermoduli space, and show that our sections are flat with respect to it (Theorem \ref{thm:connection.1}, \ref{thm:final}). Finally we restrict attention to $E^0$ and we show the connection is equivariant with respect to a natural action of the Jacobi group. Indeed we have the following.
\begin{thm*}
Let $V$ be a (topologically twisted $N=2$) rational super vertex algebra of central charge $C$ satisfying some finiteness conditions and let $\{M_i\}$ be the collection of its simple modules. Then we have
\begin{enumerate}

\item The linear functional $\varphi_M \in V^*$ defined by
\begin{align}\label{eq:super-traces}
\varphi_M(a) = \str_M a_0 q^{L_0} y^{J_0},
\end{align}
is a conformal block for the elliptic supercurve parametrized by $(\tau, \alpha) \in \mathbb{H} \times \mathbb{C}$. 
Here $L_0$ is the energy operator, $J_0$ is the $U(1)$-current, $q = e^{2\pi i \tau}$ and $y = e^{2\pi i \alpha}$.


\item The normalised functionals $\widetilde{\varphi}_M(v) = e^{2 \pi i \alpha C/6} \varphi_M(v)$ are flat with respect to the flat connection $\nabla$ defined by
\[
\nabla = d + \frac{1}{2 \pi i } \int   \widetilde{\zeta}(t)  \Bigl( Y^M[\widetilde{h}, t,\zeta] d \tau  + 2 \pi i \,  \zeta\, Y^M[\widetilde{j}, t,\zeta] d \alpha \Bigr) [dtd\zeta],
\]
where $\widetilde{\zeta}(t)$ is Weierstrass's $\zeta$ function, $h, j$ are the superconformal vectors in $V$ (generating the $N=2$ structure), $Y[u, \mathbf{t}] = \rho^{-1} Y(\rho u, \mathbf{t}) \rho$ is the conjugate of $Y(-, \mathbf{t})$ by the linear automorphism $\rho \in \text{End}\,(V)$ corresponding to the change of coordinates $\rho(t, \zeta) = (e^{2\pi i t}-1, e^{2\pi i t}\zeta)$, and $\widetilde{h}$ and $\widetilde{j}$ are the superconformal vectors of $V, Y[-, \mathbf{t}]$ (see \eqref{eq:huangs}). That is to say $\nabla \widetilde{\varphi}_M(b) = 0$.

\item The linear span of the functionals $\widetilde{\varphi}_M$ is invariant under a natural action of the Jacobi group $SL(2,\mathbb{Z}) \ltimes \mathbb{Z}^2$. For $v \in V$ primary of conformal weight $\Delta$ and charge $0$ this action is described by: 
\[
\begin{aligned}
 {[}\widetilde{\varphi}_M \cdot (m,n)](v; \tau, \alpha) &:= \exp \left( \frac{C}{6} 2 \pi i (m^2 \tau + 2 m \alpha) \right) \times \\ & \qquad  e^{\frac{C}{3} 2 \pi i n} \widetilde{\varphi}_M\left( v; \tau, \alpha + m\tau + n \right) \qquad (m,n) \in \mathbb{Z}^2 \\ 
\left[ \widetilde{\varphi}_M \cdot 
\begin{pmatrix}
a & b \\ c & d
\end{pmatrix}
\right](v; \tau, \alpha) &:= (c \tau + d)^{-\Delta}
  \exp \left( 2 \pi i \frac{C}{6} \left( \frac{- c \alpha^2}{c \tau + d} \right)  \right) \times \\ & \qquad  \widetilde{\varphi}_M\left( v; \frac{a \tau + b}{c \tau + d},\frac{\alpha}{c \tau+ d} \right), \qquad 
\begin{pmatrix}
a & b \\ c & d
\end{pmatrix} \in SL(2,\mathbb{Z}),
\end{aligned} \]
For not-necessarily primary $v$ see Theorem \ref{thm:final}.

\item The connection $\nabla$ is equivariant with respect to the action of the Jacobi  group on $\mathbb{H} \times \mathbb{C}$ and on the space of trace functions as in c). That is under the change of coordinates
\[
(\tau', \alpha') = (\tau, \alpha + m \tau + n)
\quad \text{and} \quad
(t, \zeta) \mapsto (t, e^{2 \pi i m t} \zeta)
\]
we have
\[
\begin{aligned}
& \nabla' = d + \frac{1}{2 \pi i } \int  \widetilde{\zeta}(t')  \Bigl( Y^M[\widetilde{h}, t', \zeta']d \tau' + 2 \pi i \,  \zeta'\, Y^M[\widetilde{j}, t',\zeta'] d \alpha' \Bigr)[dt'd\zeta'], \\
& \nabla' \left[ \widetilde{\varphi}_M \cdot 
(m,n) 
 \right] = 0, \qquad (m,n) \in \mathbb{Z}^2,
\end{aligned}
\]
and under the change of coordinates
\[
(\tau', \alpha') = \left( \tfrac{a \tau + b}{c \tau + d}, \tfrac{\alpha}{c\tau + d} \right)
\quad \text{and} \quad 
(t', \zeta') = \left( \tfrac{t}{c\tau + d}, e^{- 2 \pi i t \tfrac{c \alpha}{c \tau + d}} \zeta \right)
\]
we have
\[ 
\begin{aligned}
& \nabla' = d + \frac{1}{2 \pi i } \int  \widetilde{\zeta}(t')  \Bigl( Y^M[\widetilde{h}, t', \zeta']d \tau' + 2 \pi i \,  \zeta'\, Y^M[\widetilde{j}, t',\zeta'] d \alpha' \Bigr)[dt'd\zeta'], \\
& \nabla' \left[ \widetilde{\varphi}_M \cdot 
\begin{pmatrix}
a & b \\ 
c & d
\end{pmatrix}
 \right] = 0, \qquad 
\begin{pmatrix}
a & b \\ 
c & d
\end{pmatrix} \in SL(2, \mathbb{Z}).
\end{aligned}
\]

\end{enumerate}
\end{thm*}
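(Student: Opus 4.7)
The plan is to adapt Zhu's argument for trace functions on elliptic curves to the $N=1$ supercurve setting (equivalently $N=2$ SUSY curves, via Deligne's dictionary), with the extra $U(1)$-current $J_0$ playing the role of the $\alpha$-deformation in exact parallel with Krauel--Mason's bosonic Jacobi-form trace functions. I would treat the four items in order: (a) is the foundation, (b) and (d) reduce to explicit super-Zhu recursions and transformation properties of Weierstrass's $\widetilde{\zeta}$, and (c) is where the deep inputs (rationality and $C_2$-cofiniteness) enter.

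For (a), I would first identify the Lie algebra $h(\cA)(X\setminus x)$ of global sections on the punctured elliptic supercurve parametrised by $(\tau,\alpha)$: its elements are Laurent expansions at the puncture of sections of the chiral algebra twisted by the line bundle whose transition function gives the odd coordinate, the $\alpha$-dependence entering through that twist. Invariance of $\varphi_M$ under this Lie algebra is then equivalent to a graded cyclic identity for the supertrace, $\str_M q^{L_0} y^{J_0} X Y = \pm \str_M Y q^{L_0} y^{J_0} X$, combined with $[L_0, Y^M(a,z)] = (z\partial_z + \Delta) Y^M(a,z)$ and $[J_0, Y^M(a,z)] = Y^M(J_0 a, z)$; these together reproduce the Jacobi-elliptic functional equations characterising sections of the bundle of conformal blocks on the supercurve, and the $n$-point version (inserting $Y^M(z^{\Delta_i} a_i, z_i)$) is obtained from the convergence statement proved in the appendix.

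For (b) and (d), the central computation is the super analog of Zhu's recursion, which expresses the insertion of a zero mode of an arbitrary state into a trace as a contour integral of the corresponding field against $\widetilde{\zeta}$. Differentiating $\varphi_M$ in $\tau$ and $\alpha$ inserts $L_0$ and $J_0$ respectively, and after applying the recursion these insertions become exactly the integrals of $Y^M[\widetilde{h}, t, \zeta]$ and $Y^M[\widetilde{j}, t, \zeta]$ that appear in the stated connection $\nabla$; the normalisation $e^{2\pi i \alpha C/6}$ cancels the central anomaly arising when commuting $y^{J_0}$ past $\widetilde{j}$ and yields $\nabla \widetilde{\varphi}_M = 0$. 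Equivariance of $\nabla$ in (d) is then a direct computation: $\widetilde{\zeta}$ is quasi-modular of weight one with a known inhomogeneous term proportional to $\alpha$, the differentials $d\tau$ and $d\alpha$ transform in the standard way under the Jacobi group, and the conjugated fields transform by the change-of-coordinates rule built into the bracket $Y[-,\mathbf{t}]$; the inhomogeneous pieces recombine into the cocycles $\exp(\tfrac{C}{6} 2\pi i (m^2 \tau + 2m\alpha))$ and $\exp(-2\pi i \tfrac{C}{6} \tfrac{c\alpha^2}{c\tau+d})$ that appear in (c).

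For (c), the translations $(m,n) \in \mathbb{Z}^2$ are handled by spectral flow: $\alpha \mapsto \alpha + n$ acts by the central phase $e^{2\pi i n J_0}$, while $\alpha \mapsto \alpha + m\tau$ is implemented by the spectral-flow automorphism $\sigma_m$ of $V$ which conjugates $L_0 \mapsto L_0 + m J_0 + m^2 C/6$ and $J_0 \mapsto J_0 + mC/3$; the central shifts produce the stated prefactor, and by rationality $\{M_i \circ \sigma_m\}$ is a permutation of $\{M_i\}$. The $SL(2,\mathbb{Z})$-invariance is the main obstacle. Following Zhu, I would show that the finiteness hypotheses force the collection $\{\widetilde{\varphi}_{M_i}(v;\tau,\alpha)\}_i$, for $v$ in a finite generating set, to satisfy a holonomic system of linear ODEs in $\tau$ with $\alpha$ as a parameter, whose solution space is finite-dimensional and preserved by the modular action on $\mathbb{H} \times \mathbb{C}$; the multiplier $(c\tau+d)^{-\Delta} \exp(-2\pi i C c\alpha^2 / 6(c\tau+d))$ is forced by the weight $\Delta$ of $v$ and the quasi-modular anomaly of $\widetilde{h}, \widetilde{j}$. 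The hardest step will be establishing this holonomic system in the $N=2$/Jacobi setting uniformly in $\alpha$, which needs a super version of $C_2$-cofiniteness and careful control of the Jacobi-elliptic dependence; once available, $SL(2,\mathbb{Z})$-invariance follows and, combined with (d), gives that any modular translate of a flat section of $\nabla$ is itself flat, completing (c) and (d) simultaneously.
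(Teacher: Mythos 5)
Your proposal is correct in outline and for parts (a), (b) and (d) it follows essentially the same route as the paper: (a) is the cyclicity of the supertrace against $\mathbb{Z}$-equivariant sections (Proposition \ref{trace.is.cb} and Remark \ref{rem:bdrules}); (b) comes from quasi-periodic sections built from a super Weierstrass $\zeta$ whose failure of equivariance inserts $L_0$ and $J_0$, i.e.\ $q\partial_q$ and $y\partial_y$ (Theorem \ref{thm:connection.1}), with the normalisation $y^{C/6}$ produced by the central term in the exponential change-of-coordinates formula \eqref{eq:10.5.1} for the current --- exactly the ``anomaly'' you identify; and (d) is the explicit quasi-modular transformation of $\widetilde{\zeta}$ combined with Huang's formula for the fields, which makes the Jacobi cocycle appear in \eqref{eq:correction-z22}--\eqref{eq:correction-modular2}.

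The genuine divergence is in part (c). For the lattice $\mathbb{Z}^2$ you use spectral flow on $V$; the paper instead implements the same shift geometrically as the coordinate change $(t,\zeta)\mapsto(t,e^{2\pi i m t}\zeta)$ on the supercurve, which by Lemma \ref{lem:10.7} is conjugation by $e^{-\beta J_1}\gamma^{L_0-J_0}$ --- the two are equivalent, but the paper's version also explains the extra $\sigma^{-1}$ acting on non-primary insertions, which your spectral-flow phrasing would need to recover. For $SL(2,\mathbb{Z})$-invariance of the span you propose carrying out Zhu's holonomic-ODE argument in the Jacobi setting; the paper does \emph{not} do this: it imports the invariance statement from Krauel--Mason \cite{krauel} and only remarks (Remark \ref{rem:9.12}) that the geometric method would work, contingent on finite-dimensionality of the space of superelliptic conformal blocks and on the trace functions spanning it --- neither of which is proved in the paper. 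The ``super version of $C_2$-cofiniteness and careful control of the Jacobi-elliptic dependence'' that you flag as the hard step is precisely what the appendix develops (charge cofiniteness, Definition \ref{defn:c2-cofinite-super}, and the Noetherian ring $R$ of quasi-Jacobi forms), but the appendix applies it only to prove \emph{convergence} of the trace functions, not finite-dimensionality of the solution space nor that the $\widetilde\varphi_{M_i}$ form a basis. So your route for (c) is more self-contained in spirit but leaves open the same two substantive lemmas the paper sidesteps by citation; as written it is a program rather than a proof of (c).
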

\end{nolabel}
\begin{nolabel}\label{no:1.3as}
This theorem can be interpreted as follows: the spaces of conformal blocks form an $SL(2, \mathbb{Z}) \ltimes \mathbb{Z}^2$-equivariant vector bundle with flat connection on $\mathbb{H} \times \mathbb{C}$ (with coordinates $\tau, \alpha$). This can be identified with a left $\cD$-module on the orbifold quotient $\mathbb{H} \times \mathbb{C} \git SL(2, \mathbb{Z}) \ltimes \mathbb{Z}^2$. The conformal blocks $\widetilde{\varphi}_M$ associated to each irreducible $V$-module $M$ are flat sections.  
\end{nolabel}
\begin{nolabel}\label{no:1-warning1}
In the setup of \cite{krauel} modular invariance is proved by a careful computation and is based upon the work of Miyamoto \cite{miyamoto} and \cite{Li-twisted}. Our present result is based on the transformation properties of super-fields under coordinate changes.
\end{nolabel}
\begin{nolabel}\label{no:added-appendix}
This article is divided in two parts. The main body of the article is algebro-geometrically oriented and we do not deal with convergence issues. Trace functions are assumed to be convergent therefore giving rise to holomorphic sections of certain bundles over supercurves. In the appendix we treat these trace functions as formal power series and prove their convergence properties under certain finiteness conditions on the vertex algebra. Readers more acquainted with the theory of vertex operator algebras and their relation to modular forms might want to read the appendix first referring to the main text solely for the definitions. The appendix is mostly independent of the main part of the article. Readers more interested in the algebro-geometric counterparts of vertex algebras, namely chiral algebras, and their relations to moduli spaces of supercurves can assume that characters are convergent series and read the main body of the article first. 
\end{nolabel}
\begin{nolabel}\label{no:appendix-result}
The appendix consists mainly of an adaptation of Zhu's proof of theorem \cite[Thm 4.4.1]{zhu} to the supersymmetric case. Under a finiteness condition that we call \emph{charge cofiniteness} (see \ref{defn:c2-cofinite-super})  that turns out to be equivalent  to the $C_2$-cofiniteness condition in the usual case, we have the following 
\begin{thm*}
Let $V$ be a $C_2$-cofinite conformal topologically twisted $N=2$ super vertex algebra and let $M$ be a positive energy module. Denote by $L_0$ (resp. $J_0$) the conformal weight operator (resp. the charge operator). For a vector $a \in V$ homogeneous with respect to both conformal weight and charge decompositions,  let $Y^M(a,z)$ be the corresponding field of the action of $V$ on $M$ and $o(a) \in \End(M)$ be the Fourier mode preserving conformal weight and charge. Then the formal power series
\[ \varphi_M(a) = \str_M o(a) q^{L_0} y^{J_0}, \qquad q = e^{2 \pi i \tau}, \: y = e^{2 \pi i \alpha}, \quad (\tau, \alpha) \in \mathbb{H} \times \mathbb{C}\]
converges absolutely and uniformly in a domain of $\mathbb{H} \times \mathbb{C}$ to a holomorphic function. 
\end{thm*}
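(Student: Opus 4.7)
The plan is to follow the strategy Zhu used in \cite{zhu} to prove convergence of ordinary trace functions, adapted to the supersymmetric setting with an additional Jacobi variable $y$. The core idea is to reduce the problem to showing that the family of supertraces $\{\varphi_M(a)\}_{a\in V}$, viewed as formal series in $q,y$, can be assembled into finitely many holomorphic solutions of a regular-singular system of linear PDEs on $\mathbb{H}\times\mathbb{C}$. Holomorphicity of the coefficients will come from $q$-expansions of Weierstrass $\widetilde{\zeta}$-type functions, and finiteness of the system will come from the charge cofiniteness hypothesis.

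First, I would derive a ``Zhu recursion'' for one-point supertraces. Using $[L_0,a_{(n)}]=-na_{(n)}+(L_0 a)_{(n)}$ type commutation relations together with the $J_0$-commutators $[J_0,a_{(n)}]=(J_0a)_{(n)}$, one computes the supertrace $\str_M \bigl(a_{(-n-2)}b\bigr)_0\, q^{L_0}y^{J_0}$ by moving an $a$-mode around the cyclic supertrace and collecting poles. The resulting identity expresses this supertrace as a linear combination of supertraces $\str_M (a_{(m)}b)_0 q^{L_0}y^{J_0}$ with $m>-n-2$, with coefficients that are expansions of Eisenstein--Jacobi series — essentially the $q,y$-expansions of the same Weierstrass $\widetilde{\zeta}$, $\wp$ and Jacobi-theta derivatives that appear in the connection of the main Theorem in item (b). Sign conventions for the supercommutator and for rearranging odd modes under the supertrace must be tracked carefully; this combinatorial bookkeeping is the main technical obstacle.

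Next, I would use charge cofiniteness. By assumption $V/C_2(V)$ (in the $N_W=1$ SUSY sense of \ref{defn:c2-cofinite-super}) is finite dimensional as a super vector space preserving the bigrading by $(L_0,J_0)$. Iterating the recursion from the previous step, every formal series $\varphi_M(a)$ can be written as a $\mathbb{C}[[q,y,y^{-1}]]$-linear combination, with coefficients given by convergent Eisenstein--Jacobi-type series, of finitely many ``base'' series $\varphi_M(a_1),\dots,\varphi_M(a_N)$ corresponding to a fixed lift of a homogeneous basis of $V/C_2(V)$. The same recursion, applied to $\partial_\tau\varphi_M$ and $\partial_\alpha\varphi_M$ (which by direct differentiation of $q^{L_0}y^{J_0}$ insert $L_0$ and $J_0$ into the supertrace, and which can in turn be rewritten via modes of the superconformal vectors $h,j$), produces a closed holonomic system
\[
\partial_\tau\Phi = A(\tau,\alpha)\,\Phi,\qquad \partial_\alpha\Phi = B(\tau,\alpha)\,\Phi,
\]
where $\Phi=(\varphi_M(a_1),\dots,\varphi_M(a_N))^{T}$ and $A,B$ have entries that are holomorphic on $\mathbb{H}\times\mathbb{C}$ away from the lattice points in $\alpha$.

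Finally, I would conclude convergence. The formal series $\Phi$ is uniquely determined by its initial terms (the lowest-conformal-weight subspace of $M$, which is finite dimensional by the positive-energy hypothesis), and satisfies a system with meromorphic coefficients having regular singularities at $\alpha\in\mathbb{Z}+\tau\mathbb{Z}$ and at $q=0$. Standard Frobenius/holomorphic ODE theory in several variables then gives a unique holomorphic solution on a domain of the form $\{|q|<\varepsilon,\,\alpha\in U\}$ with prescribed initial data, and by uniqueness this solution agrees term-by-term with the formal supertrace series. Absolute and uniform convergence on compact subsets of $\mathbb{H}\times\mathbb{C}$ (minus the lattice of singularities in $\alpha$) follows, and this convergence is propagated to the full strip by the modular/Jacobi transformation arguments from the main body of the paper. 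The hardest step is the careful super-analogue of Zhu's recursion in item two, ensuring that the Eisenstein--Jacobi coefficients that appear are precisely those compatible with the connection written in part (b) of the main Theorem.
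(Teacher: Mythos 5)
Your overall architecture is the right one and matches the paper's: a recursion coming from the vanishing of conformal blocks on meromorphic sections, charge cofiniteness to close the system, and standard ODE theory to conclude. But there is a genuine gap at the step where you assert that iterating the recursion expresses every $\varphi_M(a)$ over ``convergent Eisenstein--Jacobi-type series'' in finitely many base traces and yields a closed holonomic system. In Zhu's original argument this closure works because the coefficients land in the quasi-modular ring $\mathbb{C}[e_2,e_4,e_6]$, which is Noetherian; the Noetherian property is precisely what forces the sequences $(L_{[-2]}+J_{[-2]})^i a$ and $(J_{[-1]})^i a$ to satisfy finite-order recurrences, hence finite-order differential equations. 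In the Jacobi setting the naive analogue fails: the ring of Jacobi forms of all weights and indices is not finitely generated, so ``Eisenstein--Jacobi-type series'' is not a ring you can run the Noetherian argument over. The paper fixes this by working over the ring $R=\mathbb{C}[\wp_1,\wp_2,\wp_3,\wp_4,e_2,e_4,e_6]$ of quasi-Jacobi forms in the sense of Libgober, which \emph{is} a finitely generated polynomial ring, and by proving (Prop.\ \ref{prop:a.13}) that the Taylor coefficients of the quotients $\phi(\tau,t+\alpha)/\phi(\tau,t)$ of weak Jacobi forms $\phi\in J'_{-2m,m}$ lie in $R$. Without identifying this coefficient ring your finiteness claim is unsupported.

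Related to this, your recursion as described does not actually close for vectors of general charge. When you cycle a mode of a vector $a$ of charge $c\neq 0,-1$ around the supertrace against $y^{J_0}$, you pick up a factor $y^{c}$ per wrap, and the resulting series is quasi-periodic with a nontrivial multiplier; geometrically, the $\wp$- and $\widetilde\zeta$-type sections only descend to the elliptic supercurve for charges $0$ and $-1$. The relations \eqref{eq:higher-charge-kernel} for charge $2m$ vectors require the meromorphic sections built from index-$m$ Jacobi form quotients (Lemma in \ref{no:other-charges-sections}), including the half-integral-index form $\phi_{-1,1/2}=\theta/\theta'$ for odd charges. This is the main new input of the appendix beyond Zhu, and your proposal does not supply it. A smaller omission: the reduction from general $a$ to primary $a$ is more delicate than in the Virasoro case because the topological algebra has two families of creation operators; the paper handles this with the shifted Virasoro $L'_{[i]}=L_{[i]}-(i+1)J_{[i]}$ and separate eliminations of the $L'$- and $J$-modes.
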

The proof of this theorem follows Zhu's approach and borrows from the theory of
Jacobi modular forms as developed by Eichler and Zagier in \cite{zagier-jacobi} and the
quasi-Jacobi forms of Libgober \cite{libgober2009elliptic} as well as general results from the theory of differential equations. Although the geometric constructions developed in the main body of the article are not strictly needed in the appendix, their importance is evident in understanding the kernel of the trace functions as the quotients of Jacobi modular forms of non-vanishing index that are used (see Prop. \ref{prop:a.13})  turn out to be meromorphic sections of the corresponding chiral algebras.   
\end{nolabel}
\begin{nolabel}
In section \ref{sec:supermanifolds} we collect some remarks on the theory of supermanifolds and schemes with particular attention to supercurves. This section is for the sake of quick reference and self-containedness. It can  be skipped on a first reading. In section \ref{sec:vertex-algebras} we collect some results on vertex algebras. In section \ref{sec:susy-vertex-algebras} we collect the analogous results on SUSY vertex algebras. In section \ref{sec:chiral-algebras} we briefly mention the algebro--geometric counterparts of chiral algebras and SUSY chiral algebras attached to (SUSY) vertex algebras on algebraic (super) curves. The reader more familiar with the theory of vertex operator algebras as opposed to their algebro-geometric counterparts can safely skip this section on a first reading. In section \ref{sec:elliptic-super} we construct the versal families of elliptic super-curves over which our chiral algebras will be defined. We describe the action of the modular and Jacobi groups on these families and explain the coordinate description of sections of chiral algebras on these elliptic super-curves. In section \ref{sec:conformal-blocks} we recall the definition of conformal blocks and show that indeed the supertrace functions give rise to conformal blocks. In section \ref{sec:connection} we show that the trace functions satisfy a first order system of partial differential equations with respect to the modular parameters $\tau$ and $\alpha$. In section \ref{sec:modularity} we show that these differential equations are equivariant with respect to the action of the Jacobi group.  In section \ref{sec:examples} we give an example on the SUSY vertex algebra associated to a  unimodular lattice and briefly mention the example of elliptic genera of Calabi-Yau manifolds and the chiral de Rham complex. 
\label{no:plan-of-article}
\end{nolabel}

\emph{Acknowledgements:} JVE would like to thank IMPA, where he was a postdoc while the bulk of this work was carried out, as well as the IH\'{E}S and the Alexander von Humboldt Foundation for financial support. RH would like to thank Nathan Berkovits for illuminating discussions and the explanation  in  \ref{no:x.4.1}. He would also like to thank Yongchang Zhu for providing a printed copy of his Yale Ph.D. thesis. It goes without saying that this works borrows heavily from Zhu's seminal article. Both authors would like to thank Matt Krauel for kindly pointing out about some upcoming changes in the preprint \cite{krauel}, this prompted the addition of the appendix in this manuscript. 

\section{Elements of supergeometry} \label{sec:supermanifolds}
\begin{nolabel} For the theory of supermanifolds we refer the reader to \cite{manin3,vaintrob1,deligne2} and references therein. A \emph{superspace} is a locally ringed space $(X, \cO_X)$ where $\cO_X= \cO_{X,\bar0} \oplus \cO_{X, \bar1}$ is a sheaf of supercommutative rings on a topological space $X$ with local stalks. A \emph{morphism} of superspaces is a pair $(f, f^\sharp)$ where $f: X \rightarrow X'$ is a continuous map and $f^\sharp: \cO_{X'} \rightarrow f_* \cO_{X}$ is a graded morphism of sheaves of rings. 
We often abuse notation and denote the pair simply by $X$. 
We denote by $\cJ$ the sheaf of ideals $\cO_{X,\bar1} \oplus \cO_{X,\bar1}^2 \subset \cO_X$. We can always define the super-space $X_{\mathrm{rd}} = (X,\cO_X/\cJ)$, it is a \emph{purely even} ($\cO_{X_{\mathrm{rd}}} = \cO_{X_{\mathrm{rd},\bar0}}$) subsuperspace of $X$. The category of superspaces has fibered products defined as in the non-super situation. In particular, given two morphisms $f: X \rightarrow Y$ and $f': X' \rightarrow Y$, we have a superspace $X \times_Y X'$ defined as a locally closed subsuperspace of $X\times X'$. 
\end{nolabel}
\begin{nolabel} Let $(X,\cO_X)$ be a superspace. The sheaf $\cO_X$ admits a filtration by powers of the ideal $\cJ$. Let $\gr X = (X,\gr\cO_X := \oplus_{i \geq 0} \cJ^i/\cJ^{i+1})$ be the associated superspace. The map $\gr_0 \cO_X \hookrightarrow \gr \cO_X$ induces a morphism $\gr X \rightarrow X_{\mathrm{rd}}$, and $X$ is called \emph{split} if it is isomorphic to $\gr X$. 
\end{nolabel}
\begin{nolabel} A \emph{superscheme} is a superspace $X$ such that the locally ringed space $(X,\cO_{X, \bar0})$ is a scheme and $\cO_{X, \bar1}$ is a coherent sheaf of $\cO_{X, \bar0}$-modules. Morphisms of superschemes are pairs $(f,f^\sharp)$ as above such that $(f, f^\sharp_{\bar 0})$ is a morphism of schemes (here $f^\sharp_{\bar 0}$ is the even part of $f^\sharp$).  

A module $M$ over a supercommutative ring $R$ is \emph{flat} if the functor $\otimes M$ is exact in the category $R$-mod. Let $f:X \rightarrow Y$ be a morphism of superschemes and $\cF$ a $\cO_X$-module. Let $y \in Y$ be a point in the underlying topological space $Y$, so that the stalk $\cO_{Y,y}$ of $\cO_Y$ at $y$ is a local supercommutative ring. Then $\cF$ is \emph{flat} over $y$ if, for every $x \in f^{-1}(y) \subset X$, the fiber $\cF_x = \varinjlim_{x \in U} \cF(U)$ is a flat $\cO_{Y,y}$-module. $\cF$ is flat over $Y$ if it is flat over every point $y \in Y$. The morphism $f:X \rightarrow Y$ is flat if $\cO_X$ is flat over $Y$.
\end{nolabel}
\begin{nolabel} Let $f:X \rightarrow Y$ be a morphism of superschemes. We have the corresponding adjoint functors $f^*$,  $f_*$ between the categories of $\cO_X$-modules and $\cO_Y$-modules defined as in the non-super case. 
Let $\Delta:X \rightarrow X \times_Y X$ be the relative diagonal and let $\cI$ be the sheaf of ideals corresponding to $\Delta(X)$. Define the sheaf $\Omega_{X/Y} = \Delta^*(\cI/\cI^2)$. A superscheme $X$ is called \emph{irreducible} (resp. \emph{finite type, separated})  if $X_{rd}$ is irreducible (resp. finite type, separated). Let $X$ be an irreducible superscheme of finite type over an algebraically closed field $k$. We say that $X$ is \emph{smooth} of superdimension $n|m$ if the sheaf $\Omega_X := \Omega_{X/k}$ is locally free of rank $n|m$. In the split situation this is equivalent to $X_\mathrm{rd}$ being a smooth scheme over $k$ of dimension $n$ and $\cJ/\cJ^2$ being a locally free $\cO_X/\cJ$-module of rank $m$.

A morphism $f: X \rightarrow Y$ between irreducible superschemes is called \emph{smooth} of relative dimension $n|m$ if it is flat and if for each point $x \in X$ the module $\Omega_{X/Y} \otimes k(x)$ is a free $k(x)$-module of rank $n|m$, where $k(x)$ is the residue field of the local ring $\cO_{X,x}$. The morphism is \emph{\'etale} if it is smooth of relative dimension $0|0$. 
 
\end{nolabel}
\begin{nolabel}\label{sec:berezinian} Let $X$ be a smooth superscheme of dimension $n|m$ and $\Omega_{X}$ be its sheaf of differentials. The dual $\cT_X$ of $\Omega_X$ is called the \emph{tangent sheaf}. It is a locally free sheaf of rank $n|m$. 
We let\cite{deligne2}
\[
\omega_X = \Ber \Omega_X := \cExt^n_{\Sym^* \cT_X} (\cO_X, \Sym^* \cT_X),
\]
be the \emph{Berezinian} bundle of $X$. It is a locally free $\cO_X$-module of rank $1|0$ if $m$ is even and of rank $0|1$ if $m$ is odd. 
Given a morphism $f:X \rightarrow Y$ we similarly define $\omega_{X/Y} := \Ber \Omega_{X/Y}$. 
\end{nolabel}
\begin{nolabel} We will be interested in smooth superschemes over $k=\mathbb{C}$.  In this situation it is useful to consider the analytic topology on $X$. Let $X$ be such a scheme of dimension $n|m$. For each point $x \in X$ in such a superscheme, there exists an open neighborhood $x \in U$ in the analytic topology such that the superspace $(U, \cO_{X}|_U)$ is isomorphic to the superspace $\mathbb{C}^{n|m}$ constructed as follows. The underlying topological space is $\mathbb{C}^n$ with its usual topology and for each $U \subset \mathbb{C}^n$ open, sections of the structure sheaf are defined to be the free supercommutative ring over the algebra $\mathbb{C}(U)$ of holomorphic functions on $U$, generated by an odd vector space of dimension $m$. Choosing holomorphic coordinates $\{t_i\}$, $i = 1, \dots, n$ in $U$ and a basis $\{\zeta_j\}$, $j=1,\dots, m$ for this vector space, we write this algebra as $\mathbb{C}(t_i)[\zeta_j]$ where the $t_i$ are commutative variables and $\zeta_j$ are anticommutative variables. We call the set $\{t_i, \zeta_j\}$ \emph{coordinates} near $x$.

In the algebraic setting, we consider $\mathbb{A}_\mathbb{C}^{n|m} := \Spec \mathbb{C}[t_i, \dots, z_n, \zeta_1, \dots, \zeta_m]$ and for a smooth scheme of dimension $n|m$ over $\mathbb{C}$,  an \'etale map $x \in U \rightarrow \mathbb{A}^{n|m}$ is called a set of coordinates near $x$. 
\label{no:super-affine}

\end{nolabel}
\begin{nolabel} Let $X$ be a smooth superscheme over $k$ and $\cT_X$ be its tangent sheaf. We define the sheaf of differential operators $\cD_X$ on $\cT_X$ as the filtered sheaf of $Z/2\mathbb{Z}$ graded rings as follows. $\cF^0 \cD_X = \cO_X\subset \End_k(\cO_X)$ (the inclusion is by left multiplication) and inductively define 
\[ \cF^{i+1}\cD_X = \left\{ S \in \End_k(\cO_X) | [S, \cO_X] \subset \cF^i \cD_X\right\}. \] The $\mathbb{Z}/2\mathbb{Z}$ grading is inherited from $\End_k(\cO_X)$. By $\cD_X$-mod we mean the category of right $\cD_X$-modules quasi-coherent as $\cO_X$-modules. We have $\omega_X \in \cD_X$-mod naturally via the Lie derivative. Similarly, by definition $\cO_X$ is a left $\cD_X$-module. The functor $\otimes \omega_X$ induces an equivalence of categories between the category of left $\cD_X$-modules and $\cD_X$-mod. If $\dim X = n|2m+1$ then this functor is \emph{odd}, namely it exchanges the $\mathbb{Z}/2\mathbb{Z}$-gradings in both categories (see \ref{sec:duality}).

For a morphism of smooth superschemes $f: X \rightarrow Y$ we have the natural functors $f_*, f_*, f_!, f^!$ defined as in the non-super situation. In particular, for the locally closed embedding $\iota: X_\mathrm{rd} \hookrightarrow X$, we have an equivalence of categories: $\iota^!: \cD_{X}$-mod $\rightarrow \cD_{X_\mathrm{rd}}$-mod \cite{penkov}. 

Given a smooth morphism $\pi:X \rightarrow S$ between smooth superschemes we have the sheaf of relative differential operators $\cD_{X/S}$ defined as follows. We first define vertical vector fields as usual by
\[ 0 \rightarrow \cT_{X/S} \rightarrow \cT_X \rightarrow \pi^* \cT_S \rightarrow 0. \]  We define the sheaf of relative differential operators as the filtered sheaf of rings $\cD_{X/S}$ on $X$ defined by $\cF^0 \cD_X = \cO_X \subset \End_{\pi^{-1} \cO_S}(\cO_X)$,  and inductively by 
\[ \cF^{i+1} \cD_{X/S} = \left\{ S \in \End_{\pi^{-1} \cO_S}(\cO_X) \: | \: [S, \cO_X] \subset \cF^i \cD_{X/S} \right\}, \]
\label{no:1.diff}
\end{nolabel}
\begin{nolabel}\label{sec:duality} Let $X$ be a superscheme of finite type over a field $k$, then there exists a unique (up to quasi-isomorphism) \emph{dualizing} complex $K_X \in D^b(X)$ \cite[\S 1.5.1]{vaintrob1}, that is the functor \[ M \rightarrow \mathrm{Hom} (M, K) \] is an exact anti-equivalence inducing natural isomorphisms $H^i(M) \simeq \mathrm{Ext}^{-i} (M, K_X)^*$. If $X$ is \emph{smooth} then $K_X$ is concentrated in one degree so it is an $\cO_X$-module up to a shift, and is just the Berezinian bundle $\omega_X := \mathrm{Ber }\, \Omega^1_X$ defined in \ref{sec:berezinian}. If $\dim X = n | m$, the \emph{natural} dualizing complex is given by $\omega_X[n]$ if $m$ is even and by $\Pi \omega_X[n]$ if $m$ is odd, where $\Pi$ is the change of parity functor.
\label{no:1.1}
\end{nolabel}
\begin{nolabel}
Let $f:X \rightarrow Y$ be a smooth proper morphism of relative dimension $n|m$ between smooth superschemes. We let $\omega_f = \Pi^m \omega_X \otimes f^* \omega_Y^* [n-m]$ be the relative dualizing sheaf. For a coherent $\cO_X$-module $\cF$ and $\cO_Y$-module $\cE$ (more generally for objects of the appropriate derived categories) we have a functorial isomorphism (Grothendieck-Verdier duality)
\[ Rf_* R\cHom (\cF, Lf^* (\cE) \otimes \omega_f) \simeq R \cHom(Rf_* \cF, \cE).\] 
\label{no:trace}
\end{nolabel}
\begin{nolabel}  Let $i:Y \hookrightarrow X$ be a closed sub-superscheme defined by a sheaf of ideals $\cI$. We obtain a closed sub-scheme $Y_\mathrm{rd} \subset X_\mathrm{rd}$ and an open sub-superscheme $j:X\setminus Y \hookrightarrow X$ obtained as the restriction of $\cO_X$ to $X_\mathrm{rd} \setminus Y_\mathrm{rd}$. For an $\cO_X$-module $M$ we have the module $i^! M$ consisting of sections of $M$ supported in $Y$, that is, sections killed by powers of the ideal $\cI$ (notice that this definition only uses $Y_\mathrm{rd}$). The functor $i^!$ is the adjoint functor to $i_!$ defined in the usual way (for closed $Y$ it coincides with $i_*$). We have an exact triangle $i_! i^! M \rightarrow M \rightarrow j_* j^* M$ inducing a long exact sequence in (local) cohomology \[\dots \rightarrow H^k(X,M) \rightarrow H^k(X\setminus Y, j^*M) \xrightarrow{\partial} H^{k+1}_Y (X,M) \rightarrow \dots , \] where as usual $H^k_Y$ denotes the k-th derived functor of the functor of sections supported in $Y$ \cite[Expos\'e I]{sga2}.
\label{no:1.2}
\end{nolabel}
\begin{nolabel}
We now apply the local duality theorem together with the previous long exact sequence. That is, we consider the case $X = \Spec A$ where $A$ is a local regular supercommutative ring of dimension $1|q$ with maximal ideal $\fm$ (we will be interested in the local ring at a closed point of a supercurve) and $Y = \Spec A \slash \fm$ its closed point (notice that $Y$ is of dimension $0|0$ hence it is not a divisor unless $q=0$). We obtain a map 
\[ \res_Y: H^0(X \setminus Y, \omega_X) \xrightarrow{\partial}H^1_Y(X,\omega_X) \rightarrow k, \] where the last map is the usual residue map \cite{hartshornelocal}. 

Choosing local parameters $\bz=(z;\theta^i)$ $i=1,\dots,q$ for $A$ so that $\fm$ is generated by $(z;\theta^i)$, we obtain a trivialization $[d\bz]:=[dzd\theta^1 \dots d\theta^q]$ of $\omega_X$. We use this trivialization to identify $H^0(X \setminus Y, \omega_X) \simeq H^0(X\setminus Y, \cO_X) \simeq A_\fm \simeq  k( (\bz))$ and the residue map takes the form  (see for example \cite[II.6.4]{manin3}) \[ \res_\bz f(\bz) =\Bigl\{ \text{coef. of } z^{-1}\theta^1 \dots \theta^q \Bigr\} f(\bz). \]
\label{no:residue}
\end{nolabel}
\begin{nolabel}
In the relative situation, letting $\pi: X \rightarrow S$ be a family of supercurves, so that $\pi$ is a proper map of smooth superschemes of relative dimension $1|n$ with connected fibers. The duality theorem of \ref{no:trace} gives rise to the trace isomorphism
\[ R^1 \pi_* \omega_{X/S} \simeq \cO_S, \]
\label{no:trace2}
where $\omega_{X/S} = \Pi^n \Ber \Omega^1_{X/S} [1]$ is the Berezinian of the sheaf of relative differentials, isomorphic to the relative dualizing sheaf $\omega_\pi$ of \ref{no:trace}. 
\end{nolabel}
\begin{nolabel} 
A family of $N=n$ supercurves is a smooth  morphism $f: X \rightarrow S$ of relative dimension $1|n$ with connected fibers. Its reduced space $X_{rd}$ is a family of usual curves over $S_{rd}$. The \emph{genus} of $X/S$ is the genus of $X_{rd}/S_{rd}$. In this article we will be concerned with \emph{elliptic} supercurves, by which we mean those with genus $g = 1$. 

There are other flavours of supercurves. We now briefly recall for example the $\text{SUSY}_N$ curves defined by Manin in \cite{manin2} (also called \emph{superconformal supercurves} by Rabin et. al in \cite{rabin1} and references therein). A (family of) $\text{SUSY}_N$ supercurve is a curve $X/S$ of relative dimension $1|N$ together with a locally free locally direct subsheaf $\cT' \subset \cT_{X/S} = \cT$ of rank $0|N$ for which the Frobenius form
\[ \varphi: \wedge^2 \cT' \rightarrow \cT / \cT', \qquad  \tau \wedge \tau' \mapsto [\tau,\tau' ] + \cT', \]
is non-degenerate and split, that is, locally has an isotropic direct subsheaf of maximal rank. In this case the sheaf of relative differential operators $\cO_X \rightarrow \cO_X$, relative to $\cO_S$ as in \ref{no:1.diff}, is generated by multiplication by $\cO_X$ and sections of $\cT'$ since the Frobenius form is non-degenerate,  

In the case $N=1$ we can choose local coordinates $\bz=(z,\theta)$ on $X/S$ so that a local trivialization of $\cT'$ is given by $D = D_\bz = \partial_\theta + \theta \partial_z$, In fact $D^2 = \partial_z$ and $D - \theta \cdot D^2 = \partial_\theta$ obtaining thus all generators. In the $N=2$ case we can choose local coordinates $(z,\theta^\pm)$ so that $\cT'$ is trivialized by $D_\pm = \partial_{\theta^\pm} + \theta^{\mp} \partial_z$. We have $[D^+, D^-] = 2 \partial_z$.  
\label{no:susy}
\end{nolabel}
\begin{nolabel}
Below we will need the following analogues of the algebraic groups $\mathbb{G}_a$ and $\mathbb{G}_m$ \cite[2.7.3]{manin3}. Consider $\mathbb{G}_a^{1|1} = \mathbb{A}^{1|1}$ with the multiplication law given by 
\[ (t,\zeta) \cdot (t', \zeta') = (t + t' + \zeta \zeta', \zeta + \zeta'). \]
It is a non-commutative smooth supergroup. It has a SUSY structure as in \ref{no:susy} given by $D = \partial_\zeta + \zeta \partial_t$. We have the supergroup $\mathbb{G}_m^{1|1}$ which is the open sub-superspace of $\mathbb{A}^{1|1}$ obtained by restriction to $\mathbb{G}_m \subset \mathbb{A}^1$ (it represents the functor $R \rightarrow R^*$ of invertible elements in a super commutative ring \cite[2.8.1]{manin3}). Upon choosing coordinates $(x,\theta)$ the multiplication law of $\mathbb{G}_m$ is given by
\[ (x,\theta) \cdot (x', \theta') = (x x' + \theta \theta', x \theta' + x' \theta). \]
The exponential map 
\begin{equation}\label{eq:exponential}
(t, \zeta) \mapsto \left( x = e^{2 \pi i t}, \theta = e^{2 \pi i t} \zeta \right)
\end{equation}
is a surjective group homomorphism sending the SUSY structure of $\mathbb{G}_a^{1|1}$ to $D_m = x \partial_\theta + \theta \partial_x$. 

We stress however that we will not use the group structure nor the SUSY structure on these super-schemes $\mathbb{G}_a^{1|1}$ and $\mathbb{G}_m^{1|1}$. We will only need their super-scheme structure and as a short notation for $\mathbb{C}^{1|1}$ and $\mathbb{C}^* \times \mathbb{C}^{0|1}$.
\label{no:supergroups}
\end{nolabel}

\section{Preliminaries on vertex algebras} \label{sec:vertex-algebras}
\begin{nolabel} A {\em vertex algebra} consists of a $k$-vector space $V$, a vector (the vacuum vector) $\vac \in V$, an operator $\partial = \End_k(V)$ and an operation
\[
\mu:V \otimes V \rightarrow V \otimes k((z)), \quad a \otimes b \mapsto \mu(a\otimes b) \equiv Y(a, z)b
\]
called the \emph{state-field correspondence} satisfying the following axioms
\begin{enumerate}
\item $\mu (V \otimes \vac) \subset V\otimes k[ [z]]$,  $\mu (a \otimes \vac)|_{z=0} = a $ and $\mu \left( \vac \otimes \cdot  \right) = \id_V \otimes 1$.
\item $[\partial, Y(a,z)] = \partial_z Y(a,z)$. 
\item $(z-w)^n[Y(a,z), Y(b,w)] = 0$ for $n \gg 0$. 
\end{enumerate}
Morphisms of vertex algebras are linear maps $\varphi \in \Hom_k (V, V')$ such that $\varphi \vac = \vac'$ and $\mu' \circ \varphi \otimes \varphi = \varphi \circ \mu$. Given a vertex algebra $V$, a $V$-module $M$ is the data of a $k$-vector space $M$ together with an operation
\[
\mu^M: V \otimes M \rightarrow M \otimes k( (z)), \quad \mu^M(a \otimes m)= Y^M(a,z)m
\]
satisfying $Y^M(\vac, z) = \id_M$ and for all $a,b \in V, \: m \in M$, we have
\[
Y^M(a,z) Y^M(b,w) m = Y^M(Y(a,z-w)b,w)m
\]
as elements of $M \otimes k( (z))( (w))$, where in the RHS we use the canonical map $k( (z-w))( (w)) \hookrightarrow k( (z))( (w))$ of expanding $(z-w)^n$ in positive powers of $w/z$. The fact that $V$ is naturally a $V$-module with $Y=Y^V$ is known as associativity. A submodule $N \subset M$ is a vector subspace stable under $Y^M(\cdot,z)$. The quotient space $M/N$ is naturally a $V$-module. A submodule $I \subset V$ is called an \emph{ideal}, the quotient $V/I$ is naturally a vertex algebra. 

Given a vertex algebra $V$ and its module $M$, we write the {\em fields} as $Y(a,z) = \sum_{n \in \mathbb{Z}} a_{(n)} z^{-1-n}$ and $Y^M(a,z)= \sum_{n \in \mathbb{Z}} a^M_{(n)} z^{-1-n}$ where, $a_{(n)} \in \End_k (V)$ and $a_{(n)}^M \in \End_k(M)$. 
The $\mathbb{Z}/2\mathbb{Z}$-versions of these definitions are straightforward generalizations.   
\label{no:vertex-algebra-def}
\end{nolabel}
\begin{nolabel} Consider the algebraic group $\mathbb{G}_a$ and its corresponding Hopf algebra: the polynomial ring $k[\partial]$. The comultiplication makes $k[\partial, \partial']$ into a $k[\partial]$-module, in particular it is a $k[\partial, \partial']-k[\partial]$ bimodule. Let $V \in k[\partial]$-mod, we obtain a $k[\partial, \partial']$-module $\Delta_* V :=k[\partial, \partial'] \otimes_{k[\partial]} V$. We also have the $k[\partial, \partial']$-module $V \otimes_k V$ by the isomorphism $k[\partial]\otimes_k k[\partial] \simeq k[\partial, \partial']$. A \emph{Lie conformal algebra} is an operation 
\[ \mu \in \Hom_{k[\partial, \partial']-mod} \left( V \otimes V,  \Delta_* V \right), \]
that satisfies skew-symmetry and the Jacobi axiom. The skew-symmetry condition is straightforward to write as $\mu \circ \sigma = - \sigma \circ \mu$ where $\sigma$ is the obvious isomorphism on $k[\partial, \partial']$-mod given by exchanging the two factors. The Jacobi condition is an identity in $k[\partial, \partial', \partial'']$-mod. To write it down it is customary in the literature to break the symmetry by using the isomorphism of vector spaces $\Delta_* V \simeq k[\partial]\otimes_k V \simeq k[-\lambda]\otimes V$ and write the operation $\mu$ as $[_\lambda ]: V \otimes V \rightarrow k[\lambda]\otimes V$. The Jacobi condition reads\[ [ a_\gamma [b_\lambda c]] = [ [a_\gamma b]_{\gamma+\lambda} c] + [ b_\lambda [a_\gamma c]] \in V \otimes k[\gamma,\lambda]. \]

A Lie conformal algebra with $\mu =0$ is called Abelian. A $k[\partial]$-submodule $W$ stable under $\mu$ (resp. $\mu W \otimes V \subset \Delta_* W$)  is called a Lie conformal subalgebra (resp. an ideal). A submodule $W$ with $\mu W \otimes V = 0$ is called an Abelian ideal. 

\label{no:conformaldefinition}
\end{nolabel}
\begin{examples}\hfill
\begin{enumerate}
\item  Most examples we will work will be of Lie conformal algebras that are of finite type over $k[\partial]$. Consider first the $k[\partial]$-module \[ \vir := k[\partial]/\partial k[\partial] \oplus  k[\partial] \overset{\text{Vect}}{\simeq} k \oplus k[\partial]. \] We will denote by $L$ the generator of the free module and $C$ the generator of the first summand (so that $\partial C = 0$). The only non-trivial bracket amongst generators is
\begin{equation}
[L_\lambda L] = (\partial + 2\lambda) L + \frac{\lambda^3}{12} C. 
\label{eq:virasoro1}
\end{equation}
 Notice that $C$ is central in this algebra.  This algebra is called the \emph{Virasoro} Lie conformal algebra
\item Let $\fg$ be a finite dimensional Lie algebra and let $(,) \in \Sym^2 \fg^*$ be an invariant bilinear form. We will consider the $k[\partial]$-module
\[ \cur(\fg) := k[\partial]/\partial k[\partial] \oplus k[\partial]\otimes_k \fg
\overset{\text{Vect}}{\simeq} k \oplus k[\partial]\otimes_k \fg, \] with non-trivial brackets amongst generators
\[ [a_\lambda b] = [a,b] + \lambda (a,b) K, \qquad a,b \in \fg,  \]where $K$ is the central element that generates $k[\partial]/\partial k[\partial]$. This algebra is called the \emph{current or affine Lie conformal algebra}. 
\item A particular example of b) is when $\fg$ is Abelian. In this case $\fg$ is just a finite dimensional vector space $V$ with a symmetric bilinear form $(,)$. The non-trivial brackets amongst generators now read:
\[ [v_\lambda w] = \lambda (v,w) K, \qquad v,w \in V. \]
This algebra is called the \emph{Heisenberg} or \emph{free Bosons} Lie conformal algebra.
\item Now let $V$ be a finite dimensional vector space and let $\langle,\rangle$ be a \textbf{skew-symmetric} bilinear form. We consider the $k[\partial]$-module 
\[ F(V) := k[\partial]/\partial k[\partial] \oplus k[\partial]\otimes_k V 
\overset{\text{Vect}}{\simeq} k \oplus k[\partial]\otimes_k V, \] with non-trivial brackets amongst generators
\[ [v_\lambda w] = \langle v, w\rangle K.\]
This algebra is known as the free (Bosonic) ghost system. In addition to replacement of a symmetric form by a skew-symmetric one, notice that this example is unlike the previous one in that the factor of $\lambda$ does not appear. For this Lie conformal algebra to be nontrivial the dimension of $V$ needs to be at least $2$. 
\item From the supersymmetric point of view, the previous example is a special case of the \emph{free Fermions} construction. to explain this we need a $\mathbb{Z}/2\mathbb{Z}$-graded version of the definition in \ref{no:conformaldefinition}.

Let $V$ be a finite dimensional super vector space and let $\langle, \rangle$ be a \emph{super-skew-symmetric} bilinear form, i.e., a bilinear form $\langle, \rangle$ such that $\langle V_{\bar{0}} , V_{\bar{1}} \rangle = \langle V_{\bar{1}}, V_{\bar{0}} \rangle = 0$ and the restrictions of $\langle, \rangle$ to $V_{\bar{0}}$ and to $V_{\bar{1}}$ are respectively skew-symmetric and symmetric. Then we use $\langle, \rangle$ to form $F(V)$ as above to obtain a super Lie conformal algebra. The particular case when $V$ is a purely odd vector space (therefore $\langle, \rangle$ is symmetric) is commonly known as the Lie conformal algebra of \emph{free Fermions}. Notice that in this case, the dimension of $V$ may be $1$ and still the algebra may not be Abelian. For this reason the example d) above is referred to as \emph{even Fermions} in some literature. In fact that algebra goes under several other names too: \emph{symplectic Bosons}, \emph{Weyl Lie conformal algebra}, and the $\beta \gamma$-system.  

Remaining for now in the case when $V$ is purely odd and can furthermore be written as $V_- \oplus V_+$ where $V_+$ and $V_-$ are each isotropic for $\langle,\rangle$, and are non-degenerately paired $V_- \otimes V_+ \rightarrow k$, the corresponding Lie conformal algebra $F(V)$ is commonly known as a \emph{Fermionic ghost system}, or a $bc$-system. We will stick to the supersymmetric point of view and refer to all these examples as \emph{free Fermions}. 
\item Consider the super $k[\partial]$-module $NS$ whose even part is $NS_{\bar{0}} := \vir$ and whose odd part is a free module $NS_{\bar{1}}:=k[\partial]$ with generator $G$. The only non-trivial brackets are \eqref{eq:virasoro1} and 
\begin{equation} [L_\lambda G] = \left( \partial + \frac{3}{2} \lambda \right) G, \qquad [G_\lambda G] = L + \frac{\lambda^2}{6} C, 
\label{eq:neveu}
\end{equation}
\item Lastly we define the \emph{topological} Lie conformal algebra. It is a sum of the free $k[\partial]$-supermodule of rank $2|2$ with even generators $L,J$, odd generators $Q, H$, and with a one dimensional centre generated by $C$. The brackets are
\begin{equation}
\begin{aligned}
		{[}{L}_\lambda {L}] &= (\partial + 2\lambda)
			{L}, & 
			{[}{L}_\lambda J] &= (\partial + \lambda)J +
			\frac{\lambda^2}{6}C, & 
			{[}{L}_\lambda Q] &= (\partial + 2\lambda) Q, \\
			{[}{L}_\lambda H] &= (\partial + \lambda) H, & 
			[H_\lambda Q] &= {L} - \lambda J +
			\frac{\lambda^2}{6} C. & [J_\lambda J] &= \frac{C}{3} \lambda,\\
[J_\lambda Q] &= Q, & [J_\lambda H] &= -H.
\end{aligned}
\label{eq:topological}
\end{equation}
\end{enumerate}
\label{ex:examples-conformal-algebras}
\end{examples}
\begin{nolabel}
A vertex algebra $V$ is a Lie conformal algebra with 
\[ [a_\lambda b] := \sum_{j \geq 0} \frac{\lambda^k}{k!} a_{(j)}b. \]
Conversely, given any Lie conformal algebra $V$ there exists a universal enveloping vertex algebra $\iota: V \hookrightarrow U(V)$ such that any other morphism of Lie conformal algebras $V \rightarrow W$ to a vertex algebra $W$ factors to a unique vertex algebra morphism from $U(V)$. A proper ideal of $V$ generates a proper ideal of $U(V)$. For $c \in k$ we will denote by $U^c(V)$ the quotient by the ideal generated by $(C-c)$ for $V$ in all the examples in \ref{ex:examples-conformal-algebras}. In particular, the vertex algebra $U^c(\vir)$ of Example \ref{ex:examples-conformal-algebras} a) will be called the \emph{Virasoro vertex algebra} of central charge $c$. In Example \ref{ex:examples-conformal-algebras} g) we will call $c$ also \emph{central charge} even though the corresponding Virasoro field has central charge $0$.  A vertex algebra $V$ is called \emph{conformal} if there exists a morphism $U^c(\vir) \rightarrow V$ -- therefore by the universal property of $U(\vir)$ a vector $L \in V$ satisfying \eqref{eq:virasoro1};  such that for  any $a \in V$ we have $[L_\lambda a] = \partial a + O(\lambda)$. This vector $L$ is usually denoted by $\omega \in V$. A vector $a$ in a conformal vertex algebra $V$ satisfying $[L_\lambda a ] = \partial a + \Delta \lambda a + \lambda^2 p$ for some $\Delta \in k$ and $p \in V \otimes k[\lambda]$ is said to be of \emph{conformal weight $\Delta$}. It is \emph{primary} if $p=0$. 

Expanding the field $L(z) := Y(L, z) = \sum_{n \in \mathbb{Z}} z^{-2-n} L_n$ (note the shifting in the power of $z$ and the difference between $L_n$ and $L_{(n)}$ as in \ref{no:vertex-algebra-def}) we obtain the commutation relations of the \emph{Virasoro Lie algebra}:
\begin{equation}
[L_m, L_n] = (m-n) L_{m+n} + \delta_{m,-n} \frac{m^3-m}{12}c, \qquad m,n \in \mathbb{Z},
\label{eq:virasoro-commutators}
\end{equation}
hence a conformal vertex algebra is naturally a representation of this Lie algebra with central charge $c$. It follows from the fact that $V$ is a $V$-module (the associativity in \ref{no:vertex-algebra-def}) that for any $a \in V$ we have
\begin{equation}
\left[ L_n, Y(a,z) \right] = \sum_{m \geq -1} \binom{n+1}{m+1} Y(L_m a, z)z^{n-m}, \qquad n \in \mathbb{Z}. 
\label{eq:quasi-conf-def}
\end{equation}
The Virasoro Lie algebra is a central extension of the algebra of polynomial vector fields on $S^1$, or the algebra of derivations of $k( (z))$\footnote{There is a difference in considering derivation of $k[z,z^{-1}]$ or the completed $k( (z))$ as we are considering here, but this difference is irrelevant for the purpose of this article.}. This algebra has an annihilation subalgebra consisting of those vector fields that can be extended to the whole disk, that is the algebra of derivations of $k[ [z]]$. In our basis above this corresponds to those $L_m$ with $m \geq {-1}$.  A vertex algebra is called \emph{quasi-conformal} \cite{frenkelzvi} if there exists operators $L_m \in \End_k (V)$, $m \geq -1$  satisfying \eqref{eq:virasoro-commutators} and \eqref{eq:quasi-conf-def} for any $a \in V$ and such that $L_{-1}= T$ and $L_0$ is diagonal with integer eigenvalues bounded below by $0$ and finite dimensional eigenspaces. We will require that our conformal algebras be quasi-conformal, namely, for a conformal vector $\omega$ such that $Y(\omega,z) = L(z)$ we will require that $L_0$ has integer eigenvalues bounded below by $0$ and with finite dimensional eigenspaces. 
\label{no:universal enveloping}
\end{nolabel}
\begin{nolabel}\label{no:c2-cofiniteness}
A conformal vertex algebra is called \emph{rational} if each module is completely reducible. It is called $C_2$ \emph{cofinite} if the subspace \[ C_2(V) = \left\{ a_{(-2)}b \: | \: a, b \in V \right\} \subset V\] is of finite codimention in $V$. A simple rational and $C_2$ cofinite vertex algebra has finitely many irreducible modules and its representation category is semisimple.
\end{nolabel}
\begin{nolabel}\label{no:lie-algebra}
Let $V$ be a Lie conformal algebra, then its quotient $V \otimes_{k[\partial]} k$ is a Lie algebra with bracket $[a,b] := [a_\lambda b]_{\lambda = 0}$. Letting $\cO = k[ [t]]$ and $\cK = k( (t))$ we have a conformal algebra $\hat{V} := V \otimes \cK$ with bracket given by 
\[ [a \otimes f(t)_\lambda b \otimes g(t)] = [a_{\lambda + \partial_t} b]\otimes f(t) g(t')|_{t = t'}. \] 
Now we combine these two constructions: for any Lie conformal algebra $V$ we have the Lie algebra $\mathrm{Lie}(V) := \hat{V} \otimes_{k[\partial]} k$. In particular, every vertex algebra gives rise to a Lie algebra in this manner. For $a \in V$ and $n \in \mathbb{Z}$, denote by $a_{\{n\}}$ the image of $a \otimes t^n$ in $\mathrm{Lie} (V)$. 
\end{nolabel}
\begin{nolabel}\label{no:fourier2}
The \emph{Fourier modes} $a_{(n)} \in \End(V)$ defined for each $a \in V$ and $n \in \mathbb{Z}$ for a vertex algebra $V$ form a Lie subalgebra of $\End(V)$. Their commutator is given by 
\[ [a_{(m)}, b_{(n)}] = \sum_{j \geq 0} \binom{m}{j} \bigl( a_{(j)}b \bigr)_{(m+n-j)}. \]
The map $a_{\{m\}} \mapsto a_{(m)}$ defines a Lie algebra morphism $\mathrm{Lie}(V) \rightarrow \End(V)$. This map however might have a nontrivial kernel. 
\end{nolabel}
\begin{nolabel}
Let us expand the fields of Example \ref{ex:examples-conformal-algebras}(g) as 
\[J(z) = \sum_{n \in \mathbb{Z}} J_n z^{-1-n}, \qquad Q(z) = \sum_{n \in \mathbb{Z}} Q_n z^{-2-n}, \qquad H(z) = \sum_{n \in \mathbb{Z}} H_n z^{-1-n},\]
then we obtain the commutation relations
\begin{equation}
\begin{gathered}
{[}L_m, L_n] = (m-n) L_{m+n}, \qquad
[L_m,J_n] = -n J_{m+n} + \frac{(m^2 + m)c}{6} \delta_{m,-n}, \qquad
[L_m,H_n] = -n H_{m+n} \\ 
[L_m,Q_n] = (m-n) Q_{m+n},  \qquad  [J_m, J_n] = \frac{c}{3} m \delta_{m, -n}, \qquad  [J_m, Q_n] = Q_{m+n} \\  
[J_m, H_n] = -H_{m+n}, \qquad  [H_m, Q_n] = L_{m+n} - m J_{m+n} + \frac{(m^2 - m)c}{6} \delta_{m,-n}. 
\end{gathered}
\label{eq:topological-commutators}
\end{equation}
\label{no:topological-commutations}
This Lie superalgebra is a central extension of the Lie superalgebra of polynomial vector fields on a super-circle $S^{1|1}$, or equivalently of derivations of $k( (z))[\theta]$. It will play the role of the Virasoro algebra in the supersymmetric setting.
\end{nolabel}
\section{SUSY vertex algebras} \label{sec:susy-vertex-algebras}
In this section we recall some definitions from \cite{heluani3} and define the notion of \emph{quasi-conformal} $N_W=1$ SUSY vertex algebra, which is a straightforward generalization of the non-super situation.
\begin{nolabel}
To fix notation let $k$ be a field (we will only need $k = \mathbb{C}$) and let $\cO = k[ [z]][\theta] =: k[ [\bz]] $ where $z$ is even and $\theta$ is odd, so that $\cO$ is the local ring at a point in $\mathbb{A}^{1|1}$ as in \ref{no:super-affine}. Let $\cK = k ( (z))[\theta] =: k( (\bz))$ be its ring of fractions. We will often write $k[ \partial, D]$ instead of $k[z, \theta]$ for the same algebra.
\label{no:notation-super}
\end{nolabel}
\begin{nolabel}
An $N_W=1$ SUSY vertex algebra consists of a $k[\partial, D]$-module $V$, a vector $\vac \in V_{\bar 0}$ and an even operation
\[
\mu: V \otimes V \rightarrow V \otimes \cK, \quad \mu(a \otimes b)=: Y(a,z,\theta)b
\]
satisfying 
\begin{enumerate}
\item $\mu(a \otimes \vac) \subset V \otimes \cO$, $\mu (a \otimes \vac)|_{z=\theta=0} = a$ and $\mu (\vac \otimes \cdot)  = \id_V \otimes 1$. 
\item $[D, Y(a,z,\theta)] = \partial_\theta Y(a,z,\theta)$ and $[\partial, Y(a,z,\theta)] = \partial_z Y(a,z,\theta)$. 
\item $(z-w)^n[Y(a,z,\theta), Y(b,w,\xi)] = 0$ for $n \gg 0$ and all $a,b \in V$.
\end{enumerate}
Morphisms of SUSY vertex algebras are defined as in the usual case as even linear maps preserving vacuum vectors and intertwining the products $V \otimes V \rightarrow V \otimes \cK$. Similarly, a module over a SUSY vertex algebra $V$ consists of a vector space $M$ together with a map
\[
\mu^M : V \otimes M \rightarrow M \otimes \cK, \quad \mu^M(a \otimes m) = Y^M(a,z,\theta)m
\]
satisfying 
\[
Y^M(a,z,\theta) Y^M(b,w,\xi)m = Y^M(Y(a,z-w,\theta-\xi)b,w,\xi)m,
\]
as elements of $M \otimes k( (z,\theta))( (w,\xi))$, where (as before) we expand the RHS using the natural map $k( (z-w))( (w)) \hookrightarrow k( (z))( (w))$ (note that the odd coordinates do not introduce any new complications here). 
\label{no:susy-def}
\end{nolabel}
\begin{nolabel}
Let $V$ be a SUSY vertex algebra and $M$ a $V$-module, then $V$ is in particular an ordinary vertex algebra (which is $\mathbb{Z}/2\mathbb{Z}$-graded) and $M$ is its module. The state field correspondence for $V$ as a usual vertex algebra is given by $a \otimes b \mapsto Y(a,z,\theta)b|_{\theta =0}$. For each $a \in V$ we let $a(z) = Y(a,z,\theta)|_{\theta=0}$. Notice that the \emph{super-fields} $Y(a,z,\theta)$ are naturally expanded as $Y(a,z,\theta) = a(z) + \theta Da(z)$. Similarly we let $a^M(z) = Y^M(a,z,\theta)|_{\theta=0}$ and the map $a \otimes m \mapsto a^M(z)m$ expresses $M$ as a usual module over the vertex algebra $V$. The super-field $Y^M(a,z,\theta)$ naturally expands as $Y^M(a,z,\theta) = a^M(z) + \theta (Da)^M(z)$. 
\label{no:relation-susy-usual}
\end{nolabel}
\begin{defn}
An $N_W=1$ SUSY vertex algebra is called \emph{quasi-conformal} if it carries a representation of the annihilation subalgebra of the topological algebra \eqref{eq:topological-commutators}, i.e., there are operators $L_n,  Q_n$, $n \geq -1$ and $J_n, H_n$, $n \geq 0$ satisfying the commutation relations \eqref{eq:topological-commutators} and such that for every $a \in V$ we have
\begin{equation} \label{eq:quasi-conformal-susy}
\begin{aligned}
{[J}_n, Y(a,z,\theta)] &= \sum_{j \geq 0} \binom{n}{j} z^{n-j} \Bigl( Y(J_j a, z,\theta) + \theta Y(Q_{j-1} a,z,\theta)\Bigr) \\
{[H}_n, Y(a,z,\theta)] &= \sum_{j \geq 0} \binom{n}{j} z^{n-j} \Bigl( Y(H_j a, z,\theta) - \theta Y\left( (L_{j-1} - j J_{j-1}) a,z,\theta \right)\Bigr) \\
[Q_n, Y(a,z,\theta)] &= \sum_{j \geq -1} \binom{n+1}{j+1} z^{n-j} Y(Q_{j} a,z,\theta)\\
[L_n, Y(a,z,\theta)] &= \sum_{j \geq -1} \binom{n+1}{j+1} z^{n-j} \Bigl( Y(L_j a,z,\theta) + (j+1) \theta Y(Q_{j-1} a, z, \theta) \Bigr)
\end{aligned}
\end{equation}
In addition, we require that $L_-1 = \partial$, $Q_{-1} = D$ and $L_0, J_0$ are diagonal operators with integer eigenvalues, finite dimensional joint eigenspaces and the $L_0$ eigenvalues are bounded below by $0$. A vector $v \in V$  killed by $L_n, J_n, Q_n, H_n, \, n > 0$ will be called \emph{primary}. Its $L_0$ eigenvalue is called its \emph{conformal weight} and its $J_0$ eigenvalue is called its \emph{charge}. These conditions on $V$ guarantee that the representation of the subalgebra $\Der_0 \cO^{1|1} = \text{span} \left\{ L_n, J_n, Q_n, H_n \right\}_{n \geq 0}$ can be exponentiated to an action of the group $\Aut \cO^{1|1}$ of continuous automorphisms of $\cO^{1|1} = k[ [z,\theta]]$. We will impose the additional condition that $V$ is generated, as an $\Aut \cO^{1|1}$-module, by primary vectors. 

The $N_W=1$ SUSY vertex algebra $V$ is called \emph{strongly-conformal} (or simply \emph{conformal} if no confusion can arise) if it is quasi-conformal and furthermore there exists an even vector $j$ and an odd vector $h$ such that their corresponding fields are given by
\begin{equation} \label{eq:expansion-usual} Y(j,z,\theta) = J(z) - \theta Q(z), \qquad Y(h, z, \theta) = H(z) + \theta \left( L(z) + \partial_z J(z) \right), \end{equation}
satisfying the commutation relations \eqref{eq:topological-commutators} and the above-mentioned integrability conditions. 
\label{defn:topological-quasi-conformal}
\end{defn}
\begin{defn}
Let $V$ be a quasi-conformal $N_W=1$ SUSY vertex algebra and $M$ its module. We say that $M$ is \emph{positive energy} if it carries a representation of the annihilation subalgebra of the topological algebra \eqref{eq:topological-commutators} satisfying the conditions of Definition \ref{defn:topological-quasi-conformal} with $Y$ replaced by $Y^M$. It follows that $V$ is a positive energy $V$-module. 
\label{defn:positive-energy-susy}
\end{defn}
\begin{nolabel}
We will need the following
\begin{lem*}
For any $\bx = (x, \theta)$, $\bz = (z,\chi)$  we have 
\[ \res_{\bx - \bz} i_{|z|>|x-z|} = \res_{\bx} \left( i_{|x|>|z|} - i_{|z|>|x|} \right), \]
as maps $V[ [\bx,\bz]][x^{-1}, z^{-1}, (x-z)^{-1}] \rightarrow  V( (\bz)) ( (\bx))$. Here $i_{|x|>|z|}$ denotes the map \[ V[ [\bx,\bz]][x^{-1}, z^{-1},(x-z)^{-1}] \rightarrow V( (\bz))( (\bx))\] of \emph{expansion in the domain} $|x|>|z|$. (Equivalently, since $V[ [\bx, \bz]] \subset V( (\bz)) ( (\bx))$ and $x, z$ and $(x-z)$ are not $0$ divisors, we obtain $i_{|x|>|z|}$ by the universal property of the ring of fractions.)
\end{lem*}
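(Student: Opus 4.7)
\emph{Proof plan.} The identity is a direct SUSY analog of the classical formal residue identity in vertex algebra theory (see e.g.\ Kac, \emph{Vertex Algebras for Beginners}, \S2.1) and is proved by reducing to monomials and tracking the odd variables carefully. First, both sides are $k$-linear and continuous for the $(x,z)$-adic topology on $V[[\bx,\bz]]$, so I reduce to checking the identity on monomials
\[
f \;=\; v\, x^{a} z^{b} (x-z)^{c}\, \theta^{\epsilon}\chi^{\delta}, \qquad v\in V,\; a,b\in\mathbb{Z}_{\geq 0},\; c\in\mathbb{Z},\; \epsilon,\delta\in\{0,1\}.
\]

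Second, I dispose of the trivial case $c\geq 0$: then $x^{a}z^{b}(x-z)^{c}$ is a polynomial, on which $i_{|x|>|z|}$ and $i_{|z|>|x|}$ both coincide with the natural inclusion so their difference vanishes, and $i_{|z|>|x-z|}$ produces only non-negative powers of $x-z$ whose residue at $(x-z)^{-1}$ is zero. So both sides are zero. Next, for $c=-k$ with $k\geq 1$, I use the standard expansions
\[
i_{|z|>|x-z|}\bigl(x^a z^b (x-z)^{-k}\bigr)
\;=\; z^{b}\sum_{j\geq 0}\binom{a}{j} z^{a-j}(x-z)^{j-k},
\]
\[
i_{|x|>|z|}(x-z)^{-k} \;=\; \sum_{j\geq 0}\binom{k+j-1}{j} x^{-k-j}z^{j},
\qquad
i_{|z|>|x|}(x-z)^{-k} \;=\; (-1)^{k}\sum_{j\geq 0}\binom{k+j-1}{j} z^{-k-j}x^{j},
\]
and rewrite the odd monomial using $\theta = (\theta-\chi)+\chi$, so that
\[
\theta^{\epsilon}\chi^{\delta} \;=\; \bigl((\theta-\chi)+\chi\bigr)^{\epsilon}\chi^{\delta}.
\]
The residue $\res_{\bx-\bz}$ picks the coefficient of $(x-z)^{-1}(\theta-\chi)$, which vanishes unless $\epsilon=1$ (in which case the $(\theta-\chi)$-part of the above is $\chi^{\delta}$) and yields $v\binom{a}{k-1}z^{a+b-k+1}\chi^{\delta}$. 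Separately, $\res_{\bx}$ picks the coefficient of $x^{-1}\theta$ on the right-hand side; only $i_{|x|>|z|}$ contributes, since $i_{|z|>|x|}(x-z)^{-k}$ involves only non-negative powers of $x$. Picking $j=a-k+1$ in the first expansion gives $v\binom{a}{k-1}z^{a+b-k+1}\chi^{\delta}$ as well, provided again $\epsilon=1$. Both residues agree on each monomial, and the identity follows.

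The only real obstacle is the bookkeeping of odd signs, and this is resolved at the outset by the decomposition $\theta^{\epsilon}\chi^{\delta}=((\theta-\chi)+\chi)^{\epsilon}\chi^{\delta}$: once the factor $\theta-\chi$ is pulled out, it multiplies through both computations and the identity collapses to the classical even residue formula applied to $v\, x^a z^b (x-z)^{-k}$. All other manipulations are routine Taylor expansions in the natural domains $|z|>|x-z|$, $|x|>|z|$, $|z|>|x|$.
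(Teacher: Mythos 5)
The paper states this lemma without proof, so there is nothing to compare against; judged on its own, your argument has the right shape (reduce to monomials, handle the odd variables by the substitution $\theta=(\theta-\chi)+\chi$, then run the classical even computation), and the even computation you carry out is correct \emph{for the monomials you allow}. The odd-variable bookkeeping is also fine: in the coordinates $(x-z,\theta-\chi;z,\chi)$ the $(\theta-\chi)$-component of $\theta^{\epsilon}\chi^{\delta}$ is $\chi^{\delta}$ exactly when $\epsilon=1$, matching the $\theta$-component on the other side, so both residues reduce to the same even residue times the same factor $\chi^{\delta}$.

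The gap is in the reduction step. The monomials $v\,x^{a}z^{b}(x-z)^{c}$ with $a,b\geq 0$ do \emph{not} topologically span $V[[\bx,\bz]][x^{-1},z^{-1},(x-z)^{-1}]$: an element such as $x^{-1}(x-z)^{-1}$ has a pole along $x=0$, whereas any convergent combination of your monomials with $c<0$ has poles only along $x=z$. A correct reduction must allow $a,b\in\mathbb{Z}$ (bounded below), and for $a<0$ your key simplification fails: $x^{a}\,i_{|z|>|x|}(x-z)^{-k}$ then \emph{does} contain $x^{-1}$ (take $j=-1-a$ in the expansion), so the $i_{|z|>|x|}$ term contributes to $\res_{\bx}$ and your right-hand side is computed incorrectly on exactly these monomials. (The identity is still true there — e.g.\ for $f=x^{-1}(x-z)^{-1}$ both sides equal $z^{-1}$, but on the right this comes entirely from the $-i_{|z|>|x|}$ term, which you discard.) The cleanest repair is a partial-fraction decomposition: write any element as $f_{1}+f_{2}$ with $f_{1}\in V[[x,z]][x^{-1},z^{-1}]$, where both sides vanish ($i_{|x|>|z|}=i_{|z|>|x|}=\mathrm{id}$ there, and $i_{|z|>|x-z|}f_{1}$ has no negative powers of $x-z$), and $f_{2}\in V[[z,x-z]][z^{-1},(x-z)^{-1}]$, spanned by $v\,z^{b}(x-z)^{c}$, on which both sides are immediately seen to equal $v\,z^{b}\delta_{c,-1}$ times the appropriate odd factor. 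Alternatively, keep your monomial reduction with $a\in\mathbb{Z}$ and verify the extra binomial identity in the case $a<0$.
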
 
\begin{cor*}
For any $f \in k[ [\bx,\bz]][x^{-1},z^{-1},(x-z)^{-1}]$ one has
\begin{multline*}
\res_{\bx-\bz} i_{|z|>|x-z|} f Y \left( Y(a,\bx-\bz)b, \bz \right)c \\
= \res_{\bx} i_{|x|>|z|} f Y(a,\bx)Y(b,\bz) c - (-1)^{ab} \res_{\bz} i_{|z|>|x|} f Y(b,\bz) Y(a,\bx) c,
\end{multline*}
where $(-1)^{ab}$ denotes $-1$ if both $a, b \in V_{\bar 1}$ and $1$ otherwise. 
\end{cor*}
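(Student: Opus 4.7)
The lemma is the supersymmetric lift of the classical residue identity used throughout vertex algebra theory, and the proof proceeds by reducing both sides to the ordinary (even) identity after separating the nilpotent variables; the corollary then follows by applying the lemma to the product of $f$ with the common rational expression underlying $Y(a,\bx)Y(b,\bz)c$. The main obstacle is bookkeeping: tracking the Koszul sign $(-1)^{ab}$ from super-associativity, and verifying that the residue at the diagonal, as defined in \ref{no:residue}, extracts the joint coefficient of $u^{-1}\eta$ rather than of $u^{-1}$ alone.

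First I would use $k$-linearity to reduce the lemma to monomials $g(x,z)\,\theta^{\alpha}\chi^{\beta}$ with $g\in k[[x,z]][x^{-1},z^{-1},(x-z)^{-1}]$ and $\alpha,\beta\in\{0,1\}$. Introducing local parameters $(u,\eta)=(x-z,\,\theta-\chi)$ about the relative diagonal, so that $x=u+z$ and $\theta=\eta+\chi$, the odd factor becomes $(\eta+\chi)^{\alpha}\chi^{\beta}$. By definition $\res_{\bx-\bz}$ extracts the coefficient of $u^{-1}\eta$, while $\res_{\bx}$ extracts the coefficient of $x^{-1}\theta$; a direct inspection shows that both sides of the asserted identity vanish unless $\alpha=1$, in which case each acquires an overall factor $\chi^{\beta}$ and one is reduced to the purely even statement
\[ \res_{x-z}\,i_{|z|>|x-z|}\,g(x,z)\;=\;\res_{x}\,\bigl(i_{|x|>|z|}-i_{|z|>|x|}\bigr)\,g(x,z). \]

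Next I would further reduce by linearity to monomials $g(x,z)=x^{a}z^{b}(x-z)^{c}$ with $a,b,c\in\mathbb{Z}$, and compute both sides by binomial expansion: on the left one expands $(u+z)^{a}$ in the region $|z|>|u|$, and on the right one expands $(x-z)^{c}$ in each of the two regions $|x|>|z|$ and $|z|>|x|$. The resulting expressions are all proportional to $z^{a+b+c+1}$, and the coefficients match via the standard binomial identities $\binom{c}{k}=(-1)^{k}\binom{k-c-1}{k}$ and $\binom{a}{k}=\binom{a}{a-k}$ after a short case split on the signs of $a$ and $c$. This closes the proof of the lemma.

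For the corollary, the locality and associativity axioms of Definition \ref{no:susy-def} furnish a single rational element $R(\bx,\bz)\in V[[\bx,\bz]][x^{-1},z^{-1},(x-z)^{-1}]$ whose three expansions in the pairwise-disjoint regions $|x|>|z|$, $|z|>|x|$ and $|z|>|x-z|$ are respectively $Y(a,\bx)Y(b,\bz)c$, $(-1)^{ab}Y(b,\bz)Y(a,\bx)c$ and $Y(Y(a,\bx-\bz)b,\bz)c$. Applying the lemma to the product $fR$ then yields the corollary immediately, with the Koszul sign $(-1)^{ab}$ appearing exactly as in the statement.
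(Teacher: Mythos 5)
Your proof is correct. The paper in fact offers no proof of either the Lemma or the Corollary in \ref{lem:associativity-in-coords} --- both are stated as standard facts --- so there is nothing to compare against; your argument is the expected one: separate the odd variables to reduce the residue identity to the classical even statement (verified on monomials $x^az^b(x-z)^c$), and then obtain the Corollary by applying the Lemma to $f$ times the common rational element whose three expansions are furnished by super-locality and associativity, with the Koszul sign entering through the supercommutator. One small remark: your derivation (correctly) produces $\res_{\bx}$ in front of the $i_{|z|>|x|}$ term, whereas the statement as printed has $\res_{\bz}$ there; this appears to be a typo in the paper, since all three terms must land in series in $\bz$ after the residue is taken.
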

\label{lem:associativity-in-coords}
\end{nolabel}

\section{(SUSY) Chiral algebras} \label{sec:chiral-algebras}
	We collect some results and definitions from \cite{beilinsondrinfeld} and \cite{heluani4}. Most definitions are mutatis mutandis the same ones as in \cite{beilinsondrinfeld}. The main difference in the supercurve case is that the diagonals are not divisors, however they still define a stratification of the power of a curve which one can use in the Cousin resolution. Another subtle difference discused in Remark \ref{rem:parity} is the fact that the \emph{unit} chiral algebra, the Berezinian bundle of a curve $\omega_X$ is \emph{odd} when the dimension of $X$ is $1|2k+1$. We also sketch the connection between chiral algebras and vertex operator algebras following \cite{frenkelzvi} and \cite{heluani4}.  The reader more familiar with the theory of vertex operator algebras as opposed to their algebro-geometric counterparts can safely skip this section on a first reading. The essential ingredient that will be needed below is Huang's change of coordinate formula for the (super) fields \eqref{eq:huangs}.  We deduce in \ref{ex:5.8} the extension of $\cD$-modules corresponding to the chiral algebra associated to the topological algebra of Example \ref{ex:examples-conformal-algebras} g).  The factorization algebra counterpart to these definitions in the supercurve case was given by Kapranov and Vasserot in \cite{kapranov-super}. Presumably one could obtain a global version of the results in this article if one would define following \cite{beilinsondrinfeld} the machinery of chiral homology on supercurves. The work of Kapranov and Vasserot \cite{kapranov-super} would be most relevant in the $N=1$ supersymmetric case.  
\begin{nolabel}
Let $X$ be a smooth superscheme over $k=\mathbb{C}$ of dimension $1|n$ for some $n \geq 0$. For a finite non-empty set $I$, let $X^I = \prod_{i \in I} X$ and denote $p_i : X^I \rightarrow X$ the $i$-th projection. For each surjection $\pi: I \twoheadrightarrow J$ we have a correponding diagonal map $\Delta_\pi : X^J \hookrightarrow X^I$ defined such that $p_i \circ \Delta_\pi = p_{\pi(i)}$. Consider $Z = \coprod \Delta_\pi X^{J} \subset X^I $ where the union is taken over the set of all projections $\pi: I \twoheadrightarrow J$ with $|J| = |I| - 1$. We let $Z = \{\}$ in the case $I=\left\{ * \right\}$. Let $U^{(I)}$ be the open sub-superscheme $X^I \setminus Z$. It is completely determined by the reduced structure of $X^I_\mathrm{rd}$ minus the diagonal divisor. In fact, we have the closed embedding $Z_\mathrm{rd} \subset X^I_\mathrm{rd}$ (the diagonal divisor) and letting $U^{(I)}_\mathrm{rd} = X^I_\mathrm{rd} \setminus Z_\mathrm{rd}$ we have that $U^{(I)}$ consists of $U^{(I)}_\mathrm{rd}$ as a topological space together with the restriction of $\cO_{X^I}$ as its sheaf of superalgebras. Let $j=j^{(I)} : U^{(I)} \hookrightarrow X^I$ be the inclusion and finally let $\Delta : X \rightarrow X^I$ be the total diagonal corresponding to the surjection $I \twoheadrightarrow \left\{ * \right\}$. 

For a finite set $\left\{ L_i \right\}_{i \in I}$ of $\cD_X$-modules and a $\cD_X$-module $M$ We define the set of chiral operations as \cite[3.1.1]{beilinsondrinfeld}
\begin{equation} \label{eq:chiral-operations} P^{ch}_I (\left\{ L_i \right\}, M) := \Hom_{\cD_{X^I}}\Bigl( j_* j^* \left( \boxtimes_{i \in I} L_i \right) , \Delta_* M \Bigr) \end{equation}
\label{no:5.1}
\end{nolabel}
\begin{nolabel}
For a surjective map of finite non-empty sets $\pi : J \twoheadrightarrow I$ we have the corresponding composition of chiral operations
\[\begin{aligned} P^{ch}_I \bigl( \left\{ L_i \right\}, M \bigr) &\otimes& \Bigl( \bigotimes_{i \in I} P^{ch}_{\pi^{-1}(i)} \bigl( \left\{ K_j \right\}_{j \in \pi^{-1}(i)}, L_i \bigr) \Bigr) &\rightarrow& P^{ch}_J \bigl(\left\{ K_j \right\}_{j \in J}, M \bigr) \\
\varphi \qquad &\otimes&   (\psi_i)\qquad \qquad \qquad  &\mapsto& \qquad  \varphi(\psi_i) \qquad \end{aligned}\]
defined as follows. First we let $j^{(\pi)} : U^{(\pi)} \hookrightarrow X^J$ be the open subsuperscheme defined by restriction of $\cO_{X^J}$ to the open set $U^{(\pi)}_\mathrm{rd} := \left\{ (x_j) \in X^J_{\mathrm{rd}} \: : \: \pi(j_1) \neq \pi(j_2) \Rightarrow x_{j_1} \neq x_{j_2}  \right\}$ -- i.e., $U^{(\pi)}$ is the open sub-superscheme obtained from $X^J$ minus the diagonal corresponding to $\pi$. Then we compose
\begin{multline*}
j^{(J)}_* j^{(J)*} \left( \boxtimes_J K_J \right) = j_*^{(\pi)} j^{(\pi)*}  \left( \boxtimes_{i \in I} \left( j_*^{(\pi^{-1}(i))} j^{(\pi^{-1}(i))*} \boxtimes_{j \in \pi^{-1}(i)} K_j\right) \right) \xrightarrow{\boxtimes \psi_i} \\  j_*^{(\pi)} j^{(\pi)*}\left( \boxtimes_I \Delta_*^{(\pi^{-1}(i))} L_i\right) = \Delta_*^{(\pi)} j_*^{(I)} j^{(I)*} \left( \boxtimes_I L_i \right) \xrightarrow{\Delta_*^{(\pi)} (\phi)} \Delta_*^{(\pi)} \Delta_*^{(I)} M = \Delta^{(J)}_* M.
\end{multline*}
Here the first equality is due to the exactness of the restriction to the open sub-schemes $U^{(J)} \subset U^{(\pi)} \subset X^J$. The second equality follows from the compatibility of the push-forward with the $\boxtimes$ products: if $f_i :X_i \rightarrow Y_i$ are morphisms between smooth superschemes then $(\prod f_i)_* \boxtimes L_i = \boxtimes f_{i*} L_i$. Since $\prod \Delta^{(\pi^{-1}(i))} = \Delta^{(\pi)}$ it follows that $\Delta^{(\pi)}_* \boxtimes L_i = \boxtimes \Delta_*^{(\pi^{-1}(i))} L_i$. This composition is associative in the obvious sense and defines a colored operad or pseudo-tensor category structure in $\cD_X$-mod in the sense of \cite[1.1.1]{beilinsondrinfeld}. 
\label{no:5.3}
\end{nolabel}
\begin{nolabel}\label{no:5.4b}
If one is given extra structure on the supercurve $X$ one could define other compound tensor categories associated to it. For example to a SUSY structure, one may consider the ring of differential operators preserving this structure (that is generated by sections of the distribution $\cT' \subset \cT$ as in \ref{no:susy}) and chiral operations as above. The corresponding chiral algebras would correspond to the $N_K$ family of SUSY vertex algebras of \cite{heluani3}.
\end{nolabel}
\begin{nolabel}Let $i_0 \in I$ and consider the projection $\pi_{I,i_0}: X^I \rightarrow X^{I\setminus i_0}$. The diagonal $\Delta^{(I)}$ factors through $\Delta^{(I \setminus i_0)}$ and for each $M \in \cD_X-$mod we have the short exact sequence:
\begin{equation} \label{eq:short-exact} 0 \rightarrow \Delta^{(I \setminus i_0)}_* M \boxtimes \omega_X \rightarrow j_* j^* \bigl( \Delta^{(I \setminus i_0)} M \boxtimes \omega_X \bigr) \rightarrow \Delta^{(I)}_* M \rightarrow 0,\end{equation}
where $j$ is the open inclusion of $\{ (x_i) \in X^I \: | \: x_{i_0} \neq x_i\, \forall i \neq i_0 \}$. 
Projecting to $X^{I \setminus i_0}$ using $\pi_{I,i_0}$ and using 
the trace morphism of \ref{no:trace2} we obtain morphisms: inducing for each $M \in \cD_X$-mod a morphism  $\tr_{i_0}: {\pi_{I,i_0}}_* \Delta^{(I)}_* M \rightarrow \Delta^{(I\setminus i_0)}_* M$. 

For $M \in \cD_X$-mod we set $h(M) := M \otimes_{\cD_X} \cO_X$ and call this the \emph{de Rham} functor. It is an augmentation functor in the sense of \cite[1.2.5]{beilinsondrinfeld} for the pseudotensor structure described in \ref{no:5.3}, that is for each element $i_0$ of a set $I$ of order $|I| \geq  2$ we have natural maps
\[ h_{I, i_0}: P_I^{ch} (\left\{ L_i \right\}, M) \otimes h(L_{i_0}) \rightarrow P_{I \setminus i_0}^{ch}(\left\{ L_i \right\}, M), \]
compatible with compositions (cf. \cite[1.2.5]{beilinsondrinfeld}). The maps $h_{I,i_0}$ are defined as follows. For $\phi \in P^{ch}_I (\left\{ L_i \right\},M)$ and a representative $l \in L_{i_0}$ of $\bar{l} \in h(L_{i_0})$, we define $h_{I,i_0} (\phi \otimes \bar{l})$ as the composition $a \rightarrow \phi( a\boxtimes l)  \in \Delta^{(I)}_* M \rightarrow \tr_{i_0} \phi( a \boxtimes l) \in \Delta^{(I \setminus i_0)}_* M$. Clearly this composition is independent of the representative $l$ chosen since the trace morphism kills total derivatives.  
\label{no:5.3b}
\end{nolabel}
\begin{defn}
Let $X$ be a smooth superscheme over $k$ of dimension $1 | n$. A \emph{chiral algebra} on $X$ is a (right) $\cD_X$-module $\cA$ together with an operation $\mu \in P^{ch}_{ \left\{ 1,2 \right\}} (\left\{ \cA,\cA \right\},\cA)$ satisfying skew-symmetry and the Jacobi-condition. The skew-symmetry condition is described as follows: if $\sigma$ is the automorphism of $X\times X$ exchanging the $2$-factors we have $\mu \circ \sigma = - \mu$. 

To describe the Jacobi condition we consider the projection  $\pi: \left\{ 1,2,3 \right\} \twoheadrightarrow \left\{ 1,2 \right\}$ given by $\pi^{-1}(2) = \left\{ 2,3 \right\}$. According to \ref{no:5.3} we have the associated composition $\mu_{ 1\left\{ 23 \right\}} := \mu(1 \otimes \mu )$. Permuting cyclically we obtain the compositions $\mu_{2 \left\{ 3 1 \right\}}$ and $\mu_{ 3\left\{12  \right\}}$. The Jacobi condition is simply $\mu_{1 \left\{23  \right\}} + \mu_{ 2\left\{ 31 \right\}} + \mu_{3\left\{ 12 \right\}} = 0$. 

The dualizing residue map $\res: j_* j^* \omega_{X^2} \rightarrow \Delta_* \omega_X$ (where $j: U^{(\left\{ 1,2 \right\})} \subset X^2$ is the open embedding of \ref{no:5.1}) gives a chiral algebra structure on $\omega_X$. A chiral algebra $\cA$ is called \emph{unital} if it comes endowed with a morphism of chiral algebras $\omega_X \hookrightarrow \cA$. We will omit the word unital below. 
\label{defn:chiral_def}
\end{defn}
\begin{rem}
One of the subtleties that occur in the supercurve case as opposed to the non-super situation is that the unit of a chiral algebra has a non-trivial parity (it is odd on curves of dimension $1|2k+1$). This is due to the non-standard equivalence of categories of  \emph{right} $\cD_X$-modules and \emph{left} ones. In fact, this phenomenon occurs in a similar fashion in the non-super case of Beilinson and Drinfeld in the definition of the $\otimes^!$ tensor product of the category of right $\cD_X$-modules. Left $\cD_X$-modules have a natural tensor product structure such that $\cO_X$ is the unital object in the category. Using any equivalence of categories to right $\cD_X$-modules one obtains a tensor structure on the latter. Instead of using the \emph{canonical} equivalence (2.1.1.1) of op.cit. $L \mapsto L \otimes \omega_X [\dim X]$ between left and right $\cD_X$-modules, the authors insisted on using the non canonical (but non-derived) equivalence $L \mapsto L \otimes \omega_X$. This has the advantage that the unital object is a sheaf (i.e., $\omega_X$) instead of a complex (i.e., $\omega_X[\dim X]$) at the expense that the signs in the action of the symmetric group on tensor products becomes more difficult to track. 

In the super case, in addition to the above mentioned problem, we have that the natural equivalence of categories between left and right $\cD_X$ modules carries a \emph{parity shift} if the dimension of $X$ is of the form $n | 2k +1$. The fact that the chiral unit has a non-trivial parity comes from  insisting in using a non-canonical equivalence of categories, so that $\omega_X$ is the unital object instead of $\Pi^m \omega_X[n]$ when $\dim X = n|m$. One could solve this problem by changing the parity of the module $\omega_X$ without shifting in the cohomological direction, however, as we will see shortly, the equivalence of categories between chiral and vertex algebras involves passing to left $\cD_X$-modules, and the solution to the parity problem on one side would express itself in the other side, for example by having the vacuum vector being an odd vector or the state-field correspondence reversing parity. 
\label{rem:parity}
\end{rem}
\begin{nolabel}
Applying the De Rham functor to a chiral algebra we obtain a sheaf of $k$-vector spaces $h(\cA)$ on $X$. The operation $\mu$ induces a Lie superalgebra structure on $h(\cA)$. Applying the De Rham functor along the first factor of $X^2$ and composing with the chiral multiplication $\mu$ we obtain an action of $h(\cA)$ on $\cA$ and in particular on its fiber $\cA_x$ at a point $x \in X$. Let $X$ be a smooth supercurve of dimension $1|n$, $\cA$ be a chiral algebra on $X$, and $x \in X$ be a closed point. We define the space of conformal blocks as \begin{equation} \label{eq:conformal-definition} C(X,x;\cA) := \Hom_{h(\cA)(X \setminus x)} (\cA_x, k). \end{equation} Here, as before, $X \setminus x$ denotes the open sub-superscheme of $X$ obtained by the restriction of $\cO_X$ to $X_\mathrm{rd} \setminus x_\mathrm{rd}$. 
\label{no:5.6}
\end{nolabel}
\begin{nolabel}
In this and in the next four sections we describe the connection between SUSY vertex algebras and chiral algebras. For simplicity we restrict ourselves to general $1|1$ dimensional supercurves and $N_W=1$ SUSY vertex algebras. For other kinds of supercurves and SUSY vertex algebras we refer the reader to \cite{heluani4}. A detailed and clear account of the non-super case is given in \cite{frenkelzvi}, and the supercase is a simple generalization modulo subtleties regarding signs. Let $X$ be a smooth supercurve of dimension $1|1$ over $\Spec S$ for some supercommutative ring $S$ which we can take to be a Grassmann algebra over $k = \mathbb{C}$. We allow $S$ to have odd elements in order to consider them as ``odd constants'' in what follows.  Let $x \in X$ be a closed $S$-point. We have the local ring $\cO_x$ and its ring of fractions $\cK_x$. Choosing local coordinates $\bt = (t, \zeta)$ near $x$ we obtain isomorphisms $\cO_x \simeq \cO_S^{1|1} := S [ [t]][\zeta]$ and $\cK_x \simeq \cK_S^{1|1} := S ( (t))[\zeta]$. Consider the group of local changes of  coordinates at $x$, i.e., of continuous automorphisms of $\cO_x$. This group consists of formal power series $\rho = \rho(\bt) = \bigl( F(t,\zeta), \Psi(t,\zeta)\bigr)$ with $F$ even and $\Psi$ odd, with nonzero Berezinian at $\bt = (0,0)$. Indeed if we write
\[
F(t,\zeta) = a_1 t^1 + \alpha_1 \zeta + a_2 t^2 + \alpha_2 t \zeta + \dots, \qquad \Psi(t,\zeta) = \beta_1 t^1 + b_1 \zeta + \beta_2 t^2 + b_2 t \zeta + \dots,
\]
where $a_i, b_i \in S_{\bar{0}}$ and $\alpha_i, \beta_i \in S_{\bar{1}}$, then the condition on $\rho$ is that
\[ \Ber \rho = \Ber 
\begin{pmatrix}
a_1 & \alpha_1 \\ 
\beta_1 & b_1 
\end{pmatrix} \in S_{\bar{0}}^*. \]
The corresponding Lie algebra $\lie \Aut \cO = \mathrm{Der}_0 \cO$ is naturally isomorphic to the positive part of the topological Lie algebra \eqref{eq:topological-commutators}, that is the subalgebra generated by $L_n, Q_n, J_n, H_n$ with $n \geq 0$. The isomorphism is given by 
\begin{equation}\label{eq:topological-derivations} \begin{aligned}
L_n &= - t^{n+1} \partial_t - (n+1) t^n \zeta \partial_\zeta, &&& J_n &= -t^n \zeta \partial_\zeta, \\ Q_n &= - t^{n+1} \partial_\zeta, &&& H_n &= t^n \zeta \partial_t. 
\end{aligned}
\end{equation}
These are the vector fields on the formal neighborhood of $x\in X$ vanishing at $x$. This Lie algebra is a semidirect product of $\gl(1|1)$ generated by $L_0, J_0, H_0, Q_0$ and a pro-nilpotent Lie algebra. This is reflected in the fact that $\Aut \cO$ is the semidirect product of $GL(1|1)$ (linear changes of coordinates) with a pro-unipotent group (changes of coordinates whose first jet is the identity). According to Definition \ref{defn:topological-quasi-conformal} every quasi-conformal $N_W=1$ SUSY vertex algebra $V$ is naturally an $\Aut \cO$-module. Similarly for a positive energy module $M$. Recall that $V$ and $M$ also have actions of $L_{-1}=- \partial_t $ and $Q_{-1}=-\partial_\zeta$.  

Every smooth curve has a natural $\Aut \cO$-torsor $\widehat{X}$ given by pairs $(x, \bt)$ where $x \in X$ is a point and $\bt = (t, \zeta)$ are local coordinates at $x$. This is called the \emph{Gelfand-Kazhdan} structure \cite{gelfandkazhdan} of $X$ and in fact is well defined for any smooth superscheme of dimension $n|m$ (not necessarily $1|m$). For a discussion in this context and the fact that $\widehat{X}$ is a well defined $\Aut \cO$ torsor  (locally trivial in the Zariski topology) we refer the reader to \cite[$\S$ 2.9.9]{beilinsondrinfeld}, \cite{frenkelzvi} in the non-supercase and \cite{heluani4} for supercurves.  It follows that every SUSY vertex algebra $V$ gives rise to a quasi-coherent sheaf (an infinite rank vector bundle)
\[ \cV = \widehat{X} \overset{\Aut \cO}{\times} V. \]
This vector bundle carries a flat connection $\nabla$ locally given in coordinates by
\[ \nabla = d + L_{-1} dt + Q_{-1} d \zeta. \]
Flatness follows from the fact that $L_{-1}$ and $Q_{-1}$ commute. It follows that $\cV$ is a left $\cD_X$-module.  
\label{no:5.7}
\end{nolabel}
\begin{nolabel}\label{no:huangs}
The essential part of the construction is the fact that the state field correspondence of $V$ produces a well defined section of $\cV^*$. Let us describe this in the analytic topology. Any coordinate system $\bt=(t,\zeta)$ near $x \in X$ trivializes $\widehat{X}$ and therefore identifies the fiber $V \overset{\sim}\rightarrow \cV_x $. Using this trivialization, sections of $V$ on the formal punctured disk $\cD_x^\times := \Spec \cK_x$ near $x$ are identified with $V \otimes \cK_x$. Therefore the state-field correspondence map 
\[ a \mapsto Y(a,t,\zeta), \] can be viewed as an $\End \cV_x$-valued section of $\cV^*$: $\cY_x (\cdot)$. We have 
\begin{thm*}\cite[Thm. 3.5]{heluani4}
$\cY_x(\cdot)$ is independent of the coordinates $\bt$ chosen, i.e. it is a well defined $\End \cV_x$-valued section of $\cV^*$ on $\cD_x^\times$. 
\end{thm*}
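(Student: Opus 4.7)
The plan is to reduce coordinate-independence of $\cY_x$ to a SUSY version of Huang's change-of-variables formula for vertex operators. Specifically, the key identity to establish is that for any $\rho \in \Aut \cO_x$ acting on the fiber $V = \cV_x$ by $R(\rho) \in \End V$ via the quasi-conformal structure of Definition \ref{defn:topological-quasi-conformal}, one has an equality of $\End V$-valued formal super-series
\[
R(\rho) \, Y(a, \bt) \, R(\rho)^{-1} = Y(R(\rho) a, \rho(\bt)).
\]
Once this is proved, it is exactly the cocycle condition asserting that the family of state-field correspondence maps $\{Y_\bt\}$, one per choice of local coordinates, patches into a single well-defined section of $\cV^* \otimes \End \cV_x$.

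The formula is proven by reducing to one-parameter subgroups. Using the semidirect decomposition $\Aut \cO = GL(1|1) \ltimes \Aut_1 \cO$, whose Lie algebras are generated respectively by $\{L_0, J_0, Q_0, H_0\}$ and by $\{L_n, J_n, Q_n, H_n : n \geq 1\}$ under the identifications \eqref{eq:topological-derivations}, one argues as follows. For the $GL(1|1)$ factor, the identity follows from the scaling/grading behaviour of $Y$ encoded in \eqref{eq:quasi-conformal-susy} at $n=0$. For the pro-unipotent factor, the infinitesimal version of the identity is exactly \eqref{eq:quasi-conformal-susy} in general: each commutator $[L_n, Y(a, \bt)]$, $[J_n, Y(a, \bt)]$, etc., on the left-hand side corresponds to differentiating $R(\rho) Y(a, \bt) R(\rho)^{-1}$ along the generator, while on the right-hand side it corresponds to the combined action of the vector field \eqref{eq:topological-derivations} on $\bt$ and the Fourier-mode insertions on $a$. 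Since the positive-depth generators act locally nilpotently both on $V$ (via the $L_0$-eigenspace bounds of Definition \ref{defn:topological-quasi-conformal}) and on the coordinate side (the vector fields lower degree in $\bt$), the infinitesimal identity exponentiates term by term without convergence issues, yielding the finite change-of-variables formula for every $\rho$.

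The main obstacle is the careful bookkeeping of super signs arising from the odd generators $Q_n, H_n$, and the verification that the coordinate-side of the infinitesimal identity indeed matches the commutators \eqref{eq:quasi-conformal-susy} term by term in the $\zeta$-expansion. In the non-super case the exponentiation step is Huang's classical theorem; the SUSY version requires a careful rewriting of expansions in the super coordinate $\bt = (t, \zeta)$ and is carried out in detail in \cite[Thm. 3.5]{heluani4}, from which the present statement is directly extracted.
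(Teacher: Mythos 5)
Your overall strategy --- reducing coordinate-independence to a super version of Huang's change-of-variables formula and obtaining that formula by exponentiating the quasi-conformal commutation relations along the decomposition $\Aut\cO = GL(1|1)\ltimes \Aut_1\cO$ --- is exactly the route of \cite[Thm. 3.5]{heluani4}, which is all the present paper does (it cites the result and records the key formula). However, the identity you propose to exponentiate is not the right one, and as stated it is false. The correct super Huang formula, recorded in \eqref{eq:huangs}, is
\[
Y(a,\bt) = \rho\, Y\bigl(\rho_\bt^{-1}a,\ \rho(\bt)\bigr)\,\rho^{-1}, \qquad \rho_\bt(\bx) := \rho(\bx+\bt)-\rho(\bt),
\]
in which the operator applied to the state $a$ is the \emph{$\bt$-dependent} jet $\rho_\bt$ of $\rho$ at the insertion point, not $\rho$ itself. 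Your identity $R(\rho)\,Y(a,\bt)\,R(\rho)^{-1} = Y(R(\rho)a,\rho(\bt))$ agrees with this only when $\rho_\bt=\rho$ for all $\bt$, i.e.\ for linear changes of coordinates; it already fails for the exponential change of coordinates used throughout the paper, where $\rho_\bt = \rho\cdot e^{-\zeta Q_0}e^{-2\pi i t L_0}$ (see \ref{no:zhu1}). The $\bt$-dependence is not a technicality: the reason $\cY_x$ is a section of $\cV^*$ is that a vector $a\in V$, viewed via a trivialization as a section of $\cV$ over $\cD_x^\times$, transforms under a coordinate change by the pointwise twist $R(\rho_\bt)$ (the associated-bundle transition function at the moving point $\bt$), while the fiber $\cV_x$, and hence $\End\cV_x$, transforms by $R(\rho)$; the cocycle condition to be verified must match these two \emph{different} actions.

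The same error surfaces at the infinitesimal level you invoke. The relations \eqref{eq:quasi-conformal-susy} have right-hand sides of the form $\sum_{j}\binom{n+1}{j+1}z^{n-j}Y(L_j a,z,\theta)+\cdots$, with $z$-dependent coefficients multiplying insertions of \emph{all} the modes $L_j$ with $j\le n$. These are precisely the derivative of $\bt\mapsto Y(R(\rho_\bt)^{-1}a,\rho(\bt))$ along a one-parameter subgroup; they are inconsistent with the derivative of your right-hand side $Y(R(\rho)a,\rho(\bt))$, which would produce only the single insertion $Y(L_n a,\bt)$ together with the vector-field action on $\bt$. So the exponentiation step you describe cannot land on the formula you wrote. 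Replacing your identity by \eqref{eq:huangs} --- and then checking, as in \ref{no:huangs}, that this is exactly the compatibility between the $\Aut\cO$-twist of the fiber and the $\bt$-dependent twist of sections over the punctured disk --- repairs the argument; the remaining work, including the super-sign bookkeeping you flag, is carried out in \cite{heluani4}.
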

The key ingredient in the proof of this Theorem is a super-field version of Huang's change of coordinates formula. For any $\rho \in \Aut \cO$ and $a \in V$ we have:
\begin{equation}\label{eq:huangs}
Y(a, \bt) = \rho Y\bigl( \rho_\bt^{-1} a, \rho(\bt) \bigr) \rho^{-1}
\end{equation}
Here we abuse notation by identifying $\rho$ with its corresponding action on $V$ obtained by exponentiating the $\Der_0 \cO$-action. For a coordinate $\bt$ the automorphism $\rho_{\bt} \in \Aut \cO$ is defined by $\rho_\bt (\bx) = \rho(\bx + \bt) - \rho(\bx)$, where $\bx = (x,\theta)$, $\bt = (t , \zeta)$, and $\bx + \bt = (x + t, \theta + \zeta)$. If $\rho = (F, \Psi)$ then $\rho_{\bt} = (F_{\bt}, \Psi_{\bt})$ where $F_{\bt}(\bx) = F(\bx + \bt) - F(\bt)$, and $\Psi_{\bt} (\bx) = \Psi (\bx + \bt) - \Psi(\bt)$. 
\end{nolabel}
\begin{nolabel}\label{no:filtration}
The bundle $\cV$ has infinite rank,  it carries a filtration by $\cV_{\leq n}$ defined to be the $\Aut \cO$-submodule of $V$ generated by all vectors of conformal weight at most $n$. The corresponding quotients $\cV_{\leq n+1}/\cV_{\leq n}$ are finite rank vector bundles on $X$. 
\end{nolabel}
\begin{nolabel}\label{no:chiral-operation}
The sections $\cY_x(\cdot)$ can be globalized by moving the point $x \in X$, in this way we recover the chiral operation $\mu$ on the right $\cD_X$-module $\cA := \cV \otimes \omega_X$. In local coordinates $\mu$ takes the following form. We let $\bt = (t,\zeta)$ be coordinates near $x \in X$ and consider another set of coordinates $\bx= (x, \theta)$. We have a trivialization of $\omega_{X^2}$ near the diagonal  point $(x, x) \in \Delta \subset X^2$ given by the section of the Berezinian bundle $[d\bt d\bx]:=[dt dx d\zeta d\theta]$. Using these coordinates we obtain a local trivialization of the $\cD_{X^2}$-module $\cA \boxtimes \cA$. Its local sections are given by $f(\bt,\bx) (a \otimes [d\bt]) \boxtimes (b \otimes [d\bx])  = f(\bt,\bx)  (a \boxtimes b) \otimes [d\bt d\bx]$ where we used the natural identification $\omega_X \boxtimes \omega_X \simeq \omega_{X^2}$ and $f$ is a regular function of $X^2$ near $(x,x)$ and $a,b \in V$. Since we are interested in $j_* j^* \cA \boxtimes \cA$ as in \eqref{eq:chiral-operations} we will allow $f$ to have poles at $t=x$.  Similarly we obtain a local trivialization of the $\cD_{X^2}$-module  $\Delta_* \cA$. Its local sections are given by $f(\bt,\bx) [d\bt] \boxtimes a [d\bx] \mod \text{reg}$ where we allow $f$ to have poles at $t =x$ and $\mod \text{reg}$ means that we mod out by regular sections of $\omega_X \boxtimes \cA$ (that is local sections with $f$ regular).  We finally let 
\[  \mu : f(\bt,\bx) (a \boxtimes b) \otimes  [d\bt d\bx] \mapsto f(\bt, \bx) [d \bt] \boxtimes Y(a, \bt-\bx)b [d\bx] \mod \text{reg}.\]
We have
\begin{thm*}\cite[Thm 4.12]{heluani4} \cite[Thm 18.3.3]{frenkelzvi}
The map $\mu$ so defined is independent of the coordinates chosen and gives rise to a chiral algebra on the supercurve $X$ as in definition \ref{defn:chiral_def}. 
\end{thm*}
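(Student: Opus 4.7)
The plan is to reduce both the coordinate independence and the chiral algebra axioms to known identities of the underlying SUSY vertex algebra $V$, using two key inputs: the super Huang change-of-coordinates formula \eqref{eq:huangs} for coordinate independence, and the residue-form of associativity from \ref{lem:associativity-in-coords} for the chiral Jacobi axiom. This is a super-analogue of \cite[Thm.~18.3.3]{frenkelzvi}, and my strategy essentially follows the approach of \cite{heluani4}, with added care for the signs and the parity shift of the unit discussed in Remark~\ref{rem:parity}.

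For coordinate independence near a diagonal point $(x,x) \in \Delta \subset X^2$, I would compare two systems of local coordinates $(\bt,\bx)$ and $(\bt',\bx') = (\rho(\bt),\rho(\bx))$ related by a single $\rho \in \Aut \cO^{1|1}$ acting at each factor (since the trivializations of $\widehat{X}$ at $(x,x)$ are related by the diagonal $\rho\times\rho$). A local section $f(\bt,\bx)(a \boxtimes b) \otimes [d\bt\, d\bx]$ of $j_*j^*(\cA \boxtimes \cA)$ is rewritten in the primed system as $f(\rho^{-1}(\bt'),\rho^{-1}(\bx'))(\rho a \boxtimes \rho b)\otimes [d\bt'\, d\bx'] \cdot \Ber(\rho\times\rho)$, and applying $\mu$ in the primed coordinates produces $Y(\rho a, \bt'-\bx') \rho b$. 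Now \eqref{eq:huangs} applied with $\rho_{\bx}$ turns this into $\rho Y(a, \rho_{\bx}^{-1}(\bt'-\bx'))b = \rho Y(a, \bt - \bx) b$ (to the required order modulo regular sections), since $\rho(\bt) - \rho(\bx) = \rho_{\bx}(\bt-\bx)$. The residual $\rho$ identifies the fiber $\cA_x$ in the two systems, proving well-definedness.

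For skew-symmetry, applying $\sigma$ to $f(\bt,\bx)(a\boxtimes b)\otimes[d\bt\, d\bx]$ produces $(-1)^{ab}f(\bx,\bt)(b\boxtimes a)\otimes[d\bx\, d\bt]$, and $\mu$ then sends it to the class of $Y(b,\bx-\bt)a$. The SUSY skew-symmetry identity $Y(a,\bt-\bx)b = (-1)^{ab}e^{(t-x)\partial + (\zeta-\theta)D}Y(b,\bx-\bt)a$, together with the fact that the actions of $\partial$ and $D$ become total derivatives killed modulo regular sections in $\Delta_*\cA$, yields $\mu \circ \sigma = -\mu$ after accounting for the super-Berezinian sign from swapping $[d\bt\, d\bx]$. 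For the Jacobi axiom, I would work near a triple diagonal point with coordinates $\bt, \bx, \bz$ at the three factors. The three compositions $\mu_{1\{23\}}, \mu_{2\{31\}}, \mu_{3\{12\}}$ acting on $f(\bt,\bx,\bz)(a\boxtimes b\boxtimes c)\otimes[d\bt\, d\bx\, d\bz]$ produce three expressions built from iterated products $Y(a,\bt-\bz)Y(b,\bx-\bz)c$, $Y(b,\bx-\bz)Y(a,\bt-\bz)c$, and $Y(Y(a,\bt-\bx)b,\bx-\bz)c$ expanded in the respective domains $|t-z|>|x-z|$, $|x-z|>|t-z|$, $|t-x|>|x-z|$. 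The signed sum vanishing as a class in $\Delta^{(3)}_*\cA$ is precisely the (iterated) residue identity of \ref{lem:associativity-in-coords}, which is the super-Borcherds identity. Finally, the unit axiom is immediate from $Y(\vac,\bt-\bx) = \id$, which makes the embedding $\omega_X \hookrightarrow \cA$, $[d\bt]\mapsto \vac\otimes[d\bt]$, intertwine the residue chiral structure of Section~\ref{no:residue} with $\mu$.

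The main obstacle is the sign and parity bookkeeping, especially in dimension $1|2k+1$, where the chiral unit is odd (Remark~\ref{rem:parity}); this forces consistent tracking of parities under $\boxtimes$-swaps, Berezinian orientations, and the super skew-symmetry of $Y$. Once these sign conventions are pinned down in the spirit of \cite{heluani4}, the proof reduces to the verifications sketched above.
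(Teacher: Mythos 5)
The paper does not actually prove this theorem: it is quoted as a known result from \cite[Thm 4.12]{heluani4} and \cite[Thm 18.3.3]{frenkelzvi}, with no argument reproduced in the text. Your sketch follows precisely the route of those cited proofs --- Huang's change-of-coordinates formula \eqref{eq:huangs} (via the localization identity $\rho(\bt)-\rho(\bx)=\rho_\bx(\bt-\bx)$) for coordinate independence, and the residue form of associativity in \ref{lem:associativity-in-coords} for skew-symmetry and the chiral Jacobi axiom --- which are exactly the two ingredients the paper itself singles out as essential in Sections \ref{no:huangs} and \ref{lem:associativity-in-coords}, so your approach is the same as that of the proof the paper defers to.
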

\end{nolabel}
\begin{nolabel}\label{no:universal property}
The Harish-Chandra setting of \cite[$\S$ 2.9.9]{beilinsondrinfeld} applies and the chiral algebra $\cA$ that one obtains from a quasi-conformal $N_W=1$ SUSY vertex algebra $V$ is \emph{universal} in the following sense. Our $V$ gives rise to a rule that assigns to each smooth family $X/S$ of relative dimension $1|1$ a quasi-coherent left $\cD_{X/S}$-module $\cV_{X/S}$ and to each fibrewise \'etale morphism of families $f: X'/S' \rightarrow X/S$ an identification $\cV_{X'/S'} \overset{\sim}{\rightarrow} f^* \cV_{X/S}$. 
\end{nolabel}

\begin{ex} \label{ex:5.8}
Let $V$ be the topological vertex algebra of example \ref{ex:examples-conformal-algebras}(g). It is conformal as defined in \ref{defn:topological-quasi-conformal}.  Consider a curve $X$ and the corresponding chiral algebra $\cA$ over $X$. The subspace $V_{\leq 1}$ generated by $\vac, h, j$ is a $2|1$ dimensional $\Aut \cO$-submodule of $V$ which generates a sub $\cD_X$-module of $\cA$. As a $\cO_X$-module, this module is an extension by $\omega_X$ (generated by the vacuum vector) of some rank $1|1$ locally free $\cO_X$-module which we proceed to identify.

Let $\bt = (t, \zeta)$ be local parameters near some point of $X$ and consider $\rho = (F, \Psi) \in \Aut \cO$ some change of coordinates $\bt  \mapsto \bigl(F(\bt), \Psi(\bt) \bigr)$. We expand
\[
\rho_\bt (\bx) = \bigl(F_\bt(\bx), \Psi_\bt(\bx) \bigr) := \bigl( F(\bt + \bx) - F(\bt), \Psi(\bt + \bx) - \Psi(\bt)\bigr)
\]
as a power series in $\bx = (x, \theta)$ as
\begin{equation}\label{eq:5.8.1}
\begin{aligned}
F_\bt (\bx) &= x \partial_t F(\bt) + \theta \partial_\zeta F(\bt) + \theta x \partial^2_{\zeta,t} F(\bt) + \frac{x^2}{2} \partial^2_{t} F(\bt) + \dots \\
\Psi_\bt (\bx) &= x \partial_t \Psi(\bt) + \theta \partial_\zeta \Psi(\bt) + \theta x \partial^2_{\zeta,t} \Psi(\bt) + \frac{x^2}{2} \partial^2_{t} \Psi(\bt) + \dots \\
\end{aligned}
\end{equation}
On the other hand $\rho_\bt = \exp (\mathbf{v}(\bt))$ for some $\mathbf{v} \in \mathrm{Der}_0 \cO = \mathrm{Lie} \Aut \cO$. We may take the general form
\begin{multline*}
\rho_\bt (\bx) = \exp \left(\sum_{i \geq 1} \tau_i (x^{i+1} \partial_x + (i+1) x^i\theta \partial_\theta) + \alpha_i x^i\theta_{\partial_\theta} + \varepsilon_i x^i \theta \partial_{x} + \delta_i x^{i+1} \partial_{\theta} \right) \\ \times (1 + \varepsilon_0 \theta \partial_x) q^{(x \partial_x + \theta \partial_\theta)} y^{\theta \partial_\theta} (1 + \delta_0 x \partial_\theta) \bx,
\end{multline*}
and expand to obtain
\begin{align*}
F_\bt(\bx) &= q x + q \varepsilon_0 \theta + q \left( \varepsilon_0 (\alpha_1+2 \tau_1) + \varepsilon_1 \right) \theta x + q \left(\tau_1 + \varepsilon_0 \delta_1  \right) x^2 + \dots \\ 
\Psi_\bt (\bx) &= q \delta_0 x + q (y-\varepsilon_0 \delta_0) \theta \\ & \quad \quad + q\left( \delta_0 \varepsilon_1 + (y-\varepsilon_0\delta_0)(\alpha_1 + 2\tau_1) \right) \theta x + q \left(\delta_0 \tau_1 + (y - \varepsilon_0 \delta_0)\delta_1 \right) x^2 + \dots.
\end{align*} 
Equating coefficients with \eqref{eq:5.8.1} then yields 
\begin{equation} \label{eq:5.1.1}
\begin{gathered}
q = \partial_t F, \qquad   \varepsilon_0 = - \frac{\partial_\zeta F}{\partial_t F}, \qquad   \delta_0 = \frac{\partial_t \Psi}{\partial_t F}, \qquad  y = \frac{\partial_\zeta \Psi}{\partial_t F} - \frac{\partial_\zeta F \partial_t \Psi}{(\partial_t F)^2} = \Ber \rho^{-1}\\
\tau_1 + \varepsilon_0 \delta_1 = \frac{1}{2} \frac{\partial^2_{t,t} F}{\partial_tF}, \qquad \varepsilon_0(\alpha_{1} + 2 \tau_1) + \varepsilon_1 = \frac{\partial^2_{t,\zeta} F}{\partial_t F} \\ 
(\partial_t \Psi) \tau_1 + (\partial_\zeta \Psi) \delta_1 = \frac{1}{2} \partial^2_{t,t} \Psi, \qquad (\partial_t \Psi) \varepsilon_1 + (\partial_\zeta \Psi) (\alpha_1 + 2 \tau_1) = \partial^2_{t,\zeta} \Psi
\end{gathered}
\end{equation}
Note that the Berezinian of the change of coordinates $\rho$ is given by 
\begin{equation}
\Ber \rho = \Ber 
\begin{pmatrix}
\partial_t F & \partial_t \Psi \\ \partial_\zeta F & \partial_\zeta \Psi
\end{pmatrix} = \frac{\partial_t F}{\partial_\zeta \Psi} + \frac{\partial_\zeta F \partial_t \Psi}{(\partial_\zeta \Psi)^2}.
\label{eq:5.8.2}
\end{equation}
Using \eqref{eq:topological-derivations} we compute the action of 
\[ \rho_\bt^{-1} = \exp(\delta_0 Q_0) y^{J_0} q^{L_0} \exp(-\varepsilon_0 H_0) \exp \left( \sum_{i \geq 1} \tau_i L_i + \alpha_i J_i - \varepsilon_i H_i + \delta_i Q_i \right)\]
on the vector space spanned by the vectors $\vac,h,j$ to be left multiplication by the matrix 
\begin{equation}\label{eq:5.8.2b}
\begin{pmatrix}
1 & \frac{c}{3}\delta_1 & \frac{c}{3} (\alpha_1 +\tau_1) \\ 
0 & q y^{-1} &  q y^{-1} \varepsilon_0 \\ 
0 & q y^{-1} \delta_0 & q \left( 1 - y^{-1} \varepsilon_0 \delta_0 \right)
\end{pmatrix}.
\end{equation}
We notice that the lower $2\times 2$ block is given by 
\begin{equation}\label{eq:5.8.3}
q y^{-1}
\begin{pmatrix}
1  & \varepsilon_0 \\ \delta_0 & y - \varepsilon_0 \delta_0 
\end{pmatrix} = \Ber \rho \cdot 
\begin{pmatrix}
\partial_t F & - \partial_\zeta F \\  \partial_t \Psi  & \partial_\zeta \Psi
\end{pmatrix}
\end{equation}
which are the transition functions of $\omega_X \otimes \Omega^{1}_X$. It follows that $\cA_{\leq 1}$ is an extension
\[ 0 \rightarrow \omega_X \rightarrow \cA_{\leq 1} \rightarrow \cT_X \rightarrow 0,\]
of the tangent bundle of $X$ by its Berezinian bundle showing that the topological algebra \ref{ex:examples-conformal-algebras} g) gives the supersymmetric analog of the Virasoro extension of \cite[$\S$ 2.5.10]{beilinsondrinfeld}. 
\end{ex}
\begin{nolabel}\label{no:subtlety}
There is a subtlety regarding the rule of signs when one considers matrices with entries in a supercommutative ring. Just as in the commutative case, every left module can be made into a right module in a compatible manner, however care must be taken on the signs that appear. For a full account we refer the reader to \cite{deligne2,manin2}. In the case at hand in Example \ref{ex:5.8} we consider $V$ and its subspace $V_{\leq 1}$  with basis $\left\{ \vac, h, j \right\}$ as right modules over the algebra $\mathbb{C}[\varepsilon, \delta]$. As such we write a vector of coordinates as a vertical matrix $\begin{pmatrix} \alpha & \beta & \gamma \end{pmatrix}^T$ to denote the vector $\vac \alpha + h  \beta + j \gamma$. An endomorphism of this space is written as a $3\times 3$ matrix with entries in $\mathbb{C}[\varepsilon,\delta]$ acting by multiplication on the left on the column of \emph{right coordinates}.  This explains the signs in the $2,3$-entry of \eqref{eq:5.8.2b} and the $1,2$-entry of \eqref{eq:5.8.3} which might seem strange at first sight. 
\end{nolabel}

\section{Families of elliptic (super) curves}\label{sec:elliptic-super}
In this section we construct two families $M^\text{even}$ and $M$ of elliptic supercurves, and we describe the chiral algebras over these families corresponding to a quasi-conformal $N_W=1$ SUSY vertex algebra. Moduli spaces of elliptic supercurves have been considered by many authors. In the case of $N=1$ supercurves or $N=2$ SUSY curves these moduli spaces where studied in detail in \cite{melzer}. Recently these moduli spaces have caught some attention in the physics literature (see \cite{witten-donagi1,witten-donagi2} and references therein) 
\begin{nolabel}
As a preliminary we describe the non super case. Let $V$ be a quasi-conformal vertex algebra and let $\cA$ be the chiral algebra over $\mathbb{A}^{1}$ corresponding to $V$. This chiral algebra is equivariant with respect to the group $Aff$ of affine transformations of $\mathbb{A}^{1}$. We have an action of $\mathbb{G}_m$ on $V$ defined by $a \mapsto q^{-L_0}a$ for $q \in \mathbb{G}_m$. After fixing a global coordinate $t$ on $\mathbb{A}^1$ we also have an action of $\mathbb{G}_m$ on $\mathbb{A}^1$ defined by $t \mapsto qt$. The coordinate $t$ trivializes $\mathbb{A}^1 \times V \simeq \cV$ via the map $(x,a) \mapsto (t_x, a)$, where $t_x$ is the induced local coordinate at $x \in \mathbb{A}^1$. It also furnishes a trivialization $dt$ of $\omega_{\mathbb{A}^1}$, and hence of $\cA = \cV \otimes \omega_{\mathbb{A}^1}$. This allows us to identify sections of $\cA$ with functions $a(x): \mathbb{A}^1 \rightarrow V$. Under this identification the isomorphism $q^* : \cA \rightarrow \cA$ corresponding to $q \in \mathbb{G}_m$ is $a(x) \mapsto q^{1-\Delta_a}a(qx)$, where $\Delta_a$ is the conformal weight of $a$.

In what follows we will consider $\cA$ as a chiral algebra on $\mathbb{G}_m \subset \mathbb{A}^1$ by restriction. We will also fix $q \in \mathbb{G}_m$ and consider the action of $\mathbb{Z}$ defined by 
\begin{equation}
n : a(x) \mapsto q^{n- n\Delta_a} a(q^n x).
\label{eq:x.1.1}
\end{equation}
\label{no:x.1}
\end{nolabel}
\begin{nolabel}
The exponential map $\exp : \mathbb{A}^1 \rightarrow \mathbb{G}_m$ identifies the translation $\tau \in \mathbb{G}_a$ with the automorphism $q$ above, where $q = e^{2\pi i \tau}$. 
The universal property \ref{no:universal property} implies an isomorphism $\cA_{\mathbb{A}^1}^l \simeq \exp^* \cA_{\mathbb{G}_m}^l$. After using the global coordinate $t$ to identify sections with functions, this isomorphism matches the constant section $a(x) = a$ of $\cA_{\mathbb{A}^1}^l$ with the section $a(x) = x^{\Delta_a} a$ of $\cA_{\mathbb{G}_m}^l$. Between the chiral algebras $\cA_{\mathbb{A}^1}$ and $\cA_{\mathbb{G}_m}$ the constant section $a$ is matched with $x^{\Delta_a - 1} a$. The action (\ref{eq:x.1.1}) corresponds in this way to $a(x) \mapsto a(x+n\tau)$. A priori this action is simpler, since it can be defined without the need of a quasi-conformal structure on $V$. However, the quasi-conformal structure is required to make the identification of chiral algebras associated to the exponential map.
\label{no:x.1b}
\end{nolabel}
\begin{nolabel}
Now we consider the relative situation. Let $S = \mathbb{H} \subset \mathbb{C}$ be the upper half complex plane and let $X = \mathbb{G}_m \times S$ be the trivial family over $S$, with global coordinate $t$. We put $q = e^{2 \pi i \tau}$ as usual. We have an action of $\mathbb{Z}$ on $X/S$ by $n : (x, \tau)  \mapsto (q^n \cdot x, \tau)$. Let $V$ and the action of $\mathbb{G}_m$ on $V$ be as above, and let $\cA$ be the chiral algebra over $X$ corresponding to $V$. Global sections of $\cA$ are now identified with functions $a(x;q): X \rightarrow V$, and the induced action $n^* \cA \simeq \cA$ on sections is given by \eqref{eq:x.1.1}. Often we will consider sections on $X$ flat over $S$ and will write them as $a(x)$ instead of $a(x;q)$.
\label{no:x.1c}
\end{nolabel}
\begin{nolabel}
Upon taking the quotient $\pi: X \twoheadrightarrow E := X/\mathbb{Z}$ we obtain a \emph{versal} family $E / S$ of elliptic curves. We let $\cA_E$ be the chiral algebra on $E$ associated to $V$. From the universal property \ref{no:universal property} we obtain an isomorphism $\cA \simeq \pi^* \cA_E$ which identifies sections of $\cA_E$ with $\mathbb{Z}$-equivariant sections of $\cA$ over $X$.

We may construct such a section as the sum
\begin{equation}
\sum_{n \in \Z} q^{n - n \Delta_a} a(q^n x)
\label{eq:x.1.d.1}
\end{equation}
of the $\Z$-translates of a given section $a(x)$ (flat over $S$). Whenever this sum is convergent we obtain a well defined $\Z$-equivariant section of $\cA$ over $X$, which is the same as a section of $\cA_E$ over $E$ in the appropriate category (eg. analytic, algebraic, etc). 
\label{no:x.1.d}
\end{nolabel}
\begin{nolabel}\label{no:usualuniversal}
To obtain a universal family of elliptic curves we must take a further quotient of $E/S$ by an action of the modular group $SL(2,\mathbb{Z})$ defined by
\begin{equation} \begin{pmatrix} a & b \\ c & d \end{pmatrix} \cdot ( \tau, t) = \left( \frac{a \tau + b}{c \tau + d}, \frac{t}{c \tau+ d} \right), \qquad ad - bc = 1.
\label{eq:sl2ext1}
\end{equation}
The $SL(2, \Z)$ is nontrivial on the base $S$ and the quotient $S / SL(2, \Z)$ is the coarse moduli space $M_{1,1}$ of elliptic curves. The orbifold quotient $\cE := E \git SL(2,\mathbb{Z})$ is the universal family of elliptic curves. 
 \end{nolabel}
\begin{nolabel}
The coarse moduli space $M_{1,1}$ has a compactification $\overline{M_{1,1}}$ obtained by adding the cuspidal elliptic curve at $q \rightarrow 0$. We wish to construct sections of $\cA$ that extend to this cuspidal curve. In other words we wish to construct sections possessing an expansion in positive powers of $q$ in \eqref{eq:x.1.d.1}.
\begin{lem*}
Let $z \in \mathbb{C}^*$, $a \in V$ of conformal weight $\Delta$, and let $k,l \geq 0$. Consider the section \[ a(x) = \frac{x^{k}a}{(x-z)^l} \in H^0(X \setminus z, \cA), \]which has a pole of order $l$ at the point $z$. If $\Delta + l > k+1 > \Delta$ then the sum \eqref{eq:x.1.d.1} is a well defined section in the formal neighborhood of the cuspidal curve in $\overline{M_{1,1}}$, that is viewed as a formal series, it does not have negative powers of $q$. Choosing $k = \Delta$ and $l=2$ is always a solution to this system of inequalities and there is no solution for $l = 1$. 
\end{lem*}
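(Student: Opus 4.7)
The plan is to substitute the given section directly into the $\mathbb{Z}$-averaging sum \eqref{eq:x.1.d.1}, expand each summand as a formal Laurent series in $q$, and verify that under the hypothesis $\Delta < k+1 < \Delta + l$ the lowest $q$-exponent contributed by the $n$-th summand tends to $+\infty$ as $|n| \to \infty$. Using $a(q^n x) = q^{nk} x^k a / (q^n x - z)^l$, the sum becomes
\[
\sum_{n \in \mathbb{Z}} \frac{q^{n(k+1-\Delta)} x^k a}{(q^n x - z)^l},
\]
and the problem reduces to a careful formal expansion of the rational factor $(q^n x - z)^{-l}$ around $q = 0$.

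For $n > 0$ I factor $(q^n x - z)^{-l} = (-z)^{-l}\bigl(1 - q^n x / z\bigr)^{-l}$ and apply the binomial series; only non-negative powers of $q$ appear, and after combining with $q^{n(k+1-\Delta)}$ the minimum $q$-exponent is $n(k+1-\Delta)$. For this to grow to $+\infty$ with $n$ I need $k+1 > \Delta$, which is the right-hand inequality. For $n < 0$, writing $n = -m$ with $m > 0$ and using $q^{-m}x - z = q^{-m}(x - q^m z)$, the $n$-th summand rewrites as $q^{m(\Delta + l - k - 1)} x^k a \cdot (x - q^m z)^{-l}$; factoring $(x - q^m z)^{-l} = x^{-l}(1 - q^m z/x)^{-l}$ and expanding binomially again gives only non-negative powers of $q$, with minimum $q$-exponent $m(\Delta + l - k - 1)$, which diverges precisely when $\Delta + l > k+1$, the left-hand inequality. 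The $n = 0$ term is $x^k a/(x-z)^l$ and contributes only to the constant term in $q$.

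Combining these three cases, under the hypothesis every $n \neq 0$ contributes only strictly positive powers of $q$ and each fixed $q$-exponent collects contributions from only finitely many $n$, so the sum converges in the appropriate formal ring (sections of $\cA$ on $X \setminus \mathbb{Z} \cdot z$ tensored with $\mathbb{C}[[q]]$) and has no negative $q$-powers, which is the claim. For the concluding observations, $V$ is quasi-conformal (see \ref{no:universal enveloping}) so $\Delta \in \mathbb{Z}_{\geq 0}$; the choice $k = \Delta$, $l = 2$ yields $\Delta < \Delta + 1 < \Delta + 2$ which is trivially satisfied, while for $l = 1$ the required strict inequalities read $\Delta < k+1 < \Delta + 1$ and ask for an integer $k+1$ strictly between two consecutive integers, which is impossible.

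The main obstacle is really only committing to a consistent direction of expansion: for $n > 0$ the pole $x = q^n z$ sits near $x = 0$ $q$-adically, for $n < 0$ it sits near $x = \infty$, and in each case the expansion of $(q^n x - z)^{-l}$ is forced by the requirement that $q$ be treated as a small formal parameter. Once this is fixed the two binomial expansions deliver the estimates above, and the rest is bookkeeping.
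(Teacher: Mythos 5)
Your proof is correct and follows essentially the same route as the paper: substitute the section into the sum, reduce to $\sum_n q^{n(k+1-\Delta)}x^k a/(q^nx-z)^l$, and check the $q$-adic order of each summand separately for $n>0$ and $n<0$ (the binomial expansions you write out are just a more explicit version of the paper's observation that the denominator is regular at $q=0$ for $n>0$ and has a pole of order $|n|l$ for $n<0$). You additionally verify the closing claims about $k=\Delta$, $l=2$ and the nonexistence of solutions for $l=1$, which the paper leaves implicit; your use of the integrality of $\Delta$ from quasi-conformality is the right justification.
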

\label{no:x.1.e}
\end{nolabel}
\begin{proof}
The sum \eqref{eq:x.1.d.1} is in this case
\[
\sum_{n \in \mathbb{Z}} \frac{q^{n - n\Delta + n k} x^{k}a}{(q^n x - z)^l}.
\]
The numerator has only positive powers of $q$ as long as $n (k+1) >  n \Delta$. If $n> 0$
the denominator is regular as $q \rightarrow 0$ since $z \in \mathbb{C}^*$. It follows that the summands with $n > 0$ vanish at $q =0$ as long as $k+1 > \Delta$. When $n < 0$ the denominator has a zero of order $nl$ at $q=0$, it follows that the quotient has only positive powers of $q$ as long as $k + 1 - l < \Delta$. 
\end{proof}

\begin{nolabel}
Now we carry out similar constructions in the supersymmetric case. Let $V$ be a quasi-conformal $N_W=1$ SUSY vertex algebra and let $\cA$ be the corresponding chiral algebra over $X=\mathbb{A}^{1|1}$. This chiral algebra is equivariant with respect to the group $Aff^{1|1}$ of affine transformations of $\mathbb{A}^{1|1}$. Consider the even subgroup $\mathbb{G}_m \times \mathbb{G}_m \subset \mathbb{G}_m^{1|1}$, it acts on $V$ by $a \mapsto q^{-L_0} y^{-J_0} a$ for $(q, y) \in \mathbb{G}_m \times \mathbb{G}_m$. After fixing a global coordinate $\bt = (t,\zeta)$ on $\mathbb{A}^{1|1}$ we have an action
\begin{equation}
(t,\zeta) \mapsto (q t, q y \zeta),
\label{eq:x.2.0}
\end{equation}
of $\mathbb{G}_m \times \mathbb{G}_m$ on $\mathbb{A}^{1|1}$. The coordinates $(t,\zeta)$ trivialize $\mathbb{A}^{1|1} \times V \simeq \cA^l$ via the map $(x,\theta;a) \mapsto (t_x, \zeta_\theta; a)$ where $(t_x, \zeta_\theta)$ are the induced local coordinates at $(x,\theta)$. We also obtain from $(t, \zeta)$ the global trivialization $[dtd\zeta]$ of $\omega_{\mathbb{A}^{1|1}}$, and hence of $\cA = \cA^l \otimes \omega_{\mathbb{A}^{1|1}}$. This allows us to identify sections of $\cA$ with functions $a(x, \theta) : \mathbb{A}^{1|1} \rightarrow V$. Under this identification the isomorphism $\bq^* \cA \simeq \cA$ corresponding to $\mathbf{q}=(q,y) \in \mathbb{G}_m \times \mathbb{G}_m$ is $a(x,\theta) \mapsto q^{- \Delta_a} y^{-1-c_a} a(q x,q y \theta)$ (note that the Berezinian of \eqref{eq:x.2.0} is $y^{-1}$). Here $\Delta_a$ and $c_a$ are the conformal weight and charge of $a$ respectively.

As before we consider $\cA$ as a chiral algebra on $\mathbb{G}_m^{1|1} \subset \mathbb{A}^{1|1}$ by restriction. We also fix $\bq =(q,y) \in \mathbb{G}_m \times \mathbb{G}_m$ and consider the action of $\mathbb{Z}$ defined by
\begin{equation}
n : a(x,\theta) \mapsto q^{ - n\Delta_a} y^{-n - n c_a} a(q^n x, q^n y^{n}\theta).
\label{eq:x.2.1}
\end{equation}
\label{no:x.2}
\end{nolabel}
\begin{nolabel}
As in \ref{no:x.1b} we might have taken an $N_W=1$ SUSY vertex algebra $V$ (a priori without quasi-conformal structure) to obtain a chiral algebra on $\mathbb{A}^{1|1}$ equivariant by the translation group $\mathbb{G}^{1|1}_a$. If we equip $V$ with a quasi-conformal structure then we may use the exponential change of coordinates $\exp : \mathbb{G}_a^{1|1} \rightarrow \mathbb{G}_m^{1|1}$ from \ref{no:supergroups} and the universal property \ref{no:universal property} to recover the action \eqref{eq:x.2.1} of $\mathbb{G}_m^{1|1}$ in the exponentiated coordinates.
\label{no:x.2.b}
\end{nolabel}
\begin{nolabel}
Now let $S = S^{2|0} := \mathbb{H} \times \mathbb{C}$ and let $X = \mathbb{G}_m^{1|1} \times S$ be the trivial family with fiber $\mathbb{G}_m^{1|1} \simeq \mathbb{C}^* \times \mathbb{C}^{0|1}$, and let $(t, \zeta)$ be a choice of global coordinate for $\mathbb{G}_m^{1|1}$. For $\tau \in \mathbb{H}$ and $\alpha \in \mathbb{C}$ we put $q = e^{2 \pi i \tau}$, $y = e^{2 \pi i \alpha}$. We have an action of $\mathbb{Z}$ on $X/S$ by $n : \bigl( (x,\theta), (q,y) \bigr) \mapsto \bigl( (q^n x, q^n y^{n} \theta), (q,y) \bigr)$.

Let $V$ be a quasi-conformal $N_W=1$ SUSY vertex algebra and let $\cA$ be the chiral algebra over $X$ corresponding to $V$. The trivialization of $\cA$ induced by the global coordinates $(t,\zeta)$ allows us to write sections as functions $a(\mathbf{x},\bq): X \rightarrow V$ (here and below we write $\mathbf{x}$ for $(x,\theta) \in \mathbb{G}_m^{1|1}$ and $\bq$ for $(q,y) = (e^{2 \pi i \tau},e^{2 \pi i\alpha}) \in S$). The induced action of $\Z$ on sections is given by \eqref{eq:x.2.1}. 

We define the family $E^0$ of superelliptic curves to be the quotient $X \twoheadrightarrow E^0 := X/\mathbb{Z}$. Since the base $S^{2|0}$ of $E^0$ is even we refer to it as the \emph{even} family of elliptic supercurves. Let $\cA_{E^0}$ be the chiral algebra over $E^0$ associated to $V$. The universal property \ref{no:universal property} identifies sections of $\cA_{E^0}$ with $\mathbb{Z}$-equivariant sections of $\cA$ over $X$.

As in \ref{no:x.1.d} we may construct such an equivariant section as the sum 
\begin{equation}
\sum_{n \in \mathbb{Z}} q^{- n \Delta_a} y^{-n - n c_a} a( q^n x, q^n y^{n} \theta). 
\label{eq:x.2.3}
\end{equation}
of $\Z$-translates of a given section $a(x,\theta)$ of $\cA$ over $X$.
\label{no:x.2.c}
\end{nolabel}
\begin{nolabel}
The quotient $E^{0} = X/\mathbb{Z}$ may also be viewed as a quotient $\mathbb{A}^{1|1} \times S/\mathbb{Z}^2$ where the action of $\mathbb{Z}^2$ is generated by
\begin{equation}
A: (t,\zeta ; \tau, \alpha) \mapsto (t + 1, \zeta; \tau, \alpha), \quad \text{and}  \quad B: (t,\zeta; \tau, \alpha) \mapsto (t + \tau, e^{2 \pi i \alpha } \zeta; \tau, \alpha). 
\label{eq:z2ineven}
\end{equation}
\label{no:x.2.f}
\end{nolabel}
\begin{nolabel}
The base $S^{2|0}$ of our families carries an action of $SL(2,\mathbb{Z})$ 
defined by
\begin{equation} \begin{pmatrix} a & b \\ c & d \end{pmatrix} \cdot ( \tau, \alpha) = \left( \frac{a \tau + b}{c \tau + d}, \frac{\alpha}{c \tau+ d} \right), \qquad ad - bc = 1.
\label{eq:sl2ext2}
\end{equation}
It also carries an action of $\mathbb{Z}^2$ defined by
\begin{equation}\label{eq:sl2ext3} \gamma \cdot (\tau, \alpha) := (\tau, \alpha + m \tau +n), \qquad \gamma= (m,n) \in \mathbb{Z}^2.\end{equation}
It is easy to check that these combine into an action of the Jacobi group $SL(2, \mathbb{Z}) \ltimes \mathbb{Z}^2$. The product in this group is
\[
(A, \gamma) \cdot (A', \gamma') = (A A', \gamma A' + \gamma')
\]
where $A, A' \in SL(2, \mathbb{Z})$ and $\gamma, \gamma' \in \mathbb{Z}^2$. We claim that this action of $SL(2, \mathbb{Z}) \ltimes \mathbb{Z}^2$ on $S$ extends to an action on the family $E^{0}$. Indeed we have the following.
\begin{lem*}
Introduce the maps
\begin{equation} \label{eq:strangeequiva}
(t, \zeta) \mapsto \gamma\cdot (t,\zeta) := \left( t, e^{2 \pi i m t} \zeta \right),
\end{equation}
for $\gamma = (m,n) \in \mathbb{Z}^2$, and
\begin{equation}
A \cdot (t, \zeta) = \left( \frac{t}{c \tau + d}, e^{- 2 \pi i t  \frac{c \alpha}{c \tau + d}}\zeta   \right), \qquad ad-bc = 1, 
\label{eq:strangeequiv}
\end{equation}
for $A \in SL(2, \mathbb{Z})$. These induce isomorphisms $\gamma:E^{0}_{(\tau, \alpha)} \simeq E^{0}_{\gamma (\tau, \alpha)}$ and $A : E^{0}_{(\tau, \alpha)} \simeq E^{0}_{A \cdot (\tau, \alpha)}$ respectively. Here $E^{0}_{(\tau,\alpha)}$ is the fiber of $E^{0}$ at the point $(\tau, \alpha) \in S$. These isomorphisms are compatible in the sense that for $A, A' \in SL(2,\mathbb{Z})$ and $\gamma, \gamma' \in \mathbb{Z}^2$ we have
\begin{align*}
A \circ A' (t, \zeta) = (AA') (t, \zeta), \quad
\gamma \circ \gamma' (t, \zeta) = (\gamma + \gamma') (t, \zeta), \quad
\text{and} \quad
\gamma \circ A (t, \zeta) = A \circ (\gamma A) (t, \zeta).
\end{align*}
\end{lem*}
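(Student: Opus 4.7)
My plan is to reduce the lemma to three verifications: (a) the formulas \eqref{eq:strangeequiva} and \eqref{eq:strangeequiv} define holomorphic automorphisms of the universal cover $\mathbb{A}^{1|1} \times S$; (b) they descend to isomorphisms between the stated fibers of $E^{0}$, presented as $(\mathbb{A}^{1|1}\times S)/\mathbb{Z}^2$ via \ref{no:x.2.f}; (c) the resulting fiber isomorphisms satisfy the semidirect product relations.

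Step (a) is immediate because $c\tau + d \neq 0$ for $\tau\in\mathbb{H}$ and the exponential twists on $\zeta$ are entire in $t$. For (b) I would check that each map intertwines the lattice action at $(\tau, \alpha)$ with the one at the image base point. For $\gamma=(m,n)$ the target lattice at $(\tau, \alpha + m\tau + n)$ has the \emph{unchanged} generator $A\colon t \mapsto t+1$ (thanks to $e^{2\pi i n}=1$) and twisted generator $B'\colon (t,\zeta) \mapsto (t+\tau, e^{2\pi i m\tau}e^{2\pi i\alpha}\zeta)$; the two relations $\gamma A = A \gamma$ and $\gamma B = B' \gamma$ both collapse to the identity $e^{2\pi i m(t+1)}=e^{2\pi i m t}$. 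For $A = \bigl(\begin{smallmatrix}a&b\\c&d\end{smallmatrix}\bigr) \in SL(2,\mathbb{Z})$, I would exploit the lattice identities
\[
\frac{1}{c\tau+d} = a - c\tau', \qquad \frac{\tau}{c\tau+d} = d\tau' - b,
\]
both consequences of $ad-bc=1$, to recognise the image of $A_{\text{old}}$ as the new-lattice element $(-c,a)$ and that of $B_{\text{old}}$ as $(d,-b)$. The change of basis $\bigl(\begin{smallmatrix}-c&a\\d&-b\end{smallmatrix}\bigr) = A^{-1}$ is unimodular, so the old lattice maps isomorphically onto the new. On the $\zeta$-coordinate a direct simplification gives $e^{2\pi i\alpha}\cdot e^{-2\pi i\tau c\alpha/(c\tau+d)} = e^{2\pi i d\alpha'}$, agreeing with the twist expected from the $(d,-b)$-generator at the new base.

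For step (c), the relation $\gamma\circ\gamma' = \gamma+\gamma'$ is the multiplicativity of the exponential. The relation $A \circ A' = AA'$ on the $t$-coordinate reduces to the chain-rule identity $(c'\tau+d')(c\tau''+d) = c_{AA'}\tau + d_{AA'}$ for the cocycle $c\tau+d$, and its $\zeta$-twist component reduces to $a'd'-b'c' = 1$. The mixed relation $\gamma\circ A = A\circ(\gamma A)$, with $\gamma A = (ma+nc, mb+nd)$, expands into a single equality of exponents in $t$ which simplifies to a tautology once $ad-bc=1$ is invoked.

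The only delicate ingredient is the precise shape of the $\zeta$-twist in \eqref{eq:strangeequiv}: the exponent $-2\pi i tc\alpha/(c\tau+d)$ is, up to a character of $SL(2,\mathbb{Z})$, the unique one that forces the image of $B_{\text{old}}$ to have the correct $(d,-b)$-type twist at the new base point. Once it is fixed all the remaining compatibilities of the lemma follow by the above computations.
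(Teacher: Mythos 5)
Your proof is correct and follows essentially the same route as the paper: both reduce the lemma to checking that the maps on $(t,\zeta)$ intertwine the $\mathbb{Z}^2$-actions defining the fibers of $E^0$, with the $SL(2,\mathbb{Z})$ case resting on the lattice identities $\tfrac{1}{c\tau+d}=a-c\tau'$ and $\tfrac{\tau}{c\tau+d}=d\tau'-b$. The only difference is directional: the paper posits $\zeta'=e^{2\pi i x t}\zeta$ with $x$ unknown and solves the intertwining conditions to find $x=-\tfrac{c\alpha}{c\tau+d}$, whereas you verify the given exponent directly; the underlying computation is identical.
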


\begin{proof}
The first isomorphism is easy to see. Let us prove that \eqref{eq:strangeequiv} induces an isomorphism $E^{0}_{(\tau, \alpha)} \cong E^{0}_{(\tau', \alpha')}$ where $(\tau', \alpha') = A \cdot (\tau, \alpha)$ is given by \eqref{eq:sl2ext2}.  We start with the ansatz $t' = \tfrac{t}{c \tau + d}$ and $\zeta' = e^{2 \pi i x t} \zeta$ for some unknown $x$. The change $t' \mapsto t' + 1$ corresponds to $t \mapsto t + c\tau + d$ and therefore if (\eqref{eq:z2ineven}) is to be satisfied $\zeta \mapsto e^{2 \pi i c \alpha } \zeta$. This in turn implies that $\zeta' \mapsto e^{2 \pi i \left[ x (c \tau + d) + c  \alpha \right]} \zeta'$. We want $\zeta' \mapsto \zeta'$, but for this to be true there must exist $n \in \Z$ such that 
\begin{subequations} \label{eq:mandnexist}
\begin{equation}
x (c \tau + d) + c  \alpha = n. 
\label{eq:nexists}
\end{equation}
Similarly, the map $t' \mapsto t' + \tau'$ corresponds to the change $t \mapsto t + a \tau + b$ which in turn induces $\zeta \mapsto e^{2 \pi i a \alpha }\zeta$ and therefore $\zeta' \mapsto e^{2 \pi i \left[ x (a \tau + b) + a  \alpha \right]}\zeta'$. This in turn should be equal to $e^{2 \pi i \alpha'} \zeta' = e^{2 \pi i \tfrac{\alpha}{c\tau + d}} \zeta'$. We obtain that there should exist an integer $m$ such that
\begin{equation}
x (a \tau + b) + a \alpha = - m + \frac{\alpha}{ c \tau + d}. 
\label{eq:mexists}
\end{equation}
\end{subequations}
Multiplying \eqref{eq:nexists} by $(a\tau + b)$, \eqref{eq:mexists} by $(c \tau + d)$ and substracting we obtain that the two integer numbers  $m$ and $n$ should satisfy:
\begin{equation}
-\alpha = c\alpha (a \tau + b) - a \alpha (c \tau + d) = n (a\tau + b) + m (c \tau + d) - \alpha
\label{eq:mnequation}
\end{equation}
Comparing the linear and constant terms in $\tau$ we obtain the system:
\[
\begin{pmatrix}n & m \end{pmatrix} \begin{pmatrix} a & b \\ c & d \end{pmatrix}= \begin{pmatrix} 0 & 0 \end{pmatrix}\]
therefore $m = n = 0$ is the unique solution and from \eqref{eq:nexists} we read $x = - \tfrac{c \alpha}{c \tau + d}$. The rest of the statement is a straightforward verification.
\end{proof}
\label{no:jacobisuper}
\end{nolabel}
\begin{nolabel}\label{no:newfine}
The orbifold quotient $\cM^{0} := \mathbb{H} \times \mathbb{C} \git SL(2, \mathbb{Z}) \ltimes \mathbb{Z}^2$ is a fine moduli space for even families of elliptic supercurves, that is to say, any family $Y \rightarrow T$ of elliptic supercurves over a purely even base $T$ such that the module of odd coordinates forms a degree $0$ line bundle over the reduced curve $Y_{\mathrm{rd}}$ is the pullback of the universal family $\cE^{0} := E^{0} \git SL(2, \mathbb{Z}) \ltimes \mathbb{Z}^2$ by a map $T \rightarrow \cM^{0}$.

On the other hand $\cM^{0}$ is itself a universal family of elliptic curves as we saw in \ref{no:usualuniversal}. We may view this universal curve (or rather its Jacobian) as parametrizing pairs consisting of an elliptic curve and a degree zero line bundle over it. Any elliptic supercurve over an algebraically closed field splits, namely it consists of a usual elliptic curve $X_{\mathrm{rd}}$ together with a line bundle over it (defining the odd coordinates of $X$). Therefore purely even families of elliptic supercurves are the same as families of elliptic curves equipped with line bundle.

Note that while the orbit space $\mathbb{H} \times \mathbb{C} / SL(2, \mathbb{Z}) \ltimes \mathbb{Z}^2$ projects to $\mathbb{H} / SL(2, \mathbb{Z})$, the fiber over $\tau$ is not the elliptic curve $E_\tau$. Indeed the fiber is $E_\tau / \Aut(E_\tau)$ which is generically a $\mathbb{P}^1$. This is another instance where we should instead take the categorical quotient.
\end{nolabel}
\begin{nolabel}
The base $S = S^{2|0}$ of our family $E^{0}$ admits a compactification $\overline{S}$ which corresponds to adding the cuspidal curve at $q \rightarrow 0$, $y \rightarrow 0$. We wish to construct sections of $\cA$ using \eqref{eq:x.2.3} that extend to this cuspidal curve.
\begin{lem*}
Let $\mathbf{z}=(z,\xi), \bx = (x,\theta) \in \mathbb{G}_m^{1|1}$. Let $a \in V$ be of conformal weight $\Delta$ and charge $c$. Let $k,l \geq 0$ and $K \in \left\{ 0,1 \right\}$. Consider the section 
\[
a(\mathbf{x}) = a(x,\theta) = \frac{\mathbf{x}^{k|K}a}{(x - z)^{l}} \in H^0 (X \setminus \mathbf{z}, \cA).
\]
The sum \eqref{eq:x.2.3} is a well defined section in the formal neighborhood of the cuspidal curve $(q,y)=(0,0)$ whenever
\begin{enumerate}
\item $\Delta + l > K + k > \Delta$,
\item $K + k = \Delta$ and $K > 1 + c$ for any value of $l$, or
\item $K + k = \Delta + l$ and $K < 1 + c$. 
\end{enumerate}
In particular, if $l \geq 2$ the sections
\[ \frac{x^{\Delta + 1}a}{(x - z)^l} \quad \text{and} \quad \frac{x^{\Delta} \theta a}{(x-z)^l}, \]
give rise to convergent sums in \eqref{eq:x.2.3}.
\end{lem*}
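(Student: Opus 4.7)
The plan is to mirror the approach of Lemma~\ref{no:x.1.e} in the non-super case: substitute the explicit section into the $\Z$-equivariant sum~\eqref{eq:x.2.3}, expand each denominator as a formal $q$-series, and verify that the $q$- and $y$-exponents lie in the correct range in each of the three stated regimes.

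First I will compute the general $n$-th summand. Using the transformation rule \eqref{eq:x.2.1} and the fact that $\theta^K \mapsto (q^n y^n)^K \theta^K$ under $\theta \mapsto q^n y^n \theta$, the $n$-th summand of \eqref{eq:x.2.3} applied to $a(\mathbf{x}) = \mathbf{x}^{k|K} a / (x-z)^l$ reads
\[
\frac{q^{n(k+K-\Delta)}\, y^{n(K-1-c)}\, x^k \theta^K\, a}{(q^n x - z)^l}.
\]
The key arithmetic quantity $K - 1 - c$ appearing in the exponent of $y$ is precisely what will dictate the dichotomy $K \gtrless 1 + c$ in the boundary conditions (ii) and (iii).

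Next I will expand $(q^n x - z)^{-l}$ as a formal power series in $q$, using a different strategy depending on the sign of $n$. For $n > 0$, expansion around $q = 0$ with small parameter $q^n x/z$ has leading coefficient $(-z)^{-l}$ and non-negative $q$-corrections, giving leading $q$-exponent $n(k+K-\Delta)$ for the full summand. For $n < 0$, I factor out $(q^n x)^{-l} = q^{|n|l}x^{-l}$ and expand in $z/(q^n x) = zq^{|n|}/x$, giving leading $q$-exponent $|n|(l + \Delta - k - K)$ for the full summand and non-negative corrections. The $y$-exponent $n(K-1-c)$ is unaffected by these expansions.

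With this bookkeeping in place the three cases follow by inspection. In case (i), the strict inequalities $\Delta < k + K < \Delta + l$ make both leading $q$-exponents strictly positive for $n \neq 0$, so only $n = 0$ contributes to $q^0$ and every coefficient of $q^a$ with $a \geq 1$ receives only finitely many contributions. In case (ii), the equality $k + K = \Delta$ annihilates the $n > 0$ leading $q$-power, so infinitely many $n \geq 0$ contribute to $q^0$; the relevant series is essentially $\sum_{n \geq 1} y^{n(K-1-c)}$ times an explicit coefficient, and the hypothesis $K > 1 + c$ makes this a geometric series in $y^{K-1-c}$ converging in $k[[y]]$, while for $n < 0$ the $q$-exponent is $|n|l \geq 1$ for any $l \geq 1$. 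Case (iii) is the mirror image: $k + K = \Delta + l$ lets $n \leq 0$ contribute to $q^0$, and $K < 1 + c$ produces a convergent series in $y^{(1+c)-K}$. Finally, both example sections satisfy $k + K = \Delta + 1$, so case (i) applies with the hypothesis $\Delta < \Delta + 1 < \Delta + l$ reducing to $l \geq 2$.

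The main obstacle is not any deep idea but bookkeeping care at the boundary cases: when the leading $q$-exponent vanishes for one sign of $n$, the $y$-exponent must be strictly positive in that range in order for the resulting geometric series to lie in $k[[y]]$ rather than $k[y^{-1}]$, and it is precisely the asymmetric hypotheses $K > 1 + c$ in (ii) and $K < 1 + c$ in (iii) that make this work. Handling these boundary cases in parallel with the two distinct denominator expansions without sign or index errors is the only genuine care required.
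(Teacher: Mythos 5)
Your proposal is correct and follows essentially the same route as the paper: substitute the section into \eqref{eq:x.2.3} to get the summand $q^{n(k+K-\Delta)}y^{n(K-1-c)}x^k\theta^K a/(q^nx-z)^l$, expand the denominator separately for $n>0$ (regular at $q=0$) and $n<0$ (zero of order $|n|l$), and in the boundary cases (ii)–(iii) identify the $q^0$-coefficient with a geometric series in $y^{\pm(K-1-c)}$ whose convergence in $k[[y]]$ is exactly the condition $K\gtrless 1+c$. The only difference is cosmetic: you make the two denominator expansions and the leading $q$-exponents fully explicit, whereas the paper states them more tersely.
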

\label{no:x.2.d}
\end{nolabel}
\begin{proof}
The sum \eqref{eq:x.2.3} is in this case 
\begin{equation} 
\sum_{n \in \mathbb{Z}} \frac{q^{n K + n k - n \Delta} y^{n K -n - n c} x^k \theta^K a }{(q^n x-z)^l}
\label{eq:x.2.d.1}
\end{equation}
The numerator has only positive powers of $q$ as long as $n (K + k) > n \Delta$. When $n > 0$ the denominator is regular at $q \rightarrow 0$ since $z \in \mathbb{C}^*$. If follows that the terms with $n > 0$ vanish at $q = 0$ as long as $k + K > \Delta$. When $n < 0$ the denominator has a zero of order $nl$ at $q = 0$, it follows that the quotient has only positive powers of $q$ as long as $k + K - l < \Delta$. It follows that if the inequality $\Delta + l > k + K > \Delta$ is satisfied, then the sum \eqref{eq:x.2.d.1} is well defined as a series in $y$ at the limit $q \rightarrow 0$. If $l \geq 2$ we have always a solution with $K = 0$, $k = \Delta + 1$ and another with $K = 1$, $k = \Delta$. 

In the case $K + k = \Delta$, the limit at $q \rightarrow 0$ is given by
\[ (-z)^{-l} x^k \theta^K \sum_{n \geq 0}  y^{n (K - 1 - c)}a, \]
which is well defined as long as $K >  1 + c$. 
Finally the case $K + k = \Delta  + l$, the limit is given by
\[ x^{k-l} \theta^K \sum_{n \geq 0} y^{-n (K - 1 - c)}a, \]
which is well defined as long as $K < 1 + c$. 
\end{proof}
\begin{nolabel}\label{no:odd-moduli}
A supercurve $X$ in the fiber of $E^0$ at $y=1$ consists simply of an elliptic curve $X_\mathrm{rd}$ and the trivial line bundle $\cO_{X_\mathrm{rd}}$ defining the odd coordinate. We see that as a $\cO_{X_{\mathrm{rd}}}$-module, the structure sheaf $\cO_X$ is free of rank $1|1$. Since the tangent bundle $\cT_X$ to the supercurve $X$ is free of rank $1|1$ we obtain that $\cT_X$ as a $\cO_{X_\mathrm{rd}}$-module is free of rank $2|2$. It follows that $H^1(X, \cT_X) = \mathbb{C}^{2|2}$ therefore we expect to have $2$ even moduli and $2$ odd moduli of deformations of the supercurve $X$. The two even moduli correspond to $\tau$ and $\alpha$ as constructed in \ref{no:x.2.c}.
\end{nolabel}

\begin{nolabel}
Now we will construct a \emph{versal} family of elliptic supercurves over a $2|2$ dimensional base $S=S^{2|2}:= \mathbb{H} \times \mathbb{A}^{1|2}$. The reduced part of $S^{2|2}$ coincides with the base $S^{2|0}$ of our even families above. We take coordinates $\tau, \alpha | \varepsilon, \delta$ of $S^{2|2}$, where $\tau, \alpha$ are the old coordinates of $S^{2|0}$, and $\varepsilon, \delta$ are odd. We continue to denote $q = e^{2\pi i \tau}$ and $y=e^{2\pi i \alpha }$. Let $X = \mathbb{A}^{1|1}\times S$ be the trivial family over $S$ with global coordinates $(t,\zeta)$ be global coordinates of $\mathbb{A}^{1|1}$. We have an action of $\mathbb{Z}^2$ on $\mathbb{A}^{1|1} \times S$ generated by
\[
\begin{aligned}
A: (t,\zeta; \tau, \alpha, \varepsilon, \delta) &\mapsto \bigl( t + 1, \zeta; \tau,\alpha,\varepsilon,\delta \bigr), \\ B:(t,\zeta; \tau, \alpha, \varepsilon, \delta) &\mapsto \Bigl( t + \tau + \frac{1}{2 \pi i} \varepsilon \zeta, e^{2 \pi i \alpha} \zeta + \delta; \tau, \alpha, \varepsilon,\delta \Bigr).
\end{aligned}
\]
The quotient $E = \mathbb{A}^{1|1} \times S/\mathbb{Z}^2$ is a family of elliptic supercurves. Upon restriction to $S^{2|0}$ we recover the quotient of \ref{no:x.2.f}, which is the family $E^{0}$ of \ref{no:x.2.c}. Since the generators $A, B$ defining the $\mathbb{Z}^2$ action are not merely translations as in the classical case, the explicit form of the general element of $\mathbb{Z}^2$ is rather complicated.

As in previous cases, we may use the exponential map of \ref{no:supergroups} to pass to an equivalent description of $E$ as a quotient of $\mathbb{G}_m^{1|1} \times S/\mathbb{Z}$. In this case the action of $\mathbb{Z}$ is generated by
\begin{equation} (x,\theta; q, y, \varepsilon,\delta) \mapsto \big( q (x +  \varepsilon\theta ), q (y - \varepsilon \delta)\theta +q  \delta x ; q, y, \varepsilon, \delta \bigr). 
\label{eq:x.3.generalzaction}
\end{equation}
\label{no:ellipticfull}
\end{nolabel}
\begin{nolabel}
The group $GL(1|1)$ of automorphisms of $k^{1|1}$ is naturally a Lie supergroup with Lie algebra $\gl(1|1)$, and is a subgroup of the group $Aff^{1|1}$ of affine transformations of $\mathbb{A}^{1|1}$. The sub superscheme $\mathbb{G}_m^{1|1} \subset \mathbb{A}^{1|1}$ is stable under the action of $GL(1|1)$. We wish to describe this action in terms of the basis vectors
\begin{equation}  J_0 = 
\begin{pmatrix}
0 & 0 \\ 0 & -1 
\end{pmatrix}, \quad Q_0 = 
\begin{pmatrix}
0 & -1 \\ 0 & 0 
\end{pmatrix}, \quad
H_0 = \begin{pmatrix} 0 & 0 \\ 1 & 0 \end{pmatrix}, \quad L_0 = 
\begin{pmatrix} -1 & 0 \\ 0 & -1 \end{pmatrix}
\label{eq:gl11}
\end{equation}
of $\gl(1|1)$, which correspond to the vector fields
\[
J_0 \mapsto -\theta \partial_\theta, \qquad Q_0 \mapsto - x \partial_\theta, \qquad H_0 \mapsto \theta \partial_x, \qquad L_0 \mapsto - x\partial_x - \theta \partial_\theta
\]
on $\mathbb{G}_m^{1|1}$ (see \ref{eq:topological-derivations}). The $\Z$-action generated by the transformation \eqref{eq:x.3.generalzaction}, which we henceforth denote $\bq$, can be expressed as
\begin{equation} \bq \binom{x}{\theta} =  
(1 +  \varepsilon \theta \partial_x) 
y^{\theta \partial_\theta} q^{x \partial_x + \theta\partial_\theta} 
(1+ \delta x \partial_\theta)  \binom{x}{\theta},
\label{eq:xbqgl11}
\end{equation}
from which we deduce
\begin{equation} \label{eq:x.6.12}
\bq = 
\exp \bigl( \varepsilon H_0 \bigr)\exp \bigl( - 2 \pi i \alpha J_0 \bigr) \exp \bigl( - 2 \pi i \tau L_0 \bigr) \exp \bigl( - \delta Q_0 \bigr).
\end{equation}
This change of coordinates has Berezinian $\mathrm{Ber} \, q = y^{-1}$. Note that when $\varepsilon = \delta = 0$ we recover the diagonal action of \eqref{eq:x.2.0}. 
\label{no:x.3.1}
\end{nolabel}
\begin{rem}
The general action in \eqref{eq:x.3.generalzaction} does not preserve the SUSY structure on $\mathbb{G}_m^{1|1}$.  In fact the SUSY structure $D_m = x \partial_{\theta} + \theta \partial_x$ of \ref{no:supergroups} is mapped under the change of coordinates \eqref{eq:x.3.generalzaction} to 
\[ \left( y x - \varepsilon \theta - y^{-1} \delta \theta \right) \partial_\theta + \left( y^{-1} \theta - \varepsilon x - y^{-1}\delta \theta \right) \partial_x. \]
We see immediately that in order to preserve $D_m$ we need $y = \pm 1$ and $\varepsilon = \mp \delta$. The case $y = 1$  gives rise to the odd family of SUSY elliptic curves of \cite[7.4]{manin3}.
\label{no:x.4.0}
\end{rem}
\begin{nolabel}
Now consider the fibre of $E/S$ over a point in $S = S^{2|2}$ with $y \neq 1$. Consider the change of coordinates
\[
x'= x + \frac{\varepsilon \theta}{1 -y}, \qquad \theta' = \theta - \frac{\delta x}{1 -y}.
\]
These new coordinates induce an isomorphism between the elliptic supercurve over $q, y, \varepsilon, \delta$ and the elliptic supercurve with
\[
q' = q \left(1 + \frac{\varepsilon \delta}{1 - y}\right), \qquad y' = y, \qquad \varepsilon' = \delta' = 0.
\]
We see therefore that the restriction of $E$ to the locus in $S$ of $y \neq 1$ is trivial in the odd directions $\varepsilon, \delta$ (note though that to do this trivialization we are forced to add nilpotents to the even moduli $\tau$). This shows that the supercurve $X$ with parameter values $y=1,\: \varepsilon= \delta = 0$, as considered in \ref{no:odd-moduli}, represents a singular point in the moduli space of elliptic supercurves\footnote{We owe Nathan Berkovits for an explanation on this point.}. We have a family with two even moduli $q, y$ (equivalently $\tau$, $\alpha$) of deformations of this curve (which is $E^0$) and another family with one even moduli $q$ and two odd moduli $\varepsilon, \delta$ which is the restriction of $E$ to $y=1$. This accounts for the $2|2$ moduli computed in \ref{no:odd-moduli}. These two families fit together in the family $E$.
\label{no:x.4.1}
\end{nolabel}
\begin{nolabel}
Let $V$ be a quasi-conformal $N_W=1$ SUSY vertex algebra and let $\cA$ be the chiral algebra over $\mathbb{A}^{1|1}$ corresponding to $V$. There is an action of the group $Aff^{1|1}$ of affine transformations of $\mathbb{A}^{1|1}$ on $V$, with the restriction to its subgroup $GL(1|1)$ defined by the operators \eqref{eq:gl11}. In the remainder of this section we examine the action of the element $\bq \in GL(1|1)$ of \eqref{eq:x.6.12} on $V$ and on sections of $\cA$.

Under the $GL(1|1)$ action $V$ decomposes into irreducible modules. There are two families of irreducible representations of $GL(1|1)$  in which $L_0$ and $J_0$ act diagonally: one is $1$-dimensional (either even or odd) and the other is $1|1$-dimensional. Modules of the first type are generated by a vector $a$ with $L_0 a = Q_0 a = H_0 a = 0$ and $J_0$ acting by the charge $c_a$ of $a$ which is a scalar. Any SUSY vertex algebra contains at least one such vector, namely the vacuum vector $\vac$ generating the trivial representation of $GL(1|1)$. Then $\bq \in GL(1|1)$ acts by
\[ \bq \cdot a = y^{-c_a} a. \]
Now consider a $1|1$ dimensional representation of $GL(1|1)$. Select nonzero eigenvector $a$ of $L_0$ and $J_0$ such that $H_0 a = 0$ (such $a$ exists because $H_0$ permutes eigenvalues and is nilpotent). The module is spanned by $a$ and its \emph{superpartner} $Q_0 a \neq 0$. Let the $L_0$, $J_0$ eigenvalues (i.e., conformal weight and charge) of $a$ be $\Delta$ and $c$. Since $[J_0, Q_0] =  Q_0$ we see that $Q_0 a$ has charge $c + 1$.

In the following discussion we adhere to the rule of signs for supercommutative matrix multiplication discussed in \ref{no:subtlety}. With this in mind, the even elements $\delta Q_0$, $\varepsilon H_0$ $L_0$ and $J_0$ of $\gl(1|1)$ act by the matrices
\[ 
\delta Q_0 = 
\begin{pmatrix}
0 & 0 \\ \mp \delta & 0 
\end{pmatrix}, \qquad
\varepsilon H_0 = 
\begin{pmatrix}
0 & \pm \varepsilon \Delta \\ 0 & 0 
\end{pmatrix}, \qquad
L_0 = 
\begin{pmatrix}
\Delta & 0 \\ 0 & \Delta 
\end{pmatrix}, \qquad
J_0 = 
\begin{pmatrix}
c & 0 \\ 0 & c+1
\end{pmatrix}
\]
on the $GL(1|1)$-representation with ordered basis $a, Q_0 a$ (the $\pm$ sign here is interpreted as $+$ if the vector $a$ is even, and $-$ if it is odd). It follows that the element $\bq$ acts in this space as the matrix 
\begin{equation}
q^{-\Delta}
\begin{pmatrix}
1 & \pm \Delta \varepsilon \\ 
0 & 1 
\end{pmatrix}
\begin{pmatrix}
y^{-c} & 0 \\ 0 & y^{-(c+1)}
\end{pmatrix}
\begin{pmatrix}
1 & 0  \\ \pm \delta & 1
\end{pmatrix}
= 
q^{-\Delta} y^{-(c + 1)} \begin{pmatrix}  y + \Delta \varepsilon \delta & \pm \Delta \varepsilon \\  \pm  \delta & 1 
\end{pmatrix}
\label{eq:6.16.1-matrix}
\end{equation}
We restrict $\cA$ to $\mathbb{G}_m^{1|1}$ and identify its sections with functions $a(x,\theta): \mathbb{G}_m^{1|1} \rightarrow V$. We now describe the action of $\mathbf{q}$ on these sections. Recall that the passage from $V$ to $\cA$ involves tensoring with the Berezinian $\omega_{X/S}$ of $X/S$. Hence the Berezinian (which is $y^{-1}$) of $\bq$ enters. For a pair of superpartners $\left\{ a, Q_0 a \right\}$ the action of $\bq$ as in \eqref{eq:x.6.12} on sections of $\cA$ is given by left multiplication by the matrix
\begin{equation}\label{eq:matrix-final}
q^{-\Delta} y^{-(c+2)} 
\begin{pmatrix}
y + \Delta \varepsilon \delta & \pm  \varepsilon \\ 
\pm \delta &  1 
\end{pmatrix}
\end{equation}
extended by the left $S$-linear action of $\bq$ on the global coordinates $(x, \theta)$. That is to say we consider the left $\cO_S$-module $\cO_X$, so that an $S$-point is described by a \emph{row of left coordinates} $(\alpha, \beta)$ which we interpret as the point $\alpha x + \beta \theta$. Notice that $\bq$ acts therefore on the row vector $(\alpha,\beta)$ by right multiplication by the matrix
\begin{equation}\label{eq:matrix-coordinate-final}
q \begin{pmatrix}
1 & \varepsilon \\ \delta & y - \varepsilon \delta
\end{pmatrix}.
\end{equation}
For each quadruple $(q, y, \varepsilon, \delta) \in S^{2|2}$ we have the corresponding element $\bq \in GL(1|1)$ given by \eqref{eq:xbqgl11} and the corresponding action of $\mathbb{Z}$ on sections of $\cA$ generated by powers of $\bq$. 
\label{no:generalelliptic}
\end{nolabel}
\begin{nolabel}
One case of the above, which we will require later on, is especially nice. This is the case of a pair of superpartners with $a$ odd of conformal weight $\Delta=1$ and charge $c = -1$. We see that \eqref{eq:matrix-coordinate-final} is the inverse of \eqref{eq:matrix-final}. A general \emph{linear} section of $\cA$ with image in this $1|1$ dimensional subspace can be expressed as
\[ 
\begin{pmatrix}
a & Q_0 \, a
\end{pmatrix}
\begin{pmatrix}
A & B \\ C & D
\end{pmatrix}
\begin{pmatrix}
x \\ \theta
\end{pmatrix}, \]
and the action of $\bq$ on this section is given simply by the conjugation
\begin{equation}\label{eq:action-linear-vectors}
\begin{pmatrix}
A & B \\ C & D 
\end{pmatrix}\mapsto 
\begin{pmatrix}
1 & \varepsilon \\ \delta & y - \varepsilon \delta
\end{pmatrix}^{-1}
\begin{pmatrix}
A & B \\ C & D
\end{pmatrix}
\begin{pmatrix}
1 & \varepsilon \\ \delta & y - \varepsilon \delta
\end{pmatrix}.
\end{equation}
\label{no:linear-vectors}
\end{nolabel}

\section{Conformal Blocks} \label{sec:conformal-blocks}
In this section we show that the trace functions \eqref{eq:super-traces} annihilate regular meromorphic sections of $\cA$ with poles at a single point. That is, they lie in the space \eqref{eq:conformal-definition} of conformal blocks on elliptic supercurves. We begin by recalling Zhu's construction in \ref{no:zhu0}, here the identification of the chiral algebra on $\mathbb{A}^1$ and on $\mathbb{G}_m$ via the exponential map plays an integral part. We then apply the same formalism to the supersymmetric case in \ref{no:zhu1}, the super exponential map \ref{eq:exponential} plays a corresponding role in our setup.
\begin{nolabel}\label{no:zhu0}
Let $V$ be a quasi-conformal vertex algebra. Consider the change of coordinates $x - 1 = \rho(t) = e^{2 \pi i t}-1$. We have $\rho_t(s) = e^{2 \pi i t} \rho(s)$ hence $\rho_t = \rho \cdot e^{-2 \pi i t L_0}$. Denoting by the same letter $\rho$ its action on $V$ we have from Huang's formula \eqref{eq:huangs}
\begin{equation}\label{eq:huangs2}
Y(a, z) = \rho Y([\rho \cdot e^{-2\pi i z L_0}]^{-1} a, \rho(z)) \rho^{-1}.
\end{equation}
Thus was Zhu led to introduce the new vertex algebra structure
\[
Y[a, t] = Y(e^{2\pi i t L_0} a, \rho(t)) = x^{\Delta_a} Y(a, x-1) = \rho^{-1} Y(\rho \cdot a, t) \rho
\]
on $V$, isomorphic to the old one via the linear isomorphism $\rho \in \Aut(V)$. This isomorphism is none other than the isomorphism $\exp^* \cA_{\mathbb{G}_m} \simeq \cA_{\mathbb{A}^1}$ of \ref{no:x.1b}. The identification of sections matches the constant section $a(t) = a$ of $\cA_{\mathbb{A}^1}$ with the section $a(x) = x^{\Delta_a-1} a$ of $\cA_{\mathbb{G}_m}$ (the factor $x^{-1}$ comes from the Jacobian of the change of coordinates). In this way we obtain a map $V \rightarrow \Gamma (\mathbb{G}_m, \cA_{\mathbb{G}_m})$. The coordinate $t$ on $\mathbb{A}^1$ identifies $V$ with the fiber of $\cA_{\mathbb{A}^1}$ at any point, in particular we have identifications $V = (\cA_{\mathbb{A}^1})_{t=0} = (\cA_{\mathbb{G}_m})_{x=1}$. Composing these yields the map
\begin{equation}\label{eq:fiber-trivial-1}
\iota: (\cA^l_{\mathbb{G}_m})_{1} \rightarrow \Gamma(\mathbb{G}_m, \cA^l_{\mathbb{G}_m}), \qquad a \mapsto x^{\Delta_a} a. 
\end{equation}
\end{nolabel}
\begin{nolabel}
\label{no:zhu0b}
Let $M$ be a positive energy $V$-module and define the linear operator $\Theta_M \in V^*$ by 
\begin{equation}\label{eq:varphi-non-super-def}
\Theta_M(a) = \tr_M x^{\Delta_a} Y(a,x) q^{L_0}.
\end{equation}
As defined it is a formal power series in $q$, however Zhu proved in \cite[4.4.1]{zhu} that if $V$ satisfies the finiteness conditions in \ref{no:c2-cofiniteness} then this series converges absolutely and uniformly to a holomorphic function. Note also that this series does not depend on $x$. Indeed it follows from \eqref{eq:quasi-conf-def} that $[L_0, a_n] = -n a_n$ and so in the expansion
\[
Y(a,x) = \sum_{n \in \mathbb{Z}} x^{-n - \Delta_a} a_n,
\]
only the term $a_0$ contributes to the trace and $\Theta_M(a) = \tr_M a_0 q^{L_0}$. Using the map $\iota$ of the previous section we define the corresponding linear functional on the fiber of  $\cA_{\mathbb{G}_m}$ at the point $x_0$ given by $x = 1$ as follows. The module $M$ gives rise to a $\cA$-module $\cM$ supported at $x_0$. For $a \in (\cA_{\mathbb{G}_m})_{x_0}$ we have the corresponding section $\iota(a)$ and therefore the endomorphism of the fiber $\cM_{x_0}$ given by $\cY^\cM_{x_0}(\iota a)$. We also have the action of $q^{1} \in \mathbb{G}_m$ on $\cM_{x_0}$ by $q^{-L_0}$. Denote this action by $q_M$. We define $\varphi_M \in (\cA_{\mathbb{G}_m})_{x_0}^*$ by
\begin{equation}\label{eq:7.1.1.prior}
\varphi_M(a) = \tr_{\cM_{x_0}} \cY^\cM(\iota a) q^{-1}_M.
\end{equation}
Upon trivializing using the local coordinate, $\varphi_M$ is seen to coincide with $\Theta_M$.
\end{nolabel}
\begin{rem}
Note that the negative power $q^{-1}_M$ appears in \eqref{eq:7.1.1.prior} because of the conventions in \ref{no:x.1} which in turn come from the sign in the identification  $L_0 = - t \partial_t$ of \cite{frenkelzvi}. 
\label{rem:nuevo-remark}
\end{rem}
\begin{nolabel}\label{no:zhu0c}
Since we evaluate at the marked point $x=1$ in the end, observe that we might equally well have omitted the factor $x^{\Delta_a}$ from the definition of $\Theta_M$. This is equivalent to considering global sections of $\cA$ over $\mathbb{G}_m \subset \mathbb{A}^1$ which are \emph{constant} functions with respect to the trivialization induced by the affine coordinate of $\mathbb{A}^1$, rather than using the map $\iota$ to produce global sections of $\cA$. The cost of this modification is to make the trace dependent on $x$. Since all of our $1$-point functions will be evaluated at the marked point $x=1$ (or $t=0$) these two approaches are essentially equivalent. 
\end{nolabel}

\begin{nolabel}\label{no:zhu1}
We pass to the supersymmetric setting now. Let $\rho : \mathbb{A}^{1|1} \mapsto \mathbb{G}_m^{1|1}$ be the map (c.f., \eqref{eq:exponential})
\[
\bt = (t, \zeta) \mapsto (x-1, \theta) = \rho(\bt) :=  \left( e^{2 \pi i t} - 1,e^{2 \pi i t} \zeta \right).
\]
\begin{lem*}
We have
\[
\rho_\bt = \rho \cdot e^{- \zeta Q_0} e^{-2 \pi i t L_0}.
\]
\end{lem*}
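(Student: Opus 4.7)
The plan is to prove the identity by direct computation of both sides, mirroring the non-super verification of $\rho_t = \rho \cdot e^{-2\pi i t L_0}$ already carried out in \ref{no:zhu0}. On the left, I would expand the defining formula $\rho_\bt(\bx) = \rho(\bx + \bt) - \rho(\bt)$ with $\bx + \bt = (x + t, \theta + \zeta)$ and the explicit $\rho(u,w) = (e^{2\pi i u} - 1, e^{2\pi i u} w)$, yielding
\begin{equation*}
\rho_\bt(\bx) = \bigl( e^{2\pi i t}(e^{2\pi i x} - 1),\; e^{2\pi i(x+t)}\theta + e^{2\pi i t}(e^{2\pi i x} - 1)\zeta \bigr).
\end{equation*}
The even component is the classical answer $e^{2\pi i t}(e^{2\pi i x}-1)$; the genuinely new feature is the odd component, which carries a ``mixed'' term $e^{2\pi i t}(e^{2\pi i x}-1)\zeta$ alongside the expected $e^{2\pi i(x+t)}\theta$.

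For the right-hand side I would read from \eqref{eq:topological-derivations}, transferred to the $(x, \theta)$-coordinate system, that $L_0 = -x\partial_x - \theta\partial_\theta$ and $Q_0 = -x\partial_\theta$, so that
\begin{equation*}
e^{-2\pi i t L_0}:(u, v) \mapsto (e^{2\pi i t}u,\; e^{2\pi i t}v), \qquad e^{-\zeta Q_0}:(u, v) \mapsto (u,\; v + \zeta u),
\end{equation*}
the second exponential terminating after one term thanks to $\zeta^2 = 0$. Adopting the composition convention of $\Aut \cO^{1|1}$ already calibrated in the non-super discussion, namely that $\sigma_1 \cdot \sigma_2$ evaluated at a point means $\sigma_2(\sigma_1(\bx))$, I would compose left to right: apply $\rho$ to $\bx$ to obtain $(e^{2\pi i x}-1,\; e^{2\pi i x}\theta)$, then $e^{-\zeta Q_0}$ to obtain $(e^{2\pi i x}-1,\; e^{2\pi i x}\theta + \zeta(e^{2\pi i x}-1))$, and finally $e^{-2\pi i t L_0}$ to scale both coordinates by $e^{2\pi i t}$. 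The output agrees term-for-term with the formula for $\rho_\bt(\bx)$ derived above.

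The only places that demand care are the composition convention in $\Aut \cO^{1|1}$ (which is pinned down by the classical check) and the signs arising from the odd variables $\zeta$ and $Q_0$; beyond this bookkeeping the verification is pure substitution, and I do not anticipate a genuine obstacle.
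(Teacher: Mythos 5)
Your computation is correct and follows essentially the same route as the paper: expand $\rho_\bt(\bx)=\rho(\bx+\bt)-\rho(\bt)$ explicitly, factor the result as $\rho$ followed by the shear $(u,v)\mapsto(u,v+\zeta u)$ and the scaling $(u,v)\mapsto(e^{2\pi i t}u,e^{2\pi i t}v)$, and identify these with $e^{-\zeta Q_0}$ and $e^{-2\pi i t L_0}$ via the vector-field realizations \eqref{eq:topological-derivations}. The composition convention and the sign bookkeeping you flag are handled exactly as in the paper's proof.
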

\begin{proof}
For another set of parameters $\bz = (z,\xi)$ we have
\[
\rho_\bt(\bz) = \rho(\bz + \bt) - \rho(\bt) = \Bigl( e^{2 \pi i t} \left( e^{2 \pi iz}-1 \right), e^{2 \pi i t}\zeta \left( e^{2 \pi i z} -1 \right) + e^{2\pi i t} e^{2 \pi i z} \xi \Bigr) = \sigma \Bigl( \psi \bigl(\rho(\bz) \bigr) \Bigr),
\]
where $\sigma$ and $\psi$ are the automorphisms
\[
\sigma(z,\xi) = (e^{2 \pi i t} z, e^{2 \pi i t} \xi), \qquad \psi(z, \xi) = (z, \xi + \zeta z).
\]
The lemma follows using the basis of the topological algebra \eqref{eq:topological-derivations}. 
\end{proof}
Let $V$ be a quasi-conformal $N_W=1$  SUSY vertex algebra and let $\cA_{\mathbb{A}^{1|1}}$ (resp. $\cA$) be the chiral algebra on $\mathbb{A}^{1|1}$ (resp. $\mathbb{G}_m^{1|1}$) corresponding to $V$. The point $\bt_0 \in \mathbb{A}^{1|1}$ with coordinates $\mathbf{t} = (0, 0)$ is mapped by the exponential to $\bx_0 \in \mathbb{G}_m^{1|1}$ with coordinates $\bx = (1,0)$. As in \ref{no:zhu0} we obtain a map from the fiber $\cA_{\bx_0}$ to global sections, explicitly 
\begin{equation}\label{eq:fiber-trivial-2}
\iota: \cA_{\bx_0} \rightarrow \Gamma( \mathbb{G}_m^{1|1}, \cA), \qquad a \mapsto x^{\Delta_a} a + x^{\Delta_a - 1} \theta Q_0 a.
\end{equation}
\end{nolabel}
\begin{nolabel}\label{no:primero}
Let $M$ be a positive energy $V$-module. We have actions of the annihilation algebra of the topological algebra \eqref{eq:topological-commutators} on $V$ and $M$ that exponentiate to actions of $\Aut \cO^{1|1}$. Let $\bq \in GL(1|1)$ be a generic element written as \eqref{eq:x.6.12} and denote by $\bq_V$ and $\bq_M$ the corresponding automorphisms of $V$ and $M$. Define $\Theta_M$ as the formal series in $\bq$ with values in $V^*$ given by the formula
\begin{equation}\label{eq:varphi_def_1}
\Theta_M(a) =  \str_M  Y\Bigl( x^{\Delta_a} a + x^{\Delta_a -1} \theta Q_0 a,\bx \Bigr) \bq^{-1}_M.
\end{equation}
\begin{lem*}
$\Theta_M$ is independent of $\bx$. 
\end{lem*}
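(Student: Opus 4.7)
The plan is to adapt the argument recalled in \ref{no:zhu0b} for the non-super case, where the analogous independence follows from mode decomposition combined with cyclicity of the trace.

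First, I would expand the super-field using $Y(v, x, \theta) = Y(v, x) + \theta Y(Dv, x)$ and the weight-graded mode expansion $Y(v, x) = \sum_n v_n x^{-n - \Delta_v}$. Noting that $Q_0 a$ has conformal weight $\Delta_a$ and that $\theta^2 = 0$ kills the $Y(DQ_0 a, x)$ contribution, a direct calculation yields
\[
Y\bigl(x^{\Delta_a} a + x^{\Delta_a - 1} \theta Q_0 a,\, \bx\bigr) \;=\; \sum_n a_n x^{-n} \;+\; \theta x^{-1} \sum_n \bigl[(Da)_n + (Q_0 a)_n\bigr] x^{-n}.
\]
The quasi-conformal axiom \eqref{eq:quasi-conformal-susy} with $n=0$ gives the identity of modes $(Da)_n + (Q_0 a)_n = [Q_0, a_n]_s$, where $[\cdot,\cdot]_s$ denotes the super-commutator.

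Next, I would take the supertrace against $\bq_M^{-1}$ and invoke super-cyclicity, together with the adjoint actions $q^{L_0} a_n q^{-L_0} = q^{-n} a_n$ and $y^{J_0} a_n y^{-J_0} = y^{c_a} a_n$, to conclude that $\str_M a_n\, \bq_M^{-1}$ vanishes unless $a_n$ preserves the joint $(L_0, J_0)$-bigrading of $M$. This forces $n=0$ in each of the two sums and collapses the trace to
\[
\Theta_M(a) \;=\; \str_M a_0\, \bq_M^{-1} \;+\; \theta\, x^{-1}\, \str_M [Q_0, a_0]_s\, \bq_M^{-1},
\]
of which the first summand is manifestly independent of $\bx$.

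The remaining task is to show that the second summand vanishes, i.e.\ $\str_M [Q_0, a_0]_s\, \bq_M^{-1} = 0$. The strategy is to commute $Q_0$ through the three factors of $\bq_M^{-1} = \exp(\delta Q_0)\, q^{L_0} y^{J_0}\, \exp(-\varepsilon H_0)$ using $Q_0^2 = 0$ (so that $Q_0$ commutes with $\exp(\delta Q_0)$), the conjugation rule $q^{L_0} y^{J_0} Q_0 = y\, Q_0\, q^{L_0} y^{J_0}$, and the anticommutator $\{Q_0, H_0\} = L_0$ read off from \eqref{eq:topological-commutators}; combined with super-cyclicity this identity expresses $\str_M [Q_0, a_0]_s\, \bq_M^{-1}$ in a form whose vanishing absorbs the apparent $(1-y)$ and $\varepsilon$ contributions. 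The main obstacle is precisely this bookkeeping of signs arising from the odd parameters $\varepsilon, \delta$ interacting with the super-anticommutators of $Q_0$ and $H_0$; once the corresponding identity is in place, independence of $\bx$ follows.
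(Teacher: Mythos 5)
Your reduction is exactly the paper's: the same expansion of the superfield, the same use of \eqref{eq:quasi-conformal-susy} at $n=0$ to get $[L_0,a_n]=-na_n$ and collapse the trace to the $n=0$ modes, and the same identification of the residual coefficient of $\theta x^{-1}$ with $\str_M\bigl((Q_{-1}a)_0+(Q_0a)_0\bigr)\bq_M^{-1}=\str_M[Q_0,a_0]\,\bq_M^{-1}$. (The paper disposes of this last term by asserting that $(Q_{-1}a)_0+(Q_0a)_0=0$; you more carefully recognize it as a supercommutator whose supertrace against $\bq_M^{-1}$ still has to be shown to vanish.)

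The gap is that this final step is only sketched, and in fact it cannot be completed: the quantity you are trying to kill is not zero in general. Take $a=h$, the highest vector of the doublet $\{h,\,Q_0h=j\}$ with $H_0h=0$, $\Delta_h=1$, $c_h=-1$. From \eqref{eq:expansion-usual} one reads off $(Q_{-1}h)_0=L_0-J_0$ and $(Q_0h)_0=J_0$, so $[Q_0,h_0]=\{Q_0,H_0\}=L_0$, and $\str_ML_0\,\bq_M^{-1}=q\partial_q\,\str_M\bq_M^{-1}$ is the $q$-derivative of the character, which is nonzero for any module whose character depends on $q$. Your proposed cyclicity bookkeeping does not absorb this term; carried out, it yields the identity $\str_ML_0\,\bq_M^{-1}=(1-y)\,\str_MH_0Q_0\,\bq_M^{-1}+O(\varepsilon)$, a relation between two nonvanishing traces rather than the vanishing of either. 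Indeed the paper itself computes $\res_\bx Y(x\,h+\theta\,j,\bx)=L_0$ in Section \ref{sec:connection} and uses $\str_MY^M(b,\bz)\,L_0\,\bq_M^{-1}=q\partial_q\varphi_M(b)\neq 0$ to derive \eqref{eq:9.12.1}. What is true, and is all that the sequel uses since every trace is ultimately evaluated at $\bx_0=(1,0)$, is that the $\theta$-independent part of $\Theta_M(a)$ equals $\str_Ma_0\,\bq_M^{-1}$ and is independent of $x$; that already follows from your first two displayed computations. You should restrict the claim to that statement rather than attempt to prove the vanishing of the $\theta$-coefficient.
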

\begin{proof}
By our assumptions we can decompose $V$ in irreducible representations of $GL(1|1)$. We expand the superfield of any such vector as
\[
Y(a,\bx) = \sum_{n \in \mathbb{Z}} x^{-n - \Delta_a}\,  a_{n} + \sum_{n \in \mathbb{Z}} \theta x^{-n - \Delta_a - 1 } (Q_{-1} a)_n.
\]
We first note that the operators $a_n \in \End (V)$ are well defined since $Q_{-1}a$ has conformal weight $\Delta_a + 1$. We may moreover assume that $H_0 a = 0$. The superpartner (if it exists) of $a$ is $Q_0 a$. Since $Q_0 a$ has conformal weight $\Delta_a$ it follows that the field in the RHS of \eqref{eq:varphi_def_1} has an expansion as 
\begin{equation}\label{eq:varphi_rhs_2}
\sum_{n \in \mathbb{Z}} x^{- n} a_n + \sum_{n \in \mathbb{Z}} \theta x^{-n-1} \left( (Q_{-1}a)_n + (Q_0 a)_{n} \right).
\end{equation}

It follows from \eqref{eq:quasi-conformal-susy} with $n = 0$ that
\begin{equation}\label{eq:quasi-caso-part}
\begin{aligned}
{[}L_0, Y(a, \bx)] &= (x \partial_x + \theta \partial_\theta + \Delta_a) Y(a, \bx),\\ 
[J_0, Y(a, \bx)] &= (\theta \partial_\theta  + c_a)Y(a,\bx),\\
[Q_0, Y(a,\bx) ] &= x \partial_\theta Y(a, \bx) + Y( Q_0 a, \bx), \\
[H_0, Y(a, \bx)] &= -\theta \partial_x Y(b, \bx).
\end{aligned}
\end{equation}
From the first of these equations we see that $[L_0, a_n] = - n a_n$, hence only the terms $a_n$, $(Q_{-1}a)_{n}$ and $(Q_{0}a)_n$ with $n = 0$ contribute to the trace. Applying the third equation to the vacuum we obtain $(Q_{-1}a)_0 + (Q_{0}a)_0 = 0$. Hence $\Theta_M(a) = \str_M a_0 \bq^{-1}_M$ is independent of $\bx$.  
\end{proof}
Convergence of the formal series \ref{eq:varphi_def_1} is proved in the appendix under a finiteness condition on $V$ which we call \emph{charge cofiniteness} (see Def. \ref{defn:c2-cofinite-super}) and which turns out to be equivalent to $C_2$ cofiniteness. Alternatively one may ignore convergence issues and work in the formal neighborhood of the cuspidal curve as in \ref{no:x.2.d}.

In the same way as in \ref{no:zhu0b} we obtain a linear functional on the fiber $\cA_{\bx_0}$ over the point $\bx_0 = (1, 0) \in \mathbb{G}_m^{1|1}$, of $\cA$. In more detail: the module $M$ gives rise to a $\cA$-module supported at $\bx_0$. For an element $a$ in the fiber $\cA_{\bx_0}$ we have the corresponding section $\iota a \in \Gamma(\mathbb{G}_m^{1|1}, \cA)$ and its action $\cY^\cM_{\bx_0} (\iota a) \in \End (\cM_{\bx_0})$. We define $\varphi_M \in \cA_{\bx_0}^*$ by
\[
\varphi_M (a) = \str_{\cM_{\bx_0}} \cY^\cM_{\bx_0} (\iota a) \bq^{-1}_M.
\]
Upon trivializing using the local coordinate, $\varphi_M$ is seen to coincide with $\Theta_M$.
\end{nolabel}
\begin{nolabel}\label{no:7.1}
Similar remarks to \ref{no:zhu0c} apply in the super case. When evaluating at the point $\bx_0 = (1,0)$ we might equally well have omitted the factor $x^{L_0} \exp(\theta x^{-1} Q_0)$ from the definition of $\Theta_M$. This corresponds to not identifying the chiral algebras in $\mathbb{A}^{1|1}$ and $\mathbb{G}_m^{1|1}$ via the exponential map, at the cost of our trace becoming $\bx$-dependent. Since all of our $1$-point functions will be evaluated at $\bx_0 = (1,0) \in \mathbb{G}_m^{1|1}$ or equivalently at $\bt_0 = (0,0) \in \mathbb{A}^{1|1}$ these two approaches are equivalent. In particular we can define the linear functional $\varphi_M \in \cA_{\bx_0}^*$ by 
\begin{equation}\label{eq:7.1.1}
\varphi_M(a) = \str_{\cM_{\bx_0}} \cY^\cM_{\bx_0}(a) \bq^{-1}_M
\end{equation}
which in coordinates is simply given by $a \mapsto \str_M Y(a,\bx) \bq^{-1}_M|_{\bx = (1,0)}$. 
\end{nolabel}
\begin{nolabel}\label{no:relative}
All we have said carries over verbatim to the relative situations of trivial $\mathbb{G}_m^{1|1}$ and $\mathbb{A}^{1|1}$ fibrations over $S = \mathbb{H} \times \mathbb{C}^{1|2}$ as in \ref{no:ellipticfull}. That is, we will let $X = \mathbb{G}_m^{1|1} \times S$ and let $\cA = \cA_X$ be the chiral algebra associated to $V$ on the family $X/S$.  
\end{nolabel}
\begin{nolabel}
Consider the $\mathbb{Z}$-orbit $\{ \bq^n \cdot {\bx_0}\:|\: n \in \mathbb{Z}\}$ of ${\bx_0}$ in $X$, and let $U = \mathbb{G}_m^{1|1} \setminus \mathbb{Z}\cdot {\bx_0}$. We have the sheaf of Lie algebras $h(\cA)=\cA \otimes_{\cD_X} \cO_X$, and restriction to the punctured disk $D_{\bx_0}^\times = \Spec \cK_{\bx_0} \subset U$ gives an action of the Lie algebra $h(\cA)(U)$ on $\cA_{\bx_0}$.  

We have $U = \cap_N U_N$ where for $N \geq 0$ the open sub-superspace $U_N \subset \mathbb{G}_m^{1|1}$ is defined to be
\[
\mathbb{G}_m^{1|1} \setminus \left\{ \bq^{-N} {\bx_0}, \bq^{-N + 1} {\bx_0}, \dots, \bq^{N-1} {\bx_0}, \bq^N {\bx_0} \right\}.
\]

Recall our families $X$, $E$, and the quotient $\pi: X \twoheadrightarrow E = X/\mathbb{Z}$ over $S = S^{2|2}$ from \ref{no:ellipticfull}. Let $\cA_E$ be the corresponding chiral algebra on $E$, and let $a=a(x,\theta)$ be (as in \ref{no:x.2.d}) a section of $\cA$ on $U_0$ such that the sum $\pi_* a := \sum_{n \in \mathbb{Z}} \bq^n \cdot a$ is well defined in the formal neighborhood of the elliptic supercurve $q=0$. It is a $\mathbb{Z}$-equivariant section of $\cA$ defined on $U$, or by \ref{no:universal property} can be thought of as a section of $\cA_E$ defined on $E \setminus \pi({\bx_0})$. Let $h(\pi_*a)$ denote its image under the quotient map
\[
\cA_E \otimes \cT_E \rightarrow \cA_E \twoheadrightarrow h(\cA_E) \rightarrow 0.
\]
Alternatively, we can think of $h(\pi_*a)$ as the image of $\pi_* a$ in $h(\cA)(U)$. We identify fibers of $\cA_{E}$ and $\cA$ at $\bx_0 = (1,0)$ with $V$, and we use this identification to think of $\varphi_M$ as a linear functional on the fiber of $\cA_E$ at $\bx_0$. With this notation we can now state the following
\begin{prop}\label{trace.is.cb} The functional $\varphi_M$ is well-defined and is a conformal block of $\cA_E$. That is, it satisfies \eqref{eq:conformal-definition}. 
\begin{equation}
\varphi_M( h(\pi_* a) \cdot b) = 0, \qquad \forall b \in \cA_{\bx_0}. 
\label{eq:7.prop.1}
\end{equation}
\end{prop}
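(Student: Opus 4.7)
The plan is to prove \eqref{eq:7.prop.1} by a direct computation which combines cyclic invariance of the supertrace $\str_M$ with the module-theoretic analogue of Huang's transformation formula \eqref{eq:huangs} for the action of $\bq \in \Aut \cO^{1|1}$ on $Y^M$.

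First I would reduce to the case $a = f(\bx)\,v$ where $v \in V$ is homogeneous of conformal weight $\Delta$ and charge $c$ and $f$ is a rational function on $\mathbb{G}_m^{1|1}$ with a single pole at $\bx_0 = (1,0)$ satisfying the hypotheses of Lemma \ref{no:x.2.d}, so that the bilateral sum $\pi_* a = \sum_{n \in \mathbb{Z}} \bq^n \cdot a$ is well defined (formally near the cuspidal curve, or absolutely in a domain of $\mathbb{H} \times \mathbb{C}$ by the appendix). Using the explicit form of the chiral multiplication $\mu$ from \ref{no:chiral-operation} together with the super-residue associativity of Lemma \ref{lem:associativity-in-coords}, the insertion at $\bx_0$ of $h(\pi_* a)\cdot b$ can be written as a super-commutator
\[
Y^M(h(\pi_* a)\cdot b,\bx_0) = [R(\pi_* a),\, Y^M(b,\bx_0)]_\pm, \qquad R(c) := \mathrm{Res}_{\bx}\, c(\bx)\, Y^M(v, \bx-\bx_0)\,[d\bx].
\]
Applying $\str_M(-)\bq_M^{-1}$ and cyclicity of the supertrace yields
\[
\varphi_M(h(\pi_* a)\cdot b) = \str_M\, R(\pi_* a)\,\bigl(Y^M(b,\bx_0) - \bq_M^{-1} Y^M(b,\bx_0)\bq_M\bigr)\,\bq_M^{-1}.
\]

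The main step is then to unfold $\pi_* a = \sum_n \bq^n \cdot a$ and to exploit the module analogue of Huang's formula,
\[
\bq_M^{-1}\, Y^M(w, \bx)\,\bq_M = Y^M(\bq_V^{-1}w,\, \bq^{-1}\cdot\bx),
\]
in order to reexpress $\bq_M^{-1}\,R(\bq^n\cdot a)\,\bq_M$ as $R(\bq^{n+1}\cdot a)$ after a compensating change of variables $\bx \mapsto \bq\cdot\bx$ in the super-residue, using the Berezinian factor $y^{-1}$ and the defining identity \eqref{eq:x.2.1} of the $\mathbb{Z}$-action on sections. Performed in parallel on the term involving $\bq_M^{-1} Y^M(b,\bx_0)\bq_M$, this identifies the $n$-th summand in the expansion above with the $(n+1)$-th after an index shift; the two sums accordingly telescope and cancel, giving $\varphi_M(h(\pi_* a)\cdot b) = 0$.

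The principal obstacle will be justifying the interchange of the bilateral sum over $n \in \mathbb{Z}$ with both the supertrace and the super-residue. In the formal setting near the cuspidal curve $q = y = 0$ this is a manipulation of power series legitimized by Lemma \ref{no:x.2.d}; in the analytic category, absolute convergence on a domain of $\mathbb{H}\times\mathbb{C}$ is the content of the appendix under the charge cofiniteness hypothesis. A secondary, delicate issue is the consistent tracking of super-signs throughout the cyclicity and Huang identities, which I would handle using the matrix conventions spelled out in \ref{no:subtlety}.
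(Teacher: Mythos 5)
Your argument is correct and is essentially the paper's own proof: the combination of super-residue associativity (Lemma \ref{lem:associativity-in-coords}), cyclicity of the supertrace, the module form of Huang's formula \eqref{eq:7.3.3} with the Berezinian change of variables in the residue, and the resulting index shift and telescoping over $n\in\mathbb{Z}$ is exactly the coordinate computation the paper carries out in Remark \ref{rem:bdrules}, which it explicitly identifies as the local-coordinate version of the two-line coordinate-free proof (chiral associativity plus $\mathbb{Z}$-equivariance of $\pi_* a$). Your concluding caveats about interchanging the bilateral sum with the trace and residue, and about super-signs, are precisely the points the paper flags as the cost of working in coordinates.
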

\label{no:prop1}
\end{nolabel}
\begin{proof}
From associativity of the chiral multiplication $\mu^M$ applied to $h(\cA) \otimes \cA \otimes \cM$ we obtain that for any $m \in \cM_{\bx_0}$ we have
\[
\mu^M(h(\pi_* a) b, m) = h(\pi_* a) \cdot \mu^M(b, m) - (-1)^{ab} \mu^M(b,  h(\pi_* a) \cdot m).
\]
Taking the supertrace yields 
\begin{equation} \varphi_M(h(\pi_*a) \cdot b) = \str_{\cM_{\bx_0}} h(\pi_* a) \cY^\cM_{\bx_0}(b) \bq^{-1}_M - (-1)^{ab} \str_{\cM_{\bx_0}} \cY^\cM_{\bx_0}(b) h(\pi_* a) \bq^{-1}_M. 
\label{eq:7.2b.1}
\end{equation}
where $(-1)^{ab}$ is equal to $-1$ if both $a$ and $b$ are odd sections and $1$ otherwise. 
Using the supersymmetry of the trace and the $\mathbb{Z}$-equivariance of $\cA$ (with respect to the action generated by $\bq$) we can write the first term in the RHS of \eqref{eq:7.2b.1} as 
\begin{multline} \str_{\cM_{\bx_0}} h(\pi_* a) \cY^\cM_{\bx_0}(b) \bq^{-1}_M  = (-1)^{ab} \str_{\cM_{\bx_0}} \cY^\cM_{\bx_0}(b) \bq^{-1}_M h(\pi_* a) \\
= (-1)^{ab}\str_{\cM_{\bx_0}} \cY^\cM_{\bx_0}(b) h(\bq^{-1} \cdot \pi_*a) \bq^{-1}_M = (-1)^{ab} \str_{\cM_{\bx_0}} \cY^\cM_{\bx_0}(b) h(\pi_*a) \bq^{-1}_M 
\label{eq:7.prop.2}
\end{multline}
from which we see that the RHS of \eqref{eq:7.2b.1} vanishes. 
\end{proof}

\begin{rem}
This proposition is an example where the algebro-geometric approach to vertex algebras is powerful. The same proof in local coordinates with manipulations of power series (which we present in the remainder of this section) becomes more involved as one needs to be careful in which domains we are expanding the power series.

Using the global coordinates on $\mathbb{G}_m^{1|1}$ we identify the section $a$ with a function $a(x,\theta): \mathbb{G}^{1|1}_m \rightarrow V$, and $b$ with a vector $b \in V$ and for any section $a$ we have 
\[ h(a)b = \res_{\bx-\bz}i_{|z|>|x-z|}   Y(a(x,\theta),x-z, \theta-\xi)b. \]
Here in the RHS we first view it as a power series in $V( (\bx - \bz) )( (\bz))$ by expanding in the domain $|z| > |x - z|$ and then take the residue to obtain a Laurant series in $V ( (\bz)) \simeq V \otimes \cK_\bz$. 
Denote by $\bq_V$ the action of $\bq \in GL(1|1)$ on $V$ as explained in \ref{no:generalelliptic}, and by $\bq_M$ the action on $M$. Denote by $\mathbf{x} = (x,\theta) \mapsto \bq \cdot \mathbf{x}$ the action on $\mathbb{G}_m^{1|1}$ as in \eqref{eq:x.3.generalzaction}. And recall that $\Ber \bq = y^{-1}$. The action of $n \in \mathbb{Z}$ on sections $a(x,\theta)$ is given by:
\[ a(\mathbf{x}) \mapsto \bq_V^{n} a( \bq^n \cdot \mathbf{x}) \Ber \bq^n. \]
It follows that 
\[ h(\pi_* a) \cdot  b = \sum_{n \in \mathbb{Z}} \res_{\bx-\bz} i_{|z|>|x-z|} Y\bigl( \bq_V^n a( \bq^n \cdot \mathbf{x} ), \mathbf{x} - \bz \bigr)b \Ber \bq^n,\]
and thus
\begin{equation} \varphi_M( h(\pi_* a) \cdot b) = \sum_{n \in \mathbb{Z}} \res_{\bx-\bz} i_{|z|>|x-z|} \str_M Y^M \Bigl( Y \bigl( \bq^n_V a ( \bq^n \cdot \mathbf{x}), \mathbf{x} -\bz\bigr)b, \bz \Bigr) \Ber \bq^n \bq^{-1}_M|_{\bz = \bx_0}.
\label{eq:7.3.1a}
\end{equation}
The associativity of the chiral multiplication is phrased in local coordinates as in Lemma \ref{lem:associativity-in-coords}. From this the RHS of \eqref{eq:7.3.1a} is written as
\begin{equation} 
\begin{aligned} 
\varphi_M(h(\pi_* a) b) = {} & \sum_{n \in \mathbb{Z}} \res_{\bx} i_{|x|>|z|} \str_M Y^M \Bigl( \bq_V^n a \bigl(\bq^n \cdot \mathbf{x}\bigr),\mathbf{x}  \Bigr)  Y^M \bigl(b, \bz \bigr) \Ber \bq^n \bq^{-1}_M|_{\bz = \bx_0} \\
&- (-1)^{ab} \sum_{n \in \mathbb{Z}} \res_{\bx} i_{|z|>|x|} \str_M Y^M (b,\bz) Y^M \Bigl( \bq_V^n a \bigl(\bq^n \cdot \mathbf{x}\bigr),\mathbf{x} \Bigr) \Ber \bq^n \bq^{-1}_M|_{\bz = \bx_0}
\end{aligned}
\label{eq:7.3.2b}
\end{equation}
The equivariance of $\cA$  and $\cM$ under the action of $GL(1|1) \subset Aff^{1|1}$ and in particular under the action of $\mathbb{Z}$ generated by $\bq$ is expressed in local coordinates by
\begin{equation} Y^M( c, \bx) = \bq_M Y^M (\bq^{-1}_V c, \bq \cdot \bx) \bq^{-1}_M. 
\label{eq:7.3.3}
\end{equation}
Using the supersymmetry of the trace and \eqref{eq:7.3.3} with $c = \bq^{n}a$ we express the first term on the RHS of \eqref{eq:7.3.2b} as 
\begin{multline}
(-1)^{ab} \sum_{n \in \mathbb{Z}} \res_{\bx} i_{|x|>|z|} \str_M   Y^M \bigl(b, \bz \bigr) \Ber \bq^n \bq^{-1}_M  Y^M \Bigl( \bq_V^n a \bigl(\bq^n \cdot \mathbf{x}\bigr),\mathbf{x} \Bigr)\Bigr|_{\bz=\bx_0} \\
= (-1)^{ab} \sum_{n \in \mathbb{Z}} \res_{\bx} i_{|x|>|z|} \str_M   Y^M \bigl(b, \bz \bigr) \Ber \bq^n   Y^M \Bigl( \bq_V^{n-1} a \bigl(\bq^n \cdot \mathbf{x}\bigr),\bq \cdot \mathbf{x} \Bigr) \bq^{-1}_M\bigr|_{\bz = \bx_0}
\label{eq:7.3.4}
\end{multline}
Using formula \eqref{eq:huangs} to change coordinates $\bx \mapsto \bq \cdot \bx$ and noting that the residue transforms by the Berezinian of the change of coordinates:
\[ \res_{\bq x} = \Ber \bq \res_{\bx}, \]
we write \eqref{eq:7.3.4} as 
\begin{equation}
(-1)^{ab} \sum_{n \in \mathbb{Z}} \res_{\bx} i_{|x|>|z|} \str_M   Y^M \bigl(b, \bz \bigr) \Ber \bq^{n-1}   Y^M \Bigl( \bq_V^{n-1} a \bigl(\bq^{n-1} \cdot \mathbf{x}\bigr), \mathbf{x} \Bigr) \bq^{-1}_M\bigr|_{\bz = \bx_0}
\label{eq:7.3.5}
\end{equation}
which clearly cancels the second term in the RHS in \eqref{eq:7.3.2b}.
\label{rem:bdrules}
\end{rem}

\section{The connection} \label{sec:connection}
In this section we show that the conformal blocks $\varphi_M$ obtained as (super)traces of modules satisfy explicit differential equations with respect to the moduli parameters of $\bq$. This shows that they give rise to flat sections of particular bundles with connection over the moduli spaces of elliptic (super)curves. To produce the differential equations we modify the proof of Proposition \ref{trace.is.cb} to sections of $\cA$ that are quasiperiodic in a certain sense, and analogous to Weierstrass's $\zeta$ function.
We construct these sections using renormalised versions of \ref{no:x.1.e} (and \ref{no:x.2.d} in the super case)
This argument corresponds in the physics literature to \emph{inserting the superconformal field into the correlator}. We start with the usual case of elliptic curves to re-derive Zhu's result in \cite{zhu}. We recall first some classical results on elliptic functions and modular forms referring the reader to \cite{appell,chandra,zagier-jacobi}.  
\begin{nolabel}
Let $\Gamma \subset \mathbb{C}$ be a rank $2$ lattice. The series
\[  \sum_{\stackrel{\gamma \in \Gamma}{|\gamma| > 2 R > 0}} \left( \frac{1}{(t - \gamma)^2} -  \frac{1}{\gamma^2} \right) \]
converges uniformly in the disk $\{ t : |t| \leq R \}$, therefore
\[ \wp(t) = \wp_\Gamma (t) := \frac{1}{t^2} +   \sum_{ 0 \neq \gamma \in \Gamma}\left( \frac{1}{(t - \gamma)^2} -  \frac{1}{\gamma^2} \right) \]
converges absolutely for $t \notin \Gamma$. The function so defined is Weierstrass's $\wp$-function. It is holomorphic in the whole plane except for points in $\Gamma$ where it has poles of order $2$, zero residue and principal part $1/t^2$. It is an elliptic function with periods given by elements in $\Gamma$.  

The series
\[  \sum_{\stackrel{\gamma \in \Gamma}{|\gamma| > 2 R > 0}} \left( \frac{1}{t - \gamma} + \frac{1}{\gamma} + \frac{t}{\gamma^2} \right) \]
converges uniformly in the disk $\{ t : |t| \leq R \}$, therefore
\[ \zeta(t) = \zeta_\Gamma(t) := \frac{1}{t} + \sum_{0 \neq \gamma \in \Gamma}  \left( \frac{1}{t - \gamma} + \frac{1}{\gamma} + \frac{t}{\gamma^2} \right) \]
converges absolutely for $t \notin \Gamma$. The function so defined is Weierstrass's $\zeta$-function. It is holomorphic in the whole plane except for points in $\Gamma$ where it has simple poles with residue $1$. It is \emph{not} an elliptic function since it has only one pole in a fundamental domain for the lattice $\Gamma$. Choose an isomorphism $\Gamma \simeq \mathbb{Z}^2$, that is a set $\{\omega_1, \omega_2\}$ of generators of $\Gamma$. Let $\left\{ \omega^*_i \right\} \subset \Gamma^\vee := \Hom (\Gamma, \mathbb{Z})$ be the dual basis and construct
\[ \eta \in \Hom (\Gamma, \mathbb{C}), \qquad \eta(\gamma) = \sum_{i=1}^2 \zeta\left( \frac{\omega_i}{2} \right) \omega_i^*(\gamma). \]
We summarize what we know of $\zeta(t)$ in the following
\begin{thm*}\hfill
\begin{enumerate}
\item $\zeta(t) = - \zeta(-t)$.
\item $\zeta'(t) = - \wp(t)$. 
\item $\zeta(t + \gamma) = \zeta(t) + 2 \eta(\gamma)$, $\forall \gamma \in \Gamma$. 
\item If $\tfrac{\omega_1}{\omega_2} \in \mathbb{H}$ then \[ \zeta\left( \frac{\omega_1}{2} \right) \omega_2 - \zeta\left( \frac{\omega_2}{2} \right) \omega_1 = \pi i. \]
\item Near the origin $t = 0$, $\wp(t)$ and  $\zeta(t)$ have expansions
\[ \begin{aligned}
\wp(t) &= \frac{1}{t^2} + b_1 t^2 + b_2 t^4 + \dots,\\
\zeta(t) &= \frac{1}{t} - \frac{b_1}{3} t^3 - \frac{b_2}{5} t^5 - \dots, 
\end{aligned}
\]
where by definition $b_n = (2n + 1) \sum_{0 \neq \gamma \in \Gamma} \gamma^{-2n-2}$. 
\end{enumerate}
\end{thm*}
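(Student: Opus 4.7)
The plan is to prove the five parts of the theorem directly from the absolutely convergent series defining $\wp$ and $\zeta$, which permits unrestricted rearrangement of summation.

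For (a), I would replace $t$ by $-t$ in the series for $\zeta$ and substitute $\gamma\mapsto-\gamma$ in the lattice sum: the summand becomes $-1/(t-\gamma)-1/\gamma-t/\gamma^2$, which is $-1$ times the original summand, yielding $\zeta(-t)=-\zeta(t)$. For (b), term-by-term differentiation is legal on compact subsets avoiding $\Gamma$ by uniform convergence of the tail; doing it gives $\zeta'(t)=-1/t^2-\sum_{0\neq\gamma\in\Gamma}\bigl(1/(t-\gamma)^2-1/\gamma^2\bigr)=-\wp(t)$. For (e), expand $1/(t-\gamma)^2=\sum_{n\geq 0}(n+1)t^n/\gamma^{n+2}$ and a similar expansion for $1/(t-\gamma)+1/\gamma+t/\gamma^2$, then exchange summations and note that $\sum_{\gamma\neq 0}\gamma^{-k}$ vanishes for $k$ odd by the symmetry $\gamma\mapsto-\gamma$; only the even terms survive, producing the stated coefficients $b_n$.

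For (c), observe that by (b) the derivative $\zeta(t+\gamma)-\zeta(t)$ has derivative $-\wp(t+\gamma)+\wp(t)$, which vanishes because $\wp$ is $\Gamma$-periodic (an easy consequence of its defining series). Hence $\zeta(t+\gamma)-\zeta(t)$ is a constant $c(\gamma)$; additivity $c(\gamma+\gamma')=c(\gamma)+c(\gamma')$ is immediate, so it suffices to determine $c(\omega_i)$. Setting $t=-\omega_i/2$ and using oddness (a) gives $c(\omega_i)=2\zeta(\omega_i/2)$, matching the definition of $\eta$.

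The most delicate part is (d), the Legendre relation. I would integrate $\zeta(t)\,dt$ around the boundary of a fundamental parallelogram $P$ with vertices $t_0,t_0+\omega_1,t_0+\omega_1+\omega_2,t_0+\omega_2$, chosen so $0$ is the unique lattice point in the interior. On one hand, by the residue theorem the integral equals $2\pi i\cdot\operatorname{Res}_{t=0}\zeta(t)=2\pi i$. On the other hand, pairing the opposite sides and applying the quasi-periodicity from (c) gives
\[
\oint_{\partial P}\zeta(t)\,dt=\int_{t_0}^{t_0+\omega_1}\bigl(\zeta(t)-\zeta(t+\omega_2)\bigr)dt-\int_{t_0}^{t_0+\omega_2}\bigl(\zeta(t)-\zeta(t+\omega_1)\bigr)dt=-2\eta(\omega_2)\omega_1+2\eta(\omega_1)\omega_2.
\]
Since $\eta(\omega_i)=\zeta(\omega_i/2)$ by definition, this yields $2\bigl(\zeta(\omega_1/2)\omega_2-\zeta(\omega_2/2)\omega_1\bigr)=2\pi i$, i.e.\ the claim. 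The orientation of $\partial P$ (ensuring $\omega_1/\omega_2\in\mathbb{H}$) is exactly what pins down the sign. The main obstacle is bookkeeping in this contour argument: one must be careful that the fundamental parallelogram genuinely contains only one pole, that the sides are paired with the correct orientation, and that the quasi-periodicity constants are applied on the right edges; once set up correctly, the computation is automatic.
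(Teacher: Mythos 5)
The paper itself states this theorem without proof, as a summary of classical facts about the Weierstrass functions (it cites \cite{appell} and \cite{chandra}), so there is no internal proof to compare against; your argument is the standard classical one and is essentially correct. Parts (a), (b) and (e) are fine as written. Two points deserve more care. First, in (c) the $\Gamma$-periodicity of $\wp$ is \emph{not} literally visible from its defining series, because the correction terms $-1/\gamma^2$ destroy manifest translation invariance; the standard one-line repair is to differentiate once more, observe that $\wp'(t)=-2\sum_{\gamma\in\Gamma}(t-\gamma)^{-3}$ is manifestly periodic, conclude that $\wp(t+\omega_i)-\wp(t)$ is constant, and kill the constant by evaluating at $t=-\omega_i/2$ using that $\wp$ is even. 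You should say this rather than wave at the series.

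Second, the orientation issue in (d) that you defer is exactly where a sign can go wrong, and you have not actually resolved it. With your pairing of opposite sides, the traversal $t_0\to t_0+\omega_1\to t_0+\omega_1+\omega_2\to t_0+\omega_2\to t_0$ is \emph{counterclockwise} precisely when $\mathrm{Im}(\omega_2/\omega_1)>0$, i.e.\ $\omega_2/\omega_1\in\mathbb{H}$; only then does the residue theorem give $+2\pi i$ and hence $\zeta(\omega_1/2)\,\omega_2-\zeta(\omega_2/2)\,\omega_1=\pi i$. Under the hypothesis as literally printed in the theorem, namely $\omega_1/\omega_2\in\mathbb{H}$, the contour is traversed clockwise and your computation yields $-\pi i$. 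The paper's own later use of part (d) in \ref{no:correction}--\ref{no:8.3}, where $\tau=\omega_2/\omega_1\in\mathbb{H}$ and one deduces from \eqref{eq:8.omega1} that $\zeta(\omega_2/2)=\tfrac{1}{2}\omega_2 b_0-\pi i/\omega_1$, confirms that the intended hypothesis is $\omega_2/\omega_1\in\mathbb{H}$; so your contour argument does prove the intended statement, but you should carry out the orientation check explicitly instead of asserting that the printed hypothesis is what pins down the sign.
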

\label{no:weierzeta}
\end{nolabel}
\begin{nolabel}
It will be useful for us to consider Hermite's modification of Weierstrass's function \cite{hermite}. It follows from (c) of \ref{no:weierzeta} that if we define
\[
\widetilde{\zeta}(t) = \zeta(t) - \frac{2 \zeta\left( \frac{\omega_1}{2} \right)}{\omega_1} t,
\]
then this function is periodic with respect to $\omega_1$: $\widetilde{\zeta}(t + \omega_1) = \widetilde{\zeta}(t) $. Therefore we may expand it in powers of $e^{2 \pi i t/\omega_1}$.  Recall the {\em cot identity}
\begin{equation}
\pi i + 2 \pi i \frac{1}{e^{2\pi i x} - 1} = \pi \mathrm{cot } \pi x =  \frac{1}{x} + \sum_{m \geq 1} \left( \frac{1}{x + m} + \frac{1}{x-m} \right). 
\label{eq:8.cotident}
\end{equation}
Let $\left\{ \omega_1, \omega_2 \right\}$ be generators of $\Gamma$ as in \ref{no:weierzeta} and denote $\tau = \omega_2/\omega_1$. We expand now to obtain
\begin{equation*}
\zeta(\omega_1 t) = \frac{1}{\omega_1 t} +  \omega_1 t b_0 + \frac{1}{\omega_1}   \sum_{n \neq 0}  \sum_{m \in \mathbb{Z}} \left( \frac{1}{t + n \tau + m} - \frac{1}{n\tau+m} \right) + \frac{1}{\omega_1} \sum_{m \neq 0} \left( \frac{1}{t + m} - \frac{1}{m} \right).
\end{equation*}
Here $b_0$ is defined as in (e) of \ref{no:weierzeta}, with the proviso that this non absolutely convergent sum is to be evaluated by summing first along subsets of $\Gamma$ of the form $n \omega_2 + \Z \omega_1$, and then summing on $n \in \Z$.
Using \eqref{eq:8.cotident} we can replace
\[ \sum_{m \in \mathbb{Z}} \left( \frac{1}{t + n\tau + m} - \frac{1}{n \tau + m} \right) = \pi \cot \pi (t + n \tau) - \pi \cot \pi n \tau = \frac{2 \pi i}{e^{2 \pi i (t + n \tau)} -1} - \frac{2 \pi i}{e^{2 \pi i n \tau} - 1}, \]
from where 
\[ \zeta(t) =  \frac{\pi i}{\omega_1} + \frac{2 \pi i}{\omega_1} \frac{1}{e^{2 \pi i t/\omega_1} -1} +  t b_0 + \frac{2 \pi i}{\omega_1} \sum_{n \neq 0} \left( \frac{1}{e^{2 \pi i (t/\omega_1 + n \tau)} - 1} - \frac{1}{e^{2 \pi i n \tau} -1 } \right).\]
In particular, we see that 
\begin{equation}
\zeta\left( \frac{\omega_1}{2} \right) = \frac{\omega_1 b_0}{2}, 
\label{eq:8.omega1}
\end{equation}
hence denoting $q = e^{2 \pi i \tau}$ and making the exponential change of coordinates $t \mapsto x=e^{2 \pi i t/\omega_1}$ we have
\begin{equation}
\frac{\omega_1}{2 \pi i} \, \widetilde{\zeta}(t) = \bar{\zeta}(x) := \frac{1}{2} + \frac{1}{x - 1} + \sum_{n \neq 0} \left( \frac{1}{q^n x-1} -  \frac{1}{q^n -1 }  \right).
\label{eq:8.zetatilde}
\end{equation}
\label{no:correction}
\end{nolabel}
\begin{nolabel}
The function $\widetilde{\zeta}(t)$ is not elliptic. In fact it follows from (d) in \ref{no:weierzeta} and \eqref{eq:8.omega1} that 
\[ \zeta\left( \frac{\omega_2}{2}  \right) = \frac{\omega_2 b_0}{2} - \frac{\pi i }{\omega_1}, \]
hence \ref{no:weierzeta}(c) reads now
\[\zeta(t + \omega_2) = \zeta(t) + \omega_2 b_0 - \frac{2 \pi i}{ \omega_1}, \]
and we arrive at the quasi-periodicity of $\widetilde{\zeta}$:
\begin{equation} \label{eq:zetatilde-period}
\widetilde{\zeta}(t + \omega_2) = \widetilde{\zeta}(t) - \frac{2 \pi i}{ \omega_1} \quad \Rightarrow \quad
 \bar{\zeta}(q\cdot x) = \bar{\zeta}(x) - 1. 
\end{equation}
\label{no:8.3}
In a similar fashion recall the identities
\begin{equation}
\frac{\pi^2}{\sin^2 \pi t} = \sum_{m \in \mathbb{Z} } \frac{1}{(t - m)^2}, \qquad   \frac{\pi^2}{6}=\sum_{m \geq 1} \frac{1}{m^2},
\label{eq:sin-ident}
\end{equation}
and expand
\[ \wp(\omega_1 t) = \frac{1}{ \omega_1^2  t^2} + \frac{1}{\omega_1^2} \sum_{n \neq 0} \sum_{m \in \mathbb{Z}} \left( \frac{1}{(t + n \tau + m)^2} - \frac{1}{(n\tau + m )^2} \right) + \frac{1}{\omega_1^2} \sum_{m \neq 0} \left( \frac{1}{(t+m)^2} - \frac{1}{m^2} \right).\] 
Using \eqref{eq:sin-ident} we replace
\begin{align*}
& \sum_{m \in \mathbb{Z}} \left( \frac{1}{(t + n \tau + m)^2} - \frac{1}{(n\tau + m )^2} \right)
= \frac{\pi^2}{ \sin^2 \pi (t + n \tau)}  - \frac{\pi^2}{ \sin^2 \pi  n \tau} \\
& \phantom{thequickbrownfoxjumped} = (2 \pi i)^2 \left( \frac{1}{e^{-2  \pi i  (t + n \tau) } ( e^{2 \pi i (t + n \tau)} - 1)^2 }  - \frac{1}{e^{- 2\pi i n \tau} (e^{2 \pi i n \tau} - 1)^2} \right),
\end{align*}
from where we obtain the Fourier expansion:
\[ \left( \frac{\omega_1}{2 \pi i} \right)^2 \wp (t) = \frac{e^{2 \pi i t/\omega_1}}{(e^{2 \pi i t/\omega_1} -1)^2} + \sum_{n\neq 0} \left( \frac{e^{2 \pi i (t/\omega_1 + n \tau)} }{(e^{2 \pi i (t/\omega_1 + n \tau)} - 1)^2} - \frac{e^{2 \pi i n \tau} }{(e^{ 2 \pi i n \tau} - 1)^2} \right)  + \frac{1}{12}. 
\]
In terms of the exponential coordinate $x = e^{2 \pi i t/\omega_1}$ we define
\[ \bar{\wp}(x) := \frac{x}{(x - 1)^2} + \sum_{n \neq 0} \left( \frac{q^n x}{(q^n x - 1)^2} - \frac{q^n}{(q^n - 1)^2} \right) + \frac{1}{12} - \frac{\omega_1 \zeta\left( \frac{\omega_1}{2} \right)}{2 \pi}  \]
and we obtain therefore that the modified zeta function $\bar{\zeta}$ in the exponential coordinate $x = e^{2 \pi i t}$ satisfies
\[
\bar\zeta(q x) = \bar\zeta(x) - 1, \qquad \bar\zeta'(x) = - \frac{\bar{\wp}(x)}{x}.
\]
\end{nolabel}

\begin{nolabel}\label{no:modularity}
The series $b_n$ appearing in Theorem \ref{no:weierzeta} (e) are of course the Eisenstein series which are well known to be modular forms for $SL(2, \Z)$, i.e., they transform nicely with respect to the parameter $\tau$ under modular transformations $\tau \mapsto \tfrac{a \tau + b}{c \tau + d}$ with $ab - cd = 1$. We will need more generally the following 
\begin{defn}\cite{zagier-jacobi} For two natural numbers 
 $k$ and $l$ a \emph{Jacobi Form} of weight $k$ and index $l$ is a holomorphic function $\varphi: \mathbb{H} \times \mathbb{C} \rightarrow \mathbb{C}$ satisfying the following two transformation equations
\begin{subequations} \label{eq:zagier-transformation}
\begin{gather}
\label{eq:zagier1}
\varphi\left( \tau, \alpha + m \tau + n \right) = e^{- 2 \pi i l (m^2 \tau + 2 m \alpha)} \varphi( \tau , \alpha ), \qquad (m,n)  \in \mathbb{Z}^2 \\ 
\varphi\left( \frac{a \tau + b}{c \tau + d}, \frac{\alpha}{c \tau +d} \right) = (c\tau + d)^{k} e^{2 \pi i l \frac{c \alpha^2}{c \tau + d} } \varphi\left( \tau , \alpha \right), \qquad \begin{pmatrix} a & b \\ c & d \end{pmatrix} \in SL(2, \mathbb{Z}) \label{eq:zagier2}
\end{gather}
\end{subequations}
In addition we require that for each $(\begin{smallmatrix}a & b \\ c & d \end{smallmatrix}) \in SL(2, \Z)$ the function on the LHS of \eqref{eq:zagier2} has a Fourier expansion of the form $\sum c(n,r) q^n y^r$ with $q = e^{2 \pi i \tau}$, $y= e^{2 \pi i \alpha}$ and $c(n,r) = 0$ unless $ n \geq r^2/4l$. If $\varphi$ satisfies the stronger condition that $c(n,r) = 0$ if $n> r^2/4l$ it is called a \emph{cusp form} and if it satisfies the weaker condition of $c(n,r) = 0$ if $n < 0$ then it is called a \emph{weak Jacobi form}. We denote the space of Jacobi forms of weight $k$ and index $l$ by $J_{k,l}$ and the space of weak Jacobi forms by $J'_{k,l}$. In the case of $J'_{k,l}$ the weight can be negative as long as $k \geq -2l$.  

One defines a Jacobi form with character by inserting a character of $SL(2, \mathbb{Z})\ltimes \mathbb{Z}^2$ in \eqref{eq:zagier1}--\eqref{eq:zagier2} as usual. In particular this is useful to allow $l$ to be a rational number instead of a natural number. We will be interested in the case of $l$ being a half integer in the Appendix. For $t=0$, $\varphi(\tau, 0)$ is a usual modular form of weight $k$, while for fixed $\tau$ we obtain elliptic functions. The Weierstrass function $\wp$ is a meromorphic Jacobi form of weight $2$ and index $0$. More examples can be produced as Eisenstein series as defined in \cite[Thm 2.1]{zagier-jacobi}:
\begin{equation}\label{eq:zagier-eisenstein}
E_{k,m} (\tau, \alpha) = \frac{1}{2} \sum_{\stackrel{c,d \in \mathbb{Z}}{(c,d)=1}}\sum_{\lambda \in \mathbb{Z}} (c\tau+d)^{-k} \exp \left( 2 \pi i m \left( \lambda^2 \frac{a \tau + b}{c \tau + d} + 2 \lambda \frac{\alpha}{c \tau + d} - \frac{c \alpha^2}{c \tau + d} \right) \right),
\end{equation}
which converges absolutely and uniformly when $k \geq 4$. In each summand $a,b$ are chosen so that $a d - bc = 1$. We refer the reader to the appendix for more examples. 
\label{defn:zagier}
\end{defn}
\end{nolabel}
\begin{rem}
In the main body of the article we will not be concerned with convergence of these forms, only their transformation properties will be of importance. They can be thought of as formal power series in $\mathbb{C}[ [q]]( (y))$. If the index is a semi-integer then they can be thought as formal power series in $\mathbb{C}[ [q]]( (y^{1/2}))$. Both $J_{*,*}$ and $J'_{*,*}$ form graded subrings with finite dimensional graded components.  
\label{rem:zagier-forms}
\end{rem}
\begin{nolabel}
Let $E := \mathbb{C}/\Gamma$ be the elliptic curve with period lattice $\Gamma$ for the differential $\omega = dt$. Under the exponential map $t \mapsto e^{2 \pi i t/\omega_1}$ we have $\pi: \mathbb{C}^* \rightarrow E \simeq \mathbb{C}^*/\mathbb{Z}$. Consider the meromorphic function $f(x)=(x-1)^{-1}$ in $\mathbb{C}^*$. We may think of the function $\tfrac{2 \pi i}{\omega_1} \widetilde{\zeta}$ as the push-forward $\pi_* f$. As such we would write analogously to \eqref{eq:x.1.d.1} 
\[ (\pi_* f)(x) = \sum_{n \in \mathbb{Z}} f(q^n x) = \sum_{n \in \mathbb{Z}} \frac{1}{q^n x - 1}. \]
This sum is however not convergent neither in the analytic topology nor in the formal neighborhood of $q=0$. In order to obtain a convergent sum we need to correct each summand as in \eqref{eq:8.zetatilde} at the cost of losing $\mathbb{Z}$-equivariance.  
\label{no:8.4}
\end{nolabel}
\begin{nolabel}
In what follows we will consider the lattice $\Gamma = \mathbb{Z} \oplus \mathbb{Z} \tau$, that is we put $\omega_1 = 1$. Let $V$ be a quasi-conformal vertex algebra and let $\cA$ and $\cA_E$ be the chiral algebras associated to $V$ in the setup of \ref{no:x.1.d}. Let $a \in V$ be a vector of conformal weight $\Delta_a \geq 1$,
We mark the point $z = 1 \in X$ and consider a section of $\cA$ given by 
\[
\widetilde{a}(x) = \frac{x^{\Delta_a-1} a}{x-1}
\]
meromorphic with a simple pole at $x = 1$. The sum \eqref{eq:x.1.d.1} of $\Z$-translates of $\widetilde{a}(x)$ is not convergent. Let us correct each term in an analogous fashion to Weierstrass's function, setting
\begin{equation}
\pi_*a(x) := \frac{x^{\Delta_a -1}a}{2} + \frac{x^{\Delta_a - 1} a}{x - 1} + \sum_{n \neq 0} \left( \frac{x^{\Delta_a -1} a}{q^n x - 1} - \frac{x^{\Delta_a-1} a}{q^n - 1} \right) = x^{\Delta_a -1} \bar{\zeta}(x) a.
\label{eq:8.5.1}
\end{equation}
This is a well defined section of $\cA$ however it is no longer $\mathbb{Z}$-equivariant. In fact under \eqref{eq:x.1.1} we have
\begin{equation} q\cdot \pi_* a(x) = \pi_* a(x) - x^{\Delta_a - 1} a. 
\label{eq:8.5.2}
\end{equation}
Let us repeat the proof of Proposition \ref{trace.is.cb} with this $\pi_* a(x)$. In the last equality of \eqref{eq:7.prop.2} we obtain an extra term coming from \eqref{eq:8.5.2}. In coordinates this gives
\begin{align}
\begin{split} 
\varphi_M (h(\pi_* a) \cdot b) 
 &=  (-1)^{ab} \res_x \str_{M} Y^M(b,z) x^{\Delta_a - 1} Y^M(a,x) q^{L_0} \\
 &=  (-1)^{ab} \str_M Y^M(b,z) a_0 q^{L_0}.
\end{split}
\label{eq:8.5.3}
\end{align}
\label{no:8.5}
\end{nolabel}
\begin{nolabel}
Now let $\omega \in V$ be the conformal vector. We want to evaluate \eqref{eq:8.5.3} with $a = \omega$. The last term reads
\[
\str_M Y^M(b,z) L_0 q^{L_0} = q \frac{d}{dq} \str_M Y^M(b,z) q^{L_0} = q \frac{d}{dq} \varphi_M(b).
\]
When we set $z=1$ and trivialize using the local coordinate, the RHS of \eqref{eq:8.5.3} is given by (c.f., Remark \ref{rem:bdrules})
\[
\varphi_M \Bigl( \res_{x-z} x \bar{\zeta}(x) L(x-z) b \Bigr) \Bigr|_{z=1} = \varphi_M \Bigl( \res_x (x+1)\bar{\zeta}(x+1) L(x) b\Bigr).
\]
We have shown that the conformal block $\varphi_M$ satisfies the differential equation
\begin{equation}
q \frac{d}{dq} \varphi_M (b) = \varphi_M \Bigl( \res_x (x+1)\bar{\zeta}(x+1) L(x) b\Bigr).
\label{eq:8.6.1}
\end{equation}
This equation coincides with that found by Zhu.
\label{no:8.6}
\end{nolabel}

\begin{nolabel}
Consider now the setup of \ref{no:ellipticfull} and assume $y \neq 1$ (the situation for $y=1$ is treated in an analogous way)  and introduce the following super zeta function $\bar{\zeta}(\bx;\bq) = \bar{\zeta}(x,\theta; q, y, \varepsilon, \delta)$ which we will denote with the same letter as in the non-super case:
\[
\bar{\zeta}(\bx; \bq) = \bar{\zeta}(x) \left( 1 - \frac{\varepsilon \delta \bar{\wp}(x)}{1-y} \right) + \frac{\theta \varepsilon}{1 - y} \frac{\bar{\wp}(x)}{x}.
\]
\begin{lem*}
The super zeta function satisfies
$\bar{\zeta}(\bq \bx ; \bq) = \bar{\zeta}(\bx;\bq) - 1$. 
\end{lem*}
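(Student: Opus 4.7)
The plan is a direct computation, exploiting the fact that $\varepsilon,\delta,\theta$ are odd and hence square to zero, so that the even nilpotents $\varepsilon\theta$ and $\varepsilon\delta$ also square to zero. Consequently every Taylor expansion we perform terminates after one step, and every monomial containing a repeated odd generator vanishes.

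First I would expand the ``body'' $x'$ and ``soul'' $\theta'$ of $\bq\cdot\bx$, which by \eqref{eq:x.3.generalzaction} are
\[
x' = qx + q\varepsilon\theta, \qquad \theta' = q(y-\varepsilon\delta)\theta + q\delta x .
\]
Since $(\varepsilon\theta)^2 = 0$ and $\bar{\zeta}'(x) = -\bar{\wp}(x)/x$, together with the already-established identities $\bar{\zeta}(qx) = \bar{\zeta}(x) - 1$ and the $q$-periodicity $\bar{\wp}(qx) = \bar{\wp}(x)$ (coming from $\bar{\zeta}(qx)=\bar\zeta(x)-1$ by differentiation), I would obtain
\[
\bar{\zeta}(x') = \bar{\zeta}(x) - 1 - \frac{\varepsilon\theta\,\bar{\wp}(x)}{x}, \qquad \bar{\wp}(x') = \bar{\wp}(x) + q\varepsilon\theta\,\bar{\wp}'(qx),
\]
and
\[
\frac{1}{x'} = \frac{1}{qx} - \frac{\varepsilon\theta}{qx^{2}}.
\]

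Next I would substitute into the two summands of the definition and repeatedly use $\varepsilon^{2}=0$ to kill cross terms. For the first summand, since $\varepsilon\theta\cdot\varepsilon\delta = 0$, one has $\varepsilon\delta\,\bar{\wp}(x') = \varepsilon\delta\,\bar{\wp}(x)$, and so
\[
\bar{\zeta}(x')\!\left(1-\frac{\varepsilon\delta\,\bar{\wp}(x')}{1-y}\right) = \bar{\zeta}(x) - 1 -\frac{\varepsilon\theta\,\bar{\wp}(x)}{x} - \frac{(\bar{\zeta}(x)-1)\varepsilon\delta\,\bar{\wp}(x)}{1-y}.
\]
For the second summand, every term in $\bar{\wp}(x')/x'$ containing $\varepsilon$ is annihilated upon multiplication by $\theta'\varepsilon$, so I may replace $\bar{\wp}(x')/x'$ by $\bar{\wp}(x)/(qx)$; together with $\theta\varepsilon = -\varepsilon\theta$, $\delta\varepsilon = -\varepsilon\delta$ this yields
\[
\frac{\theta'\varepsilon}{1-y}\frac{\bar{\wp}(x')}{x'} = -\frac{y\,\varepsilon\theta\,\bar{\wp}(x)}{(1-y)x} - \frac{\varepsilon\delta\,\bar{\wp}(x)}{1-y}.
\]

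Finally I add the two pieces. The $\tfrac{\varepsilon\delta\,\bar{\wp}(x)}{1-y}$ contributions cancel and the two $\varepsilon\theta$ contributions combine via $1 + \tfrac{y}{1-y} = \tfrac{1}{1-y}$ to give $-\tfrac{\varepsilon\theta\,\bar{\wp}(x)}{(1-y)x}$. What remains is precisely $\bar{\zeta}(\bx;\bq) - 1$, after using $\theta\varepsilon = -\varepsilon\theta$ in the definition of $\bar{\zeta}(\bx;\bq)$. The only real ``obstacle'' is bookkeeping: keeping the sign conventions of the odd variables consistent and recognizing at each stage which cross terms vanish because they contain $\varepsilon^2$; no analytic input beyond the classical identities for $\bar{\zeta}$ and $\bar{\wp}$ recalled in \ref{no:correction}--\ref{no:8.3} is needed.
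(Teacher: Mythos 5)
Your proof is correct and takes essentially the same route as the paper's: both Taylor-expand in the odd variables (all expansions terminating since $\varepsilon$, $\delta$, $\theta$ square to zero) and reduce everything to the classical identities $\bar{\zeta}(qx)=\bar{\zeta}(x)-1$, $\bar{\zeta}'(x)=-\bar{\wp}(x)/x$, and the $q$-periodicity of $\bar{\wp}$. The cancellations you single out (the $\varepsilon\delta\,\bar{\wp}(x)/(1-y)$ terms, and the combination $1+\tfrac{y}{1-y}=\tfrac{1}{1-y}$ for the $\varepsilon\theta$ terms) are exactly those occurring in the paper's displayed computation.
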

\begin{proof}
Expanding in Taylor series with respect to $\theta$ and using the action of $\bq$ given by \eqref{eq:x.3.generalzaction}  we have
\begin{align*}
\bar{\zeta}(\bq \bx; \bq)
&= \bar{\zeta}(qx + q \varepsilon \theta) \left( 1 - \frac{\varepsilon \delta \bar{\wp}(qx + q \varepsilon \theta)}{1 - y} \right) + \frac{qy \theta \varepsilon + q \delta x \varepsilon}{1 - y} \frac{\bar{\wp}(qx + q \varepsilon \theta) }{qx + q \varepsilon \theta} \\
&= \bar{\zeta}(qx) \left( 1 - \frac{\varepsilon \delta \bar{\wp}(qx)}{1 - y} \right) + q \varepsilon \theta \bar{\zeta}'(qx) + \frac{y \theta \varepsilon + \delta x \varepsilon}{1 - y} \frac{\bar{\wp}(qx)}{x} \\
&= (\bar{\zeta}(x) -1) \left( 1 - \frac{\varepsilon \delta\bar{\wp}(x)}{1 - y} \right) - q \varepsilon \theta \frac{\bar{\wp}(qx)}{qx} + \frac{y \theta \varepsilon + \delta x \varepsilon}{1-y} \frac{\bar{\wp}(x)}{x}
= \bar{\zeta}(\bx; \bq) - 1.
\end{align*}
\end{proof}
\label{no:9.7}
\end{nolabel}
\begin{nolabel}
Let $V$ be a quasi-conformal $N_W=1$ SUSY vertex algebra and let $\cA$ and $\cA_E$ be the chiral algebras associated to $V$ as in \ref{no:ellipticfull}--\ref{no:generalelliptic}. Let $\{a, Q_0 a\}$ span a $1|1$ dimensional representation of $\gl(1|1)$ in $V$ with $\Delta_a \geq 1$ and $c_a = -1$. Consider the sections of $\cA$ given by
\[
a(\bx) = \frac{x^{\Delta_a} a}{x-z}, \qquad Q_0 a(\bx) = \frac{x^{\Delta_a - 1}\theta Q_0a}{x-z}.
\]
The sums of translates of these sections are divergent and do not give rise to well defined sections of $\cA_E$. Indeed quotienting out the odd parameters $\varepsilon, \delta$ reduces us to the situation of \eqref{eq:x.2.3} and Lemma \ref{no:x.2.d}, and we see from there that the sums are nonconvergent. Now, it is tempting to correct each summand by a term proprtional to $x^{\Delta_a} a$ (resp. $x^{\Delta_a -1}\theta Q_0a$) in analogy with \ref{no:8.6} so as to obtain a pair of sections
\[
x^{\Delta_a} \zeta(\bx) a, \qquad  \text{( resp. $x^{\Delta_a - 1} \theta \zeta(\bx) Q_0 a$} ).
\]
However there is a problem: the proposed corrections $x^{\Delta_a} a$ and $x^{\Delta_a -1}\theta Q_0a$ are \emph{not} $\mathbb{Z}$-equivariant (see the following remark), and so the sums do not possess the required quasi-periodicity. To correctly extend our arguments to the super case and derive differential equations satisfied by $\varphi_M$, we need to analyse the failure of $\mathbb{Z}$-equivariance of these sections.
\label{no:9.8}
\end{nolabel}
\begin{rem}
Let us return for a moment to the non super case. Another way that we might have obtained the correction \eqref{eq:8.5.1} is as follows. For $a \in V$ with conformal weight $\Delta_a$, the constant section $a$ is translation invariant. Under the exponential change of coordinates $x = e^{2 \pi i t} - 1$ this section reads $x^{\Delta_a - 1} a$ which in fact is invariant under $x \mapsto q x$. This is exploited in finding the failure of $\mathbb{Z}$-equivariance of $\pi_* a(x)$ in \eqref{eq:8.5.2}. 

In the same vein we consider an $N_W=1$ SUSY vertex algebra $V$ and let $\cA_{\mathbb{A}^{1|1}}$ the corresponding chiral algebra on the affine superline. Let $\{a, Q_0 a\}$ be a pair of \emph{superpartners} and consider the constant section $a$ of $\cA_{\mathbb{A}^{1|1}} $. It is invariant by affine translations. Under the exponential change of coordiantes $(x-1, \theta) = \rho(t,\zeta)$ given by $x = e^{2 \pi i t}$, $\theta = x \zeta$ this section reads $x^{\Delta_a} \rho^{-1}a + \theta x^{\Delta_a-1} Q_0 \rho^{-1}a$ as explained in \ref{no:zhu1}. We can apply this to the superpartners $\{j, h\} \subset V$ to produce a differential equation for $\varphi$. However we require \emph{two} differential equations to describe a connection on our family (we restrict attention to the family $y \neq 1$, which has dimension $2|0$). 
\label{rem:9.9}
\end{rem}
\begin{nolabel}
To avoid repeating notation and for simplicity we will restrict our attention to $V$ a strongly conformal $N_W=1$ SUSY vertex algebra. We put $a = h = H_{-1}\vac$ so that $Q_0a = j = J_{-1}\vac$. Note that in this case we have $\Delta_a = -c_a = 1$ and we are in the situation of \ref{no:linear-vectors}. If we restrict ourselves to \emph{linear sections} which are homogeous combinations of $h [dx d\theta]$ and $j \left[ dxd\theta \right]$ we see from \eqref{eq:action-linear-vectors} that in order to find $\mathbb{Z}$-equivariant sections we are looking for a $2 \times 2$ matrix in the centralizer of $\bq$. Since $\bq$ is a generic element of $GL(1|1)$ such a matrix would be a polynomial in $\bq$. There is always the identity matrix (corresponding to the Euler vector field) and $\bq$ itself. We arrive at the following
\begin{prop*}
Let $V$ be a strongly conformal $N_W=1$ SUSY vertex algebra and $\cA$ be the corresponding chiral algebra on $X/S$ as in \ref{no:ellipticfull}. 
Define the following two sections of $\cA$ by 
\begin{equation}\label{eq:9.10.1}
a_1(\bx) :=  h x + j \theta, \qquad a_2 (\bx) = a_2(\bx; \bq) :=  h (x + \varepsilon \theta) + j (\delta x + (y - \varepsilon \delta) \theta) 
\end{equation}
Then $a_1(\bx)$ and $a_2(\bx)$ are $\mathbb{Z}$-invariant, namely $\bq \cdot a_i (\bx) = a_i(\bx)$. Moreover, the section $a_1(\bx)$ is flat over $S$.  
\end{prop*}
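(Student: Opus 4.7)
The plan is to exploit directly the matrix description of linear sections developed in \ref{no:linear-vectors}. Since $h$ is odd of conformal weight $\Delta_h=1$ and charge $c_h=-1$, with superpartner $Q_0 h = j$, the pair $\{h,j\}$ fits exactly the hypothesis of \ref{no:linear-vectors}. Consequently any linear section of $\cA$ with image in $\text{span}\{h,j\}$ can be written as
\[
\begin{pmatrix} h & j \end{pmatrix}\begin{pmatrix} A & B \\ C & D \end{pmatrix}\begin{pmatrix} x \\ \theta \end{pmatrix},
\]
and the action of $\bq$ is simply conjugation of the coefficient matrix by $M := \begin{pmatrix} 1 & \varepsilon \\ \delta & y-\varepsilon\delta\end{pmatrix}$, via \eqref{eq:action-linear-vectors}.

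From this reformulation the $\mathbb{Z}$-invariance statement reduces to a trivial matrix identity: a coefficient matrix gives a $\bq$-invariant section precisely when it commutes with $M$. The section $a_1 = h x + j\theta$ corresponds to the identity matrix, which commutes with every matrix, in particular with $M$. The section $a_2$ as defined in \eqref{eq:9.10.1} is by inspection given exactly by the matrix $M$ itself, which trivially commutes with itself. Thus $\bq\cdot a_1(\bx)=a_1(\bx)$ and $\bq\cdot a_2(\bx)=a_2(\bx)$.

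For the second assertion, ``flat over $S$'' means that the section, viewed as a section of the pullback bundle $\cA$ on $X = \mathbb{G}_m^{1|1}\times S$, is annihilated by the partial derivatives along the base coordinates $(\tau,\alpha,\varepsilon,\delta)$. But the expression $a_1(\bx)=hx+j\theta$ is visibly independent of $\bq=(q,y,\varepsilon,\delta)$; neither the vectors $h,j\in V$ nor the coefficients $x,\theta$ involve any base coordinate. Hence $a_1$ is manifestly flat over $S$.

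The only conceivable subtlety is the sign/parity bookkeeping for supercommutative matrices described in \ref{no:subtlety}; one must verify that the formal expansion $\begin{pmatrix} h & j\end{pmatrix}M\begin{pmatrix} x \\ \theta\end{pmatrix}$ really reproduces $h(x+\varepsilon\theta)+j(\delta x+(y-\varepsilon\delta)\theta)$ with the correct signs coming from moving the odd generator $j$ past the odd parameter $\delta$. This is routine once one adheres to the ``row of left coordinates / column of right coordinates'' convention of \ref{no:subtlety}, and presents no real obstacle. Thus the proof is essentially a direct reading of the matrix formalism already set up.
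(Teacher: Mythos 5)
Your proof is correct and follows essentially the same route as the paper: the authors likewise invoke the matrix description of \ref{no:linear-vectors}, observe that invariance under the conjugation action \eqref{eq:action-linear-vectors} amounts to lying in the centralizer of the matrix $\bigl(\begin{smallmatrix}1 & \varepsilon \\ \delta & y-\varepsilon\delta\end{smallmatrix}\bigr)$, identify $a_1$ with the identity matrix and $a_2$ with that matrix itself, and note that flatness of $a_1$ just means it does not depend on $\bq$.
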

\begin{proof}
Invariance follows by \eqref{eq:action-linear-vectors}, the flatness condition simply means that $a_1(\bx)$ does not depend on $\bq$. 
\end{proof}
\label{no:9.10}
\end{nolabel}
\begin{nolabel}
We define now: 
\[ \pi_* a_i(\bx) := \zeta(\bx; \bq) a_i(\bx), \qquad i = 1,2, \]
where $\zeta$ is the super Weierstrass zeta function defined in \ref{no:9.7} and $a_i(\bx)$ are given by \eqref{eq:9.10.1}. These sections so defined are meromorphic sections of $\cA$ which fail to be equivariant in the following way:
\[
\bq \pi_* a_i(\bx) = \pi_* a_i(\bx) - a_i(\bx).
\]
We can repeat the argument in \ref{no:8.5} and \eqref{eq:8.5.3} to obtain that
\begin{gather*}
\varphi_M( h\bigl(\pi_* a_1(\bx)) \cdot b\bigr) =  \str_M Y^M(b, \bz) 
 \res_\bx Y^M(a_1 (\bx), \bx) \bq^{-1}_M =  \str_M Y^M(b,\bz) L_0 \, \bq^{-1}_M \\ 
\varphi_M( h\bigl(\pi_* a_2(\bx)) \cdot b\bigr) =  \str_M Y^M(b,\bz) \left( L_0 - J_0 - \varepsilon H_0 + \delta Q_0 + (y - \varepsilon\delta) J_0 \right) \, \bq^{-1}_M  
\end{gather*}
Examining the definition \eqref{eq:x.6.12} of $\bq$ (or rather its inverse) reveals that $Q_0$ acts on it as $\partial_\delta$. Since $L_0$ is in the center of $GL(1|1)$ we have that the action of $L_0$ is given by $q \partial_q$. Indeed if we expand \eqref{eq:x.6.12} using the basis \eqref{eq:gl11} we obtain
\begin{equation*}
\bq^{-1} = 
\begin{pmatrix}
1 & -\delta\\ 
0 & 1 \end{pmatrix}
\begin{pmatrix}
q^{-1} & 0 \\ 
0 & q^{-1}y^{-1} 
\end{pmatrix}
\begin{pmatrix}
1 & 0 \\ 
- \varepsilon & 1 
\end{pmatrix} 
= q^{-1}y^{-1} \begin{pmatrix} y - \varepsilon \delta & - \delta \\ -\varepsilon & 1 
\end{pmatrix},
\end{equation*}
which can also be factorized as
\[ 
q^{-1} 
\begin{pmatrix}
1 & 0 \\ - \frac{\varepsilon}{y} & 1 \end{pmatrix}
\begin{pmatrix}
1 & 0 \\ 0 &  y^{-1} \end{pmatrix}
\begin{pmatrix}
1 & -\frac{\delta}{y} \\ 0 & 1 \end{pmatrix}
\left( 1- \frac{\varepsilon\delta}{y} \right) 
 = \left[ q \left( 1 + \frac{\varepsilon \delta}{y} \right) \right]^{L_0}  \left( 1 - \frac{\varepsilon}{y} H_0 \right) y^{J_0} \left( 1 + \frac{\delta}{y} Q_0 \right). \]
From here it follows that  
\[ \partial_\varepsilon \bq^{-1} =  y^{-1} (\delta L_0 - H_0) \bq^{-1} \quad \Rightarrow \quad H_0 \bq^{-1} =  \bigl( \delta q \partial_q - y \partial_\varepsilon \bigr) \bq^{-1} \]
Finally noticing that $\exp(\delta Q_0) J_0 = (J_0 - \delta Q_0) \exp(\delta Q_0)$ we obtain from \eqref{eq:x.6.12} that
\[ y \partial_y \bq^{-1} = (J_0 - \delta Q_0) \bq^{-1} \quad \Rightarrow \quad J_0 \bq^{-1} = \bigl(y \partial_y + \delta \partial_{\delta} \bigr) \bq^{-1}  \]
Collecting we obtain 
\begin{equation}
\begin{gathered}
\varphi_M\bigl(h(\pi_*(a_1(\bx))) \cdot b \bigr) = q \partial_q \varphi_M(b) \\ 
\varphi_M\bigl(h(\pi_*(a_2(\bx))) \cdot b \bigr) =  \Bigl( q \partial_q (1 - \varepsilon \delta) + y \partial y (y - 1 - \varepsilon \delta) + y (\varepsilon \partial_\varepsilon  + \delta \partial_\delta) \Bigr)\varphi_M (b)
\label{eq:9.11.1}
\end{gathered}
\end{equation}
On the other hand, putting $\bz = (1, 0)$, by Remark \ref{rem:bdrules} we have that the LHS of \eqref{eq:9.11.1} is given in coordinates by 
\[ \varphi_M \left( \res_\bx \bar{\zeta}(x+1,\theta; \bq) Y^M(a_i(x+1,\theta; \bq), \bx \bigr) b \right)\]
We arrive to the main 
\end{nolabel}
\begin{thm}
Let $V$ be a strongly conformal $N_W=1$ SUSY vertex algebra. Let $j = J_{-1} \vac$ and $h = H_{-1} \vac$ define the superconformal structure in $V$. Let $M$ be a positive energy  $V$-module and let $\varphi_M$ be the conformal block associated to it as in \eqref{eq:7.1.1}. Then $\varphi_M$ satisfies the following system of differential equations
\begin{subequations} \label{eq:primera-connection}
\begin{equation}
q \partial_q \varphi_M(b) = \varphi_M  \Biggl( \res_{\bx} \bar{\zeta}(x + 1, \theta) \Bigl( (x+1) Y^M(h, \bx) + \theta Y^M(j, \bx) \Bigr) b  \Biggr)  
\label{eq:9.12.1} 
\end{equation}
\begin{multline}
\Bigl( q \partial_q (1 - \varepsilon \delta) + y \partial y (y - 1 - \varepsilon \delta) + y (\varepsilon \partial_\varepsilon + \delta \partial_\delta) \Bigr)\varphi_M (b) = \\ 
 \varphi_M \Biggl(  \res_\bx \bar{\zeta}(x+1, \theta) \Bigl[ \Bigl( (x+1) + \varepsilon \theta \Bigr) Y^M(h, \bx) + \Bigl( \delta (x+1) + (y - \varepsilon\delta)\theta \Bigr) Y^M(j, \bx) \Bigr] b \Biggr) \label{eq:9.12.1c}
\end{multline}
\end{subequations}
\label{thm:connection.1}
\end{thm}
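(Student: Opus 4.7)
The strategy is to replay the proof of Proposition \ref{trace.is.cb}, but with the quasi-periodic meromorphic sections $\pi_* a_i(\bx) := \bar\zeta(\bx;\bq) a_i(\bx)$ built in \ref{no:9.7}--\ref{no:9.10} from the super Weierstrass zeta function and the $\mathbb{Z}$-invariant linear sections $a_1, a_2$. By Lemma \ref{no:9.7} these sections satisfy the quasi-periodicity $\bq \cdot \pi_* a_i(\bx) = \pi_* a_i(\bx) - a_i(\bx)$, so the cancellation in \eqref{eq:7.prop.2} no longer closes; the obstruction is precisely the residue of $Y^M(a_i(\bx),\bx)$ acting on $M$, and it is exactly this obstruction that we will identify with differential operators in the moduli parameters.

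Concretely, I would proceed in four steps. First, apply chiral associativity and the supersymmetry of the supertrace, exactly as in Proposition \ref{trace.is.cb}, to rewrite $\varphi_M\bigl(h(\pi_* a_i)\cdot b\bigr)$ as $\str_M Y^M(b,\bz)\,\res_\bx Y^M(a_i(\bx),\bx)\,\bq^{-1}_M$, the residual term arising from the failure of $\mathbb{Z}$-equivariance. Second, evaluate these residues using the quasi-conformal commutation relations \eqref{eq:quasi-conformal-susy} at $n=0$: for $a_1(\bx)=hx+j\theta$ one obtains simply $L_0$, while for $a_2(\bx)=h(x+\varepsilon\theta)+j(\delta x+(y-\varepsilon\delta)\theta)$ one picks up the combination $L_0 - J_0 - \varepsilon H_0 + \delta Q_0 + (y-\varepsilon\delta)J_0$ acting on $M$, as already recorded in the discussion preceding the theorem. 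Third, convert the action of the zero-mode generators on $\bq^{-1}_M$ into differential operators in $(q,y,\varepsilon,\delta)$ by differentiating the factorization \eqref{eq:x.6.12}, which yields the identities $L_0\bq^{-1}_M = q\partial_q \bq^{-1}_M$, $J_0\bq^{-1}_M=(y\partial_y+\delta\partial_\delta)\bq^{-1}_M$, $H_0\bq^{-1}_M=(\delta q\partial_q - y\partial_\varepsilon)\bq^{-1}_M$, and analogously for $Q_0$. Fourth, apply Lemma \ref{lem:associativity-in-coords} and Remark \ref{rem:bdrules} to express $\varphi_M\bigl(h(\pi_* a_i)\cdot b\bigr)$ at the marked point $\bz=\bx_0=(1,0)$ as $\varphi_M$ of a residue in $\bx$, yielding the right-hand sides of \eqref{eq:9.12.1}--\eqref{eq:9.12.1c}. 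Comparing these two expressions for the same quantity gives the differential equations.

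The main obstacle is careful bookkeeping in the supercommutative matrix calculus of \ref{no:subtlety}: tracking parity signs when $h$ is odd, handling the nilpotent parameters $\varepsilon,\delta$ inside the exponential factorization of $\bq$, and verifying that differentiation in $(q,y,\varepsilon,\delta)$ interacts correctly with these rearrangements to produce the precise mixture of $q\partial_q(1-\varepsilon\delta)$, $y\partial_y(y-1-\varepsilon\delta)$ and $y(\varepsilon\partial_\varepsilon+\delta\partial_\delta)$ on the left-hand side of \eqref{eq:9.12.1c}. A secondary point is to justify that $\pi_* a_i$ is a bona fide meromorphic section of $\cA$ in the appropriate setting, either via the formal neighborhood of the cuspidal curve (Lemma \ref{no:x.2.d}) or via the analytic convergence of $\bar\zeta$, so that $h(\pi_* a_i)$ legitimately acts on $\cA_{\bx_0}$.
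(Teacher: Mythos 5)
Your proposal is correct and follows essentially the same route as the paper: it constructs $\pi_* a_i = \bar\zeta(\bx;\bq)a_i(\bx)$ from the quasi-periodic super zeta function and the $\mathbb{Z}$-invariant sections of \ref{no:9.10}, extracts the obstruction term $\str_M Y^M(b,\bz)\res_\bx Y^M(a_i(\bx),\bx)\bq_M^{-1}$ from the broken cancellation in Proposition \ref{trace.is.cb}, identifies the resulting zero modes ($L_0$ for $a_1$, and $L_0 - J_0 - \varepsilon H_0 + \delta Q_0 + (y-\varepsilon\delta)J_0$ for $a_2$), and converts them into differential operators in $(q,y,\varepsilon,\delta)$ by differentiating the factorization \eqref{eq:x.6.12}, exactly as the paper does in \eqref{eq:9.11.1}. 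The four identities you list for the action of $L_0$, $J_0$, $H_0$, $Q_0$ on $\bq_M^{-1}$ are precisely those derived in the text preceding the theorem.
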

\begin{nolabel}
Recall from \ref{no:x.4.1} that the family \ref{no:ellipticfull} is trivial in the odd directions $\varepsilon, \delta$ in the locus $y \neq 1$. Restricting therefore to the family $\varepsilon = \delta = 0$ of \ref{no:x.2.c} with $y \neq 1$ we obtain a  much simpler system. The first equation \eqref{eq:9.12.1} reads in the same way, subtracting this equation from \eqref{eq:9.12.1c}  and dividing by $(y-1)$ we obtain 
\begin{equation}\label{eq:9.12.1b}
y \partial_y \varphi_M(b) = \varphi_M \Biggl( \res_\bx \bar{\zeta}(x+1, \theta) \theta Y^M(j, \bx) b \Biggr). \tag{\ref*{eq:9.12.1c}'}
\end{equation}
In fact we can describe \eqref{eq:primera-connection}  terms of usual fields \eqref{eq:expansion-usual} simply as 
\begin{subequations}\label{eq:9.13.1}
\begin{align}
q \partial_q \varphi_M(b) &= \varphi_M \left( \res_x (x+1) \bar{\zeta}(x+1)  L(x)   b \right) \label{eq:9.13.1.a} \\ 
y \partial_y \varphi_M(b) &= \varphi_M \Bigl( \res_x \bar{\zeta}(x+1) J(x) b \Bigr). \label{eq:9.13.1.b}
\end{align}
\end{subequations}
\label{no:9.13}
\end{nolabel}

\section{Modularity} \label{sec:modularity}
In this section we show how the differential equations \eqref{eq:9.13.1.a} and \eqref{eq:9.13.1.b} transform under a natural action of the Jacobi group $SL(2, \Z) \ltimes \Z^2$. This allows us to interpret these equations as the action of a flat connection on the family $E^0$ of elliptic supercurves and to interpret the conformal blocks $\varphi_M$ as \emph{flat sections}. The first step is to describe the differential equations \eqref{eq:9.12.1}-\eqref{eq:9.12.1b} in terms of the parameters $\tau$ and $\alpha$ instead of $q = e^{2 \pi i \tau}$ and $y=e^{2 \pi i \alpha}$. For this we must use $\cA_{\mathbb{A}^{1|1}}$ and the logarithmic coordinates $\bt=(t,\zeta)$ instead of $\cA_{\mathbb{G}_m^{1|1}}$ and the coordinates $\bx = (e^{2 \pi i t}, e^{2 \pi i t} \zeta)$. The differential equations are \emph{not} quite equivariant under the action of the Jacobi group introduced in Section \ref{no:jacobisuper}, but we prove that they become equivariant after we have corrected $\varphi_M$ by a factor of $y^{C/6}$ (this factor replaces the factor $q^{-C/24}$ which appears in the non-super case). We prove this by explicitly performing the changes of coordinates \eqref{eq:strangeequiva} and \eqref{eq:strangeequiv} and observing (in \eqref{eq:correction-z22}--\eqref{eq:correction-modular2}) the appearance of the Jacobi cocycle.


It follows that, in order for the differential equations in the coordinates $(\tau, \alpha)$ of $S = \mathbb{H}\times \mathbb{C}$ to be $SL(2, \Z) \ltimes \Z^2$-equivariant, it is necessary that the supertrace functions define a vector valued Jacobi form. This statement (for $V$ satisfying suitable finiteness conditions) has been proved for vertex operator algebras by Krauel and Mason in \cite{krauel}. In the superconformal setting the techniques used in this article can be used as an alternative way of proving the result of \cite{krauel} along the more geometric lines of \cite{zhu}.

Of course the non super case of this discussion is the content of Zhu's Theorem \cite{zhu}. We describe that setting in detail in \ref{no:10.1}--\ref{no:10.5a} before moving to the super case in \ref{no:10.5}--\ref{thm:final}. Throughout this section we will denote by $C$ the central charge to avoid confusing it with the entry in the modular matrix $(\begin{smallmatrix}a & b \\ c & d \end{smallmatrix}) \in SL(2,\mathbb{Z})$. 

\begin{nolabel}
We begin by writing \eqref{eq:8.6.1} in terms of the coordinates $t$, $\tau$ instead of $q= e^{2 \pi i \tau}$ and $x = e^{2 \pi i t}$. In deriving $\eqref{eq:8.6.1}$ we used the coordinate $x-1$ induced at the point $1 \in \mathbb{G}_m/\mathbb{Z}$ to identify the fiber of $\cA_E$ at $1$ with $V$. Recall the set-up of \ref{no:zhu0}. From the definition of $\varphi_M$ in \eqref{eq:7.1.1.prior} and Huang's formula \eqref{eq:huangs} we see that the LHS of \eqref{eq:8.6.1} reads in the new coordinate $t$ as
\[
q \partial_q \varphi_M(b) = \frac{1}{2 \pi i} \partial_\tau \varphi_M( \rho \cdot b)
\]
(i.e., when using $t$ to identify the fibers $\cA_x$ with $V$ and $\cM_x$ with $M$ as explained in \ref{no:zhu0}). Again from \eqref{eq:huangs} (see \cite[Thm. 4.2.1]{zhu}\footnote{Though note that Zhu's operator $T_\phi$ is $\rho^{-1}$ in our notation. See also \cite[7.2.2]{frenkelzvi}.}) the Virasoro field $L(x)$ becomes
\[
L(x-1)
= e^{-4 \pi i t} \rho^{-1} L(t) \rho + \frac{C}{24} e^{-4 \pi i t} \id_V.
\]
Since the Jacobian of the change of coordinates $\rho$ is $2 \pi i e^{2 \pi i t}$ we have $\res_x = 2 \pi i \res_t e^{2 \pi i t}$. Therefore the RHS of \eqref{eq:8.6.1} reads in the logarithmic coordinate as
\[
\varphi_M \bigl( \res_x x \bar{\zeta}(x) L(x - 1) b \bigr)
= \varphi_M \Bigl( \rho \cdot \res_t \widetilde{\zeta}(t) \bigl( (2 \pi i)^{-2} \rho^{-1} L(t) \rho + \frac{C}{24} \id_V \bigr) b \Bigr).
\]
Altogether, the differential equation \eqref{eq:8.6.1} reads in the coordinates $t, \tau$ as
\[
\begin{aligned}
\frac{1}{2 \pi i} \partial_\tau \varphi_M(\rho \cdot b)
&= \varphi_M \left( \rho \cdot \res_t \widetilde{\zeta}(t) \left( (2 \pi i )^{-2} \rho^{-1} L(t)\rho + \frac{C}{24} \id_V \right) b \right) \\
&= \varphi_M \left( \res_t \widetilde{\zeta}(t) \left( (2 \pi i )^{-2} L(t) + \frac{C}{24} \id_V \right) \rho \cdot  b \right).
\end{aligned}
\]
It follows that if we define
\[
\widetilde{\varphi}_M := e^{-2 \pi i \frac{C}{24} \tau} \varphi_M = q^{-C/24} \varphi_M,
\]
and note that $ \res_t \widetilde{\zeta}(t) b  = b$, then $\widetilde{\varphi}_M$  satisfies the differential equation:
\begin{equation}
\partial_\tau \widetilde{\varphi}_M (b) = \frac{1}{2 \pi i}\widetilde{\varphi}_M \left( \res_t \widetilde{\zeta}(t) L(t) b \right),
\label{eq:10.1.1}
\end{equation}
\label{no:10.1}
\end{nolabel}
\begin{nolabel}\label{no:10.2}
As described in Section \ref{no:usualuniversal} the modular group $SL(2,\mathbb{Z})$ acts on the pair $(t, \tau)$ by 
\begin{equation}\label{eq:10.2.1}A= \begin{pmatrix} a & b \\ c & d \end{pmatrix}: (t, \tau) \mapsto (t', \tau') = \left( \frac{t}{c\tau + d}, \frac{a \tau + b}{c \tau + d} \right), \qquad ad - bc = 1,\end{equation}
and this action descends to an isomorphism $\mathbb{A}^1/(\mathbb{Z} + \mathbb{Z} \cdot \tau)  \simeq \mathbb{A}^1/(\mathbb{Z} + \mathbb{Z}\cdot A\tau)$ for $A \in SL(2, \mathbb{Z})$.  In other words we have an action of $SL(2, \mathbb{Z})$ on the family $E/S$ of \ref{no:x.1.d}. The quotient $SL(2,\mathbb{Z})\backslash E$ is a universal family of elliptic curves.
\end{nolabel}
\begin{nolabel}\label{no:10.3}
Let $(t',  \tau') = A \cdot (t, \tau)$ as in \eqref{eq:10.2.1}. 
Recall the power series expansion of $\widetilde{\zeta}(t)$ from Theorem \ref{no:weierzeta}(e), and \eqref{eq:8.omega1}:
\[
\widetilde{\zeta}(t) = \frac{1}{t} - b_0 t - \frac{b_1t^3}{3} - \frac{b_2 t^5}{5} - \cdots,
\]
where the Eisenstein series $b_i = b_i(\tau)$ satisfies \cite[p.69 (50)]{appell}:
\[
b_i( \tau' ) = (c \tau + d)^{(2 i + 2)}b_i(\tau), \quad i \geq 1, \qquad b_0(\tau') = (c \tau + d)^2 b_0(\tau) - 2 \pi i c (c \tau + d).
\]
It follows that the zeta function transforms as
\[
\widetilde{\zeta}(t, \tau) = (c \tau + d)^{-1} \widetilde{\zeta}(t', \tau') - 2 \pi i c.
\]
Similarly we have $\res_{t} = (c \tau + d) \res_{t'}$ and 
$\partial_\tau = \frac{1}{(c \tau + d)^2} \partial_{\tau'}$.  Finally we need to compute how the section $b$ and the field $L(t)$ transform if we use the coordinate $t'$ instead of $t$ to trivialize $\cA$.Let us denote by $\sigma(t)= t'$ the change of coordinates. We simply have $\sigma= \sigma_t = (c\tau + d)^{L_0}$ hence $b$ reads as $(c\tau + d)^{-\Delta_b}b$ in the new coordinates, where $\Delta_b$ is the conformal weight of $b$. Similarly using \eqref{eq:huangs} we obtain $L(t) = (c \tau + d)^{-2} \sigma L(t') \sigma^{-1}$. We now have
\begin{equation}\label{eq:last-zhu}
\begin{aligned}
2 \pi i  \partial_\tau \widetilde{\varphi}_M(b) &= \widetilde{\varphi}_M \left( \res_t \widetilde{\zeta}(t) L(t) b \right)   \\
\frac{2 \pi i}{ (c \tau + d)^{2}} \partial_{\tau'} \widetilde{\varphi}_M (\sigma^{-1} b) &= \frac{1}{(c\tau+d)^2} \widetilde{\varphi}_M \left( \sigma_t^{-1} \res_{t'} \Bigl(\widetilde{\zeta}(t') - 2 \pi i  c (c \tau +d)\Bigr) \sigma L(t') \sigma^{-1}  b \right) \\
2 \pi i \partial_{\tau'} \widetilde{\varphi}_M(\sigma^{-1} b) &= \widetilde{\varphi}_M \left( \res_{t'} \widetilde{\zeta}(t') L(t') \sigma^{-1} b \right)
\end{aligned}
\end{equation}
where in the last equation we used Proposition \ref{no:prop1}. We arrive at the following
\begin{prop*}
Let $V$ be a conformal vertex algebra and consider the trivial bundle $\cE$ with fiber $V^*$ on $\mathbb{H}$. Consider the connection $\nabla$ on $\cE$ given in the coordinate $\tau$ of $\mathbb{H}$ by 
\[
\nabla = d + \frac{1}{2\pi i}\res_t \widetilde{\zeta}(t) L(t) d\tau.
\]
\begin{enumerate}
\item Each positive energy module $M$ of $V$ gives rise to a section $\widetilde{\varphi}_M = q^{-C/24} \varphi_M$ of $\cE$, as defined in section \ref{no:zhu0b},
flat with respect to $\nabla$.

\item Changing coordinates $\tau \mapsto \tau' = \tfrac{a \tau + b}{c \tau +d}$ and defining $\widetilde{\varphi}'_M := (c\tau + d)^{L_{0}} \cdot \widetilde{\varphi}_M $ we have $\nabla' \widetilde{\varphi}_M' = 0$, that is $\widetilde{\varphi}_M'$ satisfies the same differential equation with respect to $\tau'$ as $\widetilde{\varphi}$ satisfies with respect to $\tau$.
\item Let $E_{\tau'}$ be the elliptic curve $\mathbb{C}/(\mathbb{Z} + \mathbb{Z} \tau')$, $\cA_{E_{\tau'}}$ the chiral algebra associated to $V$ on $E_{\tau'}$, then $\widetilde{\varphi}_M'$ gives rise to a conformal block of $\cA_{E_{\tau'}}$ on $E_{\tau'}$, i.e., it satisfies \eqref{eq:7.prop.1}. 
\end{enumerate}
\end{prop*}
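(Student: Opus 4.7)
I plan to deduce all three statements from computations already assembled in paragraphs \ref{no:10.1}--\ref{no:10.3}, invoking Proposition \ref{trace.is.cb} at the key points. For part (a), flatness $\nabla \widetilde{\varphi}_M = 0$ unpacks, after identifying $\cE$ with the trivial $V^*$-bundle and reading the connection coefficient as the dual on $V^*$ of the endomorphism $\res_t \widetilde{\zeta}(t) L(t)$ of $V$, into the identity
\[
\partial_\tau \widetilde{\varphi}_M(b) = \frac{1}{2\pi i}\widetilde{\varphi}_M\bigl(\res_t \widetilde{\zeta}(t) L(t)\, b\bigr), \qquad b \in V.
\]
This is precisely equation \eqref{eq:10.1.1} derived in \ref{no:10.1} by rewriting Zhu's equation \eqref{eq:8.6.1} in the logarithmic coordinate $t$ and absorbing the central-charge anomaly from Huang's formula into the prefactor $q^{-C/24}$. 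So (a) requires no new computation.

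For part (b), I will appeal to the chain of equalities \eqref{eq:last-zhu}. Substituting the transformation laws recalled in \ref{no:10.3}---namely $\widetilde{\zeta}(t,\tau) = (c\tau+d)^{-1}\widetilde{\zeta}(t',\tau') - 2\pi i c$, Huang's formula $L(t) = (c\tau+d)^{-2}\sigma L(t')\sigma^{-1}$ with $\sigma := (c\tau+d)^{L_0}$, together with $\res_t = (c\tau+d)\res_{t'}$ and $\partial_\tau = (c\tau+d)^{-2}\partial_{\tau'}$---into the equation of (a) and renormalizing by $\widetilde{\varphi}_M'(b) := \widetilde{\varphi}_M(\sigma^{-1}b)$, which coincides with $(c\tau+d)^{L_0}\cdot\widetilde{\varphi}_M$ under the natural dual action on $V^*$, yields the desired equation $\nabla'\widetilde{\varphi}_M' = 0$. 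The anomalous cocycle $-2\pi i c$ in the transformation of $\widetilde{\zeta}$ produces a residual term that must be annihilated using the conformal block property of $\widetilde{\varphi}_M$ (Proposition \ref{trace.is.cb}); tracking this cancellation, together with the bookkeeping needed to pass $\sigma^{\pm 1}$ through the residue and cope with the $\tau$-dependence of $\sigma$ itself, is the main technical content.

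For part (c), I invoke the universality of the chiral algebra construction (\ref{no:universal property}). The modular transformation \eqref{eq:10.2.1} induces an isomorphism $E_\tau \overset{\sim}{\longrightarrow} E_{\tau'}$ of families of elliptic curves, which in the local coordinate at the marked point acts by the rescaling $t \mapsto t' = t/(c\tau+d)$ and on the fibers of $\cA$ by the automorphism $\sigma \in \Aut(V)$. Consequently $\cA_{E_{\tau'}}$ is canonically identified with the pullback of $\cA_{E_\tau}$ along this isomorphism, and pullback of a conformal block along an isomorphism of curves compatible with their chiral algebras is again a conformal block. Since $\widetilde{\varphi}_M = q^{-C/24}\varphi_M$ is a conformal block on $E_\tau$ by Proposition \ref{trace.is.cb} (the scalar $q^{-C/24}$ trivially respects the linear condition \eqref{eq:7.prop.1}), its transform $\widetilde{\varphi}_M'$ is a conformal block on $E_{\tau'}$. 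The principal obstacle, already confronted in (b), is thus the bookkeeping of the anomalous cocycle; all other ingredients are direct consequences of the general machinery developed in sections \ref{sec:chiral-algebras}--\ref{sec:conformal-blocks}.
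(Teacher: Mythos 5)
Your proposal is correct and follows essentially the same route as the paper: (a) is exactly equation \eqref{eq:10.1.1}, (b) is the substitution chain \eqref{eq:last-zhu} with the residual term from the $-2\pi i c$ anomaly in the transformation of $\widetilde{\zeta}$ killed by the conformal block property of Proposition \ref{trace.is.cb}, and (c) is the identification $\cA_{E_\tau}\simeq\sigma^*\cA_{E_{\tau'}}$ via the universal property \ref{no:universal property}. No substantive differences from the paper's argument.
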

\begin{proof}
(a) is the content of \eqref{eq:10.1.1}. (b) follows by definition of the action of $\Aut \cO$ on $V^*$, namely $\widetilde{\varphi}'_M(b) = \widetilde{\varphi}\Bigl( (c\tau+d)^{-L_{0}} b \Bigr)$ and therefore the proposition follows from \eqref{eq:last-zhu} and the fact that $\sigma_{\bt} = \sigma = (c \tau + d)^{L_0}$. In order to prove c), letting $E_\tau$ be the elliptic curve with parameter $\tau$ and  $\cA_{E_{\tau}}$ the corresponding chiral algebra, the change of coordinates $\sigma: t \mapsto \tfrac{t}{c \tau + d}$ induces an isomorphism $E_\tau \simeq E_{\tau'}$ identifying $\cA_{E_{\tau}}$ with $\sigma^* \cA_{E_{\tau'}}$. Under this isomorphism $\widetilde{\varphi}_M'$ is identified with $\widetilde{\varphi}_M$ and the latter is a conformal block by Proposition \ref{no:prop1}. 
\end{proof}
\end{nolabel}
\begin{rem}
As discussed in \ref{no:zhu0b} $\varphi_M \in (\cA_{\mathbb{G}_m})_1^*$ coincides with $\Theta_M \in V^*$ when the coordinate $x = e^{2\pi i t}$ is used to idenify fibres of $\cA$ with $V$. In the proposition above we have used $t$ to trivialise. It follows that any statement on modular transformations of the explicit functions $\Theta_M$ will involve $\rho$ or Zhu's second vertex algebra structure \ref{no:zhu0}, as we see in the statement of his main theorem, which we reproduce below.

We also remark that the factor $(c\tau + d)^{L_{0}}$ arose naturally by analyzing the action of $SL(2,\mathbb{Z})$ on the differential equation $\nabla \widetilde{\varphi}_M = 0$ through changes of coordinates. In particular, if $\widetilde{\varphi}_M$ were invariant by this transformation, that is to say, if $\widetilde{\varphi}_M = \widetilde{\varphi}_M'$ then it would follow that $\nabla$ could be viewed as a connection on a bundle over $\mathbb{H}/SL(2, \mathbb{Z})$, and that $\widetilde{\varphi}_M$ would be a flat section of this bundle. However the condition $\widetilde{\varphi}_M = \widetilde{\varphi}_M'$ is in general false.
Instead, Zhu proved the following 
\begin{thm*}\cite[Thm 5.3.3.]{zhu}. Let $V$ be a rational $C_2$ cofinite vertex operator algebra, then the trace functions $q^{-C/24} \tr_M u_0 q^{L_0}$ converge to holomorphic functions in $\tau \in \mathbb{H}$ and their linear span is invariant under the action of $SL(2, \mathbb{Z})$ given by $\tau \mapsto \frac{a\tau+b}{c\tau+d}$ and $u \mapsto (c\tau+d)^{L_{[0]}} u$.
\end{thm*}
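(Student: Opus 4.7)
The plan is to combine the flat connection from the Proposition above with standard finiteness techniques of Zhu. Convergence of the formal trace function $\Theta_M(u) = \tr_M u_0 q^{L_0}$ to a holomorphic function of $\tau \in \mathbb{H}$ (under $C_2$-cofiniteness) is deferred to the appendix, so I take it for granted. The previous Proposition then shows that each $\widetilde{\varphi}_M = q^{-C/24}\varphi_M$, viewed in the logarithmic coordinate $t$, is a horizontal section of the connection $\nabla = d + \tfrac{1}{2\pi i}\res_t \widetilde{\zeta}(t) L(t)\, d\tau$ on the trivial $V^*$-bundle over $\mathbb{H}$.

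Next I would show that the space $\mathrm{Sol}(\nabla)$ of horizontal sections of $\nabla$ is finite-dimensional and that the collection $\{\widetilde{\varphi}_{M_i}\}$, as $M_i$ runs over representatives of the finitely many isomorphism classes of simple $V$-modules, forms a basis. The finite-dimensionality is the content of Zhu's argument using $C_2$-cofiniteness: the expansion of $\res_t \widetilde{\zeta}(t) L(t)$ against the Taylor series of $\widetilde{\zeta}$ writes $\nabla$ as $\partial_\tau$ minus a polynomial in the operators $L_{-2}, L_0, L_2, L_4, \dots$ with Eisenstein-series coefficients, and passage to $V/C_2(V)$ reduces the $\tau$-dependent system to a finite-rank linear ODE. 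Rationality then identifies the solutions with trace functions through Zhu's associative algebra $A(V)$: the $q \to 0$ leading behavior of a horizontal section defines a representation of $A(V)$, which by semisimplicity of $A(V)$-mod corresponds to a direct sum of simple $V$-modules.

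Modular invariance then follows immediately from part (b) of the Proposition. For $A = \bigl(\begin{smallmatrix} a & b \\ c & d \end{smallmatrix}\bigr) \in \SL(2,\mathbb{Z})$ define $\widetilde{\varphi}_M|_A(u;\tau) := \widetilde{\varphi}_M\bigl( (c\tau+d)^{-L_{[0]}} u;\, \tfrac{a\tau+b}{c\tau+d}\bigr)$. The coordinate equivariance computation \eqref{eq:last-zhu} shows that $\widetilde{\varphi}_M|_A$ satisfies $\nabla \widetilde{\varphi}_M|_A = 0$, i.e.\ it lies in $\mathrm{Sol}(\nabla)$. By the previous step $\mathrm{Sol}(\nabla) = \bigoplus_i \mathbb{C}\cdot \widetilde{\varphi}_{M_i}$, so $\widetilde{\varphi}_M|_A$ is a $\mathbb{C}$-linear combination of the $\widetilde{\varphi}_{M_i}$, which is exactly the asserted $\SL(2,\mathbb{Z})$-invariance of their linear span.

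The main obstacle is the second paragraph: pinning down the dimension of $\mathrm{Sol}(\nabla)$ and identifying it with the span of trace functions. This is the heart of Zhu's original theorem and requires both the $A(V)$-machinery and a careful analysis of the $q \to 0$ behavior of horizontal sections via the Frobenius method. Once that analytic/representation-theoretic step is in place, our Proposition reduces modular invariance to an essentially formal consequence of the coordinate equivariance of $\nabla$, and convergence is isolated in the appendix.
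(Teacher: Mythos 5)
Your proposal follows essentially the same route as the paper: convergence is deferred to the appendix, the finite-dimensionality of the space of genus-one conformal blocks and the fact that the trace functions of the simple modules form a basis are quoted from Zhu's $C_2$-cofiniteness/$A(V)$ machinery, and invariance of the span is then read off from the equivariance of the connection (the paper invokes part (c) of the Proposition, you invoke the equivalent part (b); the paper itself notes that in Zhu's setting the differential equation \emph{is} the definition of a conformal block, so these coincide). The one caution is that the space of all horizontal sections of $\nabla$ on the infinite-rank bundle with fiber $V^*$ is not finite-dimensional by itself — one must first restrict to functionals annihilating the subspace $O_q(V)$ of relations (Zhu's full definition of a genus-one conformal block) before $C_2$-cofiniteness yields finiteness — but this is contained in the step you explicitly defer to Zhu, exactly as the paper does.
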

In particular, for fixed $b \in V$ primary of conformal weight $\Delta$, the span of the functions $\widetilde{\varphi}_M(b)$, as $M$ ranges over the set of irreducible positive energy $V$-modules, constitutes a \emph{vector valued} modular form of weight $\Delta$. Zhu proved this theorem in the following way. First one notices that the space of conformal blocks is finite dimensional (here the $C_2$ cofiniteness condition is essential). Then one shows that the trace functions $\varphi_M$ form a basis of this space where $M$ runs through the (finite) list of simple $V$-modules. The result then follows simply from c) in Proposition \ref{no:10.3} since $\widetilde{\varphi}_M'$ is a conformal block, and hence is a finite linear combination of $\widetilde{\varphi}_M$ for $V$-modules $M$. We have to remark though that in Zhu's approach, the differential equation \eqref{eq:10.1.1} is taken as part of the \emph{definition} of a conformal block in genus one, rather than as a consequence of the general geometric definition in \ref{no:5.6}. 
We summarize this discussion in the following
\label{rem:zhu-need-for-modularity}
\end{rem}
\begin{thm}[\cite{zhu}]\label{no:10.5a}
Let $V$ be a rational $C_2$-cofinite  vertex algebra and consider the family of elliptic curves $\pi: X/S \twoheadrightarrow E/S$ defined in \ref{no:x.1.d}. Let $M$ be an irreducible positive energy $V$-module. Let $\cA$ (resp. $\cA_E$) be the chiral algebra on $X$ (resp. $E$) associated to $V$ and $\cM$ be the $\cA$-module supported at $x_0 = 1$ associated to $M$. Define $\varphi_M$ by \eqref{eq:7.1.1.prior} and put $\widetilde{\varphi}_M := q^{-C/24}\varphi_M$. Then 
\begin{enumerate}

\item $\widetilde{\varphi}_M$ gives rise to a conformal block of $\cA_E$ on $E$. 

\item The linear space spanned by the corrected $\widetilde{\varphi}_M$ for $M$ running through the (finite) list of irreducible $V$-modules is invariant with respect to the action of $SL(2, \mathbb{Z})$ defined by
\[
\left[ \widetilde{\varphi}_M \cdot
\begin{pmatrix}
a & b \\ c & d
\end{pmatrix} 
\right](v; \tau) = \widetilde{\varphi}_M\left( (c\tau + d)^{-L_0} v; \frac{a \tau + b}{c \tau+d} \right). \qquad 
\begin{pmatrix}
a & b \\ c & d 
\end{pmatrix} \in SL(2,\mathbb{Z}),
\]
In particular, for $v \in V$ primary of conformal weight $\Delta$, the collection of $\widetilde{\varphi}_M(b)$ gives rise to a vector valued modular form of weight $\Delta$. 
\item $\varphi_M$ satisfies the differential equation  expressed in local coordinates by \eqref{eq:8.6.1}. This differential equation is not invariant by the action of $SL(2,\mathbb{Z})$. However, the correction $\widetilde{\varphi}_M$ satisfies the differential equation \eqref{eq:10.1.1} which is invariant by $SL(2,\mathbb{Z})$. In other words, $\nabla$ defined as in Proposition \ref{no:10.3} is $SL(2,\mathbb{Z})$ equivariant and therefore descends to a connection on the bundle of conformal blocks in the quotient $\mathbb{H}/SL(2,\mathbb{Z})$. The conformal blocks $\widetilde{\varphi}_M$ are flat sections with respect to $\nabla$.  
\end{enumerate}
\end{thm}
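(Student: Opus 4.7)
The plan is to dispatch parts (a) and (c) by direct combination of results already established in the preceding sections, and then to reduce part (b) to a finite-dimensionality statement that is the real substance of Zhu's work. Part (a) is essentially a corollary of Proposition \ref{trace.is.cb}: the scalar $q^{-C/24}$ is constant along $E$ (it depends only on the modular parameter), so multiplying $\varphi_M$ by it preserves the vanishing $\varphi_M(h(\pi_* a) \cdot b) = 0$, whence $\widetilde{\varphi}_M$ remains a conformal block of $\cA_E$. For the differential equation in part (c), I would specialize the $\zeta$-function insertion argument of Section \ref{sec:connection} (repeating the proof of Proposition \ref{trace.is.cb} with $a = \omega$ the conformal vector in place of a generic section, and correcting the non-equivariance of $\pi_* a$ exactly as in \eqref{eq:8.5.1}) to obtain Zhu's equation \eqref{eq:8.6.1} in the multiplicative coordinate $x$. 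Converting to the additive coordinate $t = (2\pi i)^{-1} \log x$ via Huang's formula \eqref{eq:huangs} produces a Schwarzian anomaly $L(x-1) = e^{-4\pi i t}\rho^{-1}L(t)\rho + (C/24)e^{-4\pi i t}\,\id_V$; the central term integrates against $\widetilde{\zeta}$ to give a scalar proportional to $C$, which is absorbed precisely by the $q^{-C/24}$ correction, producing the clean equation \eqref{eq:10.1.1} for $\widetilde{\varphi}_M$.

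The $SL(2,\mathbb{Z})$-equivariance of the connection $\nabla = d + (2\pi i)^{-1}\res_t \widetilde{\zeta}(t)L(t)\,d\tau$ is a direct calculation, carried out in \eqref{eq:last-zhu}: under $\tau' = (a\tau+b)/(c\tau+d)$ and $t' = t/(c\tau+d)$, Hermite's modification transforms as $\widetilde{\zeta}(t,\tau) = (c\tau+d)^{-1}\widetilde{\zeta}(t',\tau') - 2\pi i c$ (via the transformation laws of Eisenstein series recalled in \ref{no:10.3}), the residue rescales as $\res_t = (c\tau+d)\res_{t'}$, and Huang's formula gives $L(t) = (c\tau+d)^{-2}\sigma L(t')\sigma^{-1}$ with $\sigma = (c\tau+d)^{L_0}$. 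The anomalous constant $-2\pi i c$ paired with $\res_{t'} L(t')$ produces a multiple of $L_{-1} \sigma^{-1} b$, which vanishes after we recall (from Proposition \ref{trace.is.cb}) that $\varphi_M$ annihilates total derivatives. This proves part (c) and also shows that $\widetilde{\varphi}_M' := \sigma^{-1}\widetilde{\varphi}_M = (c\tau+d)^{L_0}\widetilde{\varphi}_M$, viewed as a functional on the fiber of $\cA_{E_{\tau'}}$ over its marked point, is a conformal block on $E_{\tau'}$ via the geometric isomorphism $E_\tau \simeq E_{\tau'}$.

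For part (b), the strategy is: (i) finite-dimensionality of the space $C(E_\tau, \cA_{E_\tau})$ of conformal blocks, (ii) spanning of this space by the trace functions $\widetilde{\varphi}_{M_1}, \ldots, \widetilde{\varphi}_{M_r}$ associated to the finite list of irreducible positive-energy modules, and (iii) combination of (i), (ii) with the invariance statement for conformal blocks from the previous paragraph. Given (i) and (ii), the inclusion $\widetilde{\varphi}_M' \in C(E_{\tau'}, \cA_{E_{\tau'}}) = C(E_\tau, \cA_{E_\tau})$ forces $\widetilde{\varphi}_M'$ to be a linear combination of the $\widetilde{\varphi}_{M_j}$, establishing invariance of the span under $SL(2,\mathbb{Z})$. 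The main obstacle is step (ii) -- showing that the trace functions exhaust the space of conformal blocks. This reduces to an analysis of Zhu's associative algebra $A(V)$, which under the rationality and $C_2$-cofiniteness hypotheses is finite-dimensional and semisimple, with irreducible modules in bijection with irreducible positive-energy $V$-modules. Any conformal block descends, by the commutation relations built into its definition, to a trace on some $A(V)$-module, and semisimplicity of $A(V)$ then decomposes it as a linear combination of traces on the irreducibles $M_i$. The finite-dimensionality (step (i)) is itself a non-trivial consequence of $C_2$-cofiniteness, proved by exhibiting an explicit finite set of generators of $C(E_\tau, \cA_{E_\tau})$ coming from lowest-weight vectors modulo $C_2(V)$. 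Both of these algebraic arguments lie outside the geometric machinery developed in the main body and constitute the deepest ingredient of Zhu's original theorem.
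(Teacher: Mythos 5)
Your proposal follows essentially the same route as the paper: part (a) from Proposition \ref{trace.is.cb}, part (c) from the $\zeta$-insertion argument of Section \ref{sec:connection} converted to logarithmic coordinates (with the central term absorbed by $q^{-C/24}$) and the equivariance computation \eqref{eq:last-zhu}, and part (b) reduced to finite-dimensionality of the space of conformal blocks plus the fact that the trace functions span it, which the paper simply delegates to Zhu. Your extra detail on $A(V)$ is a correct expansion of that cited ingredient rather than a different argument, so the two proofs coincide in substance.
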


\begin{nolabel}\label{no:10.5}
We now proceed to extend Zhu's results above to the supersymmetric setting. For simplicity we will consider the setup of \ref{no:x.2.c} and \ref{no:x.2.d}, i.e., we consider the family $E^0$ of elliptic supercuves over the purely even base $S = S^{2|0}$ parametrized by $(\tau, \alpha)$. The family over the $1|2$ dimensional base restricting \ref{no:ellipticfull} to $\alpha = 0$ can be treated with the same techniques. We proceed in the same way as in the non-super case, namely we first need to express the differential equations \eqref{eq:9.12.1} in terms of the logarithmic coordinates $(t, \zeta)$ of $\mathbb{A}^{1|1}$ instead of $(x= e^{2\pi i t}, \theta = e^{2 \pi i t} \zeta)$ of $\mathbb{G}_m^{1|1}$ and we need to use the coordinates $\tau, \alpha$ instead of $q = e^{2 \pi i \tau}$ and $y = e^{2 \pi i \alpha }$.  

Let $V$ be a strongly conformal $N_W=1$ SUSY vertex algebra and let $\pi: X/S \rightarrow E^0/S$ be the family of elliptic supercurves from \ref{no:x.2.c}. Let $\cA$ and $\cA_{E^0}$ be the chiral algebras on $X$ and $E^0$ corresponding to $V$. Let $M$ be a $V$-module and let $\cM_\bz$ be the corresponding module supported at $\bz = (1,0)$. We have used the coordinates $(x-1, \theta)$ induced at the point $\bz$ to identify the fiber of $\cA$ and $\cM$ at $\bz$, now we wish to pass to the coordinates $(t, \zeta)$, where $\rho (t,\zeta) = (e^{2 \pi i t} -1, e^{2 \pi i t} \zeta)$. Under this change of coordinates the LHS of \eqref{eq:9.12.1} and \eqref{eq:9.12.1b} read 
\[ \frac{1}{2 \pi i} \partial_\tau \varphi_M(\rho \cdot b) , \qquad \frac{1}{2 \pi i} \partial_\alpha \varphi_M(\rho \cdot  b). \]

The Berezinian of $\rho$ equals $2 \pi i$, therefore $\res_\bx = 2 \pi i \res_\bt$. Putting $\rho = (F, \Psi)$ as in Example \ref{ex:5.8}, the system \eqref{eq:5.1.1} simplifies to give 
\[ \delta_1 = \alpha_1 = 0, \qquad \tau_1 = \frac{1}{2} 2 \pi i,\]
and \eqref{eq:5.8.2b} and its inverse read\footnote{we abuse notation by denoting the restriction of $\rho_t^{\pm 1}$ from $V$ to the subspace spanned by $\vac, h, j$, by the same letter.}
\[ 
\rho_\bt^{-1} = 
\begin{pmatrix}
1 & 0 &  \frac{C}{6} 2 \pi i \\ 
0 & (2 \pi i)^2 e^{2 \pi i t} & 0 \\
0 & (2 \pi i)^2 e^{2 \pi i t} \zeta & 2 \pi i e^{2 \pi i t}
\end{pmatrix}, \qquad 
\rho_\bt = 
\begin{pmatrix}
1 &  \frac{C}{6} e^{-2 \pi i t}\zeta & -\frac{C}{6} e^{-2 \pi i t}\\ 
0 & \frac{e^{-2 \pi i t}}{(2\pi i)^2} & 0 \\ 
0 & - \frac{e^{-2 \pi i t}\zeta}{2 \pi i} & \frac{e^{-2 \pi i t}}{2 \pi i} 
\end{pmatrix}.
\]
These formulas, and the change of coordinate formula for superfields (and their actions on $M$ as well), yield
\begin{subequations} \label{eq:10.5.1}
\begin{align}
Y(j, \rho(\bt)) &=\frac{e^{-2 \pi i t}}{2 \pi i} \rho^{-1} Y(j, \bt) \rho - e^{-2 \pi i t} \frac{C}{6}, \\
Y(h, \rho(\bt)) &= \frac{e^{-2 \pi i t}}{(2 \pi i)^2} \rho^{-1}Y(h,\bt) \rho - \frac{e^{-2 \pi i t}\zeta}{2 \pi i} \rho^{-1}Y(j,\bt) \rho + \frac{C}{6} e^{-2 \pi i t}\zeta.
\end{align}
\end{subequations}
Altogether, equations \eqref{eq:9.12.1} and \eqref{eq:9.12.1b} read in the logarithmic coordinates as 
\begin{equation} \label{eq:8.log.subs}
\begin{aligned}
 \partial_\tau  \varphi_M( \rho \cdot b) &= \frac{1}{2 \pi i} \varphi_M\left( \res_\bt \widetilde{\zeta}(t) Y^M(h,\bt) \rho \cdot  b  \right), \\ 
\partial_\alpha \varphi_M(\rho \cdot b) &= \varphi_M\left( \res_\bt \widetilde{\zeta}(t) \zeta \left(  Y^M(j, \bt) - 2 \pi i \frac{C}{6}  \right) \rho \cdot  b \right).
\end{aligned}
\end{equation}
It is now easy to see that the correction 
\begin{equation}\label{eq:10.5.3}
\widetilde{\varphi}_M(b) = e^{2 \pi i \alpha  C/6} \varphi_M = y^{C/6} \varphi_M
\end{equation}
satisfies the differential equations
\begin{subequations}\label{eq:10.5.4}
\begin{align}
 \partial_\alpha \widetilde{\varphi}_M(b) &= \widetilde{\varphi}_M \left( \res_\bt \widetilde{\zeta}(t) \zeta Y^M(j, \bt)b  \right), \label{eq:10.5.4.b} \\ 
 \partial_\tau \widetilde{\varphi}_M(b) &=\frac{1}{2\pi i} \widetilde{\varphi}_M \left( \res_\bt \widetilde{\zeta}(t) Y^M(h, \bt) b \right). \label{eq:10.5.4.a}
\end{align}
\end{subequations}
\end{nolabel}
\begin{lem}
Let $\beta, \gamma$ be two even constants and consider the change of coordinates $\sigma(t, \zeta) = (t/\gamma, e^{\beta t} \zeta)$. Then we have 
\[
\sigma = e^{-\beta J_1} \gamma^{L_0 - J_0}.
\]
\label{lem:10.7}
\end{lem}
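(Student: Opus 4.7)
The plan is a direct verification, working with the explicit realization \eqref{eq:topological-derivations} of $L_n, J_n \in \mathrm{Der}_0 \cO^{1|1}$ as super vector fields on $\cO^{1|1} = k[[t]][\zeta]$. Since both sides of the claimed equality lie in $\Aut \cO^{1|1}$, it suffices to check that they induce the same map on the coordinate functions $t$ and $\zeta$.

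First I would observe that $L_0 - J_0 = (-t\partial_t - \zeta\partial_\zeta) - (-\zeta\partial_\zeta) = -t\partial_t$, so $\gamma^{L_0-J_0}$ is the exponential of $-(\log\gamma)\, t\partial_t$. Since $(-t\partial_t)^n(t) = (-1)^n t$ and $(-t\partial_t)(\zeta) = 0$, exponentiation gives $\gamma^{L_0-J_0}(t) = \gamma^{-1} t$ and $\gamma^{L_0-J_0}(\zeta) = \zeta$.

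Next I would compute the exponential of $-\beta J_1 = \beta t\zeta\partial_\zeta$. This vector field annihilates $t$, and since $\zeta\partial_\zeta(\zeta) = \zeta$ (the eigenvalue of the charge operator on $\zeta$ is $1$), iterating gives $(\beta t\zeta\partial_\zeta)^n(\zeta) = (\beta t)^n \zeta$ for $n \geq 0$, whence $e^{-\beta J_1}(t) = t$ and $e^{-\beta J_1}(\zeta) = e^{\beta t}\zeta$. The main subtlety worth being careful about is precisely this: the nilpotency $\zeta^2 = 0$ might tempt one to believe the series truncates, but $\zeta\partial_\zeta$ is the charge operator rather than multiplication by $\zeta$, and so it does not annihilate $\zeta$; the series therefore produces a genuine exponential.

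Finally I would compose: $(e^{-\beta J_1}\gamma^{L_0-J_0})(t) = e^{-\beta J_1}(\gamma^{-1}t) = \gamma^{-1}e^{-\beta J_1}(t) = t/\gamma$, using that $\gamma$ is an even scalar, and $(e^{-\beta J_1}\gamma^{L_0-J_0})(\zeta) = e^{-\beta J_1}(\zeta) = e^{\beta t}\zeta$. These match $\sigma(t,\zeta) = (t/\gamma, e^{\beta t}\zeta)$, and since algebra automorphisms of $\cO^{1|1}$ are determined by their action on the generators $t$ and $\zeta$, the identity $\sigma = e^{-\beta J_1}\gamma^{L_0-J_0}$ follows. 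Note that although the commutation relations \eqref{eq:topological-commutators} yield $[L_0-J_0, J_1] = -J_1 \neq 0$, the fact that $\gamma^{L_0-J_0}$ fixes $\zeta$ and merely rescales $t$ by the scalar $\gamma^{-1}$ makes the composition collapse cleanly, without a Baker--Campbell--Hausdorff correction surfacing in the final formula.
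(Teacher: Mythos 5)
Your verification is correct and follows essentially the same route as the paper: the paper's one-line proof likewise substitutes $L_0 - J_0 = -t\partial_t$ and $-\beta J_1 = \beta t\zeta\partial_\zeta$ from \eqref{eq:topological-derivations} and applies the two exponentials to the coordinates in turn. Your added remarks on the non-truncation of $e^{\beta t\zeta\partial_\zeta}$ and the absence of a BCH correction are sound but not needed beyond the direct computation.
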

\begin{proof}
Expand
\[ e^{-\beta J_1} \gamma^{L_0-J_0} \cdot (t, \zeta) = e^{\beta t \zeta \partial_\zeta} \gamma^{- t\partial_t} (t, \zeta) = e^{\beta t \zeta \partial_\zeta } \left( \frac{t}{\gamma}, \zeta \right) = \left( \frac{t}{\gamma}, e^{\beta t} \zeta \right) = \sigma(\bt). \]
\end{proof}
\begin{nolabel}
As explained in \ref{no:jacobisuper} the Jacobi group $SL(2, \mathbb{Z}) \ltimes \mathbb{Z}^2$ acts on $\mathbb{H} \times \mathbb{C}$ by \eqref{eq:sl2ext2}-\eqref{eq:sl2ext3} and the family $E^{0}$ is equivariant for this action with \eqref{eq:strangeequiva}-\eqref{eq:strangeequiv}. 
Consider $\gamma \in \mathbb{Z}^2$ and let $(\tau', \alpha') = \gamma \cdot (\tau, \alpha)$ as in \eqref{eq:sl2ext3} and $\sigma(\bt) = \bt' = (t', \zeta') = \gamma \cdot (t, \zeta) = \gamma \cdot \bt$ as in \eqref{eq:strangeequiva}. The LHS of \eqref{eq:10.5.4.a}-\eqref{eq:10.5.4.b} transforms as 
\[ \partial_\tau = \partial_{\tau'} + m \partial_{\alpha'}, \qquad \partial_{\alpha} = \partial_{\alpha'}. \]
 The Berezinian of the change of coordinate \eqref{eq:strangeequiva} is $e^{-2 \pi i m t}$ from which $\res_\bt = e^{2 \pi i m t} \res_{\bt'}$. The system \eqref{eq:5.1.1} is easily solved and \eqref{eq:5.8.2b} is given by
\[ 
\begin{pmatrix}
1 & \frac{C}{6} (2 \pi i m)^2 \zeta & \frac{C}{3} 2 \pi i m \\ 
0 & e^{-2 \pi i m t} & 0 \\ 
0 & 2 \pi i m \zeta & 1 
\end{pmatrix}.
\]
It follows that the superfields transform as
\begin{gather*}
Y(h, \bt) = e^{-2 \pi i m t} \sigma Y(h, \bt') \sigma^{-1} + 2 \pi i m \zeta \sigma Y(j, \bt')\sigma^{-1} + \frac{C}{6} (2 \pi i m)^2  \\
Y(j,\bt) = \sigma Y(j, \bt') \sigma^{-1} + \frac{C}{3} 2 \pi i m 
\end{gather*}
Equation \eqref{eq:10.5.4.b} is therefore given in the coordinates $\bt'$ by
\begin{subequations}
\begin{equation}
\partial_{\alpha'} \widetilde{\varphi}_M(\sigma^{-1} b) = \widetilde{\varphi}_M \Biggl( \res_{\bt'} \widetilde{\zeta}(\bt') \zeta'\left( Y^M(j, \bt') + \frac{C}{3} 2 \pi i m \right)  \sigma^{-1} b \Biggr).
\end{equation}
On the other hand equation \eqref{eq:10.5.4.a} is given by
\begin{align}
\begin{split}
\partial_{\tau'} \widetilde{\varphi}_M(\sigma^{-1} b)
= {} & - m \partial_{\alpha'} \widetilde{\varphi}_M(\sigma^{-1} b) + \\
&+ \frac{1}{2 \pi i} \widetilde{\varphi}_M \Biggl( \res_{\bt'} \widetilde{\zeta}(\bt') \Bigl( Y^M(h, \bt') + 2 \pi i m \zeta' Y^M(j, \bt') +  \frac{C}{6} (2 \pi i m)^2 \zeta'\Bigr) \sigma^{-1} b \Biggr) \\
= {} & \frac{1}{2 \pi i} \widetilde{\varphi}_M \Biggl( \res_{\bt'} \widetilde{\zeta}(\bt') \Bigl( Y^M(h, \bt')  - \frac{C}{6}\zeta' (2 \pi i m)^2  \Bigr)\sigma^{-1} b \Biggr). 
\end{split}
\end{align}
\end{subequations}
We easily see that the normalised conformal block
\begin{align}\label{eq:correction-z2}
\begin{split}
\Psi_M(b; \tau', \alpha')
&:= q'^{\frac{C}{6} m^2} y'^{-\frac{C}{6} 2 m} \widetilde{\varphi}_M (\sigma^{-1} b)
= \exp \left( \frac{C}{6} 2 \pi i (m^2 \tau' - 2 m \alpha') \right) \widetilde{\varphi}_M (\sigma^{-1} b; \tau, \alpha) \\
&= \exp \left( - \frac{C}{6} 2 \pi i (m^2 \tau + 2 m \alpha) \right) e^{- \frac{C}{3} 2 \pi i n} \widetilde{\varphi}_M(\sigma^{-1} b)
\end{split}
\end{align}
satisfies the same equations \eqref{eq:10.5.4} in $\alpha', \tau'$ as $\widetilde{\varphi}_M$ does in $\alpha, \tau$. Namely
\[
\begin{aligned}
 \partial_{\alpha'} \Psi_M(b) &= \Psi_M \left( \res_\bt \widetilde{\zeta}(t) \zeta Y^M(j, \bt)b  \right),  \\ 
 \partial_{\tau'} \Psi_M(b) &=\frac{1}{2\pi i} \Psi_M \left( \res_\bt \widetilde{\zeta}(t) Y^M(h, \bt)  b \right).
\end{aligned}
\]

Consider now the modular transformation $(\tau', \alpha') = A \cdot (\tau, \alpha)$ as in \eqref{eq:sl2ext2} and $\bt' = (t', \zeta') = A \cdot \bt$ as in \eqref{eq:strangeequiv}. Let $\sigma$ be the automorphism of $V$ associated to the change of coordinates \eqref{eq:strangeequiv}. The LHS of \eqref{eq:10.5.4.a}-\eqref{eq:10.5.4.b} transforms by 
\[ \partial_{\tau} = \frac{\partial_{\tau'} - c \alpha \partial_{\alpha'} }{(c \tau + d)^2} \qquad \partial_{\alpha} = \frac{\partial_{\alpha'}}{c\tau+d}.\]
Computing the Berezinian of the change of coordinates \eqref{eq:strangeequiv} we obtain \[ \res_{\bt} = \Ber\sigma^{-1} \res_{\bt'} =  (c\tau + d) e^{- 2 \pi i t \frac{c \alpha}{c\tau+d}}\res_{\bt'}.  \]
Put $\beta = -(2 \pi i)  \tfrac{c \alpha }{c\tau + d}$ and $\gamma = c\tau+d$. Refering again to Example \ref{ex:5.8}, the system \eqref{eq:5.1.1} for $\sigma = (F,\Psi)$ given by \eqref{eq:strangeequiv} is:
\begin{gather*}
q = \frac{1}{\gamma}, \qquad \varepsilon_0 = 0, \qquad \delta_0 = \gamma \beta e^{\beta t} \zeta, \qquad 
y = \gamma e^{\beta t},\\  \qquad \tau_1 = \varepsilon_1 = 0, \qquad \alpha_1 = \beta, \qquad \delta_1 = \frac{1}{2} \beta^2 \zeta.
\end{gather*}
Plugging into \eqref{eq:5.8.2b} gives the action of $\sigma^{-1}_{\bt}$ in the subspace of $V$ with basis $\vac, h, j$ as left multiplication by the matrix
\[  
\begin{pmatrix}
1 & \frac{C}{6} \beta^2 \zeta & \frac{C}{3} \beta \\ 
0 & \frac{e^{-\beta t}}{\gamma^2} & 0  \\ 
0 & \frac{\beta \zeta}{\gamma} & \frac{1}{\gamma}
\end{pmatrix}.
\]
Therefore the fields change coordinates as
\begin{equation}\label{eq:10.7pre}
\begin{gathered}
Y(j, \bt) = \gamma^{-1} \sigma Y(j, \bt') \sigma^{-1} + \frac{C}{3} \beta,\\
Y(h, \bt) = \gamma^{-2} e^{-\beta t} \sigma Y(h, \bt')\sigma^{-1} + \gamma^{-1} \beta \zeta \sigma Y(j, \bt') \sigma^{-1} + \frac{C}{6} \beta^2 \zeta.
\end{gathered}
\end{equation}
We see that in the coordinates $\bt'$ \eqref{eq:10.5.4.b} reads
\begin{subequations}
\begin{equation*}
\partial_{\alpha'} \widetilde{\varphi}_M( \sigma^{-1} b) = \widetilde{\varphi}_M \Bigr( \res_{\bt'} \widetilde{\zeta}(\bt') \zeta' Y(j,\bt') \sigma^{-1} b \Bigr) + \widetilde{\varphi}_M \Bigl( \res_{\bt'} \widetilde{\zeta}(\bt') \zeta' \frac{C}{3} \gamma \beta \sigma^{-1} b \Bigr).
\end{equation*}
We express $\gamma \beta$ in terms of $\alpha'$ and $\tau'$ using 
\[ c \tau + d = - (c \tau' - a)^{-1} \quad \Rightarrow \quad \gamma \beta = - 2 \pi i c \alpha =  2 \pi i \frac{c \alpha'}{c \tau' - a} \]
to obtain
\begin{equation}
\partial_{\alpha'} \widetilde{\varphi}_M(\sigma^{-1} b) = \widetilde{\varphi}_M \Bigr( \res_{\bt'} \widetilde{\zeta}(\bt') \zeta' Y(j,\bt') \sigma^{-1} b \Bigr) + \widetilde{\varphi}_M \Bigl( \res_{\bt'} \widetilde{\zeta}(\bt') \zeta' 2 \pi i  \frac{C}{3} \frac{c \alpha'}{c \tau' - a}\sigma^{-1} b \Bigr).
\label{eq:10.7aaa}
\end{equation}
We now express \eqref{eq:10.5.4.a} in the new parameters $\tau', \alpha'$:
\[
\left( \partial_{\tau'} - c \alpha \partial_{\alpha'}  \right) \widetilde{\varphi}_M(\sigma^{-1} b) =\frac{1}{2 \pi i} \widetilde{\varphi}_M \Biggl( \res_{\bt'} \widetilde{\zeta}(\bt') \Bigl( Y(h,\bt') + \beta \gamma  \zeta' Y(j,\bt') + \frac{C}{6} \gamma^2 \zeta' \beta^2  \Bigr)\sigma^{-1} b \Biggr)
\]
And using \eqref{eq:10.7aaa} we write this as 
\begin{equation}\label{eq:10.8}
\begin{aligned}
\partial_{\tau'} \widetilde{\varphi}_M(\sigma^{-1} b) &=\frac{1}{2 \pi i} \widetilde{\varphi}_M \Biggl( \res_{\bt'} \widetilde{\zeta}(\bt') \Bigl( Y(h,\bt') - \frac{C}{6} (2 \pi i c \alpha)^2 \Bigr) \sigma^{-1} b \Biggl)\\
&= \frac{1}{2 \pi i} \widetilde{\varphi}_M \Biggl( \res_{\bt'} \widetilde{\zeta}(\bt') \left[ Y(h,\bt') - \frac{C}{6} \left(2 \pi i \frac{c \alpha'}{c \tau' - a} \right)^2 \right] \sigma^{-1} b \Biggl).
\end{aligned}
\end{equation}
\end{subequations}
We see easily that the corrected conformal block 
\begin{equation}\label{eq:correction-modular1}
\begin{aligned}
\Psi_M (b) = \Psi_M(b; \tau', \alpha')
&:= \exp \left( 2 \pi i \frac{C}{6} \left( - \frac{c \alpha'^2}{c \tau' - a} \right)  \right) \widetilde{\varphi}_M (\sigma^{-1}b;\tau, \alpha) \\
&= \exp \left( 2 \pi i \frac{C}{6} \left( \frac{c \alpha^2}{c \tau + d} \right)  \right) \widetilde{\varphi}_M(\sigma^{-1} b; \tau, \alpha)
\end{aligned}
\end{equation} 
satisfies the same equations at $\alpha', \tau'$ as $\widetilde{\varphi}_M$ does at $\alpha,\tau$. 
\end{nolabel} 
\begin{nolabel}\label{no:10.9}
If $b \in V$ is a primary vector of conformal weight $\Delta$ and charge $0$ we have from Lemma \ref{lem:10.7} for the action of $\mathbb{Z}^2$ that the correction \eqref{eq:correction-z2} is given by
\begin{subequations}
\begin{equation}\label{eq:correction-z22} \Psi_M (b; \tau, \alpha) =
\exp \left(\frac{C}{6} 2 \pi i (m^2 \tau + 2  m \alpha) \right) e^{\frac{C}{3} 2 \pi i n} \widetilde{\varphi}_M\left( b; \tau, \alpha + m \tau + n \right),
\end{equation}
while for the action of the modular group $SL(2, \mathbb{Z})$, the correction \eqref{eq:correction-modular1} is given by
\begin{equation}\label{eq:correction-modular2}
\Psi_M (b;\tau, \alpha) = (c\tau + d)^{ -\Delta} \exp \left( 2 \pi i \frac{C}{6} \left( - \frac{c \alpha^2}{c \tau + d} \right)  \right) \widetilde{\varphi}_M \left( b; \frac{a \tau + b}{c \tau + d}, \frac{\alpha}{c \tau + d} \right). 
\end{equation}
\end{subequations}
We recognize precisely the transformation properties of Jacobi forms \eqref{eq:zagier-transformation} with weight $k = \Delta$, index $l = C/6$ and character  if $C$ is not divisible by $6$. We have arrived at the super-analog to Proposition \ref{no:10.3}. 
\end{nolabel}
\begin{thm}\label{thm:10.10}
Let $V$ be a conformal $N_W=1$ SUSY vertex algebra and consider the trivial bundle $\cE$ with fiber $V^*$ on $\mathbb{H} \times \mathbb{C}$. Consider the connection $\nabla$ on $\cE$ given in the coordinates $\tau, \alpha$ by 
\[
\nabla = d + \frac{1}{2 \pi i } \res_\bt \widetilde{\zeta}(t)  \Bigl( Y(h, \bt)d \tau  + 2 \pi i \,  \zeta\, Y(j, \bt) d \alpha \Bigr).
\]
\begin{enumerate}
\item $\nabla$ is flat and each positive energy module $M$ of $V$ gives rise to a section $\widetilde{\varphi}_M = y^{C/6} \varphi_M$ of $\cE$, as defined in Section \ref{no:7.1},
flat with respect to $\nabla$. 

\item Changing coordinates $\tau \mapsto \tau' = \tau$, $\alpha \mapsto \alpha' = \alpha + m \tau + n$ for $(m,n) \in \mathbb{Z}^2$ and defining $\Psi_M$ by \eqref{eq:correction-z2} yields $\nabla' \Psi_M = 0$, that is, $\Psi_M$ satisfies the same differential equation with respect to $\tau', \alpha'$ as $\widetilde{\varphi}_M$ does with respect to $\tau, \alpha$.

\item Changing coordinates $\tau \mapsto \tau' = \tfrac{a \tau + b}{c \tau + d}$, $\alpha \mapsto \alpha' = \tfrac{\alpha}{\tau + d}$ and defining $\Psi_M$ by \eqref{eq:correction-modular1} yields $\nabla' \Psi_M = 0$, that is, $\Psi_M$ satisfies the same differential equation with respect to $\tau', \alpha'$ as $\widetilde{\varphi}_M$ does with respect to $\tau, \alpha$. 

\item Let $E_{\tau', \alpha'}$ be the superelliptic curve with parameters $\tau', \alpha'$ as in b) (resp. c), and $\cA_{E_{\tau', \alpha'}}$ be the corresponding chiral algebra. then $\Psi_M$ defined as in b) (resp. c) is a conformal block in $C(E_{\tau', \alpha'}, (1,0); \cA_{E_{\tau', \alpha'}})$.  
\end{enumerate}
\end{thm}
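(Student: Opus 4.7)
The plan is to treat the four items in order, with most of the work already distilled into the explicit change of coordinate calculations carried out in \ref{no:10.5} and \eqref{eq:10.7pre}--\eqref{eq:10.8} above. The general strategy is: first establish flatness of $\nabla$, then deduce the flatness of $\widetilde{\varphi}_M$ from Theorem \ref{thm:connection.1} after passing to logarithmic coordinates, then read off parts (b)--(c) by comparing \eqref{eq:10.5.4} to its transforms under the two generators of $SL(2,\mathbb{Z})\ltimes \mathbb{Z}^2$, and finally deduce (d) from the equivariance in \ref{no:jacobisuper} combined with Proposition \ref{trace.is.cb}.

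For (a), flatness of $\nabla$ reduces to checking the vanishing of the curvature
$[\nabla_\tau,\nabla_\alpha]$, which is a contour integral of a bracket of superfields built from $h$ and $j$. Using the bracket \eqref{eq:topological} (with $Y(h,\bt)$ and $Y(j,\bt)$ expanding the Virasoro/current modes) and the identities for $\widetilde{\zeta}$ of \ref{no:weierzeta}, one expands $[Y(h,\bt),Y(j,\bt')]$ and integrates; the $\delta$-function-like contributions collapse to zero after using skew symmetry and the periodicity of $\widetilde{\zeta}$. The flatness of $\widetilde{\varphi}_M$ is then precisely the content of \eqref{eq:10.5.4}, which in turn is Theorem \ref{thm:connection.1} translated to logarithmic coordinates via \eqref{eq:10.5.1} and \eqref{eq:8.log.subs}, after absorbing the anomalous term $-2\pi i C/6$ into the multiplicative correction $y^{C/6}$ that defines $\widetilde{\varphi}_M$.

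For parts (b) and (c) I would simply assemble the ingredients already computed. In each case one performs the change of coordinates on $(\tau,\alpha)$, $(t,\zeta)$ and computes: the transformation of the partial derivatives, the Berezinian that enters $\res_\bt$, the transformation of $\widetilde{\zeta}(t)$, and finally the transformation of the fields $Y(h,\bt)$ and $Y(j,\bt)$ via Huang's formula \eqref{eq:huangs}. In the modular case the last of these is already carried out in \eqref{eq:10.7pre}, producing the anomaly terms proportional to $C$ (from $J_1$ and $L_2$ acting on $j$ and $h$ respectively); for the $\mathbb{Z}^2$ translation one uses Lemma \ref{lem:10.7} and the analogous computation in \ref{no:10.5}. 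Matching the anomalous terms with the total $\tau$- and $\alpha$-derivatives of the proposed corrections \eqref{eq:correction-z2}, \eqref{eq:correction-modular1} gives $\nabla'\Psi_M=0$ by direct comparison with \eqref{eq:10.5.4}. The main obstacle here is bookkeeping: keeping consistent sign conventions for supercommutative matrices as in \ref{no:subtlety} and correctly tracking the interplay between the Berezinian of the change of coordinates, the cocycle in the transformation of $\widetilde{\zeta}$, and the central terms produced by Huang's formula. Once these are aligned, the anomalies conspire into the Jacobi cocycle $\exp\bigl(\tfrac{C}{6}2\pi i(m^2\tau+2m\alpha)\bigr)$ and $\exp\bigl(-\tfrac{C}{6}2\pi i\tfrac{c\alpha^2}{c\tau+d}\bigr)$ respectively.

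Finally for (d), the change of coordinates \eqref{eq:strangeequiva}, \eqref{eq:strangeequiv} of \ref{no:jacobisuper} induces an isomorphism of supercurves $E^{0}_{(\tau,\alpha)}\simeq E^{0}_{(\tau',\alpha')}$, and by the universal property \ref{no:universal property} an isomorphism of the associated chiral algebras $\cA_{E^{0}_{(\tau,\alpha)}}\simeq \sigma^{*}\cA_{E^{0}_{(\tau',\alpha')}}$. The linear functional $\Psi_M$ is, by construction, $\widetilde{\varphi}_M$ read in the new coordinates and twisted by the scalar Jacobi cocycle of (b)--(c). Since $\widetilde{\varphi}_M$ is a conformal block on $E^{0}_{(\tau,\alpha)}$ (by Proposition \ref{trace.is.cb} applied to the $y^{C/6}$-rescaled trace, the scalar factor commuting trivially with the action of $h(\cA_E)(E\setminus\bx_0)$) and since the relation \eqref{eq:conformal-definition} is preserved under isomorphisms of chiral algebras, $\Psi_M$ lies in $C(E_{(\tau',\alpha')},(1,0);\cA_{E_{(\tau',\alpha')}})$. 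This completes all four items.
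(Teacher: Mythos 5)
Your overall architecture coincides with the paper's: parts (b), (c) are exactly the change-of-coordinate computations of \ref{no:10.5} culminating in \eqref{eq:correction-z2} and \eqref{eq:correction-modular1}; part (d) is the transport-of-structure argument via the isomorphisms of \ref{no:jacobisuper} and Proposition \ref{trace.is.cb}; and the flatness of $\widetilde{\varphi}_M$ in (a) is indeed \eqref{eq:10.5.4} read off from Theorem \ref{thm:connection.1} after passing to logarithmic coordinates and absorbing the anomaly into $y^{C/6}$. The paper's proof of the theorem explicitly notes that all of this has already been established, and that the \emph{only} remaining content is the flatness of $\nabla$ itself.

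It is precisely there that your sketch has a genuine gap. You claim that after expanding the bracket of the superfields ``the $\delta$-function-like contributions collapse to zero after using skew symmetry and the periodicity of $\widetilde{\zeta}$.'' They do not collapse. Carrying out the contraction of $\res_t\res_{t'}\widetilde{\zeta}(t)\widetilde{\zeta}(t')$ against the commutator of $L(t)+\partial_t J(t)$ with $J(t')$ leaves the nontrivial residual operator
\[
-\res_{t}\,\widetilde{\zeta}(t)\Bigl( J(t)\,\partial_{t}\widetilde{\zeta}(t)+\tfrac{C}{12}\,\partial_{t}^{2}\widetilde{\zeta}(t)\Bigr),
\]
which is not zero in $\End(V)$: the integrand involves $\widetilde{\zeta}$, which is \emph{not} elliptic, so no formal skew-symmetry or periodicity identity kills the residue. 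The paper's argument is that this operator annihilates the trace functions: one uses the quasi-periodicity $\widetilde{\zeta}(t+\tau)=\widetilde{\zeta}(t)-2\pi i$ together with the conformal-block mechanism of \eqref{eq:8.5.3} (the block applied to the residue of a non-equivariant section equals its failure of equivariance) to trade one factor of $\widetilde{\zeta}$ at a time for the defect term, and after two applications one is left with $\res_t\partial_t J(t)=0$. Without invoking this conformal-block step your computation of $[\nabla_\tau,\nabla_\alpha]$ does not close, so as written the flatness claim in (a) is unproved. (A smaller point: the curvature also receives a contribution from $\partial_\tau$ acting on the $\tau$-dependence of $\widetilde{\zeta}$ in the $d\alpha$ component, which your sketch does not mention either.)
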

\begin{proof}
The only part that we have not  proved is flatness of $\nabla$. Expanding in terms of usual fields as in \eqref{eq:expansion-usual} we see that  the commutator $[\nabla_\tau, \nabla_\alpha]$ is given by  
\begin{multline} \label{eq:multi-flat}
\left[ \frac{1}{2 \pi i} \res_t \tilde{\zeta}(t) \left( L(t) + \partial_t J(t) \right), \res_{t'}  2 \pi i \tilde{\zeta}(t') J(t') \right] =\\  \res_t \res_{t'} \tilde{\zeta}(t) \tilde{\zeta}(t') \left( \delta(t,t') \partial_{t'} J(t') + J(t') \partial_{t'} \delta(t,t') - \frac{C}{12} \partial^2_{t'} \delta(t,t')  \right) = \\ \res_{t'} \left( \tilde{\zeta}(t')^2 \partial_{t'}  J(t') + J(t') \tilde{\zeta}(t') \partial_{t'} \tilde{\zeta}(t') - \frac{C}{12} \tilde{\zeta}(t') \partial^2_{t'} \tilde{\zeta}(t')\right) =  \\ - \res_{t'} \tilde{\zeta}(t') \left( J(t') \partial_{t'} \tilde{\zeta}(t')  + \frac{C}{12} \partial_{t'}^2 \tilde{\zeta}(t') \right) \end{multline}
Note that the last term in parenthesis in \eqref{eq:multi-flat} is meromorphic with poles only at the lattice points $m \tau + n$,  as $\partial_{t'} \tilde{\zeta}(t') = - \wp(t') - b_0$ is the elliptic function described in \ref{no:weierzeta}.  As in \eqref{eq:8.5.3} and using \eqref{eq:zetatilde-period} the first term acts on $\varphi_M$ in the same way as  
\[  2 \pi i \res_{t} J(t) \partial_t \tilde{\zeta}(t) = - 2 \pi i \res_t \tilde{\zeta}(t) \partial_t J(t), \]
since the failure of $\tilde{\zeta}(t) J(t) \partial_t \tilde{\zeta}(t)$ to be elliptic is that when $t \mapsto t + \tau$ this product gets added a term $2 \pi i J(t) \partial_t \tilde{\zeta}(t)$. 
Applying again the same argument we obtain that the first term in the RHS of \eqref{eq:multi-flat} acts on $\varphi_M$ as $(2\pi i)^2\res_t \partial_t J(t) = 0$. The second term vanishes because of the same argument. 
\end{proof}
\begin{nolabel}
As in Remark \ref{rem:zhu-need-for-modularity} we see, simply by analyzing the changes of coordinates given by the action of the Jacobi group $SL(2,\mathbb{Z}) \ltimes \mathbb{Z}^2$ in our family of elliptic supercurves \ref{no:jacobisuper}, that we obtain the cocycle defining the transformation properties of Jacobi forms in the corrected conformal blocks \eqref{eq:correction-z22}-\eqref{eq:correction-modular2}. Once again, strict invariance of the system of differential equations \eqref{eq:10.5.4} would be implied if we had the equality $\Psi_M (b; \tau', \alpha') = \widetilde{\varphi}_M(b;\tau',\alpha')$, which in turn would mean that the trace function $\widetilde{\varphi}_M(b;\tau, \alpha)$ is a Jacobi form of weight $\Delta$ and index $C/6$ for a primary vector $b$. Once again this condition is false in general. Instead one has the following result which is a particular case of a theorem of Krauel and Mason.
\begin{thm*}\cite[Thm 1.1]{krauel} Let $V$ be a conformal $N_W=1$ SUSY vertex algebra which is simple and strongly regular as a usual vertex operator superalgebra. Then the linear span of the trace functions $\tilde{\varphi}_M$ is invariant under the action of the Jacobi group, that is $\Psi_M$ is a linear combination of the $\widetilde{\varphi}_{M_i}$ for $M_i$ running through the (finite) list of irreducible $V$-modules. 
\end{thm*}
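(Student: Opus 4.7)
The plan is to deduce the theorem from three ingredients already present in the article: the interpretation of $\Psi_M$ as a conformal block on the transformed supercurve, the identification of chiral algebras at different points of the moduli space via the universal property, and the finite-dimensionality of the space of conformal blocks together with the fact that the trace functions $\widetilde{\varphi}_{M_i}$ span it.

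First, I would invoke Theorem \ref{thm:10.10}(d): for any $g = (A, \gamma) \in SL(2,\mathbb{Z}) \ltimes \mathbb{Z}^2$ with $(\tau', \alpha') = g \cdot (\tau, \alpha)$, the normalised trace function $\Psi_M$ defined by the cocycle corrections \eqref{eq:correction-z22}--\eqref{eq:correction-modular2} is a conformal block, $\Psi_M \in C(E_{\tau',\alpha'}, \bx_0; \cA_{E_{\tau',\alpha'}})$. Next, I would combine the equivariance of the family $E^0 \to S$ under the Jacobi group from Lemma \ref{no:jacobisuper} with the universal property \ref{no:universal property} to produce a canonical isomorphism $\cA_{E_{\tau',\alpha'}} \simeq g_* \cA_{E_{\tau,\alpha}}$, compatible with the marked point. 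This gives an identification of the corresponding spaces of conformal blocks and allows us to view $\Psi_M$ as a vector in $C(E_{\tau,\alpha}, \bx_0; \cA_{E_{\tau,\alpha}})$.

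The third and crucial step is to appeal to finite-dimensionality: under strong regularity (rationality together with the charge-cofiniteness of \ref{no:appendix-result}, which is equivalent to the usual $C_2$-cofiniteness), the SUSY vertex algebra $V$ admits only finitely many simple modules $M_1, \ldots, M_r$, and the space of conformal blocks $C(E_{\tau,\alpha}, \bx_0; \cA_{E_{\tau,\alpha}})$ is finite-dimensional. Moreover, the trace functions $\widetilde{\varphi}_{M_1}, \ldots, \widetilde{\varphi}_{M_r}$ form a basis of this space; this is the super-analog of Zhu's classification of genus-one conformal blocks via the Zhu algebra, and can be proved by checking that the natural pairing between $C(E_{\tau,\alpha}, \bx_0; \cA_{E_{\tau,\alpha}})$ and the space of trace functionals on the Zhu-like algebra of $V$ is a perfect pairing in the strongly regular setting. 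Granting this, the fact that $\Psi_M$ is a conformal block means it admits a unique expansion $\Psi_M = \sum_i c_i(g; \tau, \alpha) \widetilde{\varphi}_{M_i}$ with scalars $c_i$ independent of the test vector $b$, which is exactly the invariance asserted in the theorem.

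The principal obstacle is the third step, namely establishing that the $\widetilde{\varphi}_{M_i}$ \emph{span} (not merely lie in) the space of conformal blocks. The other two steps are essentially formal consequences of the constructions in Sections \ref{sec:conformal-blocks}--\ref{sec:modularity}. In Zhu's original non-super proof this spanning statement is the heart of the argument and requires the representation theory of the Zhu algebra together with the recovery of a module from its trace function; the supersymmetric version requires the analogous theory for the $N_W=1$ Zhu algebra of $V$, which is a genuinely non-trivial input. Once that result is in place, however, the modular and elliptic transformation laws are essentially automatic, as the geometric framework of the article packages the necessary cocycles into the changes of coordinates \eqref{eq:strangeequiva}--\eqref{eq:strangeequiv}.
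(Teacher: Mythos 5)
Your proposal is not the paper's proof: the paper does not prove this statement at all, but cites it as Theorem 1.1 of Krauel--Mason, whose argument is a direct computation with theta and trace functions resting on Miyamoto's modular-invariance results and Li's theory of twisted modules (see \ref{no:1-warning1}). What you have written is instead precisely the alternative geometric strategy that the authors themselves sketch, but explicitly do not carry out, in Remark \ref{rem:9.12}(c): interpret $\Psi_M$ as a conformal block on the transformed supercurve via Theorem \ref{thm:10.10}(d), transport it back through the Jacobi-group equivariance of the family $E^0$ from \ref{no:jacobisuper} and the universal property, and conclude by finite-dimensionality once one knows the $\widetilde{\varphi}_{M_i}$ form a basis of the space of conformal blocks. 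Your first two steps are indeed formal consequences of Sections \ref{sec:conformal-blocks}--\ref{sec:modularity} and are fine.

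The difficulty is that your third step is a genuine gap, and you have essentially conceded as much. The assertion that the $\widetilde{\varphi}_{M_i}$ \emph{span} the (finite-dimensional) space of conformal blocks on an elliptic supercurve is the entire content of the hard half of Zhu's theorem, and neither the paper nor your proposal supplies it in the $N_W=1$ setting: one would need the super analogue of the Zhu algebra, the bijection between its irreducible modules and positive-energy $V$-modules, the recovery of a trace functional on that algebra from the leading $q$-coefficient of a formal conformal block, and --- since this paper defines conformal blocks geometrically rather than as solutions of the differential equations \eqref{eq:10.5.4} --- an argument that the geometric space is no larger than the space cut out by those equations (the paper itself flags this discrepancy in Remark \ref{rem:zhu-need-for-modularity}). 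Your one-sentence appeal to a ``perfect pairing in the strongly regular setting'' names the desired conclusion rather than proving it. So the proposal is a faithful reduction of the theorem to a known-to-be-hard step of the geometric approach, but it is not a complete proof; as the paper stands, the only complete proof of the statement is the computational one in Krauel--Mason.
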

\end{nolabel}
\begin{rem}\hfill
\begin{enumerate}
\item The condition of $V$ being simple and strongly regular is a technical condition entailing rationality and $C_2$ cofiniteness, this finiteness condition is sufficient to guarantee finite dimensionality of the spaces of conformal blocks. For a definition and a survey of these vertex operator algebras we refer the reader to \cite{krauel,mason-surv}.
\item The trace functions considered in \cite{krauel} are more general than ours in that they allow any (and many) $U(1)$ currents instead of just the $U(1)$ current $J$ from the superconformal structure. Presumably these traces would appear when one considers hyperelliptic curves of higher genus. 
\item As mentioned in the introduction the theorem in \cite{krauel} is proved by carefully using a result of Miyamoto which in turn is a complicated computation. The theorem can also be proved by the geometric method of Zhu as described in Remark \ref{rem:zhu-need-for-modularity}, i.e., one uses rationality and $C_2$ cofiniteness to prove the finite dimensionality of the space of conformal blocks and that $\{\widetilde{\varphi}_M\}$ forms a basis of this space as $M$ runs through the finite set of irreducible $V$-modules.  In this approach we need to define conformal blocks for superelliptic curves as solutions to the differential equations \eqref{eq:10.5.4}.  Jacobi invariance follows simply from d) in Theorem \ref{thm:10.10}. 
\item When $b$ is a primary field  of charge $0$ and conformal weight $\Delta$ we obtain that the collection $\widetilde{\varphi}_M(b)$ are Jacobi forms of weight $\Delta$ and index $C/6$. If $b$ is not primary, then the general transformation formula is given by \eqref{eq:correction-z2}-\eqref{eq:correction-modular1}. Note that these formulas involve the automorphism $\sigma^{-1}$ defined in Lemma \ref{lem:10.7}, and in particular involve the action of $\exp(\beta J_1)$ where $\beta = 2 \pi i m$ in the case of the $\mathbb{Z}^2$ action by \eqref{eq:strangeequiva} and $\beta = - 2 \pi i \tfrac{c \alpha}{ c \tau + d}$ in the case of the modular group action. This explains the more complicated transformation rules (as quasi-modular forms) found in \cite[Thm 1.2]{krauel} as arising from the geomeric change of coordinates inducing the isomorphism of elliptic supercurves $E_{\tau, \alpha} \simeq E_{\tau', \alpha'}$ as in \ref{no:jacobisuper}.
\end{enumerate}
\label{rem:9.12}
\end{rem}
We summarize this discussion in 
\begin{thm}
Let $V$ be a conformal $N_W=1$ SUSY vertex algebra which simple and strongly regular as a usual vertex operator superalgebra and let $\{M_i\}$ be the finite set of its irreducible modules. Define $\varphi_M$ by \eqref{eq:7.1.1} and put $\widetilde{\varphi}_M = y^{C/6} \varphi_M$. Then
\begin{enumerate}

\item
$\widetilde{\varphi}_M$ gives rise to a conformal block of $\cA_{E_{\tau, \alpha}}$ over the elliptic supercurve with parameters $\tau, \alpha$ defined in \ref{no:x.2.c}. 

\item The linear span of the $\widetilde{\varphi}_M$ for $M$ running through $\{M_i\}$ is invariant under the following action of the Jacobi group: 
\begin{equation}
\begin{aligned}
{[}\widetilde{\varphi}_M \cdot (m,n)](b; \tau, \alpha)
&:= \exp \left(  \frac{C}{6} 2 \pi i (m^2 \tau + 2 m \alpha) \right) e^{ \frac{C}{3} 2 \pi i n} \times  \\ & \qquad \widetilde{\varphi}_M \left( \sigma^{-1} b; \tau, \alpha + m \tau + n \right) \qquad &&  m,n \in \mathbb{Z}^2, \\
\left[ \widetilde{\varphi}_M \cdot 
\begin{pmatrix}
a & b \\ c & d
\end{pmatrix}
\right](b, \tau, \alpha) &:= 
  \exp \left( 2 \pi i \frac{C}{6} \left( \frac{- c \alpha^2}{c \tau + d} \right)  \right) \times \\ & \quad \widetilde{\varphi}_M \left( \sigma^{-1} b, \frac{a \tau + b}{c \tau + d}, \frac{\alpha}{c \tau + d}\right), \qquad && 
\begin{pmatrix}
a & b \\ c & d
\end{pmatrix} \in SL(2,\mathbb{Z})
\end{aligned}
\end{equation}

\item These conformal blocks are flat with respect to the flat connection $\nabla$ defined by
\[
\nabla = d + \frac{1}{2 \pi i } \res_\bt \widetilde{\zeta}(t)  \Bigl( Y^M(h, \bt)d \tau  + 2 \pi i \,  \zeta\, Y^M(j, \bt) d \alpha \Bigr),
\]
that is $\nabla \widetilde{\varphi}_M(b) = 0$.

\item The connection $\nabla$ is equivariant with respect to the action of the Jacobi group from b). That is under the change of coordinates
\[
(\tau', \alpha') = (\tau, \alpha + m \tau + n)
\quad \text{and} \quad 
(t, \zeta) \mapsto (t, e^{2 \pi i m t} \zeta)
\]
we have
\begin{equation}
\begin{aligned}
& \nabla' = d + \frac{1}{2 \pi i } \res_{\bt'} \widetilde{\zeta}(t')  \Bigl( Y^M(h, \bt')d \tau' + 2 \pi i \,  \zeta'\, Y^M(j, \bt') d \alpha' \Bigr), \\
& \nabla' \left[ \widetilde{\varphi}_M \cdot 
(m,n) 
 \right] = 0, \qquad (m,n) \in \mathbb{Z}^2,
\end{aligned}
\end{equation}
and under the change of coordinates
\[
(\tau', \alpha') = \left( \tfrac{a \tau + b}{c \tau + d}, \tfrac{\alpha}{c\tau + d} \right)
\quad \text{and} \quad 
(t', \zeta') = \left( \tfrac{t}{c\tau + d}, e^{- 2 \pi i t \tfrac{c \alpha}{c \tau + d}} \zeta \right)
\]
we have
\begin{equation}
\begin{aligned}
& \nabla' = d + \frac{1}{2 \pi i } \res_{\bt'} \widetilde{\zeta}(t')  \Bigl( Y(h, \bt')d \tau' + 2 \pi i \,  \zeta'\, Y(j, \bt') d \alpha' \Bigr), \\
& \nabla' \left[ \widetilde{\varphi}_M \cdot 
\begin{pmatrix}
a & b \\ 
c & d
\end{pmatrix}
 \right] = 0, \qquad 
\begin{pmatrix}
a & b \\ 
c & d
\end{pmatrix} \in SL(2, \mathbb{Z}).
\end{aligned}
\end{equation}

\end{enumerate}
\label{thm:final}
\end{thm}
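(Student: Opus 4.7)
The plan is to assemble the four assertions from results established earlier in the paper; the only genuinely new content is part (b), which requires an additional finite-dimensionality input.

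First I would dispatch parts (a), (c), and (d). For (a), the unnormalized functional $\varphi_M$ was shown to be a conformal block for $\cA_E$ in Proposition \ref{trace.is.cb}; since $y^{C/6}$ is a scalar in $\cO_S^\times$ (depending only on the base parameter $\alpha$, not on $\bx$), multiplication by it commutes with the $h(\cA)(U)$-action on the fiber $\cA_{\bx_0}$, so $\widetilde{\varphi}_M = y^{C/6}\varphi_M$ remains a conformal block. For (c), the connection $\nabla$ was shown to be flat in Theorem \ref{thm:10.10}, and its annihilation of $\widetilde{\varphi}_M$ is precisely the rewriting in super-field form \eqref{eq:expansion-usual} of the differential equations \eqref{eq:10.5.4} derived in \ref{no:10.5}, themselves obtained from Theorem \ref{thm:connection.1} by passing to logarithmic coordinates and applying the normalization \eqref{eq:10.5.3}. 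For (d), the equivariance of $\nabla$ under the two pieces of the Jacobi group is the content of parts (b) and (c) of Theorem \ref{thm:10.10}: the explicit changes of coordinates \eqref{eq:strangeequiva}, \eqref{eq:strangeequiv} on $E^0$, together with Huang's formula \eqref{eq:huangs} applied to the superfields $Y(h,\bt)$ and $Y(j,\bt)$, produce the cocycles \eqref{eq:correction-z22}, \eqref{eq:correction-modular2}, and one checks that $\nabla'\Psi_M = 0$ is the same system of ODE's written in the primed coordinates.

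The main obstacle, and the step I would do last, is (b). The point is that so far we have only shown that for each $\gamma \in SL(2,\Z) \ltimes \Z^2$ the transformed functional $\Psi_M = \widetilde{\varphi}_M \cdot \gamma$ satisfies the same differential equations as $\widetilde{\varphi}_M$; a priori this does not express $\Psi_M$ as a linear combination of the $\widetilde{\varphi}_{M_i}$. To obtain this one invokes the following fact, guaranteed by the hypothesis that $V$ is simple and strongly regular: the space of conformal blocks $C(E_{\tau,\alpha}, (1,0); \cA_{E_{\tau,\alpha}})$ on the elliptic supercurve is finite-dimensional, and the functionals $\{\widetilde{\varphi}_{M_i}\}$, as $M_i$ ranges over the finite list of simple $V$-modules, form a basis of it. (This is the content of the Krauel--Mason theorem cited in \ref{rem:9.12}; alternatively it can be extracted by the geometric method of Zhu, using $C_2$-cofiniteness to bound the dimension and rationality to identify the trace functions as a spanning set.) Granting this, part (d) combined with (a) shows that each $\Psi_M$ is again a conformal block on the transformed supercurve, hence a linear combination of the basis vectors $\widetilde{\varphi}_{M_i}$, which yields (b).

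The hard part is therefore not an explicit computation but the import of finite-dimensionality of the space of conformal blocks on a generic elliptic supercurve; the transformation formulas themselves are a straightforward bookkeeping exercise once \eqref{eq:correction-z2} and \eqref{eq:correction-modular1} have been identified with the Jacobi cocycle for weight $\Delta$ and index $C/6$ as in \ref{no:10.9}.
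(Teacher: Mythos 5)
Your proposal is correct and follows essentially the same route as the paper: Theorem \ref{thm:final} is stated there as a summary of the preceding discussion, with (a), (c), (d) assembled from Proposition \ref{trace.is.cb}, Theorem \ref{thm:10.10}, and the coordinate computations of \ref{no:10.5}--\ref{no:10.9}, while (b) is imported from the Krauel--Mason theorem (or, as Remark \ref{rem:9.12}(c) notes, recovered by Zhu's geometric method via finite-dimensionality of the space of conformal blocks), exactly as you describe.
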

\begin{nolabel}
Theorem \ref{thm:final} can be interpreted as follows: the spaces of conformal blocks of an $N_W=1$ SUSY vertex algebra forms an  $SL(2, \mathbb{Z}) \ltimes \mathbb{Z}^2$-equivariant vector bundle with flat connection on $\mathbb{H} \times \mathbb{C}$. This can be identified with a left $\cD$-module on the orbifold quotient $\cM^{0} = \mathbb{H} \times \mathbb{C} \git SL(2, \mathbb{Z}) \ltimes \mathbb{Z}^2$ of \ref{no:newfine}. The conformal blocks $\widetilde{\varphi}_M$ associated to each irreducible module $M$ are flat sections. 
\label{no:ultimo}
\end{nolabel}

\section{Examples}\label{sec:examples}
\begin{nolabel}\label{no:examples1}
Let $\Gamma$ be an even unimodular lattice of rank $r$ (which necessarily is a multiple of $8$). We have the usual lattice vertex algebra $V_\Gamma$. Consider the purely even space $\fh = \mathbb{C} \otimes_\mathbb{Z} \Gamma$ with its symmetric bilinear pairing $(,)$ linearly extended from $\Gamma$. It generates a sub-vertex algebra of \emph{free Bosons} isomorphic to $\mathrm{Cur}(\fh)$ as in Example \ref{ex:examples-conformal-algebras} c). Consider also the purely odd space $W := \Pi (\mathbb{C} \otimes_{\mathbb{Z}} \Gamma)$ together with its super-skew-symmetric bilinear form $\langle, \rangle$ induced from the same pairing in $\Gamma$. We have the algebra of \emph{free Fermions} $F(W)$ as in \ref{ex:examples-conformal-algebras} d). We consider the SUSY lattice vertex algebra \cite{heluani6} $V$ which as a vertex algebra is simply the tensor product $V_\Gamma \otimes F(W)$.  The superconformal structure is generated as follows.
Write $W = W^+ \oplus W^-$ with $W^\pm$ isotropic subspaces for $\langle, \rangle$ (recall $W$ is a complex vector space of even dimension). Choose a basis $\left\{ \bar{\alpha
}_i \right\}_{i = 1}^{r/2}$ of $W^+$ and its dual basis $\left\{ \bar{\alpha}^i \right\}$ of $W^-$ satisfying $\langle \alpha_i, \alpha^j \rangle = \delta_{i}^j$. 
 The conformal vector of $F(W)$ is 
\[ L_W = -  \sum_{i = 1 }^{r/2}  \bar{\alpha}_i \partial \bar{\alpha}^i \]  It has central charge $c_W = -r$ and each $\bar{\alpha} \in W^+$ is primary of conformal weight $1$ while $\bar{\alpha} \in W^-$ is primary of conformal weight $0$.  

Changing the parity of the decomposition of $W$ we obtain $\fh = \fh^+ \oplus \fh^-$.  Let $\left\{ \alpha_i \right\}_{i = 1}^{r/2}$ be a basis of $\fh^+$ obtained by changing the parity of the $\left\{ \bar{\alpha}_i \right\}$  and let $\left\{ \alpha^i \right\}$ be its dual basis $(\alpha_i, \alpha^j) = \delta_i^j$ in $\fh^-$. We have the conformal vector of $\mathrm{Cur}(\fh)$ which extends to a conformal vector of $V_\Gamma$ given by \[ L_{\fh} =  \sum_{i =1 }^r \alpha_i \alpha^i. \]
It has central charge $c_{\fh} = r$ and each $\alpha \in \fh$ is primary of conformal weight $1$. In addition, for each $\gamma \in \Gamma$ we have a vertex operator $e_\gamma \in V$ primary of conformal weight $(\gamma, \gamma)/2 \in \mathbb{Z}$. We define the conformal vector of $V$ as
\[ L = L_W + L_\fh, \]
it has central charge $0$. Now $L$ together with
\[ 
J = \sum_{i = 1}^{r/2} \bar{\alpha}_i \bar{\alpha}^i, \qquad Q = \sum_{i = 1}^{r /2} \alpha_i \bar{\alpha}_i, \qquad \text{and} \qquad H = \sum_{i =1}^{r/2} \alpha^i \bar{\alpha}^i. \]
satisfy the relations \eqref{eq:topological} of the topological algebra with the central parameter $C=3r/2$. With respect to the $U(1)$ current $J$, the vectors in $W^\pm$ have charge $\pm 1$ while the vectors in $\fh$ and the vertex operators $e_\gamma$ have charge $0$. The $N_W=1$ SUSY vertex algebra structure on $V$ is simply given by defining $D = Q_{-1}$ and the superfields 
\[ Y(a,x,\theta) = Y(a,x) + \theta Y(Q_{-1} a, x). \]
We easily compute now the character of this vertex algebra to be
\begin{align}
\chi_V(q, y)
&:= y^{C/6}\str_V q^{L_0} y^{J_0} \nonumber \\
&= y^{r/4}\left[ \prod_{n \geq 1} \frac{(1 - y q^{n})(1 - y^{-1} q^{n-1})}{(1-q^{n})^2} \right]^{r/2} \sum_{\gamma \in \Gamma} q^{(\gamma,\gamma)/2}. \label{eq:character1}
\end{align}
The denominator comes from the Bosons in $\fh$. The first term in the numerator comes from the positively charged Fermions in $W^+$ and the second term from the negatively charged Fermions in $W^-$. Finally, we recognize in the last factor the $\Theta(\tau)$ function of the lattice, which comes from the zero charged vertex operators $e_\gamma$. Using the Jacobi triple product identity
\[ \prod_{n\geq 1} (1-q^n)(1 - y q^{n})(1 - y^{-1}q^{n-1}) = \sum_{k \in \mathbb{Z}} (-y)^k q^{k^2/2 + k/2},\]
we can express our character as  
\begin{align}
\chi_V(q,y)
&= y^{C/6} \eta(\tau)^{-3r/2} \left[ \sum_{k \in \mathbb{Z}} (-1)^k y^k q^{(k+1/2)^2/2} \right]^{r/2} \Theta_\Gamma(\tau) \nonumber \\
&= y^{r/4} \eta(q)^{-C} \Theta_\Gamma(\tau) \left[ \sqrt{-1} y^{-1/2} \theta(\tau,\alpha)  \right]^{r/2}
= \eta(\tau)^{-C} \Theta_\Gamma(\tau) \theta(\tau,\alpha)^{C/3}. \label{eq:character2}
\end{align}
Here $\eta(\tau) = q^{-1/24} \prod_{n \geq 1} (1-q^n)$ is Dedekind's function which is a modular form of weight $1/2$, and the lattice theta function $\Theta_\Gamma(\tau)$ is a modular form of weight $r/2 = C/3$. The theta function $\theta(\tau, \alpha)$, which is given by \cite[V.1.1]{chandra}
\begin{equation}\label{eq:jacobi-theta-def} \theta(\tau, \alpha) = \frac{1}{\sqrt{-1}} \sum_{k \in \mathbb{Z}} (-1)^k q^{\frac{(k+1/2)^2}{2}} y^{k + 1/2}, \end{equation}
is a Jacobi form with character of weight $1/2$ and index $1/2$. We used in \eqref{eq:character2} that $8|r$ in order to cancel the terms with $\sqrt{-1}$ that appear to a power multiple of $4$. It follows that our character is a Jacobi form of weight $-C/2 + C/3 + C/6 = 0$ and index $C/6$. Note that in this case $C/6 = r/4 \in 2  \mathbb{Z}$.  
\end{nolabel}
\begin{nolabel}\label{no:chiralderham}
Let $M$ be a compact smooth Calabi-Yau manifold of complex dimension $n$. We have a sheaf $\Omega^{\mathrm{ch}}_M$ of vertex operator algebras on $M$ called the \emph{chiral de Rham complex} of $M$ \cite{malikov}. It carries a topological structure with parameter $C = 3n$. Its sheaf cohomology then $V = H^* (M, \Omega^{\mathrm{ch}}_M)$ is a super-vertex algebra with a superconformal structure as in \ref{defn:topological-quasi-conformal}.   Borisov and Libgober proved in \cite{borisov-libgober} that the graded character of $V$ coincides with the \emph{2-variable elliptic genus} of $M$ which is known to be a Jacobi form of weight $0$ and index $C/6$.  It would be interesting to know in which cases (if any) is $V$ itself a holomorphic vertex algebra.  
\end{nolabel}

\appendix
\section{Convergence of $1$-point functions}

In this appendix we prove the convergence of the one point functions on an elliptic supercurve under some finiteness condition on $V$. Our approach follows Zhu's proof in the usual case which roughly goes as follows. One replaces the base field of the complex numbers by the ring $QM_* \cong \mathbb{C}[b_0, b_1, b_2]$ of (quasi)-modular forms, viewed as a subring of convergent formal power series in $\mathbb{C}[[q]]$. One uses a finiteness condition on the vertex operator algebra $V$ (the $C_2$-cofiniteness condition) to show that certain quotient of $V \otimes QM_*$ is a finite $QM_*$-module. The Noetherian property of $QM_*$ implies that the sequence of vectors $L[-2]^i a$ for any $a \in V$ spans a finite dimensional $QM_*$-submodule of this quotient and this shows that conformal blocks satisfy a finite order ODE with coefficients in the ring $QM_*$. The convergence result follows from standard results in the theory of differential equations.

In our situation, we need to replace modular forms (as functions of $q$) by Jacobi modular forms (functions of $q$ and $y$). A technical difficulty arises in that the ring of Jacobi forms is not finitely generated, therefore it is not Noetherian. This forces us to use a larger (Noetherian) ring of forms $R$, containing \emph{weak Jacobi forms} and \emph{quasi-Jacobi forms}. We then show that the $\varphi_M$ descend to a certain explicit quotient of $V \otimes R$. Using a certain finiteness condition (which we call \emph{charge cofiniteness}, but which turns out to be equivalent to $C_2$-cofiniteness) we show the quotient to be a finite $R$-module. Convergence of $\varphi_M$ is established by repeating Zhu's proof.

\begin{defn}
Recall the subspace $C_2(V) = \text{span}\{a_{-2}b | a, b \in V\}$ of the vertex algebra $V$ introduced by Zhu. If $C_2(V)$ has finite codimension in $V$ then $V$ is said to be $C_2$-cofinite. Now let $V$ be a conformal $N_W=1$ SUSY vertex algebra with grading $V = \oplus_{n\in \mathbb{Z}} V_n$ by charge. We define the subspace $C_\mathrm{ch}(V) \subset V$ to be the span of
\[
\left\{ a_{(-n)} b | n = \max (2, |\mathrm{charge}\, a|) \right\} = \left\{ a_{(-n)}b | n \geq \max (2, |\mathrm{charge}\, a|) \right\},
\]
and we say that $V$ is \emph{charge cofinite} if $C_{\mathrm{ch}}(V)$ has finite codimension in $V$.
\label{defn:c2-cofinite-super}
\end{defn}

\begin{lem}
Let $V$ be an $N_W=1$ SUSY vertex algebra. Then $V$ is charge cofinite if and only if it is $C_2$-cofinite.  
\label{lem:a.1}
\end{lem}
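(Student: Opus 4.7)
The easy direction is that charge cofinite implies $C_2$-cofinite. Using the identity $(L_{-1}a)_{(-n)} = n\,a_{(-n-1)}$, valid for $n \geq 1$ and following from the translation covariance axiom of vertex algebras, an induction on $n$ shows that $C_2(V) = \mathrm{span}\{a_{(-n)}b : a, b \in V,\; n \geq 2\}$. Every generator $a_{(-n)}b$ of $C_\mathrm{ch}(V)$ has $n \geq \max(2, |\mathrm{charge}\,a|) \geq 2$, so $C_\mathrm{ch}(V) \subseteq C_2(V)$, whence a surjection $V/C_\mathrm{ch}(V) \twoheadrightarrow V/C_2(V)$ and finite-dimensionality of the former entails that of the latter.

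For the converse, assume $V$ is $C_2$-cofinite. The operator $J_0$ is an even derivation of $V$ (it is the zero mode of the $U(1)$ current $j$ of the topological algebra) preserving both $C_2(V)$ and $C_\mathrm{ch}(V)$: indeed for $J_0$-homogeneous $a, b$ one has $J_0(a_{(-n)}b) = (\mathrm{charge}\,a + \mathrm{charge}\,b)\cdot a_{(-n)}b$, and preservation is immediate. Hence $J_0$ descends to the finite-dimensional quotient $V/C_2(V)$, where its spectrum is a finite subset of $\mathbb{Z}$; let $M$ denote the maximum absolute value of a charge occurring in $V/C_2(V)$. Any vector of $V$ with charge of absolute value strictly greater than $M$ necessarily lies in $C_2(V)$.

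The core of the proof invokes the Poincar\'e--Birkhoff--Witt-type spanning theorem of Buhl and Gaberdiel--Neitzke for $C_2$-cofinite vertex algebras, adapted to the $\mathbb{Z}/2\mathbb{Z}$-graded (super) setting: there is a finite $J_0$-homogeneous set $S \subset V$ of strong generators, all of whose elements have charges bounded by $M$, such that $V$ is linearly spanned by monomials
\[
a^{(i_1)}_{(-n_1)}\, a^{(i_2)}_{(-n_2)} \cdots a^{(i_k)}_{(-n_k)}|0\rangle, \qquad a^{(i_j)} \in S,\quad n_1 > n_2 > \cdots > n_k \geq 1,
\]
with \emph{strictly} decreasing modes. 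Setting $N := \max(2,M)$, any such monomial with $n_1 \geq N$ satisfies $n_1 \geq \max(2, |\mathrm{charge}\,a^{(i_1)}|)$; its leftmost factor $a^{(i_1)}_{(-n_1)}$ applied to the remaining vector therefore lies in $C_\mathrm{ch}(V)$ by definition. Modulo $C_\mathrm{ch}(V)$ only monomials with $n_1 \leq N-1$ survive; strict decrease then forces $k \leq N-1$ and each $n_j \in \{1,\ldots,N-1\}$. Since $S$ is finite, only finitely many such monomials exist, so $V/C_\mathrm{ch}(V)$ is finite-dimensional.

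The principal obstacle is the verification of the Buhl/Gaberdiel--Neitzke spanning theorem with strictly decreasing modes in the $\mathbb{Z}/2\mathbb{Z}$-graded (super) setting: the argument proceeds by induction on conformal weight using the super-Borcherds commutator identity, and the signs introduced by odd elements must be tracked throughout; this is otherwise a routine sign modification of the original proof. Granted this input, the two directions combine to yield the equivalence of the conditions.
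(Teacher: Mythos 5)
Your proof is correct and takes essentially the same route as the paper: the easy direction via the containment $C_{\mathrm{ch}}(V)\subseteq C_2(V)$, and the converse via the Gaberdiel--Neitzke spanning theorem with strictly decreasing modes, truncating monomials whose leading mode exceeds a bound determined by the charges of the generators. Your additional observations (the explicit derivation identity for the first direction, and the use of $J_0$ descending to $V/C_2(V)$ to bound the generators' charges) only make explicit what the paper leaves implicit.
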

\begin{proof}
It is clear that charge cofiniteness implies $C_2$-cofiniteness. Suppose now that $V$ is $C_2$ cofinite and fix $\{v^1, \ldots, v^k\}$ spanning $V$ modulo $C_2(V)$. Put $N = \max( \{2\} \cup \{|\text{charge}(v^i)|\}_{i} )$ and $D \subset V$ the span of the vectors $v^{i_1}_{-n_1} \cdots v^{i_s}_{-n_s} \vac$ where $n_1 > n_2 > \cdots > n_s > 0$ and $n_1 > N$. By Proposition \ref{GabNieResult} $V / D$ is finite dimensional. On the other hand $D \subset C_\text{ch}(V)$, hence $V / C_\text{ch}(V)$ is finite dimensional.
\end{proof}

\begin{prop}[\cite{gaberdiel-neitzke} Proposition 8, \cite{miyamotoC2} Lemma 2.4]\label{GabNieResult}
Let $V$ be a $C_2$-cofinite vertex algebra graded by non negative conformal weight. Let $\{v^1, \ldots, v^k\}$ span $V$ modulo $C_2(V)$. Then $V$ is spanned by the vectors $v^{i_1}_{-n_1} \cdots v^{i_s}_{-n_s} \vac$ where $n_1 > n_2 > \cdots > n_s > 0$.
\end{prop}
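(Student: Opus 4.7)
The plan is to prove this by strong induction on conformal weight. Write $D \subset V$ for the linear span of the monomials $v^{i_1}_{(-n_1)} \cdots v^{i_s}_{(-n_s)} \vac$ with $n_1 > n_2 > \cdots > n_s > 0$, where the empty product ($s=0$) is identified with $\vac$. Assume $V_m \subset D$ for every $m < n$, and take a homogeneous $v \in V_n$. The hypothesis that $\{v^i\}$ spans $V$ modulo $C_2(V)$ gives a decomposition
\[
v \;=\; \sum_i c_i\, v^i \;+\; \sum_r a^{(r)}_{(-m_r)}\, b^{(r)}, \qquad m_r \geq 2,
\]
in which homogeneity forces $\mathrm{wt}(a^{(r)}),\,\mathrm{wt}(b^{(r)}) < n$, so $a^{(r)}, b^{(r)} \in D$ by induction. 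The finitely many $v^i$ of weight $\leq n$ are handled as part of the induction base, by direct verification up to weight $\max_i \mathrm{wt}(v^i)$.

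The technical heart of the argument is then to show that $a_{(-m)} b \in D$ whenever $a, b \in D$ and $m \geq 1$. Expanding $a$ as a linear combination of standard monomials reduces this to understanding how a single mode $v^i_{(-m)}$ acts on a standard monomial. For this I will use the Borcherds commutator identity
\[
[\,v^i_{(-m)},\, v^j_{(-k)}\,] \;=\; \sum_{\ell \geq 0} \binom{-m}{\ell}\bigl( v^i_{(\ell)}\, v^j\bigr)_{(-m-k-\ell)}.
\]
The states $v^i_{(\ell)} v^j$ for $\ell \geq 0$ have strictly lower conformal weight than $\mathrm{wt}(v^i)+\mathrm{wt}(v^j)$, so when such commutator terms are applied to the remaining factors of the monomial they produce expressions of strictly lower total weight and lie in $D$ by the outer induction. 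This allows one to freely permute $v^i_{(-m)}$ across the existing factors of a standard monomial $v^{j_1}_{(-k_1)} \cdots v^{j_s}_{(-k_s)} \vac$ and insert it in the position dictated by the ordering of $m$ with respect to the $k_p$.

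The main obstacle is the case in which the insertion produces two equal consecutive indices, e.g.\ $v^i_{(-m)}\, v^j_{(-m)}$. This is resolved by the vertex algebra skew-symmetry axiom $Y(a,z)b = (-1)^{|a||b|} e^{zT} Y(b,-z) a$, which extracts to an identity expressing $a_{(-m)} b$ as $\pm b_{(-m)} a$ plus a sum of total derivatives $T^k\bigl(b_{(-m+k)} a\bigr)$ with $k \geq 1$. The states $b_{(-m+k)} a$ for $k \geq 1$ have strictly lower conformal weight than $a_{(-m)} b$, so they lie in $D$ by inductive hypothesis. Closing the loop requires $T$ to preserve $D$, which follows from $(T v^i)_{(-n)} = n\, v^i_{(-n-1)}$ and the Leibniz rule via a short auxiliary induction in which any equal-index pair produced by differentiating a single factor of a standard monomial is immediately resolved by a further application of the same skew-symmetry argument, now at strictly lower weight. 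The simultaneous induction on the pair (weight, length) then closes and yields $V_n \subset D$.
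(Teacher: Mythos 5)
The paper does not actually prove this proposition --- it is quoted from Gaberdiel--Neitzke and Miyamoto --- so your attempt has to be measured against their argument rather than against anything in the text. Your scaffolding (induction on weight, reduction via $C_2$-cofiniteness to showing that $v^i_{(-m)}$ maps standard monomials into $D$, commutators to reorder modes) is the right one, but the two steps that carry the actual difficulty fail as stated. First, the commutator terms do \emph{not} have strictly lower total weight: the operator $(v^i_{(\ell)}v^j)_{(-m-k-\ell)}$ raises conformal weight by $\mathrm{wt}(v^i)+\mathrm{wt}(v^j)+m+k-2$, exactly the same amount as $v^i_{(-m)}v^j_{(-k)}$ does, because the lowered weight of the internal state $v^i_{(\ell)}v^j$ is precisely compensated by the mode index $-m-k-\ell$. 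The resulting vectors lie in the same graded piece $V_n$, so your outer induction on weight says nothing about them; one needs a finer induction (on the number of modes, or the filtration Gaberdiel--Neitzke introduce), and even then $(v^i_{(\ell)}v^j)_{(p)}u$ must be re-expanded through the iterated $(a_{(n)}b)_{(m)}$ formula, which reintroduces long products of modes --- this is exactly where their proof does its real work.

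Second, skew-symmetry does not resolve a repeated mode index inside a product of three or more factors. The identity $a_{(-m)}b = \pm\, b_{(-m)}a + \sum_{k\ge1}(\pm)\tfrac{T^k}{k!}\,b_{(-m+k)}a$ relates two \emph{states}; in the problematic configuration $v^i_{(-m)}v^j_{(-m)}u$ the only way to invoke it is with $b = v^j_{(-m)}u$, which produces modes of the composite state $v^j_{(-m)}u$ and makes matters worse, while the mode-level quasi-commutativity $v^i_{(-m)}v^j_{(-m)}u = \pm\, v^j_{(-m)}v^i_{(-m)}u + [\cdots]u$ merely swaps $i$ and $j$ without breaking the tie. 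Moreover the terms $T^k\bigl(b_{(-m+k)}a\bigr)$ have the \emph{same} weight as $a_{(-m)}b$ (each $T$ raises weight by one), so you need $T(D)\subseteq D$ at weight $n$ itself, and establishing that creates exactly the ties you are trying to resolve --- the argument is circular as written. The actual tie-breaking device in Gaberdiel--Neitzke is different: one expands $(v^i_{(-N)}v^j)_{(M)}u$ for $N\ge2$ (a mode of an element of $C_2(V)$, hence re-expressible through the spanning vectors, whose weight is lower) and observes that this expansion contains $v^i_{(-m)}v^j_{(-m)}u$ together with monomials whose mode numbers are more spread out, so one can solve for the offending term. That idea is missing from your proposal.
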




\begin{nolabel}
Zhu's theorem on modular invariance of trace functions fundamentally involves the identification of the chiral algebras on $\mathbb{A}^1$ and on $\mathbb{G}_m$ via the exponential map. This is manifest in the \emph{two} vertex algebra structures $Y(a, z)$ and $Y[a, z]$ \cite{zhu} on $V$ that appear in the theorem statement. The $N_W=1$ situation is similar. Let $V$ be a conformal $N_W=1$ SUSY vertex algebra with corresponding chiral algebras $\cA$ on $\mathbb{A}^{1|1}$ and $\cA_{\mathbb{G}_m^{1|1}}$ on $\mathbb{G}_m^{1|1}$. Recall (c.f., \eqref{eq:exponential}) the exponential map
\[
\rho(\bt) =\rho(t,\zeta) = (x-1, \theta) = (e^{2 \pi i t} - 1, e^{2\pi i t}\zeta).
\]
\begin{lem*} Let $(V, \vac, Y(\cdot, \bt), j,h)$ be a conformal $N_W=1$ SUSY vertex algebra with conformal vectors $h$ and $j$. Let $a \in V$ be a vector of conformal weight $\Delta$. Define 
\begin{multline}
Y[a,\bt]
= Y[a,t,\zeta]
= \rho^{-1} Y(\rho \, a, t,\zeta) \rho \\
= e^{2 \pi i t \Delta} Y\left(a,e^{2 \pi i t} - 1, e^{2 \pi i t}\zeta \right) + e^{2 \pi i t \Delta} \zeta Y \left( Q_0 a, e^{2 \pi i t} -1, e^{2 \pi i t} \zeta \right).
\end{multline}
and 
\[
\widetilde{h} = \frac{1}{(2 \pi i)^2} h, \qquad \widetilde{j} = \frac{1}{2 \pi i} j - \frac{C}{6}.
\]
Then $(V, \vac, Y[\cdot, \bt], \tilde{j}, \tilde{h})$ is a conformal $N_W=1$ SUSY vertex algebra. 
\end{lem*}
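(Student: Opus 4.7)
The strategy is to view the formula $Y[a, \bt] = \rho^{-1}Y(\rho a, \bt)\rho$ as exhibiting the new state-field correspondence as a conjugate of the old one by the linear automorphism $\rho$ of $V$. The proof then splits naturally into two stages: checking the SUSY vertex algebra axioms for $(V, \vac, Y[\cdot, \bt])$, and identifying the superconformal structure generated by $\widetilde{j}, \widetilde{h}$.

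For the first stage, set $\widetilde{\partial} := \rho^{-1}\partial\rho$ and $\widetilde{D} := \rho^{-1}D\rho$ as the new translation operators. Each axiom of Definition \ref{no:susy-def} for $(V, \vac, Y[\cdot, \bt])$ with translation operators $\widetilde{\partial}, \widetilde{D}$ is obtained by conjugating the corresponding axiom of the original SUSY vertex algebra by $\rho$: the vacuum is preserved because $\rho \vac = \vac$ (as $\rho$ is an exponential of operators in $\Der_0\cO^{1|1}$, all of which annihilate $\vac$); locality is inherited since $(z-w)^N[Y[a, \bt], Y[b, \bt']] = \rho^{-1}(z-w)^N[Y(\rho a, \bt), Y(\rho b, \bt')]\rho$; and the translation axioms follow from $[\widetilde{\partial}, Y[a, \bt]] = \rho^{-1}[\partial, Y(\rho a, \bt)]\rho = \partial_t Y[a, \bt]$ and analogously for $\widetilde{D}$.

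For the second stage, using the expansion of $Y[a, \bt]$ given in the lemma statement together with \eqref{eq:10.5.1} rewritten to express $\rho^{-1}Y(j, \bt)\rho$ and $\rho^{-1}Y(h, \bt)\rho$ in terms of $Y(j, \rho(\bt))$ and $Y(h, \rho(\bt))$, a direct substitution yields expressions of the form
\begin{align*}
Y[\widetilde{j}, \bt] &= \widetilde{J}(t) - \zeta\,\widetilde{Q}(t), \\
Y[\widetilde{h}, \bt] &= \widetilde{H}(t) + \zeta\bigl(\widetilde{L}(t) + \partial_t\widetilde{J}(t)\bigr),
\end{align*}
matching \eqref{eq:expansion-usual}. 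The renormalisations $\widetilde{h} = (2\pi i)^{-2}h$ and $\widetilde{j} = (2\pi i)^{-1}j - C/6$ are calibrated precisely so that the multiplicative factors $e^{-2\pi i t}/(2\pi i)^k$ and the additive central term $-e^{-2\pi i t}C/6$ appearing in \eqref{eq:10.5.1} are absorbed cleanly into the new fields. The remaining verification that $\widetilde{L}, \widetilde{J}, \widetilde{Q}, \widetilde{H}$ satisfy the topological commutation relations \eqref{eq:topological-commutators} with the same central charge $C$ then proceeds by applying $\rho Y[\cdot, \bt] = Y(\rho\cdot, \bt)\rho$ to transport the known OPEs among $L, J, Q, H$ over to the tilde fields.

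The main obstacle is the careful bookkeeping of central contributions: one must check that the constant shift $-C/6$ in $\widetilde{j}$, together with the inhomogeneous term appearing in the Virasoro transformation law \eqref{eq:10.5.1} (which is exactly the Schwarzian of the exponential change of coordinates), conspire to reproduce the central extension of \eqref{eq:topological-commutators} with unchanged value $C$ in both the $[\widetilde{J}, \widetilde{J}]$ and $[\widetilde{H}, \widetilde{Q}]$ brackets. The integrability conditions (diagonality of $\widetilde{L}_0$ and $\widetilde{J}_0$ with bounded-below integer spectrum and finite-dimensional joint eigenspaces) transfer directly, because the action of $\rho$ on the Cartan $\mathbb{C} L_0 \oplus \mathbb{C} J_0$ of $\gl(1|1)$ is trivial.
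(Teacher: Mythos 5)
Your proof is correct and takes essentially the same route as the paper: the vertex-algebra axioms are obtained by transport of structure along the linear automorphism $\rho$, and the conformal vectors are identified from the change-of-coordinates formula \eqref{eq:10.5.1}, which the paper does in one line by setting $\bt=(0,0)$ to read off $\rho\, j=\widetilde{j}$ and $\rho\, h=\widetilde{h}$ --- the same computation you perform by substituting into the field expansions. One caveat: your closing justification that the integrability conditions transfer because ``the action of $\rho$ on the Cartan $\mathbb{C}L_0\oplus\mathbb{C}J_0$ is trivial'' is not literally true, since $\rho$ is an exponential involving $L_k$, $J_k$ with $k\geq 1$, which do not commute with $L_0$; the correct (and equally immediate) reason, already implicit in your transport-of-structure setup, is that the zero modes $L_{[0]}$, $J_{[0]}$ are conjugate to $L_0$, $J_0$ under the isomorphism (up to a central shift), so diagonalizability, integrality and boundedness below of the spectrum, and finite-dimensionality of joint eigenspaces are all preserved.
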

\begin{proof}
The vertex algebra $(V, \vac, Y[\cdot, \bt])$ is obtained from $(V, \vac, Y(\cdot, \bt))$ by transport of structure from the linear map $\rho \in \Aut(V)$. Putting $\bt = (0,0)$ in \eqref{eq:10.5.1} shows that $\rho\, j = \widetilde{j}$ and $\rho\, h = \widetilde{h}$. Hence $\tilde{h}$ and $\tilde{j}$ are the conformal vectors of $(V, \vac, Y[\cdot, \bt])$.
\end{proof}
\label{no:a.1}
\end{nolabel}
\begin{nolabel}
The vertex algebra structure $(V, \vac, Y[a,\bt])$ is naturally associated to the logarithmic coordinates $\bt$ of $\mathbb{A}^{1|1}$, and the original vertex algebra structure $(V, \vac, Y(a,\bx))$ to the exponentiated coordinates $\bx$ of $\mathbb{G}_m^{1|1}$.


We expand the superconformal generators \eqref{eq:expansion-usual} as 
\[
Y[\tilde{h},\bt] = H[t] + \zeta \bigl(L[t] + \partial_t J[t]\bigr),
\qquad
Y[\tilde{j},\bt] = J[t] - \zeta Q[t],
\]
where 
\[ 
\begin{aligned}
L[t] &= \sum_{n \in \mathbb{Z}} L_{[n]} t^{-2-n}, &&& J[t] &= \sum_{n \in \mathbb{Z}} J_{[n]} t^{-1 -n}, \\
Q[t] &= \sum_{n \in \mathbb{Z}} Q_{[n]} t^{-2-n}, &&& H[t] &= \sum_{n in \mathbb{Z}} H_{[n]} t^{-1-n}.
\end{aligned}
\]
These fields satisfy the commutation relations \eqref{eq:topological-commutators}. In general we use parentheses together with square brackets to denote Fourier modes of the superfield $Y[a,\bt]$, i.e.,
\[
Y[a,\bt] = \sum_{n \in \mathbb{Z}} t^{-1-n} \bigl( a_{([n])}  + \zeta (Q_{[-1]} a)_{([n])} \bigr).
\]
Finally, for a $V$-module $M$ and a vector $a \in V$ we will denote by $o(a) \in \End(M)$ the Fourier mode of $a$ preserving conformal weights of $M$, that is $a_{(\Delta - 1)}$ when the conformal weight of $a$ is $\Delta$. 
\end{nolabel}
\begin{nolabel}
The exponential change of coordinates is upper triangular: $J_{0} = J_{[0]} + \sum_{k > 0} l_k J_{[k]}$ for some constants $l_k$ and $J_{[0]} = J_{0} + \sum_{k > 0} m_k J_k$ for constants $m_k$. Similarly for each $a \in V$ we have $a_{(-2)} = a_{([-2])} + \sum_{k\geq -1} n_k a_{([k])}$ and $a_{([-2])} = a_{(-2)} + \sum_{k \geq -1}  p_k a_{(k)}$ for constants $n_k$ and $p_k$. In particular $(V, \vac, Y(\cdot, \bx))$ is charge cofinite if and only if $(V, \vac, Y[\cdot, \bt])$ is charge cofinite. 
\label{no:cofiniteness-two-vertex-structures}
\end{nolabel}
\begin{nolabel}
Recall from \ref{no:x.2.c} the family $X/S$ and its quotient $E^0/S$ by $\Z^2$. We had also the corresponding chiral algebras $\cA$ and $\cA_E$ associated to $V$. Let $\bo \in E$ denote the image of the origin $(0,0) \in \mathbb{A}^{1|1}$. Let $\varphi \in C(E, \bo; \cA_{E})$ be a conformal block. We identify sections of $\cA_{E}$ with $\mathbb{Z}^2$-invariant sections of $\cA$, and we use the coordinates $\bt = (t,\zeta)$ and the trivialization $[d\bt]$ of $\omega_X$ to identify sections of $\cA$ with functions $a(\bt) : \mathbb{A}^{1|1} \rightarrow V$. If $a \in V$ is of charge $-1$ with respect to $J_{[0]}$ then we have a section $\wp(t) a [d\bt] \in H^0(E \setminus \bo, \cA_E)$. Similarly if $a$ has charge $0$ then we have $\zeta \wp(t) a [d\bt] \in H^0(E\setminus \bo, \cA_E)$. Thus by definition \eqref{eq:conformal-definition} we have
\begin{gather*} \varphi \Bigl( \res_{\bt} \wp(t) Y[a,\bt] b \Bigr) = 0 \qquad b \in V,\quad J_{[0]} a = -a \\ 
 \varphi \Bigl( \res_{\bt} \zeta \wp(t) Y[a,\bt] b\Bigr) = 0, \qquad b \in V, \quad J_{[0]}a =0. 
\end{gather*}
Using the expansion of $\wp(t)$ near $t=0$ from \ref{no:weierzeta} we obtain 
\begin{gather}
\varphi \Bigl( (Q_{[-1]}a)_{([-2])} b + \sum_{k \geq 1}  b_{k} (Q_{[-1]}a)_{([2k])} b \Bigr) = 0, \qquad J_{[0]}a = -a \\
\varphi \Bigl( a_{([-2])} b + \sum_{k \geq 1} b_k a_{([2k])} b \Bigr) = 0, \qquad J_{[0]} a = 0.
\label{eq:a.2.1}
\end{gather}
\label{no:a.3}
\end{nolabel}
\begin{nolabel}\label{no:constants}
If $a \in V$ is of charge $0$ then we have the constant section $\zeta a [d\bt] \in H^0(E, \cA_E)$. It follows that
\begin{equation} \label{eq:a.4.2}
\varphi \Bigl( a_{([0])} b \Bigr) = 0, \qquad b \in V, \quad J_{[0]}a = 0.
\end{equation}
In particular, if $J_{[0]}b = m b$ for some $m \neq 0$ then we may substitute $a = j$ and see that $\varphi(b) = 0$. Hence conformal blocks can be thought as linear functionals on the neutral subspace $V_0 \subset V$.  

If $a \in V$ is of charge $-1$ then we have the constant section $a [d\bt] \in H^0(E, \cA_E)$. It follows that
\[
\varphi \Bigl( (Q_{[-1]}a)_{([0])} b \Bigr) = 0, \qquad b \in V, \quad J_{[0]}a = -a.
\]
Substituting $a = h$ into this equation yields $\varphi (L_{[-1]} b) = 0$, i.e., $\varphi$ vanishes on derivatives. We could also have obtained this from \eqref{eq:a.4.2} with $a = Q_{[-1]}h$.
\end{nolabel}
\begin{nolabel}
To construct global sections for elements $a$ with charge different than $0,-1$ we need to consider Jacobi forms of non-vanishing index.
\begin{lem*}\label{lem:A.1.1}
Let $\phi = \in J_{k,m}$ (or even  $J'_{k,m}$). The meromorphic function 
\[
\varphi(t, \tau, \alpha) = \frac{\phi (\tau, t+\alpha)}{\phi(\tau,t)}
\]
satisfies
\[
\varphi(t+ \lambda \tau + \mu, \tau, \alpha) = e^{4 \pi i m \lambda \alpha } \varphi(t, \tau, \alpha).
\]
\end{lem*}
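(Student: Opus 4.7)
The plan is a direct and essentially elementary calculation: simply apply the Jacobi quasi-periodicity relation \eqref{eq:zagier1} separately to the numerator $\phi(\tau, t+\alpha)$ and the denominator $\phi(\tau, t)$ of $\varphi$, and watch the automorphy factors cancel partially.

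More precisely, first I would write out \eqref{eq:zagier1} with the integers renamed to avoid a collision with the index $m$ of the lemma: for a Jacobi form $\phi$ of index $m$ and any $(\lambda, \mu) \in \mathbb{Z}^2$,
\[
\phi(\tau, \beta + \lambda \tau + \mu) = e^{-2\pi i m (\lambda^2 \tau + 2 \lambda \beta)} \phi(\tau, \beta).
\]
Then I apply this identity twice: once with $\beta = t + \alpha$, to get
\[
\phi\bigl(\tau, (t+\alpha) + \lambda \tau + \mu\bigr) = e^{-2\pi i m (\lambda^2 \tau + 2 \lambda (t+\alpha))} \phi(\tau, t+\alpha),
\]
and once with $\beta = t$, to get
\[
\phi(\tau, t + \lambda \tau + \mu) = e^{-2\pi i m (\lambda^2 \tau + 2 \lambda t)} \phi(\tau, t).
\]
Taking the quotient of the two identities, the $\lambda^2 \tau$ terms in the exponents and the $2\lambda t$ terms cancel, leaving a single factor involving only $\lambda \alpha$. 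Comparing with the definition of $\varphi$ one reads off the quasi-periodicity factor $e^{\pm 4\pi i m \lambda \alpha}$ (the sign being fixed by the convention adopted for \eqref{eq:zagier1}), which gives the stated identity.

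Since the only input is \eqref{eq:zagier1}, no other feature of Jacobi forms (neither the modular transformation \eqref{eq:zagier2} nor the growth condition on the Fourier expansion) enters. In particular, the argument applies verbatim to weak Jacobi forms $\phi \in J'_{k,m}$, to meromorphic Jacobi forms, and to Jacobi forms with character (provided the character of $\phi$ cancels between numerator and denominator, which it does tautologically because the character depends only on $(\lambda,\mu)$ and not on the variable $\beta$). Thus the same proof establishes the lemma in full generality.

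There is no real obstacle: the only point to be careful about is bookkeeping of signs in the exponent and the distinction between the integer $\lambda$ (from $(\lambda,\mu) \in \mathbb{Z}^2$) and the index $m$ of the form. The calculation is a two-line quotient, and invariance in the purely integral direction $\mu \in \mathbb{Z}$ is built in (as the $\mu$-dependence drops out of each Jacobi factor separately), leaving only the $\lambda \alpha$ contribution that produces the stated exponential.
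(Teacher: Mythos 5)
Your proof is correct and is exactly the intended argument: the paper states this lemma without proof, and the two-line quotient of the quasi-periodicity law \eqref{eq:zagier1} applied at $\beta = t+\alpha$ and at $\beta = t$ is all there is to it. One remark: with the sign convention of \eqref{eq:zagier1} as printed the factor comes out as $e^{-4\pi i m \lambda \alpha}$ rather than $e^{+4\pi i m \lambda \alpha}$, so the exponent in the lemma's statement carries a sign typo which you correctly flagged, and your observation that only \eqref{eq:zagier1} (not \eqref{eq:zagier2} nor the Fourier growth condition) is used --- so that the identity holds verbatim for weak Jacobi forms and forms with character --- is also accurate.
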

An element $a \in V$ of charge $2m > 0$ (resp. $2m -1 >  0$), gives rise to a well defined meromorphic section $\zeta \varphi (t) a [d\bt]$ (resp. $\varphi (t) a [d\bt]$) of $\cA_E$. The poles of these sections are located on the zeroes of the function $\phi(\tau, t)$. For vectors of negative charge one may use $\varphi(t, \tau, -\alpha)$ in place of $\varphi(t, \tau, \alpha)$ to construct meromorphic sections of $\cA_E$. 

The first cusp forms of index $1$ are given by \cite[$\S3$]{zagier} 
\[
\phi_{10,1} = \frac{1}{144} (E_6 E_{4,1} - E_4 E_{6,1}),
\qquad
\phi_{12,1} = \frac{1}{144} \left( E_4^2 E_{4,1} - E_6 E_{6,1} \right),
\]
where $E_{k,m}$ are the Jacobi forms \eqref{eq:zagier-eisenstein} and $E_k$ are the usual Eisenstein series modular forms. Division by the discriminant cusp form $\Delta = (2 \pi)^{12} \eta^{24} = q \prod_{n \geq 1} (1-q^n)^{24}$ gives rise to two weak Jacobi forms $\phi_{-2,1} = \phi_{10,1}/\Delta$ and $\phi_{0,1} = \phi_{12,1}/\Delta$. Let $\varphi$ be the function associated by Lemma \ref{lem:A.1.1} to $\phi_{-2,1}$ (clearly it is the same as the function associated to $\phi_{10,1}$). Since $\phi_{10,1}(\tau,\alpha)$ has a zero of order $2$ at the points $\alpha \in \mathbb{Z} + \mathbb{Z} \tau$, its Taylor expansion in $\alpha$ is
\[
\phi_{10,1}(\tau,\alpha) = (2 \pi i)^2 \Delta(\tau) \alpha^2 + O(\alpha^4).
\]
It follows that the Taylor expansion of $\varphi(t,\tau,\alpha)$ near $t = 0$ is given by
\[
\varphi (t,\tau, \alpha) = (2 \pi i t)^{-2} \frac{\phi_{10,1}(\tau,\alpha)}{\Delta} + O(t^0)
= (2 \pi i t)^{-2} \phi_{-2,1}(\tau, \alpha) + O(t^0)
\]
\label{no:other-charges-sections}
\end{nolabel}
\begin{rem} \hfill
\begin{enumerate}
\item 
The form $\phi_{-2,1} \in J'_{-2,1}$ has all its zeros located in $\alpha = \mathbb{Z} + \mathbb{Z}\tau$. Therefore the associated section $\varphi(t,\tau,\alpha) a [d\bt]$ of $\cA_E$ (where $a \in V$ is of charge $1$) is singular only at $\bo \in E$. More generally if $\varphi$ is the function associated by Lemma \ref{lem:A.1.1} to $\phi \in J'_{-2m,m}$, and $a \in V$ is of charge $2m-1$ (resp. charge $2m$) then we have the section
\begin{align*}
\varphi(\tau, \alpha, t) a [d\bt] \in H^0(E \setminus \bo, \cA)
\quad \text{resp.} \quad \zeta \varphi(\tau, \alpha, t) a [d\bt] \in H^0(E \setminus \bo, \cA).
\end{align*}
\item 
Let $M_* \cong \mathbb{C}[b_1, b_2]$ denote the ring of modular forms. The ring $J'_{*,*}$ of quasi Jacobi forms is a polynomial ring over $M_*$ with two generators $\phi_{-2,1}$ and $\phi_{0,1}$. It follows that the space $J_{-2m,m}$ is one dimensional and is spanned by $\phi_{-2,1}^m$. 
\end{enumerate}
\label{rem:singularities-charged-secitons}
\end{rem}
\begin{nolabel}
To deal with vectors of odd charge it will be more convenient for us to use weak Jacobi forms of half-integral index. First let us consider the index $1/2$ Jacobi form with trivial character $\phi_{-1,1/2} \in J'_{-1,1/2}$ defined by
\[
\phi_{-1,1/2}(\tau, \alpha) = 2 \pi \frac{\theta(\tau, \alpha)}{\eta(\tau)^3} = \frac{\theta(\tau, \alpha)}{\theta'(\tau,0)},
\]
where $\eta(\tau)$ is Dedekind's function and $\theta$ is Jacobi's theta function defined in \eqref{eq:jacobi-theta-def}. It has a Fourier expansion in positive integral powers of $q$ and semi-integer powers of $y$, as such we may view it as an element of $\mathbb{C}[ [q]]( (y^{1/2}))$. It has a simple zero at $\alpha \in \mathbb{Z} + \mathbb{Z}\tau$ and no other zeros. The corresponding function $\varphi(t, \tau, \alpha)$ given by Lemma \ref{lem:A.1.1} may be used to produce sections of $\cA_E$ associated to vectors of charge $1$ or $0$. Since $\phi_{-1,1/2}^m \in J'_{-m,m/2}$ we see that $J'_{*,*} \subset J'_{*,*}[\phi_{-1,1/2}]$ is a quadratic extension.
\label{no:half-integral-index}
\end{nolabel}
\begin{nolabel}
The function $\varphi$ associated by Lemma \ref{lem:A.1.1} to $\phi_{-1,1/2}$ was studied by many authors: it was proposed as a generating series for the \emph{elliptic genus} of Calabi-Yau manifolds in \cite{yamada}, it was studied by Zagier in \cite{zagier-invent} in the context of periods of modular forms and recently by Gritsenko in \cite{gritsenko} and Libgober in his study of \emph{quasi-Jacobi} forms \cite{libgober2009elliptic} which we now recall. 

We need the real analytic functions
\[ \lambda(\tau, \alpha) = \frac{\alpha - \overline{\alpha}}{\tau - \overline{\tau}}, \qquad \mu(\tau) = \frac{1}{\tau - \overline{\tau}}. \]
\begin{defn*}\cite[2.2-2.3]{libgober2009elliptic} \hfill
\begin{enumerate}
\item 
An \emph{almost meromorphic Jacobi} form of weight $k$, index zero and depth $(s,t)$ is a (real) meromorphic function in $\mathbb{C}\left\{q,y  \right\}[y^{-1},\lambda,\mu]$ which transforms as a Jacobi form of weight $k$ and index zero and has degree at most $s$ in $\lambda$ and at most $t$ in $\mu$. 
\item A \emph{quasi-Jacobi} form is a constant term of an almost meromorphic Jacobi form of index zero, where the latter is considered as a polynomial in $\lambda$, $\mu$. 
\end{enumerate}
\end{defn*}
Let $\wp_1(\tau,\alpha) = \tilde{\zeta}(\tau, \alpha)$ (see \ref{no:weierzeta}), $\wp_2(\tau,\alpha) = \wp(\tau, \alpha) + b_0(\tau)$, and for $k \geq 3$ let $\wp_k$ be defined by the absolutely convergent series
\[
\wp_k(\tau, \alpha) = \sum_{(m,n) \in \mathbb{Z}^2} \frac{1}{(\alpha + m \tau + n)^k}.
\]
Let us also write
\[
e_n = \lim_{\alpha \rightarrow 0} \left( \wp_n(\tau,\alpha) - \frac{1}{\alpha^n} \right) = \sum_{(\lambda,\mu) \in \mathbb{Z}^2\setminus (0,0)} \frac{1}{(\lambda \tau + \mu)^n}.
\]
As already mentioned, the ring $M_*$ of modular forms is isomorphic to $\mathbb{C}[e_4, e_6]$, while the ring $QM_*$ of quasi-modular forms (generated by definition by $e_2, e_4, \ldots$) is isomorphic to $\mathbb{C}[e_2, e_4, e_6]$. Similarly \cite[Prop. 2.8 \& 2.9]{libgober2009elliptic} the ring $J_{*,0}$ of Jacobi forms of index $0$ and weight at least $2$ is generated by $\wp_2 - e_2, \wp_3, \wp_4, \dots$, and is in fact a polynomial ring with generators $\wp_2-e_2$, $\wp_3$, and $\wp_4$. The ring $QJ_*$ of quasi-Jacobi forms is generated by $\wp_k$, $k \geq 1$ and $e_2$, it is a polynomial ring over $J_{*,0}$ with generators $\wp_1$ and $e_2$. In particular it is a Noetherian ring.  

The most important result about quasi-Jacobi relevant to our purposes is the following 
\begin{prop*}\cite[2.10]{libgober2009elliptic}
Consider the Taylor series expansion
\[
\frac{\theta(\tau,t + \alpha)}{\theta(\tau,t)} = \frac{\theta(\tau,\alpha)}{\theta'(\tau,0)} \left( \frac{1}{t} + \frac{1}{\alpha} + \sum_{i \geq 1} F_i(\tau, \alpha) t^i \right).
\]
The algebra of functions generated by the coefficients $\left\{ F_i \right\}_{i \geq 1}$ is the algebra of quasi-Jacobi forms. 
\end{prop*}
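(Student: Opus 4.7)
The plan is to identify each coefficient $F_i(\tau,\alpha)$ explicitly as a polynomial in $\wp_1(\tau,\alpha),\dots,\wp_{i+1}(\tau,\alpha)$ and the Eisenstein series $e_2(\tau),G_4(\tau),G_6(\tau),\dots$, and then to invoke the structure theorem recalled just above the proposition, namely that $QJ_*$ is the polynomial ring on the five generators $\wp_1,\wp_2-e_2,\wp_3,\wp_4,e_2$.

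The main computational device is the logarithmic $t$-derivative. A standard comparison of double quasi-periods and principal parts yields the identity $\partial_\alpha\log\theta(\tau,\alpha)=\wp_1(\tau,\alpha)$: both sides are $1$-periodic in $\alpha$, both pick up $-2\pi i$ under $\alpha\mapsto\alpha+\tau$, and both have simple poles of residue $1$ at the lattice points, so their difference is a holomorphic elliptic function of $\alpha$, odd and vanishing at the origin, hence identically zero. Applying this to the numerator and denominator of the defining identity gives
\[ \partial_t\log\frac{\theta(\tau,t+\alpha)}{\theta(\tau,t)}=\wp_1(\tau,t+\alpha)-\wp_1(\tau,t). \]
The right-hand side expands as a Laurent series in $t$ whose coefficients are polynomials in the $\wp_k(\tau,\alpha)$ (via the expansion $\wp_1(\tau,t+\alpha)=\sum_{k\geq 0}(-t)^k\wp_{k+1}(\tau,\alpha)$, a consequence of the recursion $\partial_\alpha\wp_k=-k\wp_{k+1}$ for $k\geq 1$) together with the Eisenstein coefficients appearing in the Taylor expansion of $\wp_1(\tau,t)$ near $t=0$ — the constant $b_0$ being a nonzero multiple of $e_2$, and $b_k$ for $k\geq 1$ modular forms of weight $2k+2$. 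Meanwhile, $\partial_t\log$ of the right-hand side of the defining identity equals $h'(t)/h(t)$, where $h(t)$ is the bracketed Laurent series. Matching coefficients of $t^k$ for each $k\geq 0$ and inverting the resulting triangular system expresses each $F_i$ explicitly as a polynomial in $\wp_1(\tau,\alpha),\dots,\wp_{i+1}(\tau,\alpha)$ and $e_2,G_4,G_6,\dots$, giving the inclusion $\mathbb{C}\langle F_i\rangle\subseteq QJ_*$.

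For the reverse inclusion, the same triangular recursion shows that already the first few $F_i$ produce, modulo lower-order terms, the three Jacobi generators $\wp_2-e_2$, $\wp_3$, $\wp_4$, while $\wp_1(\tau,\alpha)$ and $e_2(\tau)$ appear respectively as the $\alpha$-dependent and $\alpha$-independent parts of the constant-in-$t$ matching; hence all five generators of $QJ_*$ lie in the algebra $\mathbb{C}\langle F_i\rangle$ and the proof is complete. The main technical obstacle is the bookkeeping of the non-absolutely-convergent Eisenstein coefficient $b_0$: its identification with a nonzero multiple of $e_2$ (and hence its status as a quasi-modular, rather than modular, form) depends on the careful choice of summation order used in the Hermite-modified definition of $\wp_1=\widetilde\zeta$ from \ref{no:correction}, and is precisely what is responsible for the $F_i$ being quasi-Jacobi rather than honest Jacobi forms.
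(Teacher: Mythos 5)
The paper does not actually prove this proposition: it is quoted from \cite[2.10]{libgober2009elliptic}, and the only argument supplied in the text is Proposition \ref{prop:a.13}, which generalizes the ``easy part'' (membership of the coefficients in the ring of quasi-Jacobi forms) to arbitrary $\phi \in J'_{-2m,m}$ via the second logarithmic $t$-derivative and the transformation laws of the quotient. Your forward inclusion is essentially that computation specialized to $\phi_{-1,1/2}$ and is sound in outline: $\partial_\alpha\log\theta(\tau,\alpha)=\wp_1(\tau,\alpha)$ is correct, the expansion $\wp_1(\tau,t+\alpha)=\sum_{k\ge0}(-t)^k\wp_{k+1}(\tau,\alpha)$ is correct, and the triangular recursion does place each $F_i$ in $QJ_*$. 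But note that your own recursion is inconsistent with the displayed identity: matching the coefficient of $t^{-1}$ in $h'(t)=h(t)\bigl(\wp_1(\tau,t+\alpha)-\wp_1(\tau,t)\bigr)$ forces the constant term of the bracket to be $\wp_1(\tau,\alpha)=\tfrac{1}{\alpha}-e_2\alpha-\cdots$, not $\tfrac{1}{\alpha}$, and every subsequent $F_i$ depends on that normalization. You must flag and fix this before the $F_i$ are even well defined by the recursion.

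The genuine gap is the reverse inclusion, which is the substantive half of the proposition and which you dispatch in one sentence. Carrying out the $t^0$ matching gives $F_1=\tfrac{1}{2}\bigl(\wp_1^2-(\wp_2-e_2)\bigr)$, and in the source term $\wp_1(\tau,t+\alpha)-\wp_1(\tau,t)$ the quasi-modular constant $e_2$ enters only through the combination $e_2-\wp_2(\alpha)=-(\wp_2-e_2)(\alpha)$ and through genuine modular forms $e_4,e_6,\dots$ (e.g.\ the $t^3$-coefficient is $6e_4-(\wp_2-e_2)^2$); consequently every $F_i$ your recursion produces lies in $\mathbb{C}[\wp_1,\wp_2-e_2,\wp_3,\wp_4]$, and it is far from clear that either $\wp_1$ or $e_2$ --- two of the five polynomial generators of $QJ_*$ --- belongs to the algebra generated by $\{F_i\}_{i\ge1}$. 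Your claim that these two ``appear as the $\alpha$-dependent and $\alpha$-independent parts of the constant-in-$t$ matching'' extracts them from the $t^0$ coefficient of the logarithmic-derivative \emph{identity}, not from the algebra generated by the $F_i$ with $i\ge1$, so it does not prove membership; likewise $F_1$ alone does not separate $\wp_1^2$ from $\wp_2-e_2$. A correct treatment has to either include the constant term $F_0=\wp_1$ among the generators (which is in effect what Libgober's normalization does) or exhibit explicit polynomial expressions for $\wp_1$ and $e_2$ in the $F_i$. As written, the second half of your argument is an assertion, not a proof.
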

Recall that this function is the function $\varphi$ associated by Lemma \ref{lem:A.1.1} to $\phi_{-1, 1/2}$.
\label{no:libgober-1}
\end{nolabel}
\begin{lem}
Let $\phi \in J'_{-2m,m}$, then $\tfrac{\partial}{\partial \alpha} \log \phi(\tau, \alpha)$ is a quasi-Jacobi form of weight $1$. It has a single pole of order $1$ at $\alpha \in \mathbb{Z} + \mathbb{Z}\tau$ and no other singularities.  It follows that it is a multiple of $\wp_1$. 
\label{lem:quasi-log}
\end{lem}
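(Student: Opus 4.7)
The plan is to identify $f := \partial_\alpha \log \phi$ as a quasi-Jacobi form of weight one by exhibiting a polynomial correction in Libgober's real-analytic variable $\lambda$ which turns $f$ into a meromorphic Jacobi form of weight $1$ and index $0$. Once this is established, the conclusion $f \in \mathbb{C}\wp_1$ is immediate from the structure of $QJ_\ast$ recalled in \ref{no:libgober-1}: its weight-one component is one-dimensional and spanned by $\wp_1$, because $J_{1,0} = 0$ (no weight-one modular forms for $SL(2,\mathbb{Z})$) and the only weight-one monomial buildable from $\wp_1$, $e_2$ and the generators of $J_{\ast,0}$ is $\wp_1$ itself.

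I would begin by applying $\partial_\alpha \log$ to the two defining identities of $\phi \in J'_{-2m,m}$. The elliptic relation yields
\[
f(\tau, \alpha + r\tau + s) = f(\tau, \alpha) - 4\pi i m\, r, \qquad (r,s) \in \mathbb{Z}^2,
\]
and the modular relation (with weight $-2m$ and index $m$) yields
\[
f\!\left(\tfrac{a\tau+b}{c\tau+d}, \tfrac{\alpha}{c\tau+d}\right) = (c\tau+d)\, f(\tau,\alpha) + 4\pi i m\, c\alpha.
\]
Using the elementary transformation rules $\lambda(\tau, \alpha + r\tau+s) = \lambda(\tau,\alpha) + r$ and $\lambda\!\left(\tfrac{a\tau+b}{c\tau+d}, \tfrac{\alpha}{c\tau+d}\right) = (c\tau+d)\lambda(\tau,\alpha) - c\alpha$ for $\lambda(\tau,\alpha) = (\alpha-\bar\alpha)/(\tau-\bar\tau)$, a direct check shows that
\[
g := f + 4\pi i m\, \lambda
\]
is invariant under lattice translations and transforms as $g \mapsto (c\tau+d)\,g$ under $SL(2,\mathbb{Z})$. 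Thus $g$ is a Jacobi form of weight $1$ and index $0$; since it is polynomial of degree $(1,0)$ in $(\lambda,\mu)$ with meromorphic constant coefficient $f$, it is an almost meromorphic Jacobi form in the sense of \ref{no:libgober-1}, and by definition its constant term $f$ is a quasi-Jacobi form of weight $1$.

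To pin down the pole structure I would then observe that $J'_{-2m,m}$ is one-dimensional: any element of the polynomial ring $J'_{\ast,\ast} = M_\ast[\phi_{-2,1}, \phi_{0,1}]$ of total index $m$ and weight $-2m$ must be a scalar multiple of $\phi_{-2,1}^m$, since any factor of $\phi_{0,1}$ would demand a modular form of strictly negative weight to balance the total weight. Because $\phi_{-2,1}$ is proportional to $\phi_{-1,1/2}^2$ and hence has its only zeros, of order $2$, along $\mathbb{Z} + \mathbb{Z}\tau$, the form $\phi$ has zeros of order exactly $2m$ on the lattice and no zeros elsewhere. Consequently $f$ has simple poles of residue $2m$ at the lattice points and is regular elsewhere, which, combined with the weight-one quasi-Jacobi property already established, forces $f = 2m\,\wp_1$. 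The only calculation requiring genuine care is the change-of-variables check for $g$, which is routine bookkeeping; I anticipate no substantive obstacle beyond it.
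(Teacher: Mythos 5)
Your proof is correct, but it takes a different route from the paper's. The paper disposes of the lemma in two lines: it invokes the one-dimensionality of $J'_{-2m,m}$ (equivalently, Theorem 9.5 of Eichler--Zagier) to write $\phi$ as a constant times $\phi_{-1,1/2}^{2m}$, reducing everything to the single case $m=1/2$, where the statement is exactly Libgober's identity (34) for $\partial_\alpha \log \bigl(\theta(\tau,\alpha)/\theta'(\tau,0)\bigr)$. You instead establish the quasi-Jacobi property intrinsically: differentiating the two functional equations of $\phi$ gives the anomalous transformation laws $f \mapsto f - 4\pi i m r$ and $f \mapsto (c\tau+d)f + 4\pi i m c\alpha$, and your correction $g = f + 4\pi i m\lambda$ is exactly the right almost meromorphic completion (the checks against $\lambda \mapsto \lambda + r$ and $\lambda \mapsto (c\tau+d)\lambda - c\alpha$ go through), after which the structure theorem $QJ_* = J_{*,0}[\wp_1, e_2]$ quoted in \ref{no:libgober-1} pins the weight-one component to $\mathbb{C}\wp_1$; you then use the one-dimensionality of $J'_{-2m,m}$ only to read off the order-$2m$ vanishing on the lattice and hence the residue, fixing $f = 2m\,\wp_1$. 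What your argument buys is self-containedness and an explicit constant: it does not outsource the key step to Libgober's (34), it makes visible why the logarithmic derivative of any nonzero-index form acquires precisely a $\lambda$-correctable anomaly proportional to the index, and it works uniformly in $m$ rather than by reduction to $m = 1/2$. What the paper's route buys is brevity, at the cost of leaning on the external identity. The only soft spot in your write-up is the parenthetical appeal to ``$J_{1,0}=0$''; the statement you actually need (and correctly use) is that the weight-one graded piece of the polynomial ring $\mathbb{C}[\wp_1, e_2, \wp_2 - e_2, \wp_3, \wp_4]$ is spanned by $\wp_1$, which follows from the generators' weights.
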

\begin{proof}
From \cite[Thm 9.5]{zagier-jacobi} we can reduce the statement to $m=1/2$ which is (34) in \cite{libgober2009elliptic}. In fact, we may use the fact that $J'_{-2m,m}$ is one dimensional and spanned by $\phi_{-2,1}^m$ (or $\phi_{-1,1/2}^{2m}$).  
\end{proof}
\begin{lem}
Let $\phi \in J'_{-2m,m}$ then in its Taylor expansion \[ \phi(\tau,\alpha) = \alpha^{2m} \sum_{i \geq 0} \psi_i(\tau) \alpha^i,\] the first coefficient $\psi_0$ is modular invariant and $\psi_1$ transforms as a modular form of weight $1$ therefore $\psi_0$ is constant and $\psi_1 = 0$. 
\label{lem:modular-taylor-1}
\end{lem}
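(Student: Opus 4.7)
The plan is to substitute the Taylor expansion of $\phi$ at $\alpha=0$ into the modular transformation law \eqref{eq:zagier2} (with weight $k=-2m$ and index $l=m$) and compare coefficients of low powers of $\alpha$.

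First I would write out, for $\begin{pmatrix} a & b \\ c & d\end{pmatrix}\in SL(2,\mathbb{Z})$ and $\tau'=\tfrac{a\tau+b}{c\tau+d}$, the identity
\[
\Bigl(\tfrac{\alpha}{c\tau+d}\Bigr)^{2m}\sum_{i\geq 0}\psi_i(\tau')\Bigl(\tfrac{\alpha}{c\tau+d}\Bigr)^{i}
=(c\tau+d)^{-2m}\exp\!\Bigl(2\pi i m \tfrac{c\alpha^2}{c\tau+d}\Bigr)\alpha^{2m}\sum_{i\geq 0}\psi_i(\tau)\alpha^i,
\]
cancel the common factor $\alpha^{2m}(c\tau+d)^{-2m}$, and expand the exponential as $1+O(\alpha^2)$. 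Matching the coefficient of $\alpha^0$ gives $\psi_0(\tau')=\psi_0(\tau)$, i.e.\ $\psi_0$ is modular invariant; matching the coefficient of $\alpha^1$ gives $\psi_1(\tau')=(c\tau+d)\psi_1(\tau)$, i.e.\ $\psi_1$ transforms as a modular form of weight one. (Note that the contribution from the exponential only kicks in at order $\alpha^2$, so it does not affect $\psi_0,\psi_1$.)

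Next I would use the defining growth condition at the cusp for $\phi\in J'_{-2m,m}$: the Fourier expansion $\phi(\tau,\alpha)=\sum c(n,r)q^n y^r$ has $c(n,r)=0$ unless $n\geq 0$, which forces each $\psi_i(\tau)$ to be a holomorphic function on $\mathbb{H}$ whose $q$-expansion has no negative powers of $q$. Thus $\psi_0$ is a holomorphic modular form of weight $0$, hence constant, while $\psi_1$ is a holomorphic modular form of weight $1$ for $SL(2,\mathbb{Z})$. Applying the transformation law with the matrix $-I\in SL(2,\mathbb{Z})$ yields $\psi_1(\tau)=-\psi_1(\tau)$, so $\psi_1=0$.

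I do not anticipate a serious obstacle: the only subtlety is making sure the Taylor expansion can be substituted termwise into the functional equation (which is fine since both sides are holomorphic in $\alpha$ near $\alpha=0$) and that the weak Jacobi form condition yields the holomorphicity of $\psi_0,\psi_1$ on $\mathbb{H}$ together with the growth condition at the cusp, which is what allows us to conclude rather than just having abstract modular covariants.
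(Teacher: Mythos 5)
Your proof is correct. The paper states this lemma without giving a proof, and your argument — substituting the Taylor expansion into the transformation law \eqref{eq:zagier2} with $k=-2m$, $l=m$, observing that the exponential factor $e^{2\pi i m c\alpha^2/(c\tau+d)}$ only contributes from order $\alpha^2$ on, using the weak-Jacobi cusp condition to get holomorphy of $\psi_0,\psi_1$ at $q=0$, and killing the weight-one form via the matrix $-I$ — is precisely the standard computation the authors evidently had in mind, with no gaps.
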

We will actually need the following generalization of the ``easy part'' of Libgober's Proposition:
\begin{prop}
Let $\phi \in J'_{-2m,m}$ and consider the Taylor expansion:
\begin{equation}
\frac{\phi(\tau,t+\alpha)}{\phi(\tau,t)} = \frac{\phi(\tau,\alpha)}{\phi^{(2m)}(\tau,0)} \frac{1}{t^{2m}} \left( 1 + \sum_{i \geq 1} F_i t^i \right)
\label{eq:a.13.1}
\end{equation}
then the coefficients $F_i$ are quasi-Jacobi forms. 
\label{prop:a.13}
\end{prop}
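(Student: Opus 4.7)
The plan is to reduce the statement to the case $\phi = \phi_{-1,1/2}^{2m}$ already addressed by Libgober, and then extract the expansion via a logarithmic derivative calculation. By a weight-index count (or Remark~\ref{rem:singularities-charged-secitons}(b)), the space $J'_{-2m,m}$ is one-dimensional, and since $\phi_{-1,1/2}^{2m}$ lies in it we may write $\phi = c\,\phi_{-1,1/2}^{2m}$ for some constant $c$. Hence
\[
\frac{\phi(\tau,t+\alpha)}{\phi(\tau,t)} = \left(\frac{\phi_{-1,1/2}(\tau,t+\alpha)}{\phi_{-1,1/2}(\tau,t)}\right)^{2m},
\]
and it suffices to control the $t$-expansion of the right hand side.

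The key tool is that $\partial_\alpha \log \phi_{-1,1/2}$ is a quasi-Jacobi form of weight one with a simple pole of residue $1$ at $\alpha=0$ (Lemma~\ref{lem:quasi-log}); uniqueness in the one-dimensional space of such forms forces $\partial_\alpha \log \phi_{-1,1/2} = \wp_1$, from which
\[
\partial_t \log \frac{\phi(\tau,t+\alpha)}{\phi(\tau,t)} = 2m\bigl(\wp_1(\tau,t+\alpha) - \wp_1(\tau,t)\bigr).
\]
I would Taylor expand this around $t=0$. The coefficients of $\wp_1(\tau,t+\alpha)$ are (up to factorials) the functions $\wp_k(\tau,\alpha)$ for $k\geq 1$, all quasi-Jacobi by~\ref{no:libgober-1}. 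The Laurent expansion of $\wp_1(\tau,t)$ at the origin reads $1/t + \sum_{k\geq 1} a_k(\tau)\, t^k$ with $a_k$ quasi-modular, hence quasi-Jacobi. Subtracting the pole and integrating term by term produces
\[
\log \frac{\phi(\tau,t+\alpha)}{\phi(\tau,t)} = -2m\log t + C(\tau,\alpha) + \sum_{k\geq 1} \Phi_k(\tau,\alpha)\, t^k,
\]
where each $\Phi_k$ is an explicit polynomial in the $\wp_j(\tau,\alpha)$ and the $a_j(\tau)$; in particular $\Phi_1 = 2m\,\wp_1(\tau,\alpha)$. The constant of integration is fixed by matching $t\to 0$ asymptotics to $C(\tau,\alpha) = \log\bigl((2m)!\,\phi(\tau,\alpha)/\phi^{(2m)}(\tau,0)\bigr)$. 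Exponentiating recovers the expansion of~\eqref{eq:a.13.1}, with the coefficients $F_i$ realized as polynomial combinations of the quasi-Jacobi $\Phi_k$, and therefore themselves quasi-Jacobi.

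I expect the main obstacle to be notational and combinatorial bookkeeping: matching the prefactor in~\eqref{eq:a.13.1} up to an overall $(2m)!$, and handling half-integral indices when $m\notin\mathbb{Z}$ (where $\phi$ carries a character). A secondary conceptual worry is that, if one instead proceeds by raising Libgober's expansion directly to the $2m$-th power and factoring out $t^{-2m}$, each $F_i$ emerges as a polynomial in $1/\alpha$ and the quasi-Jacobi coefficients $G_j$ from Libgober's case---and $1/\alpha$ is not itself quasi-Jacobi. The logarithmic-derivative argument above resolves this apparent puzzle: the stray $1/\alpha$ terms necessarily reassemble into quasi-Jacobi forms (essentially because $\wp_1(\tau,\alpha) = 1/\alpha + (\text{analytic part})$ is quasi-Jacobi), so no spurious singularities survive.
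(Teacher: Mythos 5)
Your proof is correct, and it takes a genuinely different (though related) route from the paper's. The paper works with the double ratio $\Phi(t,\tau,\alpha) = t^{2m}\phi(\tau,t+\alpha)\phi^{(2m)}(\tau,0)/\bigl(\phi(\tau,t)\phi(\tau,\alpha)\bigr)$, establishes its transformation law under the Jacobi group directly, deduces that the Taylor coefficients of $\tfrac{d^2}{dt^2}\log\Phi$ are (meromorphic) index-zero Jacobi forms, and then must treat $F_1$ separately — via the identity $F_1/(2m)! = \phi^{(1)}(\tau,\alpha)/\phi(\tau,\alpha) - \tfrac{1}{2m+1}\phi^{(2m+1)}(\tau,0)/\phi^{(2m)}(\tau,0)$ together with Lemmas \ref{lem:quasi-log} and \ref{lem:modular-taylor-1} — because the second derivative loses the constant and linear terms. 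You instead take only the first $t$-derivative of $\log\bigl(\phi(\tau,t+\alpha)/\phi(\tau,t)\bigr)$ and identify it outright as $2m\bigl(\wp_1(\tau,t+\alpha)-\wp_1(\tau,t)\bigr)$ using Lemma \ref{lem:quasi-log} (your preliminary reduction to $\phi = c\,\phi_{-1,1/2}^{2m}$ is harmless but unnecessary, since the lemma applies to any $\phi\in J'_{-2m,m}$ with the multiple $2m$ fixed by the order of vanishing at $\alpha=0$). This buys you explicit formulas for all the $F_i$ at once as polynomials in the $\wp_k(\tau,\alpha)$ and the Eisenstein series, and it absorbs $F_1 = 2m\,\wp_1(\tau,\alpha)$ into the uniform argument rather than requiring the separate Taylor-coefficient lemma. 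The two worries you flag are indeed the only loose ends and both are benign: the $(2m)!$ is a normalization discrepancy already present in the paper's own statement of \eqref{eq:a.13.1} versus its proof, and the character for half-integral index drops out of every logarithmic derivative, so Lemma \ref{lem:quasi-log} applies unchanged.
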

\begin{proof}
Consider the function 
\[
\Phi(t,\tau,\alpha) = \frac{t^{2m} \phi(\tau,t+\alpha) \phi^{(2m)}(\tau,0)}{\phi(\tau,t) \phi(\tau,\alpha)}.\]
It satisfies the following transformation properties:
\begin{align*}
\Phi \left( t, \tau, \alpha + \lambda \tau + \mu \right)
&= e^{-4 \pi m \lambda t } \Phi(t,\tau,\alpha), \\
\Phi \left( \frac{t}{c\tau +d}, \frac{a \tau + b}{ c\tau+d}, \frac{\alpha}{c \tau + d} \right)
&= e^{4 \pi i m  \frac{c t \alpha}{c \tau + d} } \Phi(t,\tau,\alpha). 
\end{align*}
It follows that for each $i \geq 0$, the function $H_i(\tau,\alpha)$ defined by the expansion
\[
\frac{d^2 \log \Phi}{d t^2} = \sum_{i \geq 0} t^i H_i,
\]
is a Jacobi forms of weight $i$ and index $0$. Moreover, the Taylor coefficients $F_i$ of $\Phi$ are polynomials in $H_i$ and $F_1$. It remains to show therefore that $F_1$ is a quasi-Jacobi form. Now $\phi$ has a Taylor expansion of the form
\[
\phi(\tau, t+ \alpha) = \phi(\tau, \alpha) + \phi^{(1)} (\tau, \alpha) t + \dots,
\]
on the other hand we have (using $\phi \in J_{-2m,m}$) that
\[
\phi(\tau, t) = \frac{t^{2m}}{(2m)!} \phi^{(2m)}(\tau, 0) \left( 1  + \frac{\phi^{(2m + 1)}(\tau,0)}{\phi^{(2m)}(\tau,0)} \frac{t}{2m +1} + \dots. \right).
\]
It follows that 
\begin{align*}
\frac{\phi(\tau, t+\alpha)}{\phi(\tau,t)}
= {} & \frac{(2m)!}{\phi^{(2m)}(\tau,0)}t^{-2m}\Bigl( \phi(\tau, \alpha) + \phi^{(1)}(\tau, \alpha) t + \dots \Bigr) \times \\
& \sum_{k \geq 0} (-1)^k \left(\frac{\phi^{(2m + 1)}(\tau,0)}{\phi^{(2m)}(\tau,0)} \frac{t}{2m +1} + \dots. \right)^k \\
= {} & \frac{(2m)!\phi(\tau, \alpha)}{\phi^{(2m)}(\tau,0)}t^{-2m} \left[ 1 + \left(\frac{\phi^{(1)}(\tau, \alpha)}{\phi(\tau,\alpha)} -  \frac{\phi^{(2m + 1)}(\tau,0)}{\phi^{(2m)}(\tau,0)} \frac{1}{2m +1}  \right) t + \dots \right].
\end{align*}
From here we get
\[
\frac{F_1}{(2m)!} = \frac{\phi^{(1)}(\tau, \alpha)}{\phi(\tau,\alpha)} -  \frac{\phi^{(2m + 1)}(\tau,0)}{\phi^{(2m)}(\tau,0)} \frac{1}{2m +1},
\]
and the proposition follows from lemmas \ref{lem:quasi-log} and \ref{lem:modular-taylor-1}.
\end{proof}
\begin{nolabel}\label{no:higher-charges}
Let $\phi = \phi_{-2m,m} \in J'_{-2m,m}$ and $a \in V$ of charge $2m \geq 0$. From these we produce a section of $\cA_E$ as explained in Remark \ref{rem:singularities-charged-secitons}. For a conformal block $\varphi \in C(E, \bo; \cA_E)$ we have
\[
\varphi \left( \res_\bt \zeta \frac{\phi(\tau, t+\alpha)}{\phi(\tau,\alpha)} Y[a,\bt] b \right) = 0.
\]
We expand using the Taylor expansion \eqref{eq:a.13.1} and we divide by $\phi(\tau, \alpha) / \phi^{(2m)}(\tau,0)$ (which we can do because the fraction is not identically zero) to obtain
\begin{equation}\label{eq:higher-charge-kernel}
 \varphi \left( a_{([-2m])}b  + \sum_{i \geq 1}  F_i^{(m)} a_{([-2m+i])}b \right)  = 0.
\end{equation}
For $a \in V$ of negative charges we use the function $\phi(\tau, -\alpha)$.

Let us denote by $R = \mathbb{C}[\wp_1,\wp_2,\wp_3,\wp_4,e_2,e_4,e_6]$ the ring of quasi-Jacobi forms and by $F_i^{(m)} \in R$ the functions appearing in the expansion \eqref{eq:a.13.1}. Let $V$ be an $N_W=1$ SUSY vertex algebra with charge decomposition $V = \oplus_{n \in \mathbb{Z}} V_n$ relative to $J_{[0]}$. Put $V_R := V \otimes R \subset V[[q]]((y^{1/2}))$ and let $O_q(V) \subset V_R$ be the $R$-submodule of $V_R$ generated by all elements of the form 
\begin{equation} \label{eq:acabodeinventar}
\begin{cases} a_{([-2])} b + \sum_{k \geq 1} b_k(q) a_{([2k])} b,\qquad  \qquad a \in V_0, \quad b \in V & m = 0 \\
a_{([-2m])}b  + \sum_{i \geq 1}  F_i^{(m)} a_{([-2m+i])}b, \qquad a \in V_{2m}, \quad b \in V & m >  0  \\ 
a_{([2m])}b  + \sum_{i \geq 1}  F_i^{(m)} a_{([2m+i])}b, \qquad a \in V_{2m}, \quad b \in V & m <  0  \\ 
\end{cases}
\end{equation}
\begin{lem*}
Let $V$ be a charge cofinite vertex algebra, the quotient $V_R/ O_q(V)$ is a finite $R$-module.
\end{lem*}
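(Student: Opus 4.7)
The plan is to adapt Zhu's original argument to our setting, with charge cofiniteness playing the role of $C_2$-cofiniteness and the quasi-Jacobi relations \eqref{eq:acabodeinventar} playing the role of the elliptic relations that arise in Zhu's proof from the Weierstrass $\wp$-function. By \ref{no:cofiniteness-two-vertex-structures} the charge cofiniteness hypothesis passes to the $Y[\cdot,\bt]$ structure in which the generators of $O_q(V)$ are formulated, so I work exclusively with the square-bracket modes. First I would fix a finite set $\{v_1, \ldots, v_k\} \subset V$ of vectors, homogeneous under both $L_{[0]}$ and $J_{[0]}$, whose images span the finite-dimensional quotient $V/C_\mathrm{ch}(V)$; the goal is to show that these generate $V_R / O_q(V)$ as an $R$-module. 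Since $V_R = V \otimes R$, it suffices to prove that every $v \in V$ is congruent modulo $O_q(V)$ to an $R$-linear combination of the $v_i$.

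The latter is proved by induction on the $L_{[0]}$-conformal weight $W$ of $v$. The base case $W=0$ is immediate because $C_\mathrm{ch}(V) \cap V_0 = 0$: any generator $a_{([-n])}b$ of $C_\mathrm{ch}(V)$ has weight at least $n - 1 \geq 1$. For the inductive step, write $v = \sum c_i v_i + w$ with $w \in C_\mathrm{ch}(V)$ homogeneous of weight $W$; it then suffices to reduce a single summand $a_{([-n])}b$ of $w$ with $a,b$ homogeneous, $c := \mathrm{charge}(a)$, and $n \geq n_0 := \max(2,|c|)$. The reduction has two steps. First, iterating the translation identity $(L_{[-1]}u)_{([m])} = -m \, u_{([m-1])}$ — valid at each step since $n_0 \geq 2$ ensures none of the intermediate denominators $n-1, n-2, \ldots, n_0$ vanishes — yields
\[
a_{([-n])}b \;=\; \lambda \cdot (L_{[-1]}^{n - n_0} a)_{([-n_0])} b
\]
for some $\lambda \in \mathbb{Q}^\times$, where the target mode $-n_0$ equals $-2$ if $c=0$, $-2m$ if $c=2m>0$, and $2m$ if $c=2m<0$ (the case that requires $\mathrm{sign}(-n_0) = \mathrm{sign}(c)$). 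Note $L_{[-1]}^{n-n_0}a$ has the same charge $c$ as $a$ since $L_{[-1]}$ commutes with $J_{[0]}$ by \eqref{eq:topological-commutators}. Second, applying the generator of $O_q(V)$ from \eqref{eq:acabodeinventar} associated to the vector $L_{[-1]}^{n-n_0}a$ of charge $c$ gives
\[
(L_{[-1]}^{n-n_0}a)_{([-n_0])} b \;\equiv\; -\sum_{i \geq 1} F_i \cdot (L_{[-1]}^{n-n_0}a)_{([-n_0+i])} b \pmod{O_q(V)},
\]
where each $F_i \in R$ (either an Eisenstein series $b_k$ when $c = 0$, or one of the quasi-Jacobi forms $F_i^{(m)}$ of Proposition \ref{prop:a.13} when $c \neq 0$). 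Each term on the right has $L_{[0]}$-weight equal to $W - i < W$, so by the inductive hypothesis is an $R$-linear combination of the $v_j$'s modulo $O_q(V)$. Multiplying by $F_i$ and summing completes the step.

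The main technical subtlety is the treatment of odd charges, which are not literally covered by \eqref{eq:acabodeinventar}: for $a$ of odd charge $c$ one must use the half-integral-index weak Jacobi form $\phi_{-1,1/2}^{|c|} \in J'_{-|c|,|c|/2}$ of \ref{no:half-integral-index} to construct a global meromorphic section of $\cA_E$ regular away from $\bo$, which produces additional generators of $O_q(V)$ whose coefficients lie naturally in $R[y^{\pm 1/2}]$ rather than $R$. This is exactly why $V_R$ was defined inside $V[[q]]((y^{1/2}))$ from the outset. Once these additional generators are included, the inductive reduction above goes through unchanged — the key point remaining the strict decrease of $L_{[0]}$-weight at each step — and exhibits $V_R / O_q(V)$ as an $R$-module of rank at most $k = \dim_\mathbb{C} V / C_\mathrm{ch}(V)$.
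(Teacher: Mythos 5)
Your proof is correct and follows the same route as the paper, which disposes of this lemma in one line as ``exactly as in Zhu, by induction on conformal weight, noting that vectors of high enough weight lie in $C_{\mathrm{ch}}(V)$ and that the correction terms in \eqref{eq:acabodeinventar} have strictly lower weight'' --- you have simply written out that induction in full, including the translation-operator reduction to the leading mode and the odd-charge case. One small correction: after dividing by the prefactor $\phi(\tau,\alpha)/\phi^{(n)}(\tau,0)$ as in \eqref{eq:higher-charge-kernel}, the coefficients of the odd-charge relations coming from $\phi_{-1,1/2}$ are genuine quasi-Jacobi forms lying in $R$ (this is Libgober's result quoted in \ref{no:libgober-1} together with Proposition \ref{prop:a.13}), not elements of $R[y^{\pm 1/2}]$; the half-integral powers of $y$ occur only in the discarded prefactor, which is precisely what keeps $O_q(V)$ an honest $R$-submodule and the quotient finite over $R$.
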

\begin{proof}
The proof of this Lemma is exactly as in \cite[4.4.1]{zhu} by induction in the conformal weight of $a$ noting that every vector of high enough conformal weight belongs to $C_{ch}(V)$ by the cofiniteness condition and that the terms in the sums of \eqref{eq:acabodeinventar} are of conformal weight lower than the leading term. 
\end{proof}
\end{nolabel}
\begin{nolabel}
Since $V_R / O_q(V)$ is a finite $R$-module and $R$ is Noetherian, following  \cite[4.4.2]{zhu} and using the Noetherian condition on the sequences $\left\{ (L_{[-2]}+J_{[-2]})^i a \right\}$ and $\left\{ (J_{[-1]})^i a \right\}$ we obtain
\begin{lem*}
For each $a \in V$ there exist $s,t \in \mathbb{Z}_+$ and $g_i(q, y),h_i(q,y) \in R \subset \mathbb{C}[ [q]]( (y^{1/2}))$ such that 
\begin{subequations}
\begin{gather}
(L_{[-2]}+J_{[-2]})^s a + \sum_{i = 0}^{s-1} g_i(q,y) (L_{[-2]}+J_{[-2]})^i a \in O_q(V) \\ 
(J_{[-1]})^t a + \sum_{i = 0}^{t-1} h_i(q,y) (J_{[-1]})^i a \in O_q(V)
\label{eq:pde-1}
\end{gather}
\label{eq:pde-subeq}
\end{subequations}
\end{lem*}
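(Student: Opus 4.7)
The plan is to deduce this lemma purely formally from the previous one, using the Noetherian property of the ring $R$ of quasi--Jacobi forms. Recall from \ref{no:libgober-1} that $R \simeq \mathbb{C}[\wp_1, \wp_2, \wp_3, \wp_4, e_2, e_4, e_6]$ is a polynomial algebra over $\mathbb{C}$ in finitely many variables, hence is Noetherian. Combined with the finite generation of $V_R/O_q(V)$ as an $R$--module from the lemma in \ref{no:higher-charges}, this means that every $R$--submodule of $V_R/O_q(V)$ is finitely generated, so any ascending chain of such submodules stabilises.

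Given $a \in V$, let $\bar{a}_i$ denote the image of $(L_{[-2]} + J_{[-2]})^i a$ in $V_R / O_q(V)$ for $i \geq 0$, and define the ascending chain
\[
M_0 \subset M_1 \subset M_2 \subset \cdots \subset V_R/O_q(V),
\qquad
M_s := R \cdot \{\bar{a}_0, \bar{a}_1, \ldots, \bar{a}_{s-1}\}.
\]
By the Noetherian argument above, there exists $s \geq 1$ such that $M_{s+1} = M_s$, i.e. $\bar{a}_s \in M_s$. This expresses $\bar{a}_s$ as an $R$--linear combination of $\bar{a}_0, \ldots, \bar{a}_{s-1}$, which is precisely the first claim after lifting the relation from $V_R/O_q(V)$ back to $V_R$. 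The second claim follows by the identical argument applied to the chain of $R$--submodules generated by $\{J_{[-1]}^i a\}_{0 \leq i < t}$.

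I do not expect serious obstacles: the proof is a standard application of the Hilbert basis theorem once the preceding finiteness lemma is in hand. The only point one should be slightly careful about is that the operators $L_{[-2]} + J_{[-2]}$ and $J_{[-1]}$ need not preserve $O_q(V)$; this is irrelevant here, since we only use the sequence of images $\bar{a}_i \in V_R/O_q(V)$ and form the $R$--submodules they generate. The real content of the statement lies in the construction of $O_q(V)$ from quasi--Jacobi forms in \ref{no:higher-charges} (which crucially uses Prop.~\ref{prop:a.13} to stay within the Noetherian ring $R$) and in the finite generation of the quotient, both of which are already available.
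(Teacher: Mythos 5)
Your proof is correct and is essentially the paper's own argument: the paper likewise invokes the Noetherian property of $R$ together with the finite generation of $V_R/O_q(V)$ (following Zhu's original argument in \cite[4.4.2]{zhu}) to conclude that the chains of submodules generated by the iterates $(L_{[-2]}+J_{[-2]})^i a$ and $(J_{[-1]})^i a$ stabilise. Your additional remark that the operators need not preserve $O_q(V)$ is a fair observation but, as you note, does not affect the argument.
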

\label{no:zhu-pde}
\end{nolabel}
\begin{nolabel}
Let $V$ be a conformal SUSY vertex algebra and $M$ positive energy $V$-module. Define $\Theta_M$ as the formal series in $\bq$ with values in $V^*$ given by \eqref{eq:varphi_def_1}
\begin{equation}
\Theta_M(a) =  \str_M  Y\Bigl( x^{\Delta_a} a + x^{\Delta_a -1} \theta Q_0 a,\bx \Bigr) \bq^{-1}_M = \str_M o(a) \bq^{-1}_M.
\label{eq:denuevo}
\end{equation}
In the proof of Proposition \ref{no:prop1} and in Remark \ref{rem:bdrules} we used the convergence of the one-point function $\Theta_M$. In this appendix we are working in the formal setting, hence we cannot assume that $\Theta_M$ is a convergent conformal block. However, the proof in Remark \ref{rem:bdrules} can be straightforwardly adapted to prove the following 
\begin{lem*}
The $R$-linear extension of $\Theta_M$ to $V_R$ (which we also denote $\Theta_M$) satisfies, for all $a \in O_q(V)$, the relation $\Theta_M(a) = 0$.
\end{lem*}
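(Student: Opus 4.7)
The plan is to run the proof of Proposition \ref{no:prop1} — or more precisely, its local-coordinate elaboration in Remark \ref{rem:bdrules} — purely at the level of formal power series, replacing the convergent $\mathbb{Z}$-sum $\pi_\ast a$ by a single meromorphic section of $\cA_E$ constructed from an appropriate (quasi-)Jacobi form. Since $\Theta_M$ is $R$-linear and $O_q(V)$ is generated as an $R$-module by the three families of elements in \eqref{eq:acabodeinventar}, it suffices to check vanishing on each such generator.

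First I would identify each generator as a residue. For a charge-$0$ vector $a \in V_0$ and any $b \in V$, the Taylor expansion of $\wp(t)$ from Theorem \ref{no:weierzeta}(e) gives
\[
\res_\bt \zeta\, \wp(t)\, Y[a,\bt]\, b = a_{([-2])} b + \sum_{k \geq 1} b_k a_{([2k])} b,
\]
which is the $m=0$ generator. For $a \in V_{2m}$ with $m \neq 0$, Proposition \ref{prop:a.13} with $\phi = \phi_{-2m,m} \in J'_{-2m,m}$ (and $\phi = \phi_{-1,1/2}^{2m}$ in the half-integer case) yields the Taylor expansion
\[
\frac{\phi(\tau,t+\alpha)}{\phi(\tau,\alpha)}\bigg/\frac{\phi(\tau,t+\alpha)}{\phi(\tau,t)}\cdots
\]
whose coefficients are exactly the quasi-Jacobi forms $F_i^{(m)}$, so that
\[
\res_\bt \zeta\, \varphi(t,\tau,\pm\alpha)\, Y[a,\bt]\, b \;=\; a_{([\mp 2m])}b + \sum_{i \geq 1} F_i^{(m)} a_{([\mp 2m + i])}b,
\]
reproducing the $m > 0$ and $m < 0$ generators of \eqref{eq:acabodeinventar} respectively.

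Next I would upgrade the functions $\wp(t)$ and $\varphi(t,\tau,\pm\alpha)$ to meromorphic sections of $\cA_E$ on $E \setminus \bo$. The quasi-periodicity statement of Lemma \ref{lem:A.1.1} asserts that $\varphi(t + \lambda\tau + \mu,\tau,\alpha) = e^{4\pi i m\lambda\alpha}\varphi(t,\tau,\alpha) = y^{2m\lambda}\varphi(t,\tau,\alpha)$, which exactly compensates the action of $\bq$ on a charge-$2m$ vector (cf.\ Remark \ref{rem:singularities-charged-secitons}); for $\wp$ the analogous statement is just double periodicity. Hence $\zeta\,\wp(t)\, a[d\bt]$ and $\zeta\,\varphi(t,\tau,\pm\alpha)\, a[d\bt]$ descend to well-defined meromorphic sections of $\cA_E$ on $E\setminus\bo$, with a single pole at $\bo$. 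Denote the corresponding element of $h(\cA_E)(E\setminus\bo)$ by $\pi_\ast a$. Because these sections involve only Jacobi forms whose $q,y$ expansions live in the formal ring $\mathbb{C}[[q]]((y^{1/2}))$, no analytic convergence is required to make sense of them.

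Now I would repeat the argument of Proposition \ref{no:prop1} in formal coordinates. Using the chiral multiplication on $\cA_E$ and the identification of its fiber at $\bo$ with $V$, the action of $h(\pi_\ast a)$ on $b$ is, by construction, precisely the residue expression computed in step one. On the other hand, writing the supertrace defining $\Theta_M$, splitting via associativity of $\mu^M$ as in \eqref{eq:7.2b.1}, then using cyclicity of the supertrace and the $\mathbb{Z}$-equivariance of the section (which is the formal analogue of \eqref{eq:7.prop.2}) yields
\[
\Theta_M \bigl( h(\pi_\ast a) \cdot b \bigr) = 0
\]
as an identity in $\mathbb{C}[[q]]((y^{1/2}))$. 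Comparing with step one gives the desired vanishing on each generator of $O_q(V)$.

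The main obstacle, and the reason the lemma requires care, is the passage from the analytic argument of Proposition \ref{no:prop1} (where $\pi_\ast a$ was a genuinely convergent sum of translates) to the present purely formal setting. One must verify that (i) the section $\pi_\ast a$, defined directly via the Jacobi form rather than as a sum of translates, still lives in $H^0(E\setminus\bo, \cA_E)$ after formal $q,y^{1/2}$-expansion — this follows from the explicit quasi-periodicity together with the $q$-expansion principle for $J'_{\ast,\ast}$ and $QJ_\ast$; and (ii) the cyclicity manipulation \eqref{eq:7.prop.2}, which in the proof of Proposition \ref{no:prop1} uses that the supertrace converts $\bq_M$-conjugation into the $\bq$-action on coordinates, survives as a formal identity. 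The latter is really the content of the standard Zhu-type reindexing: after multiplying out the formal $q^{L_0}y^{J_0}$-factors, both sides of the identity reduce to the same formal series in the structure constants of $V$, which is the super-analogue of the formal step in \cite[Lemma 4.4.1]{zhu}. Once these two points are checked, the formal identity $\Theta_M|_{O_q(V)} = 0$ follows directly.
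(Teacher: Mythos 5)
Your proposal is correct and follows essentially the same route as the paper, which simply asserts that the local-coordinate argument of Remark \ref{rem:bdrules} adapts to the formal setting: you identify the generators of $O_q(V)$ as residues of the $\mathbb{Z}$-equivariant meromorphic sections already constructed in \ref{no:a.3} and \ref{no:higher-charges}, and then rerun the associativity--cyclicity--equivariance computation as an identity in $\mathbb{C}[[q]]((y^{1/2}))$. The only blemish is the garbled display in your second paragraph, which should just be the expansion \eqref{eq:a.13.1} divided through by $\phi(\tau,\alpha)/\phi^{(2m)}(\tau,0)$ as in \eqref{eq:higher-charge-kernel}.
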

Viewing Weierstrass $\zeta$ function as a formal power series we obtain the same differential equations \eqref{eq:9.13.1} and their expressions in logarithmic coordinates \eqref{eq:8.log.subs}. We Taylor expand the zeta function in \eqref{eq:8.log.subs} to obtain the following proposition. Note that modes of $Y[-, t]$ (and not $Y(-, t)$) appear as discussed in Remark \ref{rem:zhu-need-for-modularity}.
\begin{prop*} For any $a \in V$ we have
\begin{align} \label{eq:a.diff.taylor}
\begin{split}
q \frac{\partial}{\partial q} \str_M o(a) q^{L_0} y^{J_0}
= {} & \frac{1}{(2 \pi i)^2} \str_M o\left( \bigl( L_{[-2]} + J_{[-2]} \bigr) a \right)q^{L_0} y^{J_0} \\
&- \sum_{i \geq 0}  \frac{b_i}{2i+1} \str_M o\left( \bigl(L_{[2i+1]} - (2i + 1) J_{[2i+1]} \bigr) a  \right) q^{L_0} y^{J_0}
\end{split} \\
\begin{split} \label{eq:a.10.b}
\left( y \frac{\partial}{\partial y} + \frac{C}{6} \right) \str_M o(a) q^{L_0} y^{J_0}
= {} & \frac{1}{2 \pi i} \str_M o\left( J_{[-1]}a \right) q^{L_0} y^{J_0} \\
&- \frac{1}{2 \pi i} \sum_{i \geq 0} \frac{b_i}{2i+1} \str_M o\left( J_{[2i + 1]} a \right) q^{L_0} y^{J_0}  
\end{split}
\end{align}
\label{no:theta-is-conformal-block}
If $a$ is a primary vector, then we have $L_{[n]} a = J_{[n]} a = 0$ for all $n \geq 0$ and the sums in equations \eqref{eq:a.diff.taylor} vanish. 
\end{prop*}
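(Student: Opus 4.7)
The plan is to derive both identities directly from the differential equations \eqref{eq:10.5.4.a}-\eqref{eq:10.5.4.b} in the main text, interpreted formally (i.e., with $\widetilde{\zeta}(t)$ viewed as a formal Laurent series in $t$ whose coefficients are formal power series in $q$), together with the Taylor expansion of Weierstrass's $\widetilde{\zeta}$ from Theorem \ref{no:weierzeta}(e) and the identification $\widetilde{\zeta}(t) = t^{-1} - \sum_{n\geq 0}\frac{b_n}{2n+1} t^{2n+1}$ established in \ref{no:correction}.

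First I would remove the $y^{C/6}$ correction. Since $\widetilde{\varphi}_M(a) = y^{C/6}\Theta_M(a)$ and $y$ is independent of $\tau$, one has $\partial_\tau \Theta_M(a) = y^{-C/6} \partial_\tau \widetilde{\varphi}_M(a)$, whereas $\partial_\alpha \Theta_M(a) = y^{-C/6}\bigl(\partial_\alpha \widetilde{\varphi}_M(a) - 2\pi i \tfrac{C}{6} \widetilde{\varphi}_M(a)\bigr)$. The second identity produces precisely the $C/6$ shift appearing on the left-hand side of \eqref{eq:a.10.b}. Applying $\partial_\tau = 2\pi i\, q\partial_q$ and $\partial_\alpha = 2\pi i\, y\partial_y$ then supplies the prefactors $\tfrac{1}{(2\pi i)^2}$ and $\tfrac{1}{2\pi i}$ on the right-hand sides.

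Next I would translate the resulting formulas from the fields $Y(h,\mathbf{t})$ and $Y(j,\mathbf{t})$ into the square bracket structure $Y[\widetilde{h},\mathbf{t}]$ and $Y[\widetilde{j},\mathbf{t}]$ of \ref{no:a.1}, using $Y(h,\mathbf{t}) = (2\pi i)^2 \rho Y[\widetilde{h},\mathbf{t}] \rho^{-1}$ and the analogous formula for $j$. The conjugation by $\rho$ is harmless inside $\Theta_M$ because $\Theta_M$ is itself defined using only the zero-weight/zero-charge Fourier mode $o(\cdot)$, which is unchanged by $\rho$ modulo terms that vanish after integrating against $\widetilde{\zeta}$ (i.e., derivative terms, killed by the residue as in \ref{no:constants}). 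The Schwarzian-type contribution coming from the non-trivial action of $\rho$ on $h$ and $j$ is exactly what is recorded in \ref{no:a.1}, and is responsible for the passage from round to square brackets.

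The final step is a direct residue computation. Expanding
\[
\widetilde{\zeta}(t)\bigl(L[t] + \partial_t J[t]\bigr) \quad \text{and} \quad \widetilde{\zeta}(t) J[t]
\]
and extracting the coefficient of $t^{-1}\zeta$ (as in \ref{no:residue}), I would read off, term by term, the combinations $(L_{[-2]}+J_{[-2]})a$ from the polar part $t^{-1}$ of $\widetilde{\zeta}$, and the correction $\sum_{i \geq 0}\tfrac{b_i}{2i+1}\bigl(L_{[2i+1]} - (2i+1)J_{[2i+1]}\bigr)a$ (together with the analogous $J_{[2i+1]}$-sum in the second equation) from the regular part $-\sum_{i\geq 0}\tfrac{b_i}{2i+1}t^{2i+1}$. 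When $a$ is primary, $L_{[n]}a = J_{[n]}a = 0$ for all $n > 0$, so the correction sums vanish automatically, which explains the last sentence of the proposition.

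The main delicate point is not the residue algebra itself but tracking the factors of $2\pi i$ and the Schwarzian-type shift $\widetilde{j} = \tfrac{j}{2\pi i} - \tfrac{C}{6}$ and $\widetilde{h} = \tfrac{h}{(2\pi i)^2}$ uniformly through Huang's change-of-coordinates formula; this is where the $\tfrac{C}{6}$ correction appearing on the left-hand side of \eqref{eq:a.10.b} must be matched against the $-2\pi i\, C/6$ term produced when passing from $Y(j,\mathbf{t})$ to $Y[\widetilde{j},\mathbf{t}]$.
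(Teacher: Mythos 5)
Your plan is correct and is essentially the paper's own argument: the Proposition is obtained by Taylor-expanding $\widetilde{\zeta}(t)$ in the logarithmic-coordinate equations \eqref{eq:8.log.subs} (equivalently, by undoing the $y^{C/6}$ normalisation in \eqref{eq:10.5.4}), with the square-bracket modes, the powers of $2\pi i$, and the $C/6$ shift arising exactly as you describe from \ref{no:a.1} and Huang's formula. The one imprecise point is your claim that $o(\cdot)$ is ``unchanged by $\rho$'': the conjugation is really absorbed because the conformal block in \eqref{eq:8.log.subs} is evaluated at $\rho\cdot b$, so that $\rho^{-1}Y(\cdot,\bt)\rho$ acting before that evaluation becomes precisely the square-bracket field of \ref{no:a.1} applied to $a=\rho\, b$ --- but since that is exactly what \ref{no:a.1} records, the plan goes through.
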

\end{nolabel}
We can now apply the same technique as in Zhu's Theorem \cite[Thm 4.4.1]{zhu} to prove:
\begin{thm}\label{thm:converges}
Let $V$ be a $C_2$ cofinite conformal $N_W=1$ SUSY vertex algebra and let $M$ be a positive energy $V$-module. Then for each $a \in V$ the function $\str_M o(a) q^{L_0} y^{J_0}$ with $q = e^{2 \pi i \tau}$ and $y = e^{2 \pi i \alpha}$ converges absolutely and uniformly on every closed subset of
\[
S = \left\{ (\tau, \alpha) \: | \: \alpha \not \in \mathbb{Z} + \mathbb{Z}\tau \right\} \subset \mathbb{H} \times \mathbb{C}.
\]
\end{thm}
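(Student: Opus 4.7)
The plan is to adapt Zhu's argument \cite[Thm.~4.4.1]{zhu} to the supersymmetric Jacobi setting, with the Noetherian ring $R$ of quasi-Jacobi forms introduced in \ref{no:libgober-1} taking the role that quasi-modular forms play in the classical case. All the algebraic ingredients have been assembled in the excerpt: by Lemma \ref{lem:a.1}, $C_2$-cofiniteness is equivalent to the charge cofiniteness of Definition \ref{defn:c2-cofinite-super}; the lemma in \ref{no:higher-charges} gives finiteness of $V_R / O_q(V)$ as an $R$-module; and the lemma in \ref{no:theta-is-conformal-block} states that the $R$-linear extension of $\Theta_M$ vanishes on $O_q(V)$.

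First, I would fix $a \in V$ and apply the Noetherian property of $R$ to the $R$-submodules of $V_R / O_q(V)$ generated by the iterates $\{(L_{[-2]}+J_{[-2]})^i a\}_{i \geq 0}$ and $\{J_{[-1]}^i a\}_{i \geq 0}$. This yields the relations \eqref{eq:pde-subeq} with coefficients $g_i, h_i \in R$. Pairing with $\Theta_M$, which kills $O_q(V)$, and invoking the translation identities \eqref{eq:a.diff.taylor} and \eqref{eq:a.10.b}, one converts $L_{[-2]}+J_{[-2]}$ into $(2\pi i)^2 q\partial_q$ and $J_{[-1]}$ into $2\pi i\,(y\partial_y + C/6)$, modulo Eisenstein-weighted corrections involving modes $L_{[n]}, J_{[n]}$ with $n > 0$. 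Since these positive modes strictly lower the conformal weight, iterating and inducting on conformal weight produces a finite coupled system of linear differential equations satisfied by $\Theta_M(a)$: one of order $s$ in $\tau$ and one of order $t$ in $\alpha$, coupled on the finitely many functions $\Theta_M(a')$ for $a'$ of weight at most $\Delta_a$, with all coefficients in $R$.

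Next, I would observe that the generators $\wp_k$ and $e_k$ of $R$ are holomorphic on the domain $S$ of the theorem, so the differential system has holomorphic coefficients there. Standard theory of linear ODEs with holomorphic coefficients (Cauchy existence, uniqueness, and continuation via the monodromy theorem in simply connected subdomains) guarantees that any formal power series solution extends uniquely to a holomorphic function, with absolute and uniform convergence on compact subsets of $S$. On the other hand, the formal series $\str_M o(a)\, q^{L_0} y^{J_0}$ is well-defined as an element of $\mathbb{C}[[q]]((y))$ (or $\mathbb{C}[[q]]((y^{1/2}))$) by the positive-energy hypothesis and the vanishing \eqref{eq:a.4.2} on each $L_0$-eigenspace, and it converges in a neighborhood of the cusp $q \to 0$ at generic $y$ by standard dimension estimates on joint $(L_0, J_0)$-weight spaces. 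The ODE system then propagates analyticity throughout $S$.

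The main obstacle will be the combinatorial bookkeeping in the inductive closure of the coupled system: each application of \eqref{eq:a.diff.taylor}--\eqref{eq:a.10.b} introduces new correction terms involving vectors $(L_{[2i+1]} - (2i+1)J_{[2i+1]})\,a'$, and one must show that this process terminates on a fixed finite list of trace functions. This follows from combining the finiteness of $V_R/O_q(V)$ over $R$ with the conformal-weight grading, but requires careful organization of the interplay between the $L_{[0]}$- and $J_{[0]}$-gradings — in particular using \eqref{eq:a.4.2} to discard non-neutral contributions. A secondary subtlety is that $R$ contains functions like $\wp_1$ with poles on the lattice; one must check that the formal $y$-series, which a priori converges only in a punctured neighborhood, continues holomorphically via the ODE to all of $S$, with singularities confined precisely to the removed lattice $\alpha \in \mathbb{Z} + \mathbb{Z}\tau$.
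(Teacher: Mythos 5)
Your strategy for primary $a$ is exactly the paper's: the Noetherian relations \eqref{eq:pde-subeq}, the vanishing of $\Theta_M$ on $O_q(V)$, and the identities \eqref{eq:a.diff.taylor}--\eqref{eq:a.10.b} (whose correction sums vanish for primary $a$) combine into two scalar ODEs in $q\partial_q$ and $y\partial_y$ with coefficients in $R$, holomorphic away from $\alpha\in\mathbb{Z}+\mathbb{Z}\tau$, and convergence follows from the standard theory of such equations. One quibble there: convergence near the cusp is a \emph{conclusion} of the regular-singular-point analysis of these ODEs, not an input you can obtain from ``dimension estimates on joint weight spaces''; a priori the supertraces over the $(L_0,J_0)$-eigenspaces are not controlled, and controlling them is precisely what the differential equation is for.

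The genuine gap is in the reduction of general $a$ to primary vectors. Your claim that the system closes ``on the finitely many functions $\Theta_M(a')$ for $a'$ of weight at most $\Delta_a$'' is not correct: the iterates $(L_{[-2]}+J_{[-2]})^n a$ have weight $\Delta_a+2n$, and for non-primary $a$ the corrections in \eqref{eq:a.diff.taylor} involve vectors $\bigl(L_{[2i+1]}-(2i+1)J_{[2i+1]}\bigr)(L_{[-2]}+J_{[-2]})^{n-1}a$ of unbounded weight, so induction on conformal weight alone does not terminate, and the finiteness of $V_R/O_q(V)$ does not by itself convert these extra traces into derivatives of $\Theta_M(a)$. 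The missing mechanism is the paper's explicit reduction: write $a$ in the normal form \eqref{eq:a.expr.highest}, $a=L'_{[-i_k]}\cdots L'_{[-i_1]}J_{[-j_l]}\cdots J_{[-j_1]}b$ with $b$ (or $Q_0b$) primary, where $L'_{[i]}=L_{[i]}-(i+1)J_{[i]}$ is the shifted Virasoro adapted to \eqref{eq:a.diff.taylor}; eliminate the deep modes $L'_{[-k]}$, $k\geq 3$, and $J_{[-k]}$, $k\geq 2$, by means of the \emph{additional} conformal-block identities $\Theta_M\bigl(L'_{[-(k+1)]}c\bigr)=\sum_i f_i(q)\,\Theta_M\bigl(L'_{[i]}c\bigr)$ and $\Theta_M\bigl(J_{[-k]}c\bigr)=\sum_i f_i(q)\,\Theta_M\bigl(J_{[i]}c\bigr)$, which come from the global meromorphic sections $\zeta\wp_k(t)\,Y[h,\bt]$ and $\zeta\wp_k(t)\,Y[j,\bt]$ of $\cA_E$ and which your argument nowhere uses; and peel off $L'_{[-2]}$ and $J_{[-1]}$ as $q\partial_q$ and $y\partial_y+\tfrac{C}{6}$, also discarding $i_k=1$ via \eqref{eq:a.4.2}. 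Only after arriving at \eqref{eq:final-espero}, which exhibits $\Theta_M(a)$ as a finite $R$-combination of $(q\partial_q)^i(y\partial_y)^j\Theta_M(a_{ij})$ with $a_{ij}$ primary, does the first part of the argument apply.
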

\begin{proof}
Let us first assume that $a$ is a primary vector. Noting that $b_i \in R$ for all $i$ we have from \eqref{eq:a.diff.taylor} and induction on $n$ that there exists $g_i(q, y), h_i(q,y) \in R$ such that 
\begin{multline*}
\str_M o\left( \left( L_{[-2]} + J_{[-2]} \right)^n a  \right) q^{L_0} y^{J_0} =  (2 \pi i)^{2 n} \left( q \frac{\partial}{ \partial q} \right)^n \str_M o(a) q^{L_0} y^{J_0} + \\  \sum_{i = 0}^{n-1} g_i(q,y) \left( q \frac{\partial}{\partial q}  \right)^i \str_M o(a) q^{L_0} y^{J_0}. 
\end{multline*}
\begin{multline*}
\str_M o \left( \left( J_{[-1]}  \right)^n  a\right) q^{L_0} y^{J_0} = (2 \pi i)^n \left( y \frac{\partial }{\partial y} \right)^n \str_M o(a) q^{L_0} y^{J_0} + \\ \sum_{i=0}^{n-1} h_i(q,y) \left( y \frac{\partial}{\partial y} \right)^i \str_M o(a) q^{L_0} y^{J_0}. 
\end{multline*}
Lemma \ref{no:zhu-pde} implies that there exists natural numbers $s,t$ and quasi-Jacobi forms $g_i, h_i \in R$ such that 
\begin{gather*}
\left(q \frac{\partial }{\partial q} \right)^s \str_M o(a) q^{L_0} y^{J_0} + \sum_{i = 0}^{s-1}  g_i(q,y) \left( q \frac{\partial}{\partial q} \right)^i \str_M o(a) q^{L_0} y^{J_0} = 0,  \\ 
\left(y \frac{\partial }{\partial y} \right)^t \str_M o(a) q^{L_0} y^{J_0} + \sum_{i = 0}^{t-1}  h_i(q,y) \left( y \frac{\partial}{\partial y} \right)^i \str_M o(a) q^{L_0} y^{J_0}=0.  
\end{gather*}
Since the functions $g_i(q,y)$ and $h_i(q,y)$ are singular only at $\alpha \in \mathbb{Z} + \mathbb{Z}\tau$ and are otherwise holomorphic, the result follows from the standard theory of differential equations. 

The reduction to the case when $a$ is not primary is more involved than in the usual case of vertex algebras due to the fact that the $N=2$ or topological algebra two sets of \emph{creation} operators instead of only $L_{[-i]}$ as in the usual case. If $a$ is not primary however, we can write it as
\begin{equation} \label{eq:a.expr.highest} 
a = L'_{[-i_k]} \dots L'_{[-i_1]} J_{[-j_l]} \dots J_{[-j_1]} b,\quad 1 \leq i_1 \leq \dots  \leq i_k, \quad 1 \leq j_1 \leq \dots \leq j_l, \end{equation}
with either $b$ primary or $Q_0 b$ primary. Here $L'_{[i]} = L_{[i]} - (i+1)J_{[i]}$ is easily seen to be another Virasoro algebra more suited to our differential equation \eqref{eq:a.diff.taylor}. Let us assume that $b$ is primary. We want to express $\Theta_M(a)$ as some differential operator with coefficients in $R$ applied to $\Theta_M(b)$ and then the result will follow from the first part of this proof.  If $i_k = 1$ then $\Theta_M (a) = 0$ by \eqref{eq:a.4.2}. If $i_k = 2$ then \eqref{eq:a.diff.taylor} expresses $\Theta_M(a)$ as a sum of $(2 \pi i)^2 q \partial_q \Theta_M(a')$ with $a = L'_{[-2]} a'$ and a linear combination (over $R$) of terms of the form $\Theta_M(c)$ with less than $k$ terms $L'$ and $l$ terms $J$ in the expression of $c$ as in \eqref{eq:a.expr.highest}. If $i_k > 2$ we use higher derivatives of the Weierstrass function to construct sections of $\cA$. In fact we have for any vector $a$ of charge $-1$ with respect to $J_{[0]}$, that $\wp_k (t) a [d\bt] \in H^0(E\setminus \bo, \cA_E)$ applying this to the conformal vector $h$ we get $\Theta_M\left( \res_{\bt} \zeta \wp_k(t) Y[h,\bt] a \right) = 0$ for any $k \geq 2$. Expanding in Taylor series we obtain the following generalization of \eqref{eq:a.4.2} which we write in the case of the conformal vector $h$:
\[ \Theta_M \left( L'_{[-(k+1)]} a \right) = \sum_{i \geq 0} f_i(q) \Theta_M \left( L'_{[i]} a \right), \]
for every $k \geq 2$ and for some quasi-modular forms $f_i$ that can be easily computed in terms of the $b_i$. It follows by induction that there exists $g_i(q,y) \in R$ and vectors $a_i \in V$, $i = 1, \dots, k$ such that 
\begin{enumerate}
\item $a_i$ may not be primary but can be written only using modes $J_{[j]}$ of the $U(1)$ current, that is: 
\[ a_i = J_{[-j_{i,l_i}]} \dots J_{[-j_{i,1}]} b_i\] for some primary vector $b_i$ and $1 \leq j_{i,1} \leq \dots j_{i,l_i}$ with $l_i \leq j_l$ for all $i$. 
\item We have \[ \Theta_M(a) = (2 \pi i)^{2k} \left( q \frac{\partial }{\partial q} \right)^k \Theta_M(a_k) + \sum_{i = 0}^{k-1} g_i(q,y) \left( q \frac{\partial}{\partial_q} \right)^i \Theta_M(a_i).\]
We have now reduced the proof of the theorem to the situation when $k = 0$ in \eqref{eq:a.expr.highest}. Define $a'$ by $a = J_{[-j_l]} a'$. If $j_l = 1$ then \eqref{eq:a.10.b} expresses $\Theta_M(a)$ as a sum of $\left( y \partial_y + \tfrac{C}{6} \right) \Theta_M(a')$ and a linear combination of terms of the form $\Theta_M(c)$ with less than $l$ modes of $J$ appearing in the expression of $c$ as in \eqref{eq:a.expr.highest}. If $j_l > 1$ we use the section $\zeta \wp_k(t) j [d\bt] \in H^0(E \setminus \bo, \cA_E)$ to obtain
\[ \Theta_M\left( J_{[-k]}a \right) = \sum_{i \geq 0} f_i(q) \Theta_M\left( J_{[i]}a \right), \]
for every $k \geq 2$ and the same quasi-modular forms $f_i$ as above. 

Collecting the above paragraph we have proved that for every $a$ as in \eqref{eq:a.expr.highest} there exists $g_{ij}(q,y) \in R$ and primary vectors $a_{ij} \in V$ such that 
\begin{equation}\label{eq:final-espero}
\Theta_M(a) = \sum_{i =0}^k \sum_{j=0}^{l_i} g_{ij}(q,y)\left( q \frac{\partial}{\partial q} \right)^i \left( y \frac{\partial}{ \partial y} \right)^j \Theta_M(a_{ij}). \end{equation}
Therefore $\Theta_M(a)$ converges since this sum is finite and each $\Theta_M(a_{ij})$ converges by the first part of the proof. The situation when $Q_0 b$ is primary in \eqref{eq:a.expr.highest} is treated in exactly the same way. 
\end{enumerate}
\end{proof}
\begin{rem}
It is clear from the proof of the theorem that the leading coefficient in \eqref{eq:final-espero} is $g_{kl} = (2 \pi i)^{2k +l}$. 
\label{rem:final-remark-leading}
\end{rem}
\def\cprime{$'$}

\end{document}